\DeclareMathOperator{\ran}{ran}
\newcommand{\notequiv}{\equiv \hskip -3.4mm \slash}
\newcommand{\bel}[1]{\begin{equation}\label{#1}}
\newcommand{\be}{\begin{equation}}
\newcommand{\ba}{\begin{eqnarray}}
\newcommand{\ea}{\end{eqnarray}}
\newcommand{\qe}{\end{equation}}
\newcommand{\R}{\mathbb{R}}
\newcommand{\N}{\mathbb{N}}
\newcommand{\Z}{\mathbb{Z}}
\DeclareMathOperator\supp{supp}
\newcommand{\Hmm}[1]{\leavevmode{\marginpar{\tiny%
$\hbox to 0mm{\hspace*{-0.5mm}$\leftarrow$\hss}%
\vcenter{\vrule depth 0.1mm height 0.1mm width \the\marginparwidth}%
\hbox to
0mm{\hss$\rightarrow$\hspace*{-0.5mm}}$\\\relax\raggedright #1}}}
\newtheorem{theorem}{Theorem}[section]
\newtheorem{lemma}[theorem]{Lemma}
\newtheorem{corollary}[theorem]{Corollary}
\newtheorem{definition}[theorem]{Definition}
\newtheorem{remark}[theorem]{Remark}
\newtheorem{prop}[theorem]{Proposition}
\newtheorem{example}[theorem]{Example}
\newtheorem*{theorem*}{Theorem}
\newenvironment{thmbis}[1]
    {%
     \addtocounter{theorem}{-1}%
     \begin{theorem}}
    {\end{theorem}}
\begin{document}

\title[Discrete Schwarz rearrangement in lattice graphs]{Discrete Schwarz rearrangement on lattice graphs}

\author{Hichem Hajaiej}
\address{Hichem Hajaiej: Department of Mathematics, College of Natural Science Cal State University, 5151 State Drive, 90032 Los Angeles, California, USA}
\email{\href{mailto:hhajaie@calstatela.edu}{hhajaie@calstatela.edu}}

\author{Fengwen Han}
\address{Fengwen Han: School of Mathematics and Statistics, Henan University, 475004 Kaifeng, Henan, China}
\email{\href{mailto:fwhan@outlook.com}{fwhan@outlook.com}}

\author{Bobo Hua}
\address{Bobo Hua: School of Mathematical Sciences, LMNS, Fudan University, Shanghai 200433, China; Shanghai Center for Mathematical Sciences, Fudan University, Shanghai 200433, China}
\email{\href{mailto:bobohua@fudan.edu.cn}{bobohua@fudan.edu.cn}}

\begin{abstract}
In this paper, we prove a discrete version of the generalized Riesz inequality on $\Z^d$. As a consequence, we will derive the extended Hardy-Littlewood and P\'olya-Szeg\"o inequalities. We will also establish cases of equality in the latter. Our approach is totally novel and self-contained. In particular, we invented a definition for the discrete rearrangement in higher dimensions.  Moreover, we show that the definition ``suggested'' by Pruss does not work. We solve a long-standing open question raised by Alexander Pruss in \cite[ p 494]{Pruss98}, Duke Math Journal, and discussed with him in several communications in 2009-2010, \cite{Pruss2010Personal}. Our method also provides a line of attack to prove other discrete rearrangement inequalities and opens the door to the establishment of optimizers of many important discrete functional inequalities in $\Z^d,$ $d\geq2$. We will also discuss some applications of our findings. To the best of our knowledge, our results are the first ones in the literature dealing with discrete rearrangement on $\Z^d,$ $d\geq2$.
\end{abstract}
\maketitle

\tableofcontents

\section{Introduction}

Schwarz rearrangement is the process of transforming an admissible function $u$ into another function $u^*$
that is radial, radially decreasing, and equimeasurable to $u$. A rearrangement inequality is when a certain
functional decreases, or increases, or remains unchanged under the rearrangement process.
Rearrangement inequalities in the continuous setting have attracted many research groups since the
early nineteenth. People were first interested in solving optimization problems like the isoperimetric
inequality, the best constant in some functional inequalities, and some eigenvalue problems, see \cite{polya1951isoperimetric}, \cite{talenti1976best}, to cite only
a few applications. To reach this goal, it has been established that rearrangement decreases the value of
the functionals under study, which implies that the optimizers are Schwarz symmetric functions (radial
and radially decreasing). A nice summary of these aspects can be found in E. H. Lieb and M. Loss
book, Analysis, \cite{LL2001b}, where the authors collected some of the most relevant results along these lines.
The contribution of E. H. Lieb in this topic is impressive, especially to the proof of the Brascamp-Lieb-Luttinger theorem with Brascamp and Luttinger~\cite{brascamp1974general}, the sharp Young inequality with Brascamp~\cite{brascamp1976best}, and the sharp Hardy-Littlewood-Sobolev inequality~\cite{lieb1983sharp}. He has been an inspiring source for generations.
The second wave of highly important contributions dealt with the establishment that the set of
optimizers only has Schwarz symmetric functions. This amounts to studying cases of equality in the
rearrangement inequalities, see \cite{LL2001b}, \cite{Haj2005a}, \cite{burchard2006rearrangement}, to cite only a few contributions. The third
wave of valuable contributions addressed the stability and the quantitative version of these
rearrangement inequalities, see \cite{FMP2008c} that was the first paper in the literature on this highly
important subject. Many valuable contributions were inspired by this breakthrough work. All these
waves gave us outstanding and inspiring contributions published in top journals, and deeply
contributed to the resolution of highly important problems in various domains ranging from analysis
and geometry to physics and engineering. For example, thanks to rearrangement inequalities, we know
that all the optimizers of some functionals are Schwarz symmetric, this reduces the study of the
problems to the space of radial functions instead of the entire space. Therefore, we gain compactness
that is crucial to prove that the minimizing sequences are convergent in some Lebesgue space. This is a
key step in showing the existence of optimizers. The symmetry also reduces PDEs to much simpler ODEs.
This was the main idea used by several colleagues to prove the uniqueness of the solution of some PDEs.
There is a very big number of valuable papers studying various aspects of PDEs using rearrangement
inequalities. 

The situation in the discrete setting is completely different; not that this counterpart is not
extremely interesting, but because the literature has remained quite silent in this field. To the best of
our knowledge, only a handful of papers dealt with the rearrangement in the discrete setting. The two
major contributions are \cite{HH10}, and \cite{Pruss98}. However, both authors were only able to deal with the
one-dimensional setting. The main challenge comes from the difficulty to define the discrete
rearrangement in higher dimension. Because of this major obstacle, things were stagnant for years, and
even the conjecture stated in \cite{Pruss98}, which gave some hope to the community, turned out to be false,
as we will show it in this article. Moreover, in this paper, we will provide a universal definition of
Schwarz rearrangement on lattice graphs. We will then show the P\'olya-Szeg\"{o}, Hardy-Littlewood, and
Riesz inequalities in the discrete setting, as well as the extended versions of these discrete
rearrangement inequalities. The two first rearrangement inequalities are a direct consequence of the
third one as is proved in Theorem \ref{thm:discrete_Riesz_inequality}. Moreover, we will establish cases of equality in all of the above
inequalities, and discuss the optimality of our conditions, see Proposition \ref{prop5.8} and Example \ref{example5.14}. In the
last section, we will present some applications of our findings.
For example, proving that the solutions of discretized partial differential equations have some symmetry properties is relevant for the design of numerical schemes. In particular, it implies that it is sufficient to study them on a small domain instead of the entire space, which cuts down the computational cost (see Section $5$ of \cite{HH10} for more details).

A field where discretization has played an important role is Schr\"{o}dinger equations of the form:
\begin{equation}\label{schrodinger.eq}
  \begin{cases}
    i\partial_t \Phi(t,x)+\Delta\Phi(t,x)+f(|x|,|\Phi|)=0,  \\
    \Phi(0,x)=\Phi_0(x),
  \end{cases}
\end{equation}
where $\Phi_0$ is the initial data, $x\in \R^d,$ $t\in \R.$ \eqref{schrodinger.eq} has attracted the attention of many mathematicians during the last years due to its numerous applications in various domains ranging from biology to quantum mechanics. One of the directions that has particularly sparkled the scientists' interest is the establishment of standing wave solutions to \eqref{schrodinger.eq}, i.e, solutions of the form $\Phi(t,x)=e^{-i\lambda t}u(x),$ where $u$ solves the partial differential equation
\begin{equation}\label{p.d.eq}
  \Delta u+ f(|x|,|u|)+\lambda u=0,
\end{equation}
where $\lambda$ is the Lagrange multiplier.

It turned out that the most stable solutions of equation \eqref{schrodinger.eq} are the solutions of the following constrained variational problem:
\begin{equation}\label{constrained}
  I_c=\inf\left\{\frac{1}{2}\int_{\R^d}|\nabla u|^2 dx-\int_{\R^d}F\left(|x|,|u|\right)dx:\ \int_{\R^d}u^2=c^2\right\}
\end{equation}
where $c\in\R$ is prescribed, and $F(r,s)=\int_{0}^{s}f(r,t)dt.$

In \eqref{constrained}, we minimize the energy
$
E(u)=\frac{1}{2}\int_{\R^d}|\nabla u|^2 dx-\int_{\R^d}F\left(|x|,|u|\right)dx
$ under the sphere $\int_{\R^d}u^2(x)=c^2.$
In order to show that \eqref{constrained} has a solution we need some compactness structure. Usually this is not guaranteed by the Sobolev embeddings. However a good choice of the minimizing sequence to \eqref{constrained} can provide us with the required compactness for the chosen sequence.  To prove that the energy $E$ decreases under Schwarz (symmetric decreasing) symmetrization, the following rearrangement inequalities are crucial:

\begin{equation}\label{polya}
  \int_{\R^d}|\nabla u^*|^2\leq  \int_{\R^d}|\nabla u|^2\quad \text{(P\'{o}lya-Szeg\"{o})}
\end{equation}
\begin{equation}\label{Cavalieri}
   \int_{\R^d}|u^*|^2=\int_{\R^d}|u|^2\quad\text{(Cavalieri's Principle)}
\end{equation}
\begin{equation}\label{F*}
  \int_{\R^d}F\left(|x|,u\right)\leq \int_{\R^d}F\left(|x|,u^*\right).
\end{equation}

Inequalities \eqref{polya}, \eqref{Cavalieri}, and \eqref{F*} guarantee the existence of a sequence that is radial and radially decreasing, which provides compactness to the minimization problem \eqref{constrained}.

For $d=1$, the discrete version of Schwarz rearrangement in $\Z$ has already been studied. We write $\R_+:=[0,+\infty)$ in this paper. Let $u:\Z \to \R_+$ be a function vanishing at infinity, so the function values can be sorted in a decreasing order: $a_1 \ge a_2 \ge \cdots$. The discrete Schwarz rearrangement of $u$ is defined as the unique function $u^*:\Z \to \R_+$ such that
\begin{itemize}
  \item [(i)] For every $x\geq 0$: $u^*(x)\geq u^*(-x)\geq u^*(x+1)\geq \cdots$
  \item [(ii)] For every $t>0:$ $\#\left\{x\in\mathbb{Z}:\ u^*(x)>t\right\}=\#\left\{x\in\mathbb{Z}:\ u(x)>t\right\}.$
\end{itemize}
Explicitly,
\begin{equation*}
    u^*(x)=
    \begin{cases}
    a_{1-2x}, \qquad & x\le 0;\\
    a_{2x}, & x>0.
    \end{cases}
\end{equation*}

Let ``$\prec$'' be a total order in $\Z$ defined via $0 \prec 1 \prec -1 \prec 2 \prec -2 \prec \cdots$, $u^*$ is the unique $\prec$-decreasing function satisfying (ii) of the above. In the early 20th century, Hardy, Littlewood and P\'olya explored the one-dimensional discrete rearrangement, and discrete versions of Hardy-Littlewood and Riesz rearrangement inequalities were proved in chapter X of \cite{hardy1952inequalities}.

The discrete version of \eqref{polya}, \eqref{Cavalieri}, and \eqref{F*} in $\Z$ have been proved for suitable $u$ and $F$:
\begin{equation}\label{equ:discrete_Cavalieri_principle_on_Z}
    \sum_{x\in \Z} |u(x)|^2 = \sum _{x\in \Z} |u^*(x)|^2;
\end{equation}
\begin{equation}\label{equ:discrete_Polya-Szego_inequality_on_Z}
    \sum_{x\in \Z}|u^*(x)-u^*(x+1)|^2 \le \sum_{x\in \Z}|u(x)-u(x+1)|^2;
\end{equation}
\begin{equation}\label{equ:generlized_inverse_Riesz_inequality}
  \sum_{x\in \Z}F\left(|x|,u(x)\right)\le \sum_{x\in \Z}F\left(|x|,u^*(x)\right).
\end{equation}
The discrete version of Cavalieri's Principle~\eqref{equ:discrete_Cavalieri_principle_on_Z} is obtained directly by the definition of $u^*$. By using the idea of discrete polarization, the first author~\cite{HH10} proved \eqref{equ:discrete_Cavalieri_principle_on_Z}, \eqref{equ:discrete_Polya-Szego_inequality_on_Z}, and \eqref{equ:generlized_inverse_Riesz_inequality}. Gupta~\cite{gupta2022symmetrization} also studied some properties of $u^*$ and gave another definition of discrete Schwarz rearrangement in $\Z$ in a recent paper. However, \eqref{equ:generlized_inverse_Riesz_inequality} is violated in his setting.

Pruss \cite{Pruss98} studied the Schwarz rearrangement on general graphs. Given a graph $G=(V,E)$, let $\prec$ be an order on $V$ such that $V$ has a minimal element under $\prec$, i.e. the elements in $V$ form a sequence $x_1 \prec x_2 \prec \cdots$. Given a function $u:V\to \R_+$ with function values $a_1 \ge a_2 \ge a_3 \ge \cdots$, the symmetric decreasing rearrangement (Schwarz rearrangement) of $u$ is defined as the unique $\prec$-decreasing function $u^{\#}$ equimeasurable to $u$, i.e. $u^{\#}(x_i)=a_i$. Given a graph, Pruss tried to find an order such the following discrete P\'{o}lya-Szeg\"{o} inequality holds:
\begin{equation}
    ||\nabla u^{\#}||_2 \le ||\nabla u||_2,
\end{equation}
where $||\nabla u||_2^2=\frac{1}{2}\sum_{x\in V}\sum_{y\sim x} |u(y)-u(x)|^2$ is the discrete analog of the Sobolev energy. He proved that if $G$ is a regular tree, there exists such an order on $V$. ($\Z$ is a 2-tree, the order $\prec$ is defined via $0\prec 1 \prec -1 \prec 2 \prec -2 \prec \cdots$, then $u^{\#}$ coincides with $u^*$ as defined previously). The existence of an order for higher dimensional lattices is an open problem raised by Pruss; see \cite[p494]{Pruss98}. We give a negative answer for $\Z^2$ as stated in the following theorem, the proof is shown in section 3.
\begin{theorem}
    \label{thm:Pruss's_definition_fails_for_Z2}
    There does not exist a total order on $\Z^2$ such that
    $$||\nabla u^{\#}||_2 \le ||\nabla u||_2$$
    holds for all nonnegative functions on $\Z^2$.
\end{theorem}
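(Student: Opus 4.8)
The plan is to force a purely local contradiction living on the first five elements of the order. Write the enumeration of $\Z^2$ given by $\prec$ as $x_1 \prec x_2 \prec \cdots$ and put $S_k=\{x_1,\dots,x_k\}$. First I would record the only feature of the rearrangement that matters: if $u$ is supported on $S_k$, then so is $u^{\#}$, since the $k$ largest values are placed on the $k$ $\prec$-smallest vertices. Thus $||\nabla u^{\#}||_2\le ||\nabla u||_2$ for all such $u$ is equivalent to saying that, for every decreasing list $a_1\ge\cdots\ge a_k\ge 0$, the assignment $x_i\mapsto a_i$ minimizes the Dirichlet energy among all placements of these values on $S_k$ (and $0$ outside). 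Since every vertex of $\Z^2$ has degree $4$, for any $u$ supported on $S_k$ one has
\[
||\nabla u||_2^2=\sum_{x\sim y}|u(x)-u(y)|^2=4\sum_{x\in S_k}u(x)^2-2\!\!\sum_{\substack{x\sim y\\ x,y\in S_k}}\!\! u(x)u(y),
\]
and the first term depends only on the multiset of values. Hence the necessary condition becomes: \emph{for every value vector, the sorted assignment $x_i\mapsto a_i$ maximizes} $\Phi_k:=\sum_{x\sim y,\ x,y\in S_k}u(x)u(y)$.

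Next I would pin down the shape of $S_5$ using indicators. Testing $u=\mathbbm{1}_A$ gives $||\nabla u||_2^2=|\partial A|$, so every $S_k$ must be an edge-isoperimetric minimizer. The unique $4$-vertex minimizer is the $2\times2$ square (boundary $8$), so $S_4$ is a unit square; moreover $x_5$ must be adjacent to $S_4$, for otherwise $S_5$ is disconnected with boundary $8+4=12>10$, contradicting minimality at $k=5$. Because a unit square is convex, any neighbour touches exactly one of its cells, so $S_5$ is a P-pentomino: a $2\times2$ block with a pendant cell $T$ attached to a block cell $C$; let $P$ be the block corner opposite $C$ and $Q,R$ the remaining two block cells.

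The contradiction then comes from a value-dependent optimum. With these labels the internal edges are $PQ,PR,QC,RC,CT$, whence $\Phi_5=(f_P+f_C)(f_Q+f_R)+f_Cf_T$. I would exhibit two value vectors whose \emph{unique} maximizers put the smallest value on different cells. A short check shows that for $a=(10,1,1,1,0)$ the maximum of $\Phi_5$ is attained only with the value $0$ at $P$, whereas for $a=(3,2,2,1,0)$ it is attained only with $0$ at $T$; the two regimes are separated by the sign of $a_1-(a_2+a_3)$. But $u^{\#}$ always assigns the smallest value to the fixed $\prec$-largest vertex $x_5$, so optimality of the sorted assignment forces $x_5=P$ for the first vector and $x_5=T$ for the second, which is impossible. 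This finishes the proof.

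The main obstacle is conceptual rather than computational, and it dictates where to look. Indicator functions alone cannot prove the theorem: nested edge-isoperimetric minimizers \emph{do} exist on $\Z^2$ (for instance the square spiral emanating from the origin makes every $S_k$ optimal), so the P\'olya--Szeg\"o inequality already holds for all $\{0,1\}$-valued functions. One is therefore forced to use genuinely multivalued functions together with the exact identity for $\Phi_k$, and the delicate point is to find the smallest configuration on which the energy-minimizing arrangement is value-dependent. The $2\times2$ square and the $2\times3$ rectangle are too symmetric — their optimal arrangements can be checked to be value-independent (on the $4$-cycle one always balances the two diagonals, i.e. extremes on one diagonal) — so they yield no contradiction; $S_5$ is the first place where the optimum genuinely flips, which is exactly why the argument must be carried out there.
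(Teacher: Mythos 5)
Your proposal is correct, and it reaches the contradiction by a genuinely different route than the paper. The paper first proves a classification lemma (Lemma~3.1): an energy-minimizing function equimeasurable to a given five-point function is supported either on the cross $V^{\Diamond}_1$ or on the P-pentomino $\Omega$; it then plays two explicit five-valued functions against each other --- $u_1$, the first Dirichlet eigenvector of $\Omega$ (the eigenvalue-minimizing $5$-cell region), whose unique optimality forces some diagonal vertex to precede some axis neighbour of the origin, and $u_2$, whose unique optimal support is the cross, forcing all four axis neighbours to precede all diagonals --- so the contradiction occurs between two incompatible \emph{shapes} for the initial segment of the order. You instead pin the shape down once and for all with indicator (edge-isoperimetric) tests: $S_4$ must be the unique $4$-cell minimizer, the $2\times 2$ square, whence $S_5$ is a P-pentomino and, by construction, $x_5$ is the pendant cell $T$; the contradiction then comes from optimal \emph{placements of values within} that fixed shape, via the identity $\|\nabla u\|_2^2=4\sum u^2-2\Phi_5$. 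Your enumeration checks out: for $(10,1,1,1,0)$ the unique maximum $\Phi_5=30$ puts $0$ at $P$ and $10$ at $C$ (against $22$, $21$, $20$ for the other zero-locations), while for $(3,2,2,1,0)$ the maximum $16$ occurs only with $0$ at $T$. This buys you independence from the classification lemma and from any eigenvalue facts, at the cost only of a finite, fully checkable enumeration. One small remark: your second vector is redundant --- since $S_4$ is the square and $x_5$ is the pendant, $x_5=T$ is already forced by the indicator tests, so $(10,1,1,1,0)$, which forces $x_5=P\ne T$, finishes the proof on its own; the two-vector formulation is only needed if one declines to use that identification. Finally, your closing observation that indicator functions alone cannot suffice --- the spiral order makes every initial segment edge-isoperimetrically optimal in $\Z^2$, so the inequality holds for all $\{0,1\}$-valued functions under that order --- is a point the paper does not make, and it correctly explains why genuinely multivalued test functions (your vector with a dominant value, or the paper's eigenvector $u_1$) are unavoidable.
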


By the above result, we know that the explicit construction on $\Z$ cannot be extended to higher dimensions ($\mathbb{Z}^d,$ $d>1$), where it is also not reasonable to expect that a sort of Schwarz Symmetrization of $u$ to has the same kind of symmetry as in the one-dimensional case.

Despite its great importance and numerous applications, this crucial extension has remained unresolved for years. The first attempt was the discrete Steiner symmetrization on $\Z^2$ introduced in a pioneering work \cite{shlapentokh2010asymptotic}, aiming for the Faber-Krahn type result for the first Dirichlet eigenvalue. However, the discrete Steiner symmetrization cannot apply to the proofs of the inequalities \eqref{polya}-\eqref{F*} in the discrete setting $\mathbb{Z}^d,$ where $d> 1$. After several investigations and discussions with many experts in the field, it became evident to us that new ideas and an innovative approach are required.

In this work, we will construct a Schwarz rearrangement that preserves the $l^p$ norm and satisfies the analog of all known rearrangement inequalities in the continuous setting. We provide a suitable definition of Schwarz rearrangement of a function defined on $\mathbb{Z}^d$, $d\geq2$, as the limit of a sequence of iterated one-dimensional rearrangements. (see Definition \ref{def:definition_of_discrete_Schwarz_rearrangement}). We then show all known rearrangement inequalities. The main theorem of the paper is stated below, and will be restated and proved in Theorem~\ref{thm:discrete_Riesz_inequality}. Here, we write $R_{\Z^d}$ for the Schwarz rearrangement operator on $\Z^d$, and we say $G$ is ``supermodular'' if $G(s+s_0,t+t_0)+G(s,t) \ge G(s+s_0,t)+G(s,t+t_0)$ for all $s,t,s_0,t_0 \ge 0$ (see Definition~\ref{def:supermodular}). The discrete Hardy-Littlewood and P\'olya-Szeg\"{o} inequalities will follow as corollaries as shown in section 5.

\begin{theorem}[Generalized discrete Riesz inequality]
    \label{thm:main}
    Let $u,v\in C_0^+(\Z^d)$ be admissible, $u^*=R_{\Z^d} u$, $v^*=R_{\Z^d} v$, $H:\R_+ \to \R_+$ be a decreasing function, and $G:\R_+ \times \R_+ \to \R_+$ be a supermodular function with $G(0,0)=0$. Then
    \begin{equation*}
        \sum_{x,y \in \Z^d}G(u(x),v(y))H(d(x,y)) \le \sum_{x,y \in \Z^d}G(u^*(x),v^*(y))H(d(x,y)).
    \end{equation*}
    Moreover, if $u=u^*$ with $\ran u=[a_1>a_2>\cdots]$, $G$ is strictly supermodular, $H(n)>H(n+1)$ for some $n\in \N$, $|\sum\limits_{x,y \in \Z^d}G(u^*(x),v^*(y))H(d(x,y))|<\infty$, then the equality holds if and only if $v=v^*$.
\end{theorem}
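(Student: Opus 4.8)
The plan is to write $J(u,v):=\sum_{x,y\in\Z^d}G(u(x),v(y))H(d(x,y))$ and to exploit the structure of $R_{\Z^d}$ as a limit of iterated one-dimensional rearrangements (Definition~\ref{def:definition_of_discrete_Schwarz_rearrangement}), reducing the $d$-dimensional statement to the one-dimensional generalized Riesz inequality applied line by line. First I would dispose of the boundary contributions coming from the coordinate axes. Replacing $G$ by $\tilde G(s,t):=G(s,t)-G(s,0)-G(0,t)$, one checks directly that the mixed second difference defining supermodularity is unchanged, so $\tilde G$ is again supermodular; that $\tilde G\ge 0$ (take $s=t=0$ in the supermodularity inequality and use $G(0,0)=0$); and that $\tilde G$ vanishes on the two axes. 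Since $\sum_{y}H(d(x,y))=\sum_{z}H(d(0,z))$ is independent of $x$ by translation invariance, the discarded terms $\sum_{x,y}[G(u(x),0)+G(0,v(y))]H(d(x,y))$ depend only on the multisets of values of $u$ and of $v$, hence are invariant under rearrangement by Cavalieri's principle, the discrete analogue of~\eqref{equ:discrete_Cavalieri_principle_on_Z} (when these constants are finite; otherwise both sides of the claimed inequality are infinite and there is nothing to prove). Thus it suffices to treat $\tilde G$, for which only finitely many terms are nonzero, since $\tilde G$ is supported away from the axes and $u,v$ have finite support.

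Next I would establish monotonicity under a single coordinate rearrangement. Write $x=(x_1,x')$, $y=(y_1,y')$ with $x',y'\in\Z^{d-1}$ and use the additivity $d(x,y)=|x_1-y_1|+d'(x',y')$ of the graph distance. For fixed transverse coordinates the inner sum $\sum_{x_1,y_1}\tilde G(u(x_1,x'),v(y_1,y'))\,H(|x_1-y_1|+d'(x',y'))$ is a one-dimensional two-function Riesz functional with symmetric decreasing kernel $K(m):=H(|m|+d'(x',y'))$ and supermodular weight $\tilde G$. Applying the one-dimensional generalized Riesz inequality to each such line pair, and summing over all $(x',y')$, yields $J(u,v)\le J(R_1u,R_1v)$, where $R_1$ sorts each line in the first coordinate direction; the boundary terms discarded above are unchanged by $R_1$ as well, so the same inequality holds for the original functional. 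The identical argument applies in every coordinate direction.

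The main inequality then follows by iteration and passage to the limit. Let $u_n,v_n$ be obtained from $u,v$ by the sequence of coordinate rearrangements defining $R_{\Z^d}$. By the previous step $J(u_n,v_n)$ is nondecreasing in $n$, while by Definition~\ref{def:definition_of_discrete_Schwarz_rearrangement} we have $u_n\to u^*$ and $v_n\to v^*$ pointwise; since all these functions share the finite support cardinalities of $u$ and $v$ and $\tilde G,H$ are continuous, only a bounded finite number of terms are ever nonzero and $J(u_n,v_n)\to J(u^*,v^*)$. Hence $J(u,v)\le J(u^*,v^*)$.

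Finally, for the equality case observe that $u=u^*$ is a fixed point of every coordinate rearrangement, so with $u$ held fixed the scheme produces a sequence $v=v_0,v_1,v_2,\ldots$ with $v_k\to v^*$, $J(u,v_k)\le J(u,v_{k+1})$, and $J(u,v_k)\to J(u,v^*)$. Under the finiteness hypothesis the assumed equality $J(u,v)=J(u^*,v^*)=J(u,v^*)$ forces $J(u,v_k)=J(u,v_{k+1})$ for every $k$. The heart of the matter is a one-dimensional strict statement: because each step inequality is a sum of nonnegative line-wise gaps, equality at a step forces equality in every transverse line pair, and I would show that strict supermodularity of $G$, the strict monotonicity $H(n)>H(n+1)$ (which makes some kernel $K$ genuinely strictly decreasing), and the distinctness of the values $a_1>a_2>\cdots$ of the already-sorted lines of $u$ leave no room unless each line of $v_k$ is already symmetric decreasing, i.e. $v_{k+1}=v_k$. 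Propagating this through the scheme gives $v_k=v$ for all $k$, whence $v=\lim_k v_k=v^*$; the converse is immediate, since $u=u^*$ and $v=v^*$ make both sides coincide. I expect this one-dimensional strict/equality analysis — pinning down exactly how strict supermodularity together with the strictly decreasing kernel forces a single line of $v$ to be symmetric decreasing, uniformly enough to run the bootstrap — to be the main obstacle.
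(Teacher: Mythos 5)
Your overall strategy---reduce to $\widetilde G(s,t)=G(s,t)-G(s,0)-G(0,t)$, apply a one-dimensional Riesz inequality line by line, iterate the one-step rearrangements, and drive the equality case with a one-dimensional strict lemma---is the same as the paper's (Lemma~\ref{lemma:discrete_Riesz_inequality_for_finitely_supported_functions} together with Lemma~\ref{lemma:one-dimensional_discrete_Riesz_inequality} and, for equality, Lemma~\ref{lemma:one_dimensional_generlized_Risez_inequality}). But there is a genuine gap at the center of your iteration step: you only rearrange in the coordinate directions, relying on the additivity $d(x,y)=|x_1-y_1|+d'(x',y')$. The operator $R_{\Z^d}$ of Definition~\ref{def:definition_of_discrete_Schwarz_rearrangement} cycles through the directions $e_i$ \emph{and} the diagonals $(e_i\pm e_j)/2$, and the diagonal steps are essential: a sequence of coordinate-only rearrangements does not converge to $u^*$ (the examples in Section 4 show the diagonal steps genuinely move values, and Theorem~\ref{thm:Pruss's_definition_fails_for_Z2} is precisely the statement that no such order-based scheme can work), so your sequence $(u_n,v_n)$ does not tend to $(u^*,v^*)$ and the final limit collapses. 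To repair this you must prove the one-step monotonicity for the diagonal directions as well, where the restricted distance between two parallel lines is \emph{not} additive: the paper computes $d(x_l,y_m)=A+\max\{2|l-m|,2B\}$, which is still nonincreasing in $|l-m|$, but the lines are parametrized by $\Z$ or $\Z+1/2$, so one also needs the half-integer and mixed versions of the one-dimensional lemma (parts (ii) and (iii) of Lemma~\ref{lemma:one-dimensional_discrete_Riesz_inequality}); none of this appears in your plan.

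Two further points. You assert twice that $u,v$ have finite support, but admissibility only means $u,v\in C_0^+(\Z^d)$; the support may be infinite, so your ``finitely many nonzero terms'' justification of $J(u_n,v_n)\to J(u^*,v^*)$ has no basis in the general case. The paper bridges $C_c(\Z^d)$ to $C_0^+(\Z^d)$ by truncation $u_i=\mathbbm{1}_{B_i}u$, the pointwise monotonicity $u\le v\Rightarrow R_{\Z^d}u\le R_{\Z^d}v$, the monotonicity of $\widetilde G$ in each variable, and monotone convergence---a step entirely missing from your proposal. (Conversely, for $u,v\in C_c$ no limiting argument in $J$ is needed at all: by Lemma~\ref{lem-well_defined_Cc} and Lemma~\ref{lemma:well-definedness_for_finitely_supported_functions} the iteration stabilizes after finitely many steps, so finitely many one-step inequalities already conclude.) Finally, in the equality case your plan to extract strictness ``in every transverse line pair'' is subtly off: for a pair of \emph{distinct} lines (especially diagonal ones) the induced kernel $\widetilde H(|l-m|)=H(A+\max\{2|l-m|,2B\})$ can be constant on the relevant range, so the strict decrease can be lost exactly where you need it. The paper avoids this by restricting to pairs $x,y$ in the \emph{same} line $V_{e_1}^{\alpha}$, where the kernel is $H(|l-m|)$ itself and the hypothesis $H(n)>H(n+1)$ transfers verbatim to Lemma~\ref{lemma:one_dimensional_generlized_Risez_inequality}; this yields $v|_{V_e^{\alpha}}=(v|_{V_e^{\alpha}})^*$ for every line and every direction, hence $v=v^*$, with no bootstrap along the iteration required.
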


Let us emphasize that a lot of challenging open questions about discrete symmetrization in higher dimensions can now be solved thanks to our approach. More precisely, we can address the optimizers of some discrete functional inequalities in $\mathbb{Z}^d,$  $d>1.$

We provide the basic setups in section 2, and prove Theorem~\ref{thm:Pruss's_definition_fails_for_Z2} in section 3. In section 4, we provide the definition for the Schwarz rearrangement on $\Z^d$, and show some properties of rearranged functions. In section 5, the main result of this paper, namely the extended Riesz inequality in the discrete setting, and other inequalities will be established. Section 6 is devoted to some applications of our results. Our study opens a locked door for decades.
Thanks to our findings, it is now possible to tackle all the aspects studied in the continuous case,
however things are totally different on graphs!

Let us start our presentation with basic notation and definitions.

\section{Preliminaries}

\subsection{Graphs and function spaces on graphs}
We recall the setting of graphs: Let $G=(V,E)$ be a simple, undirected, locally finite graph, where $V$ is the set of vertices and $E$ is the set of edges. Given two vertices $x,y \in V$, if there exists an edge joining $x$ and $y$, we write $x\sim y$ or $xy\in E$, and call $y$ a \emph{neighbor} of $x$. The \emph{combinatorial distance} is defined via
$$d(x,y)= \inf \{l:x=x_0\sim x_1 \sim \cdots \sim x_l=y\}.$$

We focus on lattice graphs in this paper. Given the group $\Z^d$, let $e_i$ be the $i$-th standard unit vector, $1\le i \le d$, and let $S=\{\pm e_i\}$. There is a natural graph structure on $\Z^d$ defined via
$$x\sim y \text{ if and only if there exists $e\in S$ such that $x=y+e$.} $$
The graph defined above is called the $d$-dimensional lattice graph, denoted by $\Z^d$. It is the discretization of $d$-dimensional Euclidean space.

Let $x=(x^1,x^2,\cdots,x^d)\in \Z^d$, we write
\begin{itemize}
    \item $e_i=(0,\cdots,0,\ \overset{i\text{-th}}{1}\ ,0,\cdots,0)$ be the $i$-th standard unit vector;
    \item $||x||_p^p=\sum_{i=1}^d |x^i|^p$, where $1\le p < \infty$;
    \item $|x|=||x||_2=\sqrt{\sum_{i=1}^d |x^i|^2};$
    \item $||x||_{\infty}=\max\{|x^i|\}$;
    \item $||x||_1=d(0,x)=\sum_{i=1}^d |x^i|$;
    \item $V(x)=\{(\pm x^{\sigma_1}\!\!,\pm x^{\sigma_2}\!\!,\cdots,\pm x^{\sigma_d}\!)\!:\:\sigma\;\text{is a permutation of }(1,2,\cdots,d)\}$;
    \item $\overline{V}(x)=\{\text{The convex hull of $V(x)$ in $\R^d$} \} \cap \Z^d$;
    \item $\partial \overline{V}(x)=\{\text{The boundary of the convex hull of $V(x)$ in $\R^d$} \} \cap \Z^d$;
    \item $V^{\Box}_l:= \overline{V}{(l,l,\cdots,l)}=\{y\in \Z^d:\ ||y||_{\infty} \le l\}$, where $l$ is a positive integer;
    \item $V^{\Diamond}_l:= \overline{V}{(l,0,\cdots,0)}=\{y\in \Z^d:\ ||y||_1 \le l\}=\{y\in \Z^d:\ d(0,y)\le l\}$, where $l$ is a positive integer.
\end{itemize}

\begin{figure}[H]
\centering
\setcounter{subfigure}{0}
\subfigure[$V(2,3)$ on $\Z^2$.]{
\includegraphics[width=0.3\textwidth]{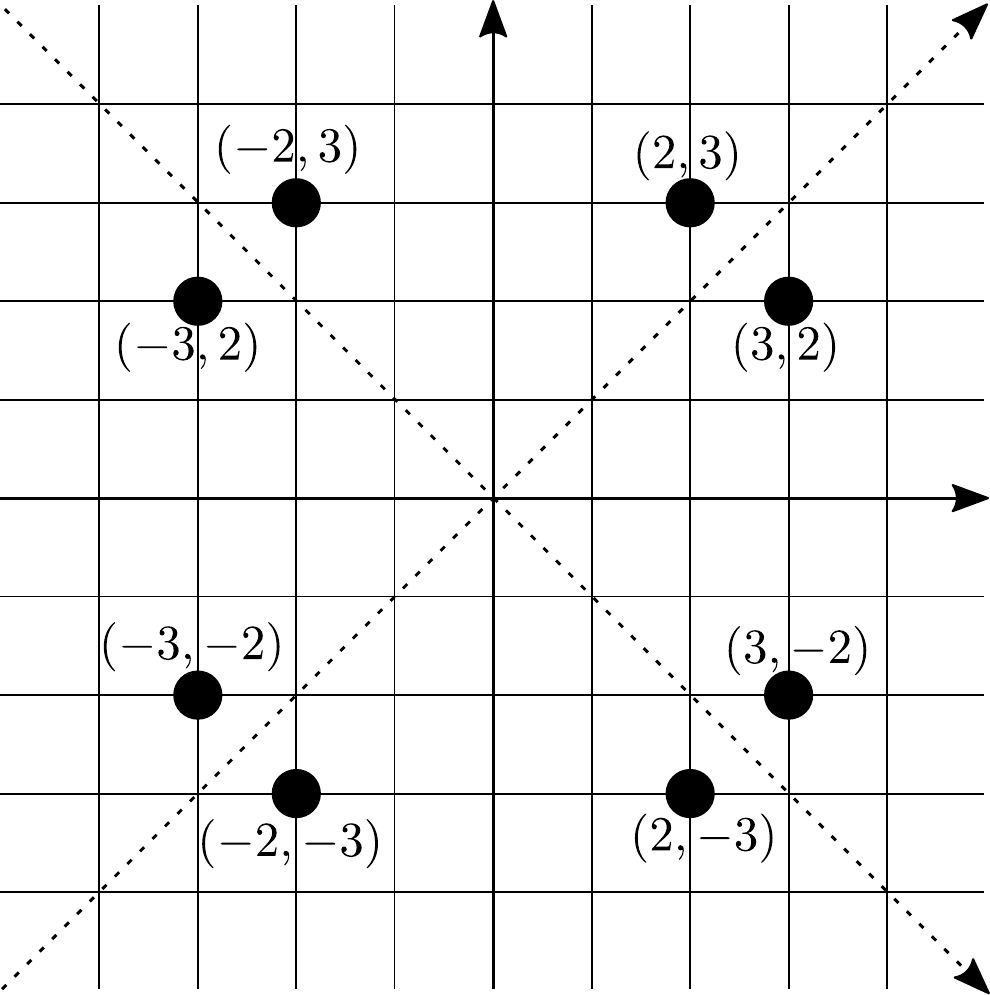}} \ \
\subfigure[$\overline{V}(2,3)$ on $\Z^2$.]{
\includegraphics[width=0.3\textwidth]{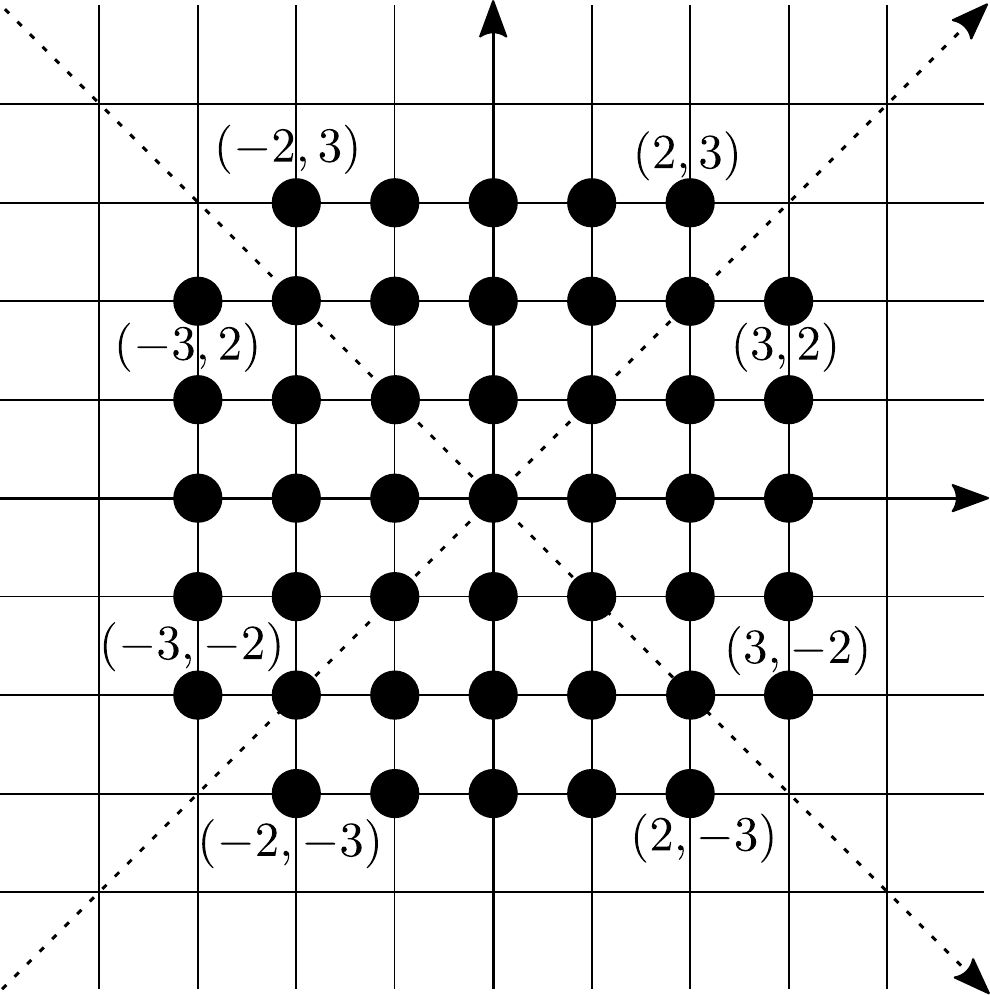}} \ \
\subfigure[$\partial \overline{V}(2,3)$ on $\Z^2$.]{
\includegraphics[width=0.3\textwidth]{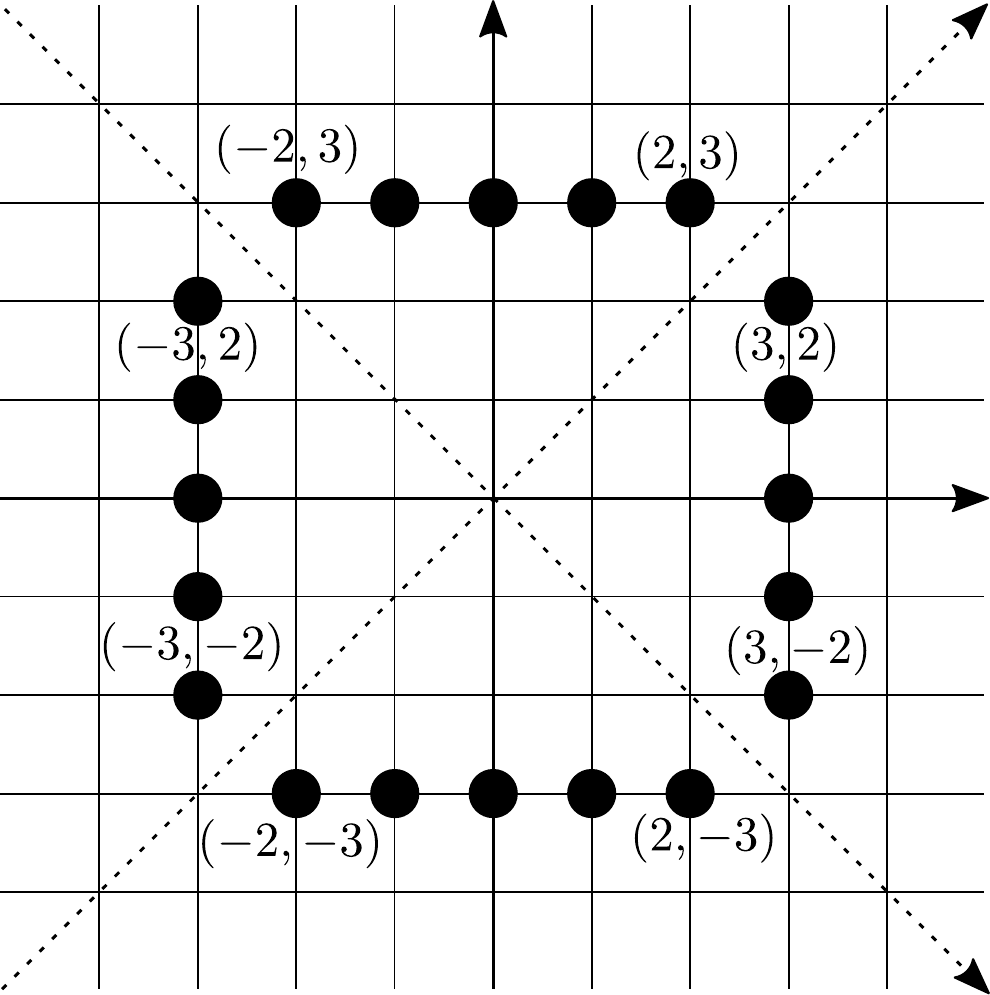}}
\label{fig-V_2,3_}
\end{figure}

For $\Z^2$, we have
\begin{equation}\label{equ-num_box_dia}
    |V^{\Diamond}_l|=2l^2+2l+1, \qquad |V^{\Box}_l|=(2l+1)^2.
\end{equation}

\begin{figure}[H]
\centering
\setcounter{subfigure}{0}
\subfigure[$V^{\Diamond}_3$ on $\Z^2$]{
\includegraphics[width=0.35\textwidth]{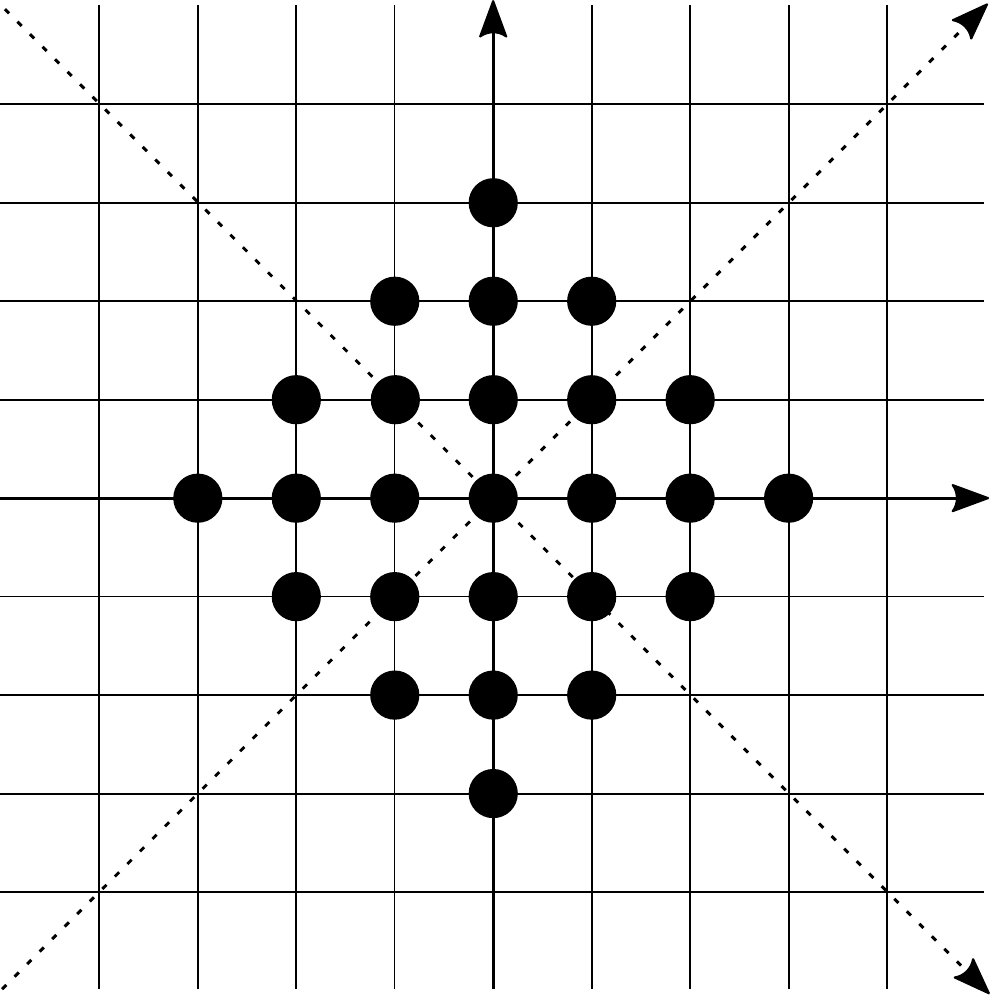}} \quad
\subfigure[$V^{\Box}_3$ on $\Z^2$]{
\includegraphics[width=0.35\textwidth]{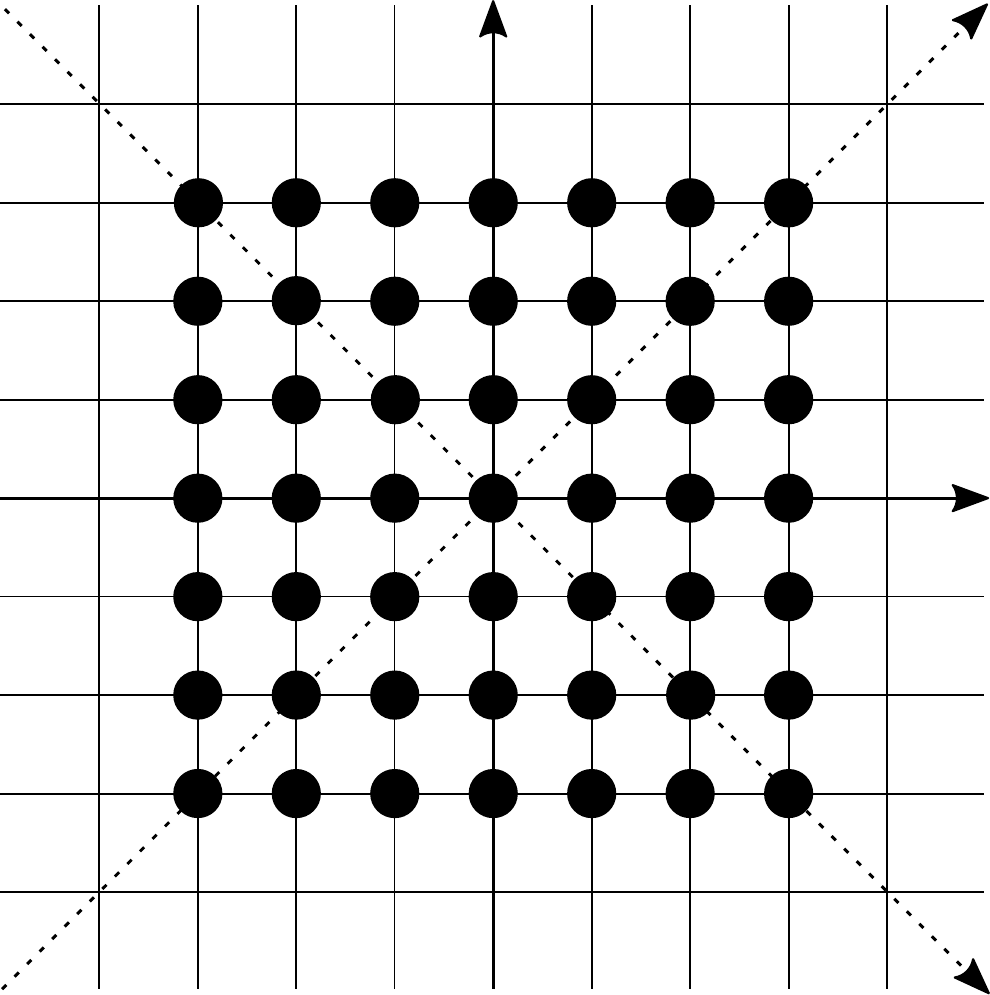}} \quad
\end{figure}

Given a graph $G=(V,E)$ and a subset $\Omega \subset V$, $|\Omega|$ denotes the cardinality of $\Omega$. Let $1\le p < +\infty$ be a fixed number, we have:
\begin{itemize}
    \item $\R^V$ is the set of all real functions on $V$;
    \item $\supp{u}:=\{x \in V: u(x) \ne 0\}$ is the support of $u\in \R^V$;
    \item $C_c(G):=\{u \in \R^V: |\supp{u}| < +\infty\}$ is the space of finitely supported functions on $V$;
    \item $C_0(G):=\{u \in \R^V: |\{x : |u(x)| > t \}|<+\infty,\ \forall t > 0\}$ is the space of functions that vanish at infinity;
    \item $||u||_p:=(\sum_{x\in V} |u(x)|^p)^{1/p}$ is the $l^p$ norm of $u$;
    \item $||u||_{\infty}:=\sup\{|u(x)|:x\in V\}$ is the $l^{\infty}$ norm of $u$;
    \item $l^p(V):=\{u \in \R^V: ||u||_p < +\infty\}$ is the $l^p$ function space on $V$;
    \item $|\nabla_{xy}u|:=|u(y)-u(x)|$, where $xy\in E$;
    \item $||\nabla u||_p^p:=\frac{1}{2}\sum_{x\in V}\sum_{y\sim x}|u(x)-u(y)|^p=\sum_{xy\in E}|\nabla_{xy}u|^p$ is the $p-$Sobolev energy of $u$;
    \item $||u||_{W^{1,p}}:=||u||_p+||\nabla u||_p$ is the $W^{1,p}$ norm of $u$;
    \item $W^{1,p}(G):=\{u\in \R^V:||u||_{W^{1,p}} < +\infty \}$ is the $W^{1,p}$ space on $G$.
\end{itemize}
The graph Laplacian $\Delta : \R^V \to \R^V$ is defined via
$$\Delta u(x)=\sum_{y\sim x} (u(y)-u(x)).$$

It is well-known that $W^{1,p}(\Z^d)=l^p(\Z^d)$. Moreover, $||\cdot||_{W^{1,p}}$ and $||\cdot||_p$ are equivalent norms; see \cite[Lemma 2.1]{chen2020riesz}.

Let us recall that two nonnegative functions $u,v \in \R^V$ are \emph{equimeasurable} if
\begin{equation}\label{equ-equimeasurable}
  |\left\{x\in V:\ u(x)>t\right\}|=|\left\{x\in V:\ v(x)>t\right\}|,\ \forall t>0,
\end{equation}
and we write $u \overset{e.m.}{\sim} v$.

A nonnegative function $u: V \to \R_+$ is said to be \emph{admissible} for Schwarz symmetrization/ Schwarz rearrangement, (or admissible, in short) if $u\in C_0(G)$. We write $C_0^+(G)$ be the set of all admissible functions on $G$. Given $u\in C_0^+(G)$, let
$$a_n=\sup_{\underset{|\Omega|=n}{\Omega \subset V}}\min\{u(x):x\in \Omega\}$$
be the $n$-th largest function value of $u$. We write $\ran(u):=[a_1 \succ a_2 \succ \cdots ]$ as a multiset with a total order ``$\succ$''. If $a_n>a_{n+1}$ for all $n\in \Z^+$, we write $\ran(u)=[a_1 > a_2 > \cdots ]$. Define the $n$-th cut-off of $u$ via
\begin{equation}
    \hat u_n(x)=
    \begin{cases}
    u(x), \qquad &u(x) \succ a_{n+1},\\
    0, & \text{otherwise.}
    \end{cases}
\end{equation}
Then $\hat u_n$ is a finitely supported function with $|\supp{\hat u_n}| \le n$.

\subsection{One dimensional Schwarz rearrangement}

Let $u\in C_0^+(\Z)$ be admissible with $\ran(u)=[a_1 \succ a_2 \succ \dots ]$. Let
\begin{equation*}
    u^*(x)=
    \begin{cases}
    a_{1-2x}, \qquad & x\le 0;\\
    a_{2x}, & x>0.
    \end{cases}
\end{equation*}
The discrete Schwarz rearrangement of $u$ is defined via $R_{\Z}u=u^*$.
The above method also applies to $\Z+1/2=\{\pm 1/2, \pm 3/2, \pm 5/2, \cdots\}$. Let $u\in C_0^+(\Z+1/2)$ be admissible, we denote the map $u \mapsto u^*$ by $R_{\Z+1/2}$, or still use $R_{\Z}$.

\begin{figure}[H]
\centering
\setcounter{subfigure}{0}
\subfigure[$R_{\Z}u$]{
\includegraphics[width=0.8\textwidth]{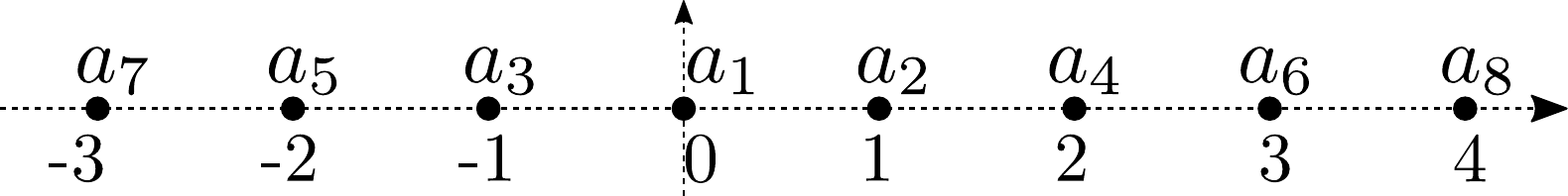}} \quad
\subfigure[$R_{\Z+1/2}u$]{
\includegraphics[width=0.8\textwidth]{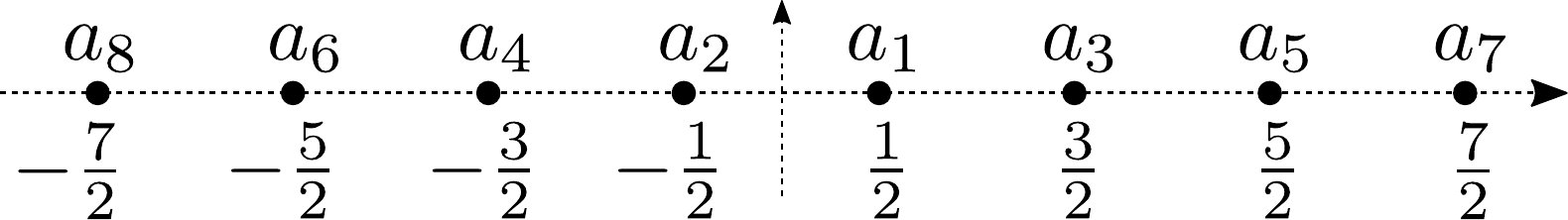}} \quad
\end{figure}

\section{Proof of Theorem \ref{thm:Pruss's_definition_fails_for_Z2}}

The main idea of the proof is to find the optimal equimeasurable function that minimizes the Sobolev energy. We start by introducing some notations:

Given a graph $G=(V,E)$, a bijection $\sigma:V \to V$ is called a \emph{graph automorphism} if $x\sim y \Leftrightarrow \sigma(x) \sim \sigma(y)$ for all $x,y\in V$. Note that a graph automorphism of $\Z^d$ may not be a group automorphism. Given two subsets $\Omega_1,\Omega_2$, we write $\Omega_1 \cong \Omega_2$ if there exits a graph automorphism $\sigma$ such that $\sigma(\Omega_1)=\Omega_2$. Given two functions $u_1,u_2$, we write $u_1 \cong u_2$ if there exits a graph automorphism $\sigma$ such that $u_1(\sigma(x))=u_2(x)$ for all $x\in V$.

\begin{lemma}
    \label{lemma:optimal_geometry_for_functions_with_support_5}
    Let $u\in C_c(\Z^2)$ be a nonnegative function with $|\supp{u}|=5$, then there exists a function $u'$ equimeasurable with $u$ and
    \begin{equation}
        \label{equ:lemma:optimal_geometry_for_functions_with_support_5}
        ||\nabla u'||_2 = \inf \{||\nabla v||_2:\ v \overset{e.m.}{\sim} u\}.
    \end{equation}
    Moreover $\supp{u'} \cong V^{\Diamond}_1$ or $\Omega=\{(0,0),(0,1),(1,0),(0,-1),(1,1)\}$.
\end{lemma}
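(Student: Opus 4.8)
The plan is to recast the energy minimization as a purely combinatorial optimization over placements of the five values. Write the function values in decreasing order as $a_1 \ge a_2 \ge a_3 \ge a_4 \ge a_5 > 0$; any $v \overset{e.m.}{\sim} u$ is obtained by placing this multiset on five distinct vertices of $\Z^2$ (and $0$ elsewhere). Since $\Z^2$ is $4$-regular and $v$ is finitely supported, splitting the edge sum into edges internal to $\supp v$ and edges leaving it, and expanding the squares, yields the identity
\[
\|\nabla v\|_2^2 = 4\sum_{x} v(x)^2 - 2\!\!\sum_{\substack{xy\in E \\ x,y\in \supp v}}\!\! v(x)v(y).
\]
Because $\sum_x v(x)^2 = \sum_{i=1}^5 a_i^2$ is fixed by equimeasurability, minimizing $\|\nabla v\|_2$ is equivalent to maximizing the adjacency functional $P(v):=\sum v(x)v(y)$, the sum of products over the internal edges of $\supp v$.

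To produce a maximizer I would first reduce to connected supports. If $\supp v$ is disconnected, translating one connected component toward another can only create new internal edges, and since every product $v(x)v(y)$ is nonnegative, $P$ does not decrease. Iterating, one reaches a configuration whose support is connected, i.e. a pentomino, with at least as large a value of $P$. Up to the graph automorphisms $\cong$ of $\Z^2$ there are finitely many congruence classes of pentominoes, and on each the assignment of the $a_i$ ranges over a finite set; hence $P$ attains its maximum and the optimal support may be taken to be a pentomino. This already gives the attainment claim~\eqref{equ:lemma:optimal_geometry_for_functions_with_support_5}.

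The heart of the argument is to identify the optimal pentomino as the cross $V^{\Diamond}_1$ or the configuration $\Omega$. The only pentomino containing a $2\times 2$ block is $\Omega$; its induced subgraph is a $4$-cycle with one pendant vertex, hence has five internal edges, while every other pentomino is a tree with four internal edges. For any tree support $T$ carrying the values, root $T$ at the vertex holding $a_1$: each non-root vertex $w$ contributes exactly the one edge to its parent $\mathrm{par}(w)$, so
\[
P = \sum_{w \ne r} v(w)\,v(\mathrm{par}(w)) \le a_1 \sum_{w\ne r} v(w) = a_1\,(a_2+a_3+a_4+a_5),
\]
with equality precisely when every non-root vertex is adjacent to the root, i.e. when $T$ is the star, which is exactly $V^{\Diamond}_1$. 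Thus $V^{\Diamond}_1$ realizes the maximum of $P$ over all tree supports, and $\Omega$ is the unique cyclic pentomino, so the global maximum of $P$ is attained on $V^{\Diamond}_1$ or on $\Omega$; a short computation on the $4$-cycle-plus-pendant $\Omega$ (whose optimum is governed by the sign of $a_2+a_3-a_1$) determines which case occurs, and in either case the optimal support is $\cong V^{\Diamond}_1$ or equals $\Omega$.

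I expect the two delicate points to be the attainment of the infimum — ruling out mass escaping to infinity, handled above by the translation-of-components reduction to the finite list of pentominoes — and the verification that among all pentomino shapes only $V^{\Diamond}_1$ and $\Omega$ can be optimal. The clean reduction of $\|\nabla v\|_2$ to the adjacency functional $P$ and the rooting bound for trees are what make both of these manageable, and the nonnegativity of the products is used crucially in both the reduction to connected supports and the tree estimate.
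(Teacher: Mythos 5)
Your proposal is correct, but it proceeds along a genuinely different route from the paper. The paper argues directly on the Dirichlet energy by three local perturbation steps: (1) if the support is disconnected, shifting a connected component strictly decreases the energy; (2) if no vertex of the support has at least three neighbors in it, the support is an induced path, and re-embedding the same values onto $\Omega$ (which gains an extra internal edge) strictly decreases the energy; (3) if the maximal degree in the support is exactly three, the fifth vertex must sit diagonally, since otherwise moving its value to a diagonal position again strictly decreases the energy. You instead linearize the problem first, using $4$-regularity of $\Z^2$ to write $\|\nabla v\|_2^2 = 4\sum_x v(x)^2 - 2\sum v(x)v(y)$, reducing everything to maximizing the adjacency functional $P$ over placements; then you classify combinatorially: since $\Z^2$ is bipartite, any cycle in an induced subgraph on five vertices is a $4$-cycle, every $4$-cycle is a unit square, so $\Omega$ is the unique non-tree pentomino (five internal edges), while the rooting bound $P \le a_1(a_2+a_3+a_4+a_5)$ shows the star $V^{\Diamond}_1$ is optimal among trees. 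Your approach buys a cleaner global argument (no case-by-case perturbations, and the attainment of the infimum is handled honestly rather than declared trivial) and even extracts the exact criterion $a_2+a_3 \gtrless a_1$ deciding which shape wins, which the paper never makes explicit; the paper's approach, in turn, generalizes more readily to its use in Theorem 1.1, where the same perturbation moves yield the rigidity statements "equality iff $u' \cong u$". One small caveat in your write-up: your claim that equality in the tree bound holds \emph{precisely} for the star uses implicitly that $a_1 > a_2$ (with repeated values, equality can occur on non-star trees); this is harmless for the lemma as stated, since it asserts only the existence of a minimizer supported on $V^{\Diamond}_1$ or $\Omega$, but it would matter if you wanted the uniqueness claims (i)--(ii) invoked in the proof of Theorem 1.1. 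You should also spell out the one-line justification that a pentomino without a $2\times 2$ block is a tree (no odd cycles by bipartiteness, no $4$-cycles by assumption, and longer cycles need more than five vertices), which your text leaves implicit.
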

\begin{figure}[H]
    \centering
    \setcounter{subfigure}{0}
    \subfigure[$V^{\Diamond}_1$]{
        \includegraphics[width=0.35\textwidth]{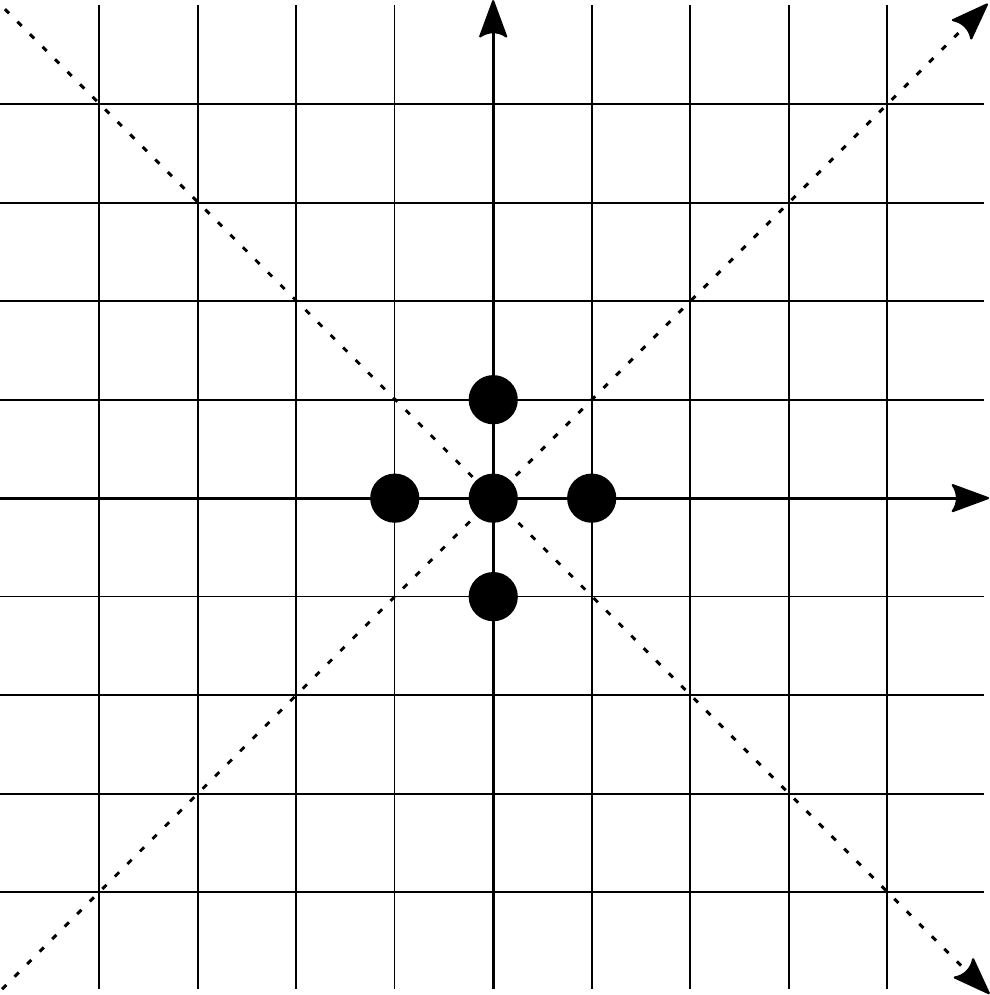}} \quad
    \subfigure[$\Omega$]{
        \includegraphics[width=0.35\textwidth]{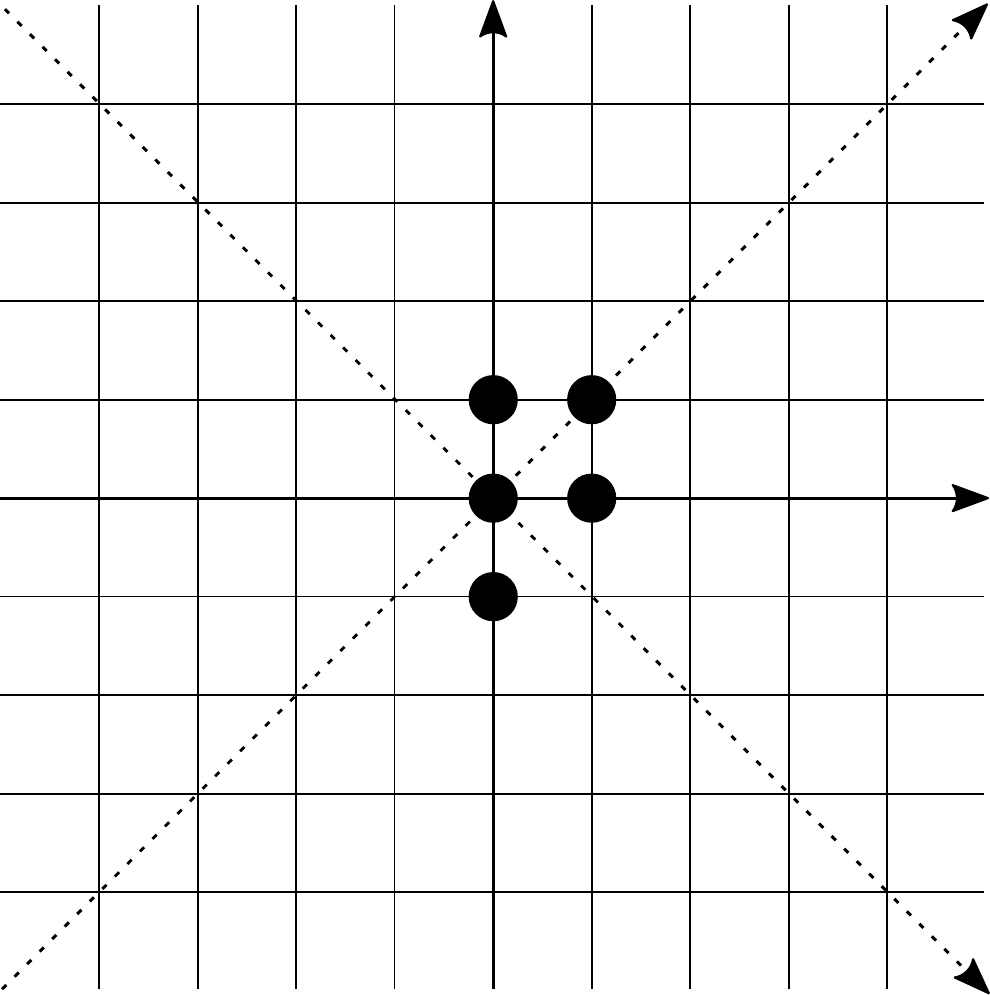}} \quad
    \caption{Geometry of optimal functions with support 5}
\end{figure}

\begin{proof}
    The existence of $u'$ is trivial since $|\supp{u}|$ is finite. We only need to show the second part of the statement.

    \underline{Step 1}: We first prove that $\supp{u'}$ is connected, i.e. $\forall x,y\in \supp{u'}$, there exists a finite path $x = z_1 \sim z_2 \sim \cdots \sim z_l = y$, where $z_i \in \supp{u'}$. Otherwise we shift the connected components to get a connected support and the Sobolev energy decreases strictly.

    \underline{Step 2}: There exists an $x\in \supp{u'}$ such that $|N(x)\cap \ \supp{u'}|\ge 3$, where $N(x):=\{y\in V: y\sim x\}$ is the set of neighbors of $x$. Otherwise, suppose that $|N(x)\cap \ \supp{u'}|\le 2$, i.e. $\supp{u'}=\{x_1,x_2,x_3,x_4,x_5\}$ with $x_i \sim x_j$ if and only if $|i-j|=1$. Let $u''$ be the function with $u''(0,-1)=u'(x_1)=b_1$, $u''(0,0)=u'(x_2)=b_2$, $u''(0,1)=u'(x_3)=b_3$, $u''(1,1)=u'(x_4)=b_4$,$u''(1,0)=u'(x_5)=b_5$, then $||\nabla u''||_2 < ||\nabla u'||$, which contradicts \eqref{equ:lemma:optimal_geometry_for_functions_with_support_5}.
    \begin{figure}[H]
        \centering
        \setcounter{subfigure}{0}
        \subfigure[$u'$]{
            \includegraphics[width=0.35\textwidth]{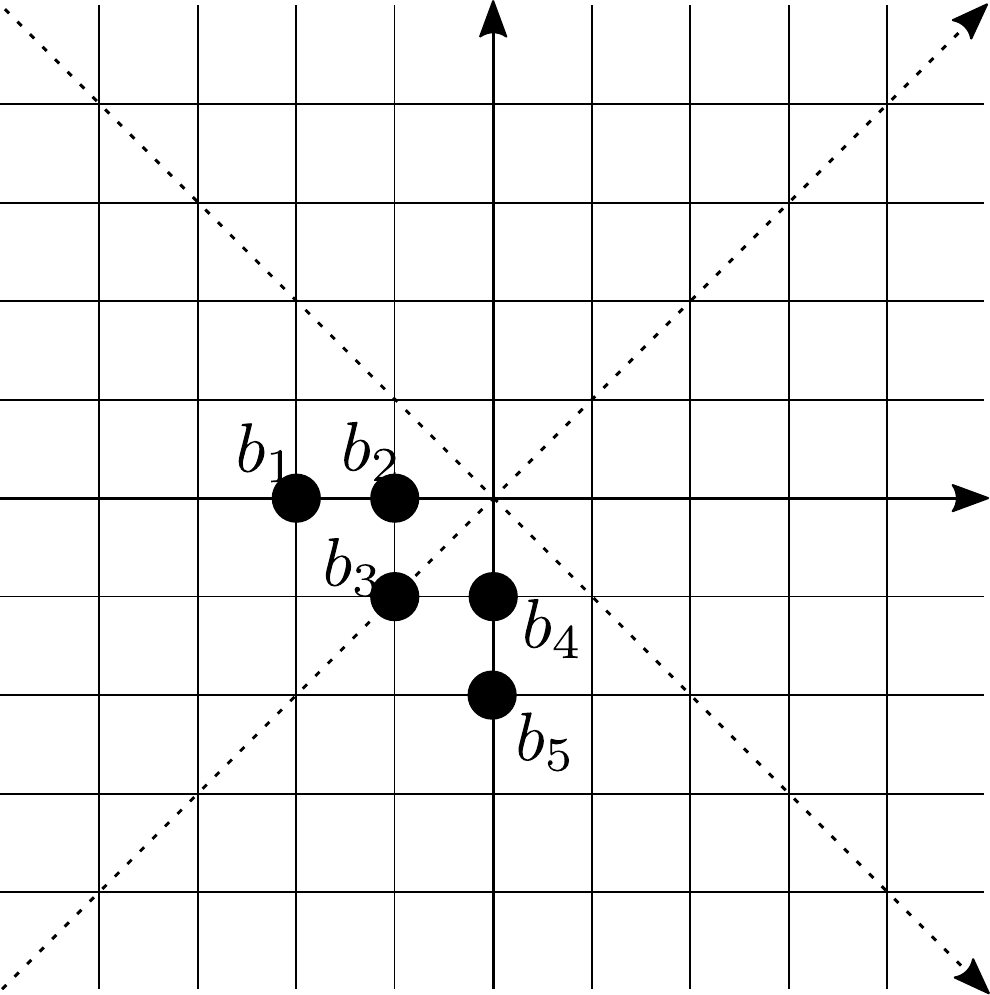}} \quad
        \subfigure[$u''$]{
            \includegraphics[width=0.35\textwidth]{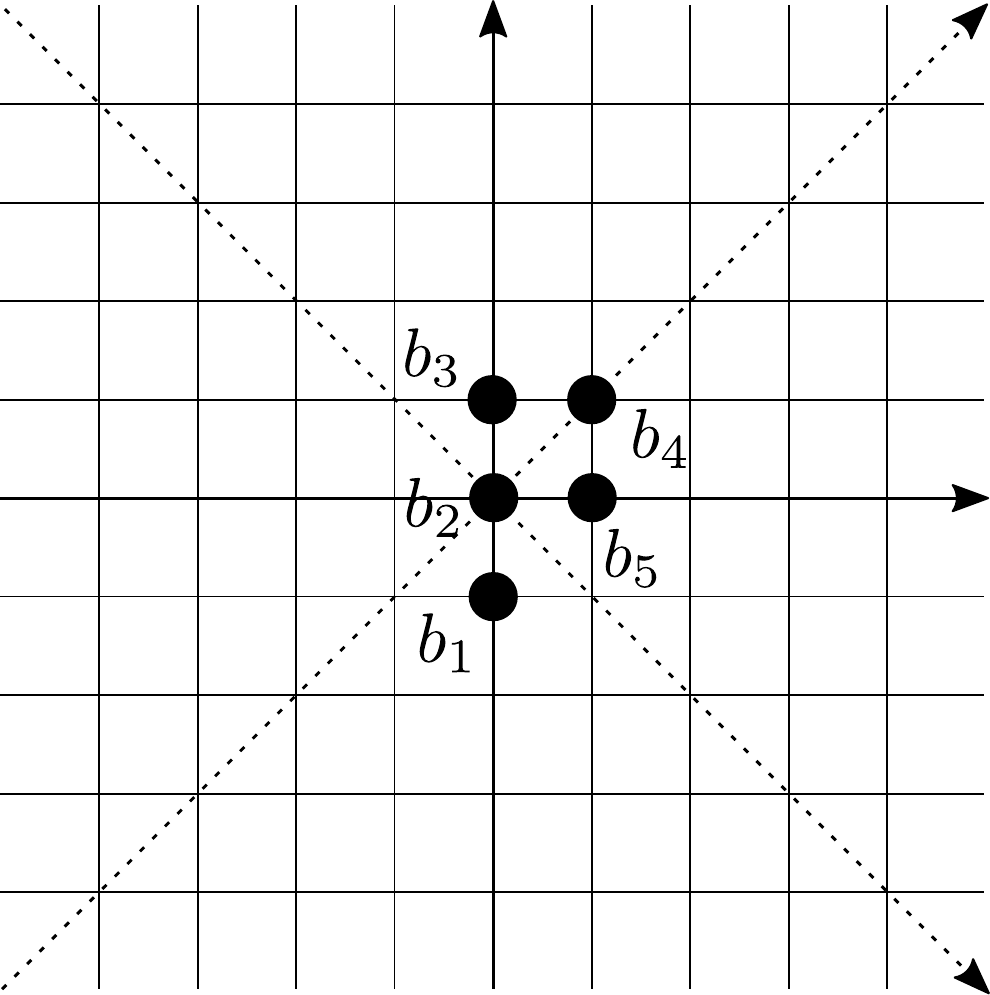}} \quad
    \end{figure}

    \underline{Step 3}: We prove that $\supp{u'} \cong V^{\Diamond}_1$ or $\Omega$. Suppose that there exists an $x\in \supp{u'}$ such that $|N(x)\cap \ \supp{u'}|=4$, then $\supp{u'} \cong V^{\Diamond}_1$. Otherwise W.L.O.G., we assume that $x=(0,0)\in \supp{u'}$, and $N(x)\cap \ \supp{u'}=\{(0,1),(1,0),(0,-1)\}$. Let $\{y\}=\supp{u'} \setminus V^{\Diamond}_1$. Then $y=(1,1)$ or $(-1,1)$, otherwise we move the function value $u'(y)$ to $(1,1)$ or $(1,-1)$ to get a new function $u''$ with $||\nabla u''||_2 < ||\nabla u'||_2$. This proves the result.
    \begin{figure}[H]
        \centering
        \setcounter{subfigure}{0}
        \subfigure[$u'$]{
            \includegraphics[width=0.35\textwidth]{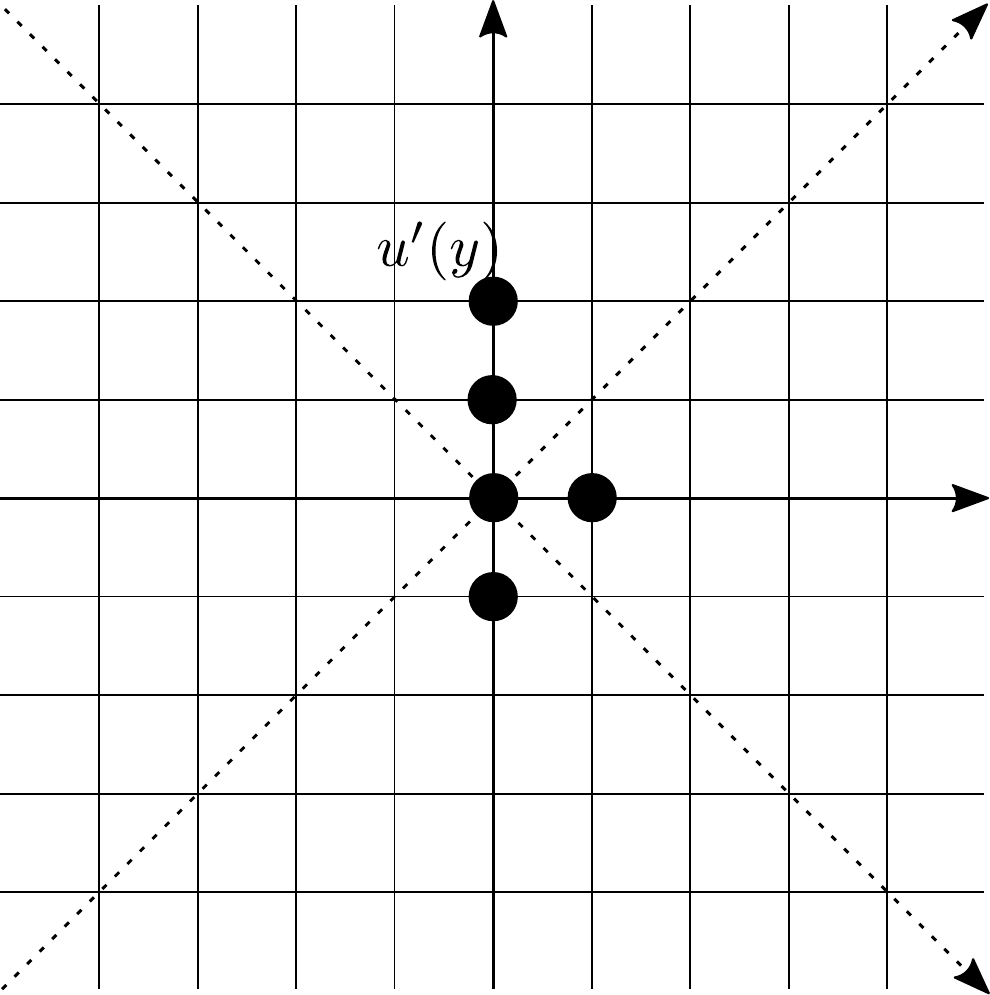}} \quad
        \subfigure[$u''$]{
            \includegraphics[width=0.35\textwidth]{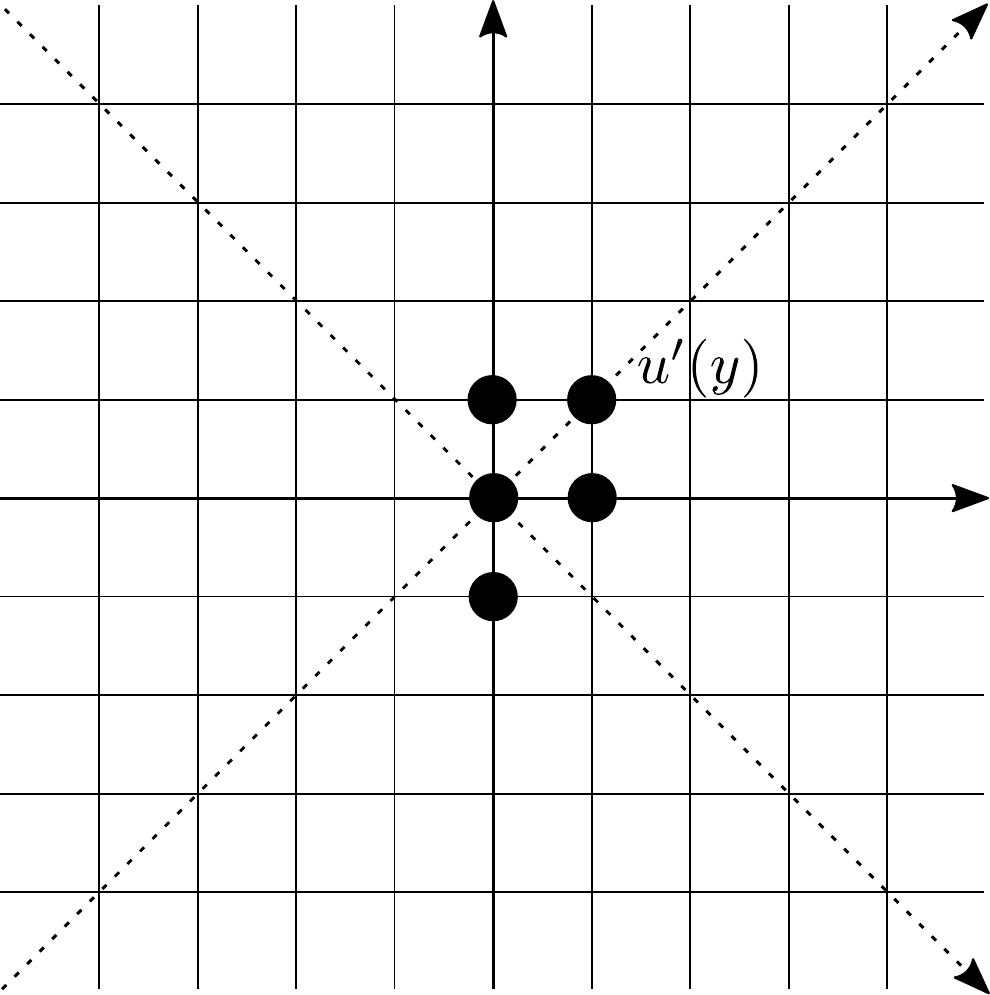}} \quad
        \caption{If $\supp{u'}$ is shown as (a), we shift the function value $u'(y)$ and obtain $u''$ with $||\nabla u''||_2 < ||\nabla u'||_2$.}
    \end{figure}
\end{proof}

Now we are ready to prove Theorem~\ref{thm:Pruss's_definition_fails_for_Z2}.
\begin{proof}[Proof of Theorem~\ref{thm:Pruss's_definition_fails_for_Z2}]
    Let $u_1$ , $u_2$ be functions as shown in the following figures below, function values not labelled are all zeros.
    \begin{figure}[H]
        \centering
        \setcounter{subfigure}{0}
        \subfigure[$u_1$]{\includegraphics[width=0.35\textwidth]{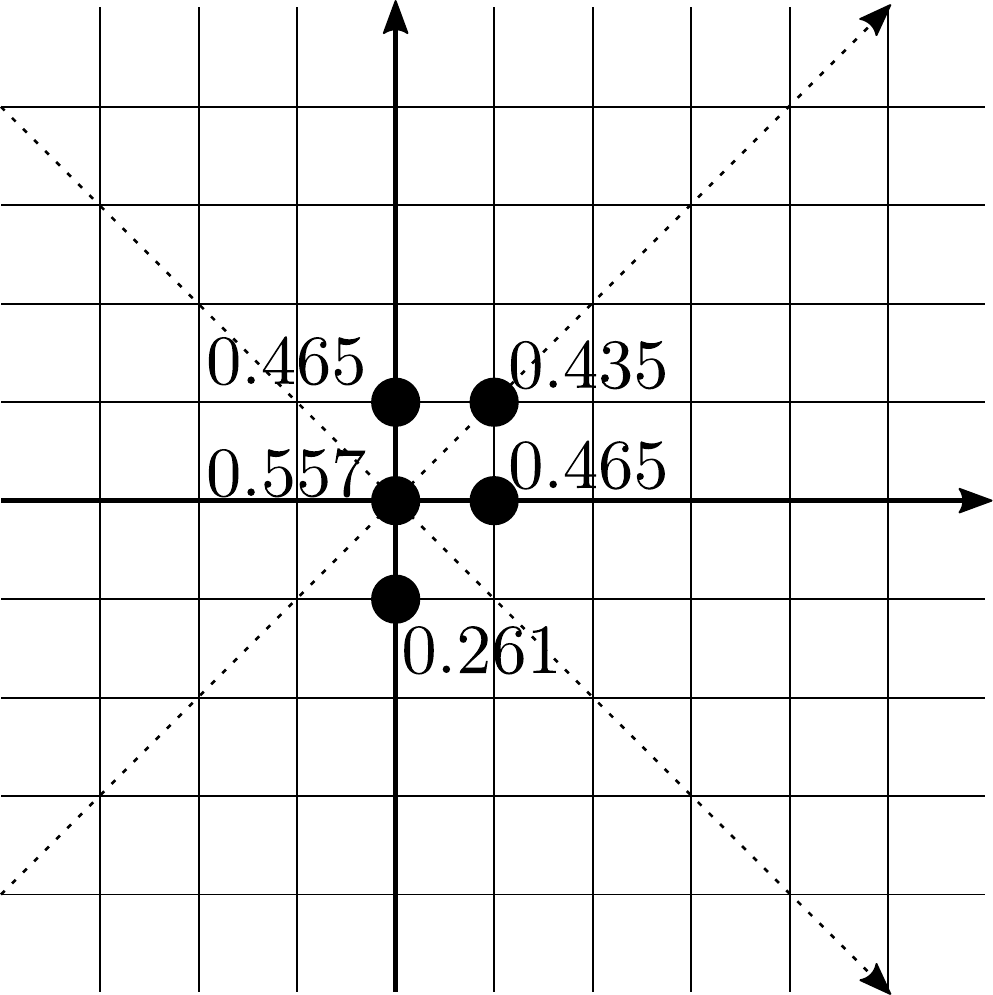}} \quad
        \subfigure[$u_2$]{\includegraphics[width=0.35\textwidth]{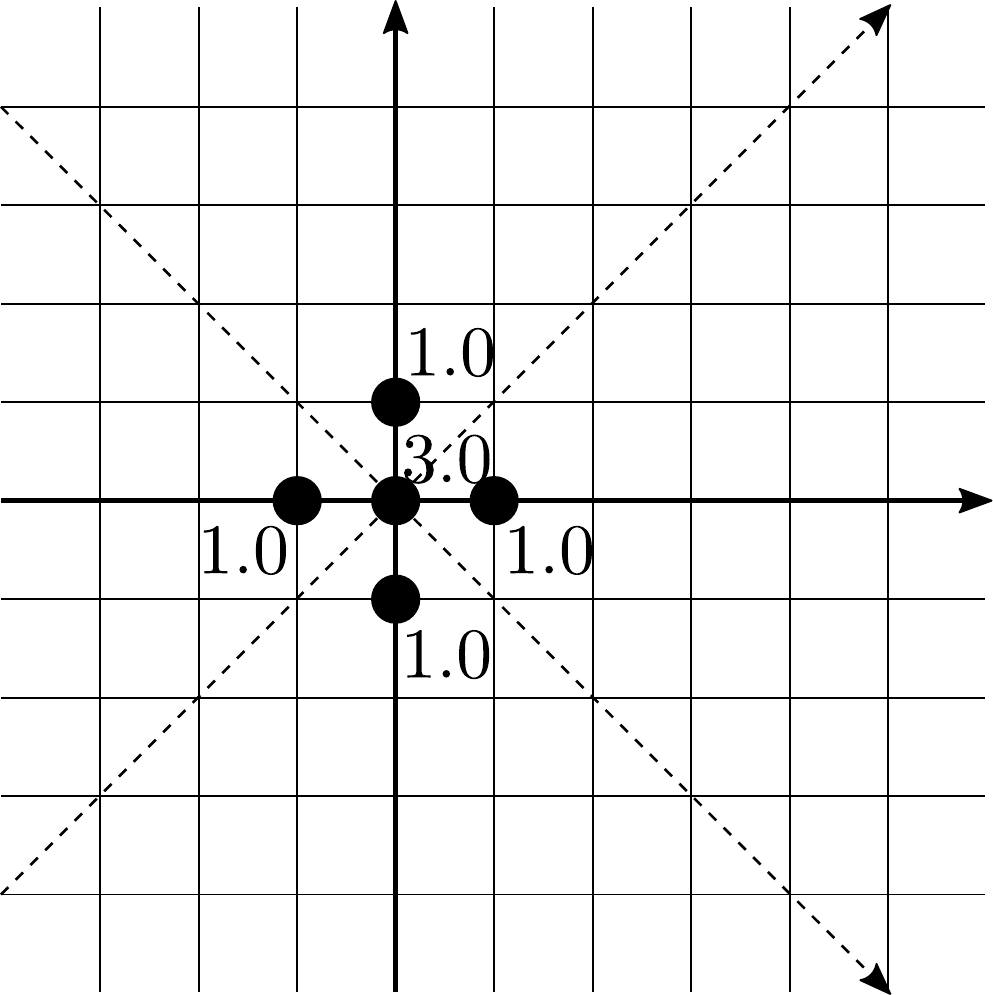}}
    \end{figure}

    By Lemma~\ref{lemma:optimal_geometry_for_functions_with_support_5} and a direct calculation, we have
    \begin{enumerate}[(i)]
        \item $||\nabla u_1||_2 \le ||\nabla u'_1||_2$ for all $u'_1 \overset{e.m.}{\sim} u_1$. The equality holds if and only if $u'_1 \cong u_1$. In fact $\supp{u_1}$ minimizes the first Dirichlet Laplacian eigenvalue of all regions $\Omega \subset \Z^2$ with $|\Omega|=5$, and $u_1$ is the corresponding eigenvector; see \cite{shlapentokh2010asymptotic}.
        \item $||\nabla u_2||_2 \le ||\nabla u'_2||_2$ for all $u'_2 \overset{e.m.}{\sim} u_2$. The equality holds if and only if $u'_2 \cong u_2$.
    \end{enumerate}
     We prove the theorem by contradiction. Suppose that ``$\prec$'' is such an order of $\Z^2$. W.L.O.G., assume that $(0,0)$ is the minimal element w.r.t. ``$\prec$''. Let $V_1=\{(0,\pm 1),(\pm 1, 0)\}$, $V_2=\{(\pm 1, \pm 1)\}$. By (i), there exists $x\in V_1$ and $y\in V_2$ such that $y \prec x$. By (ii), $x\prec y$ for all $x\in V_1$ and $y\in V_2$. Then we get a contradiction and the result follows.
\end{proof}

\section{Definition of Schwarz rearrangement on lattice graphs}
We provide a definition of discrete Schwarz rearrangement on lattice graphs and study the properties of the corresponding rearranged functions in this section.

\subsection{Schwarz rearrangement on $\Z^2$}
By Theorem~\ref{thm:Pruss's_definition_fails_for_Z2}, we know that it is impossible to find a suitable total order on $\Z^2$ as we did for $\Z$. We now state the definition of discrete Schwarz rearrangement.
\subsubsection{\bf{Definition}}
By modifying the discrete Steiner symmetrization developed in \cite{shlapentokh2010asymptotic}, we define four kinds of one-step rearrangement on $\Z^2$, and the Discrete Schwarz rearrangement is defined as the limit of a sequence
of iterated one-step rearrangement. Let
$$\Vec{E}=\{e_1,e_2,\frac{e_1+e_2}{2},\frac{e_1-e_2}{2}\},$$
where $e_1$ and $e_2$ are the standard unit vectors. Given $e\in \Vec{E}$, if $x=(x^1,x^2),y=(y^1,y^2)\in \Z^2$ satisfying $x-y$ is parallel to $e$, we write $x\overset{e}{\sim}y$. Then $\overset{e}{\sim}$ is an equivalence relation and there is an unique corresponding partition $\Z^2=\bigsqcup_{\alpha \in I_{e}} V_e^{\alpha}$. Given $x=(x^1,x^2)$, we choose the superscript $\alpha$ of the equivalence class $V_e^{\alpha} \ni x$ in the following way:
\begin{align*}
    \alpha =
    \begin{cases}
        x^2,\quad & e=e_1;\\
        x^1, & e=e_2;\\
        x^1-x^2, & e=\frac{e_1+e_2}{2};\\
        x^1+x^2, & e=\frac{e_1-e_2}{2}.
    \end{cases}
\end{align*}
Then we have
\begin{equation}
    \Z^2=\bigsqcup_{\alpha\in \Z} V_e^{\alpha}
\end{equation}
Given $e\in \Vec{E}$ and $V_e^{\alpha}$, the map $x \mapsto <e,x>$ is a bijection from $V_e^{\alpha}$ to $\Z$ or $\Z+1/2$, where $<\cdot,\cdot>$ is the inner product of vectors. Then $R_{\Z}$ on $V_e^{\alpha}$ is well-defined. Given $u\in C_0^+(\Z^2)$ be admissible, we define the \emph{one-step rearrangement} of $u$ with respect to $e\in \Vec{E}$ via
\begin{equation}
    (R_e u)|_{V_e^{\alpha}}=R_{\Z}(u|_{V_e^{\alpha}}).
\end{equation}
Then $R_e u$ is admissible and is equimeasurable with $u$. Therefore, $R_e$ is well-defined for admissible functions.

\begin{figure}[H]
    \centering
    \setcounter{subfigure}{0}
    \subfigure[$V_{e_1}^1$ and $V_{e_1}^2$]{\includegraphics[width=0.35\textwidth]{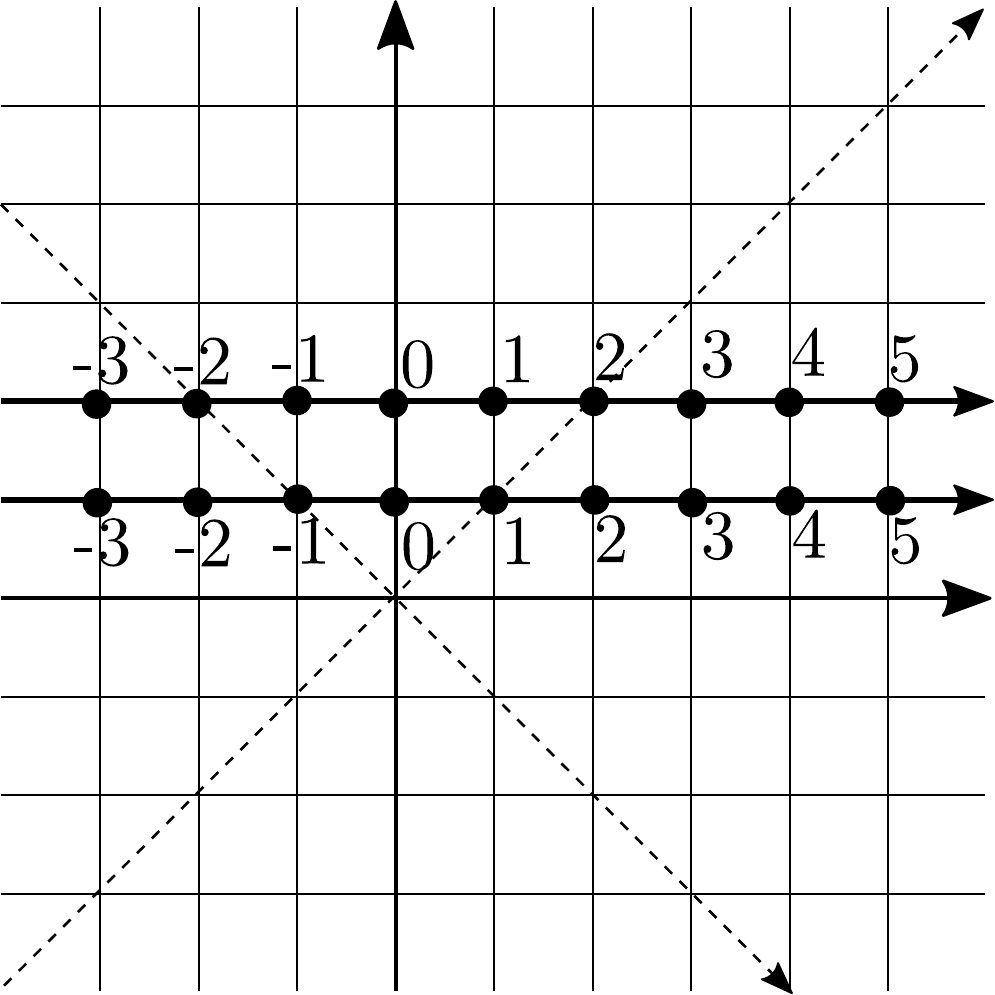}}\quad
    \subfigure[$V_{\frac{e_1+e_2}{2}}^1$ and $V_{\frac{e_1+e_2}{2}}^2$]{\includegraphics[width=0.35\textwidth]{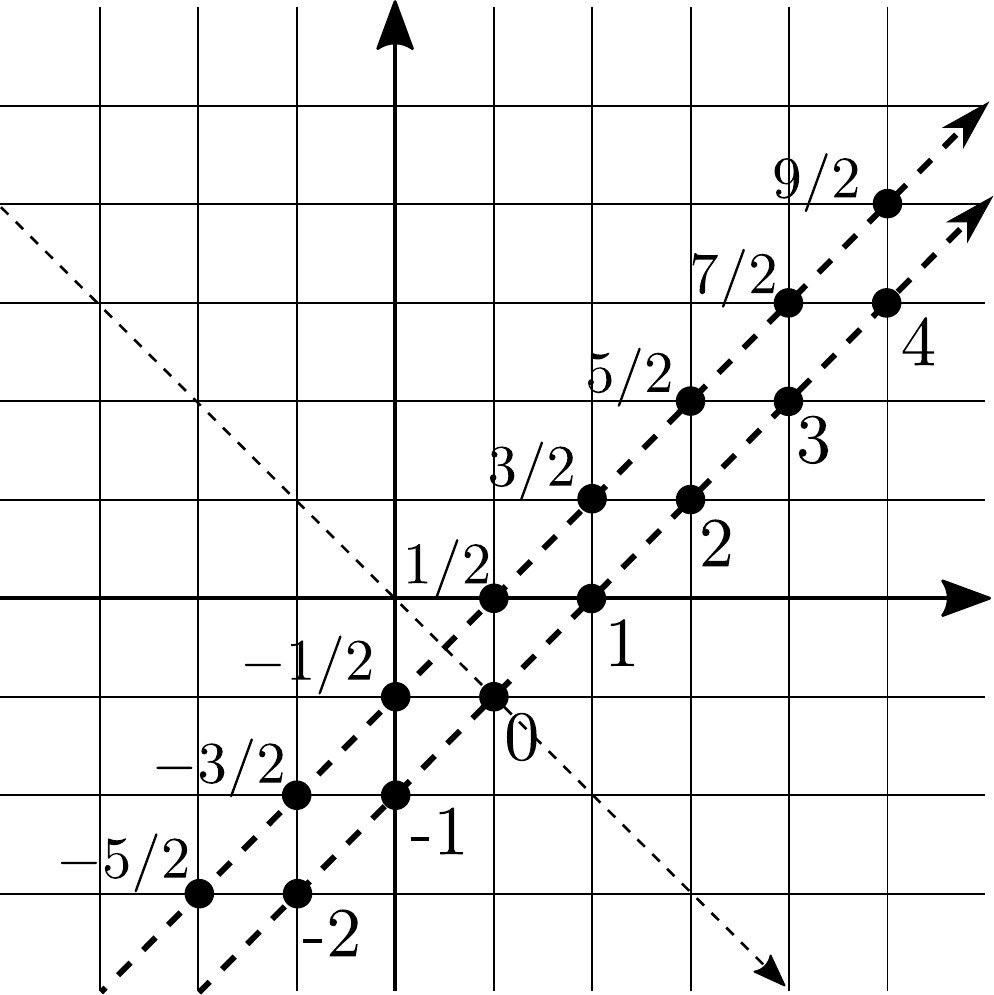}}
    \label{fig-V_2}
\end{figure}

\begin{definition}[Discrete Schwarz rearrangement on $\Z^2$]
    \label{def}
    Let $u \in C_0^+(\Z^2)$ be an admissible function, we write the $k$-th iteration of one-step rearrangements by
    $$R^k u:=\underbrace{\cdots \circ R_{e_2} \circ R_{e_1} \circ R_{\frac{e_1-e_2}{2}} \circ R_{\frac{e_1+e_2}{2}} \circ R_{e_2} \circ R_{e_1}}_k u.$$
    The Schwarz rearrangement of $u$ is defined via
    \begin{equation}
        \label{equ:definition_of_2_dimensional_discrete_Schwarz_rearrangement}
        R_{\Z^2}u(x)=\lim_{k\to \infty} R^k u (x),\ x\in \Z^2.
    \end{equation}
\end{definition}
Before proving that the discrete Schwarz rearrangement is well-defined, we first give an example that shows how a function becomes ``more symmetric'' under one-step rearrangements.

\begin{example}
    Let $\Omega =\{(x,0)\in \Z^2 : x=0,\pm 1, \pm 2, \pm 3.\}$ as shown in the figure (a) below, $u=\mathbbm{1}_{\Omega}$. As shown below, the function values of $R^k u$ gather towards the center by one-step rearrangements.
    \begin{figure}[H]
        \centering
        \setcounter{subfigure}{0}
        \subfigure[represents the graph of $u=R^1 u=R^2 u$]{\includegraphics[width=0.3\textwidth]{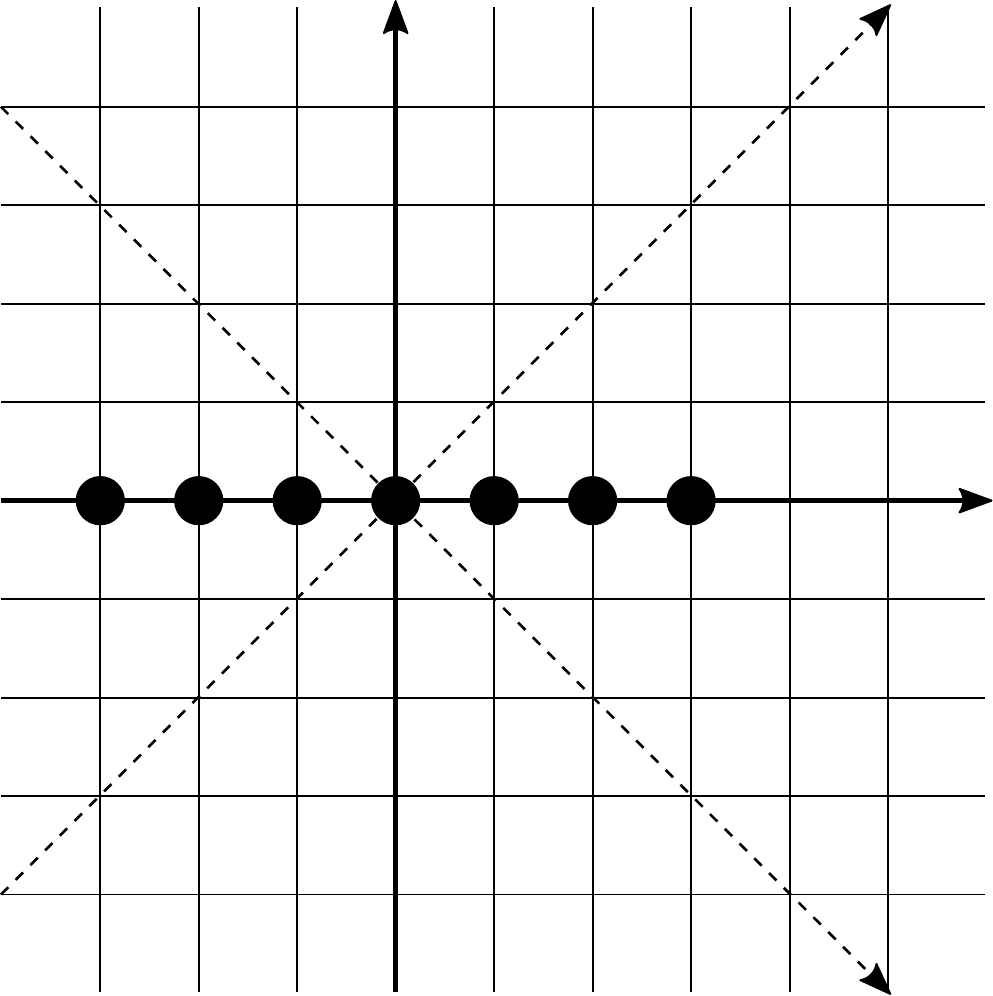}}\
        \subfigure[represents the graph of $R^3 u=R^4 u$]{\includegraphics[width=0.3\textwidth]{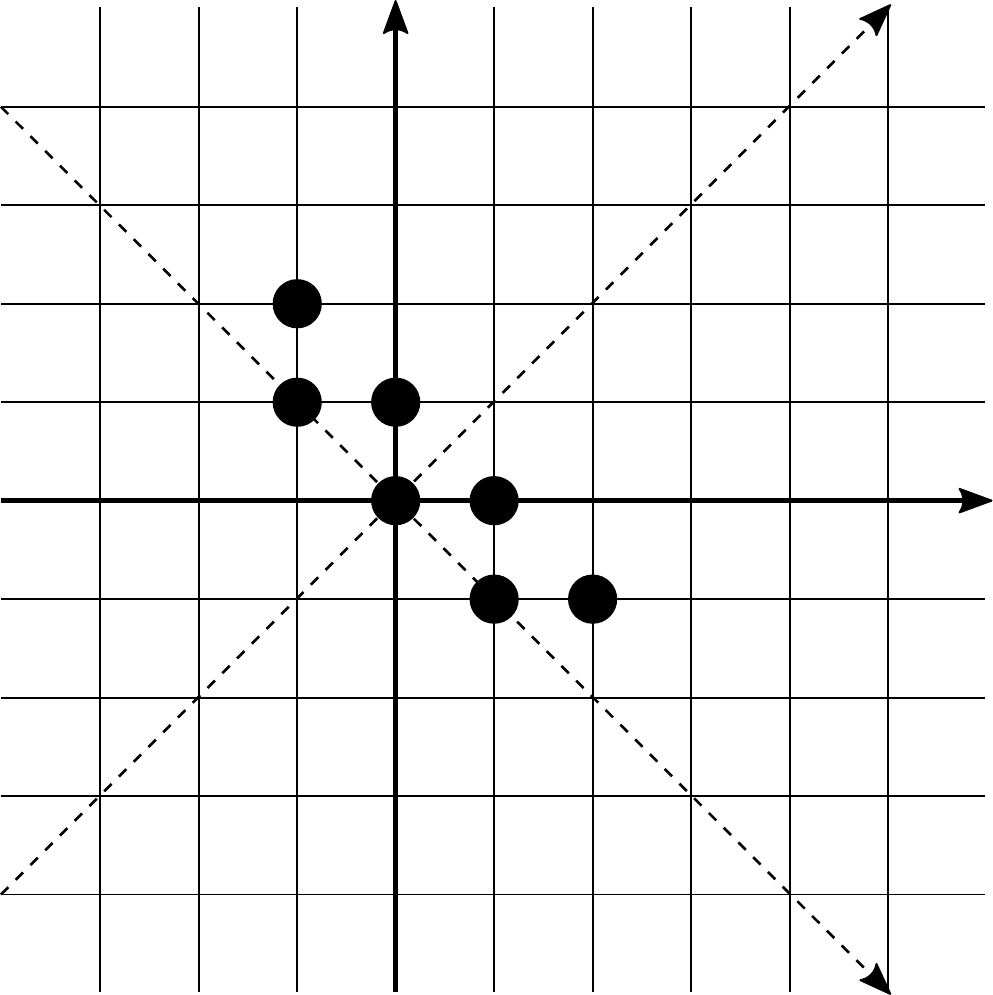}}\
        \subfigure[represents the graph of $R^5 u=R^6 u$]{\includegraphics[width=0.3\textwidth]{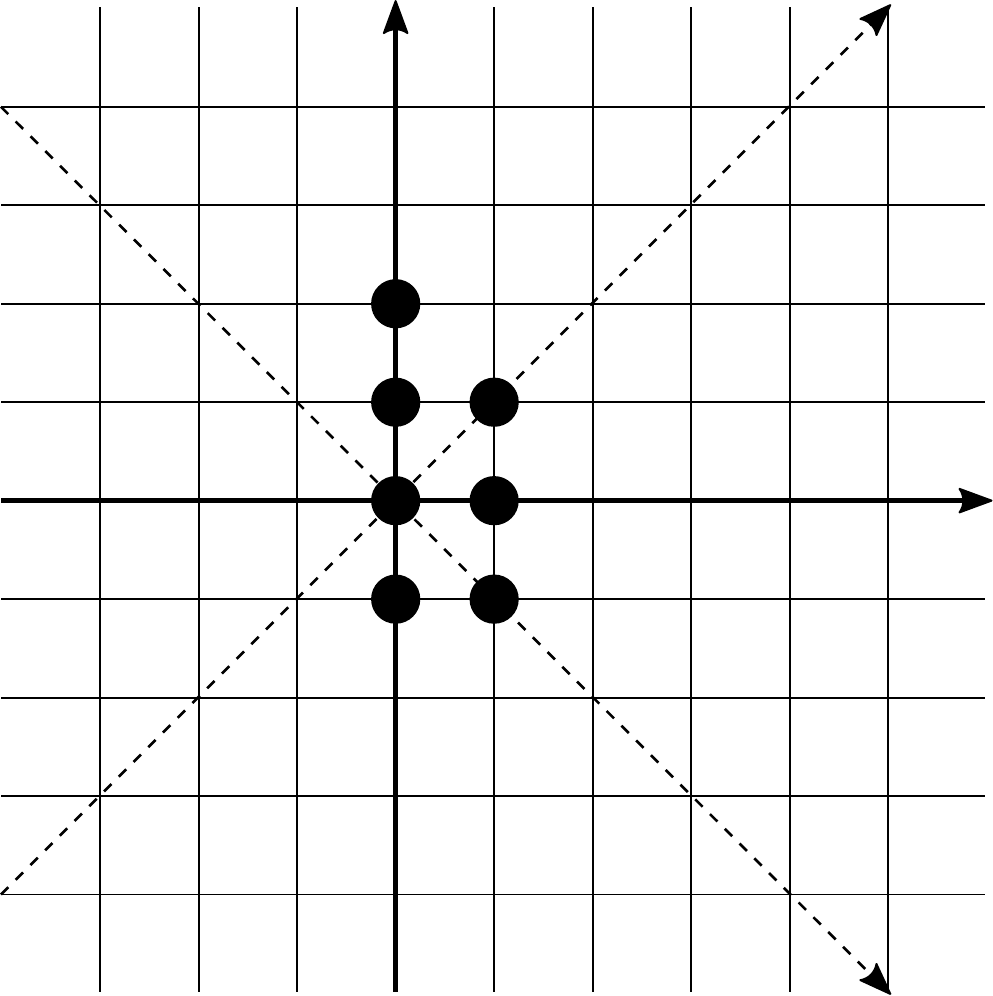}}\
        \subfigure[represents the graph of $R^7 u=R^8 u=R^9 u$]{\includegraphics[width=0.3\textwidth]{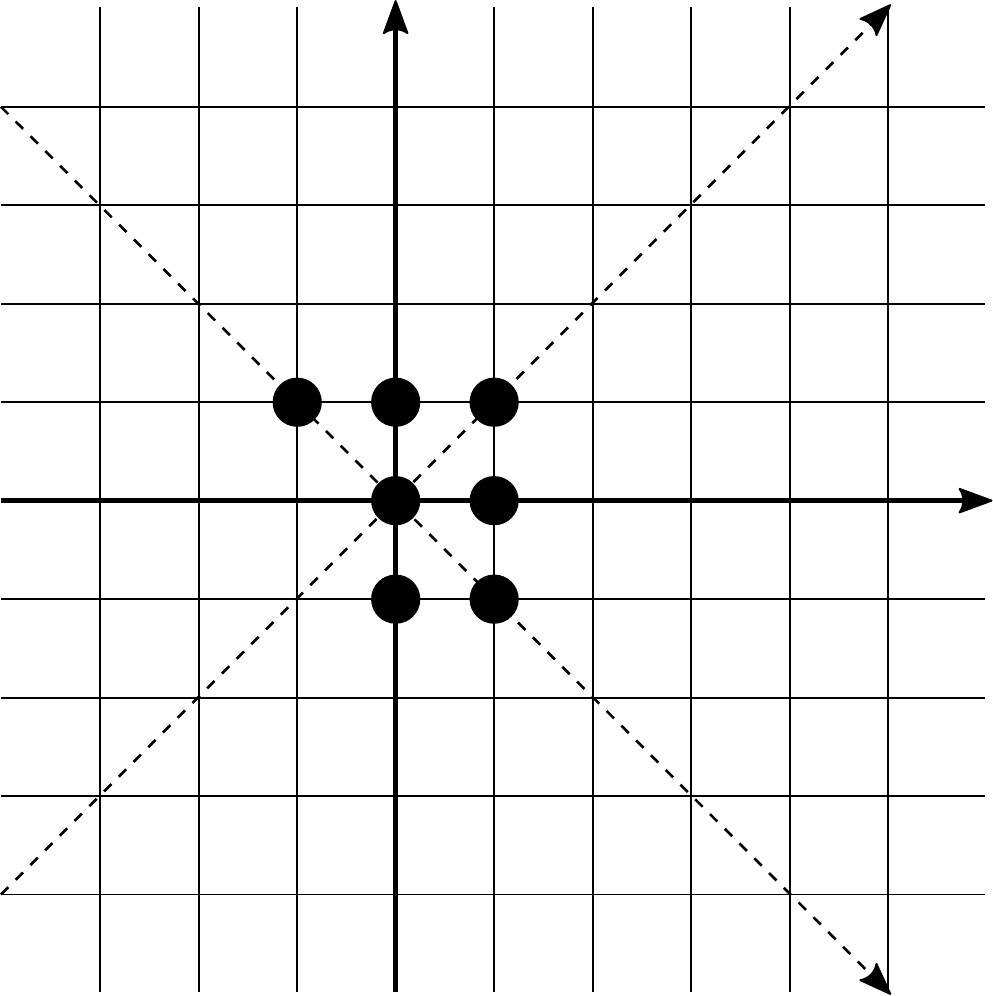}}\
        \subfigure[represents the graph of $R^{10} u=R_{\Z^2}u$]{\includegraphics[width=0.3\textwidth]{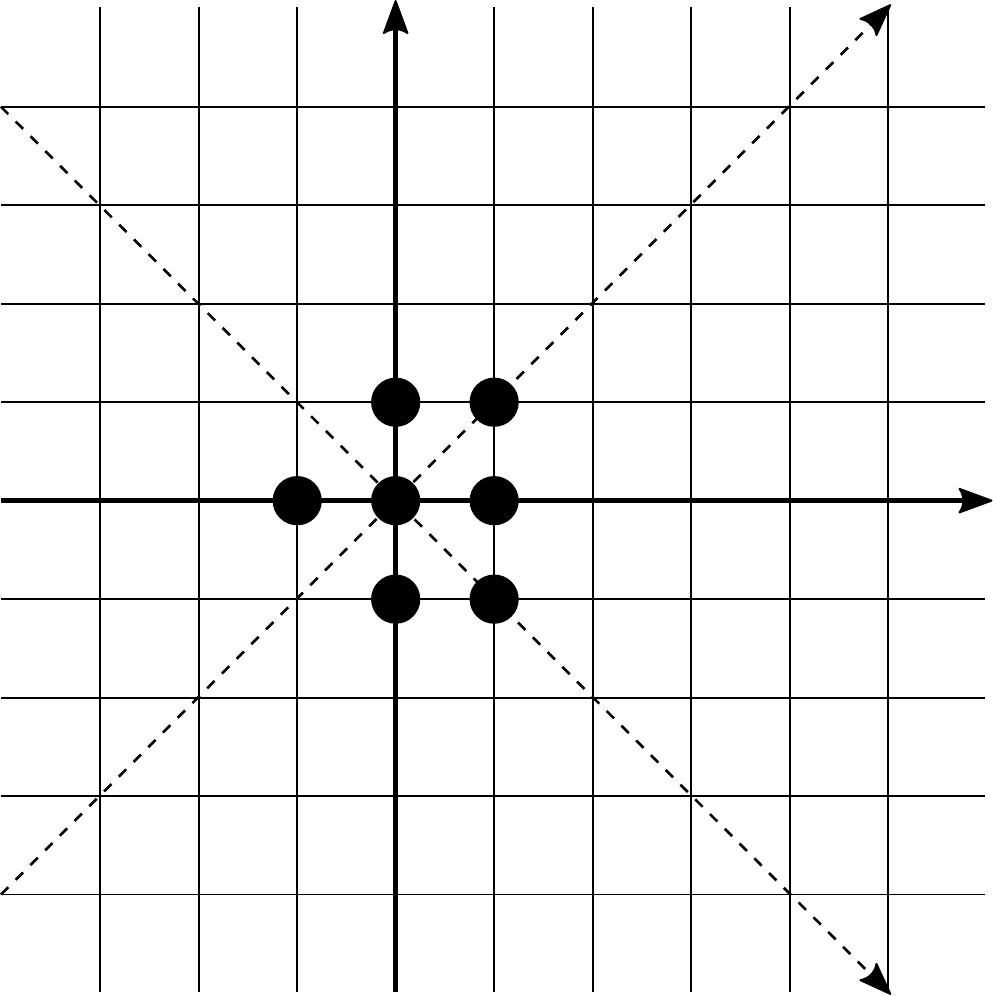}}
    \end{figure}
\end{example}

For the continuous case, given $u\overset{e.m.}{\sim}v$, one has
\begin{equation}
    u^* \equiv v^*.
\end{equation}
Then by P\'olya-Szeg\"o inequality, we know
\begin{equation}
    ||\nabla u^*||_2=\inf\{||\nabla v||_2:\ v \overset{e.m.}{\sim} u\}.
\end{equation}
One can also verify that this holds for one-dimensional discrete Schwarz rearrangement. However, this is not true for the discrete Schwarz rearrangement on $\Z^2$ defined above. In fact, by changing the iteration order of one-step rearrangements, we may get different ``symmetric'' functions with different Sobolev energies.
\begin{example}
    Let $\Omega=\{(1,1),(-1,0),(0,0),(1,0),(0,-1)\}$, $u=\mathbbm{1}_{\Omega}$ as shown in (a) of the following figure. One can verify that $R_{\Z^2}u=R_{e_1}u$ as shown in (b). If we define the iteration of one-step rearrangements in the following order:
    $$R'_{\Z^2}u=\lim_{k\to \infty}R'^{k}u=\cdots \circ R_{e_2} \circ R_{e_1} \circ R_{\frac{e_1-e_2}{2}} \circ R_{\frac{e_1+e_2}{2}}u,$$
    then $R'_{\Z^2} u=R_{\frac{e_1+e_2}{2}}u$ is shown in (c). Moreover $||R'_{\Z^2} u||_2 < ||R_{\Z^2} u||_2$.
    \begin{figure}[H]
        \centering
        \setcounter{subfigure}{0}
        \subfigure[represents the graph of $u$]{\includegraphics[width=0.3\textwidth]{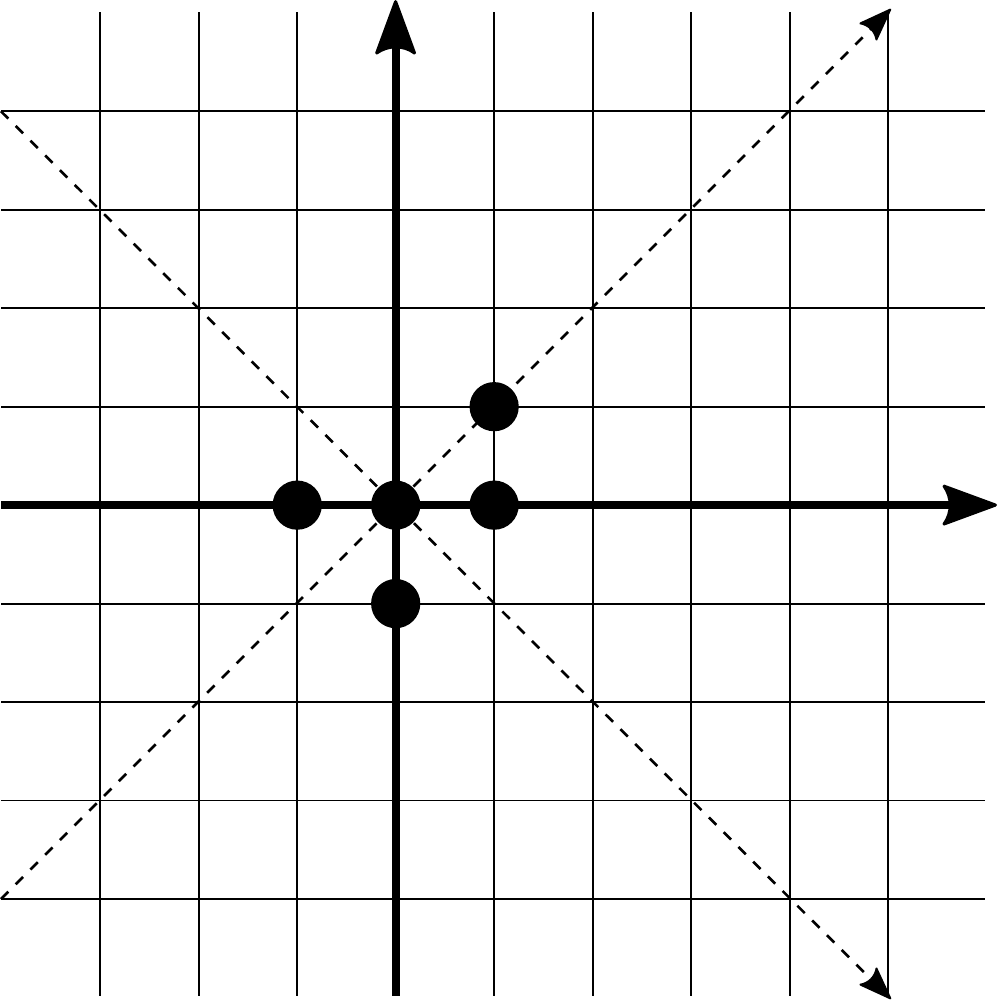}}\
        \subfigure[represents the graph of $R_{\Z^2} u$]{\includegraphics[width=0.3\textwidth]{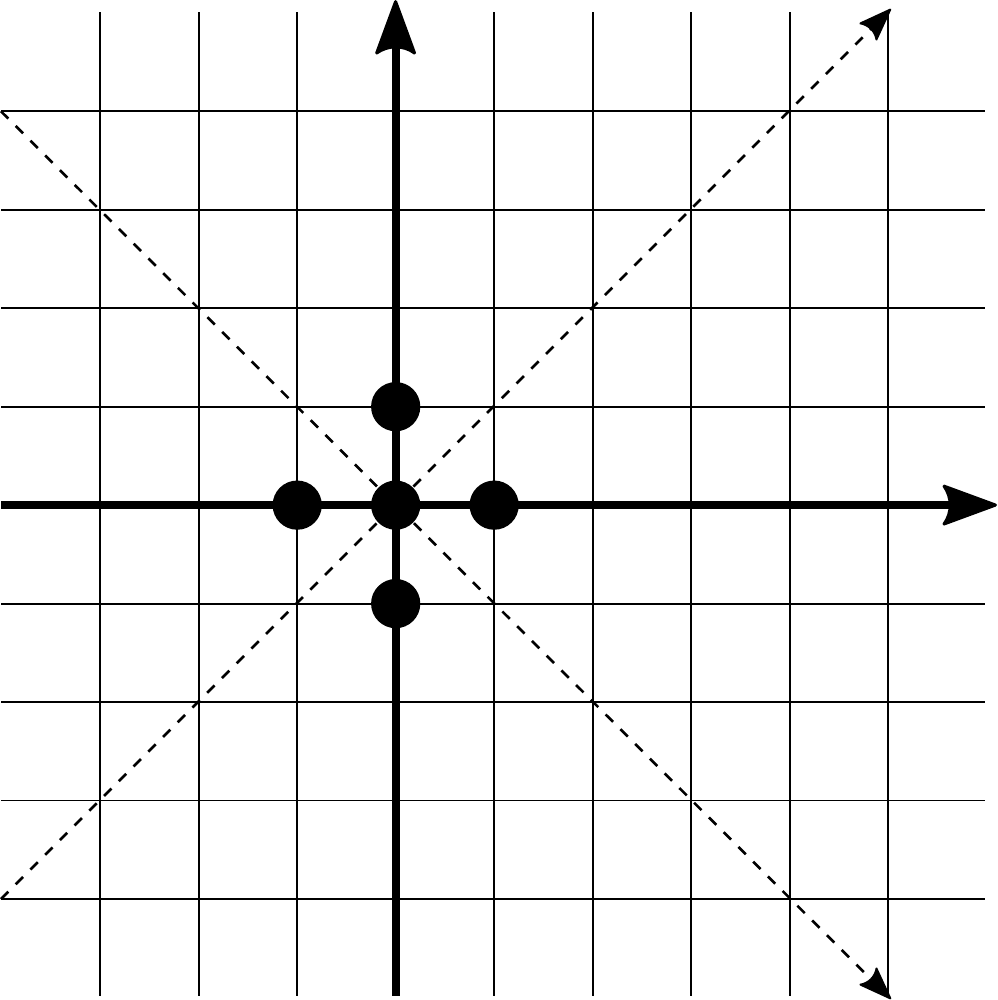}}\
        \subfigure[represents the graph of $R'_{\Z^2} u$]{\includegraphics[width=0.3\textwidth]{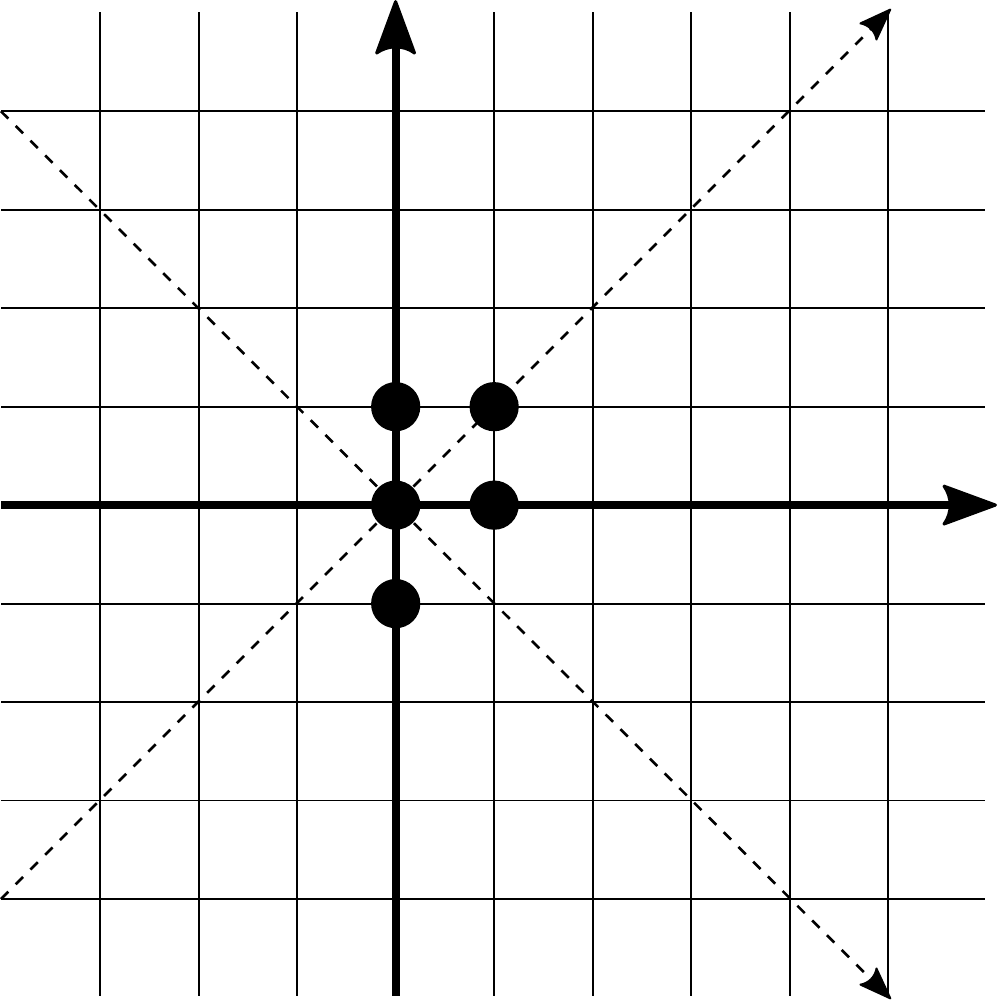}}\
    \end{figure}
\end{example}
Though we may not get the ``best'' equimeasurable symmetric function that minimizes the Sobolev energy by the discrete Schwarz rearrangement, the existence of the ``best'' function is proved; see Lemma~\ref{lemma:existence_of_best_symmetric_function}.

The choice of $\Vec{E}$ is crucial to get a ``satble'' function under the iteration of one-step rearrangements. The following example shows that the limit process introduced in \eqref{equ:definition_of_2_dimensional_discrete_Schwarz_rearrangement} in Definition~\ref{def} may fail if we change the direction of some vector in $\Vec{E}$.
\begin{example}
    Let $\Vec{E}'\!=\!\{\!-\!e_1\!,e_2\!,\frac{e_1\!+\!e_2}{2}\!,\frac{e_1\!-\!e_2}{2}\!\}$, $R_{-e_1}u(x^1\!,x^2)\!=\!R_{e_1}u(-x^1\!,x^2)$, define
    $$\Tilde{R}^k u=\underbrace{\cdots \circ R_{e_2} \circ R_{-e_1} \circ R_{\frac{e_1-e_2}{2}} \circ R_{\frac{e_1+e_2}{2}} \circ R_{e_2} \circ R_{-e_1}}_k u.$$
    Given $u=\mathbbm{1}_{\Omega}$, where $\Omega=\{(0,0),(0,1),(1,0),(0,-1),(-1,0),(1,-1)\}$, $\Tilde{R}^k u$ are shown as follows. The sequence $\Tilde{R}^{k} u (1,-1)$ is not convergent!
    \begin{figure}[H]
        \centering
        \setcounter{subfigure}{0}
        \subfigure[represents the graph of $u=\Tilde{R}^{4l} u$]{\includegraphics[width=0.3\textwidth]{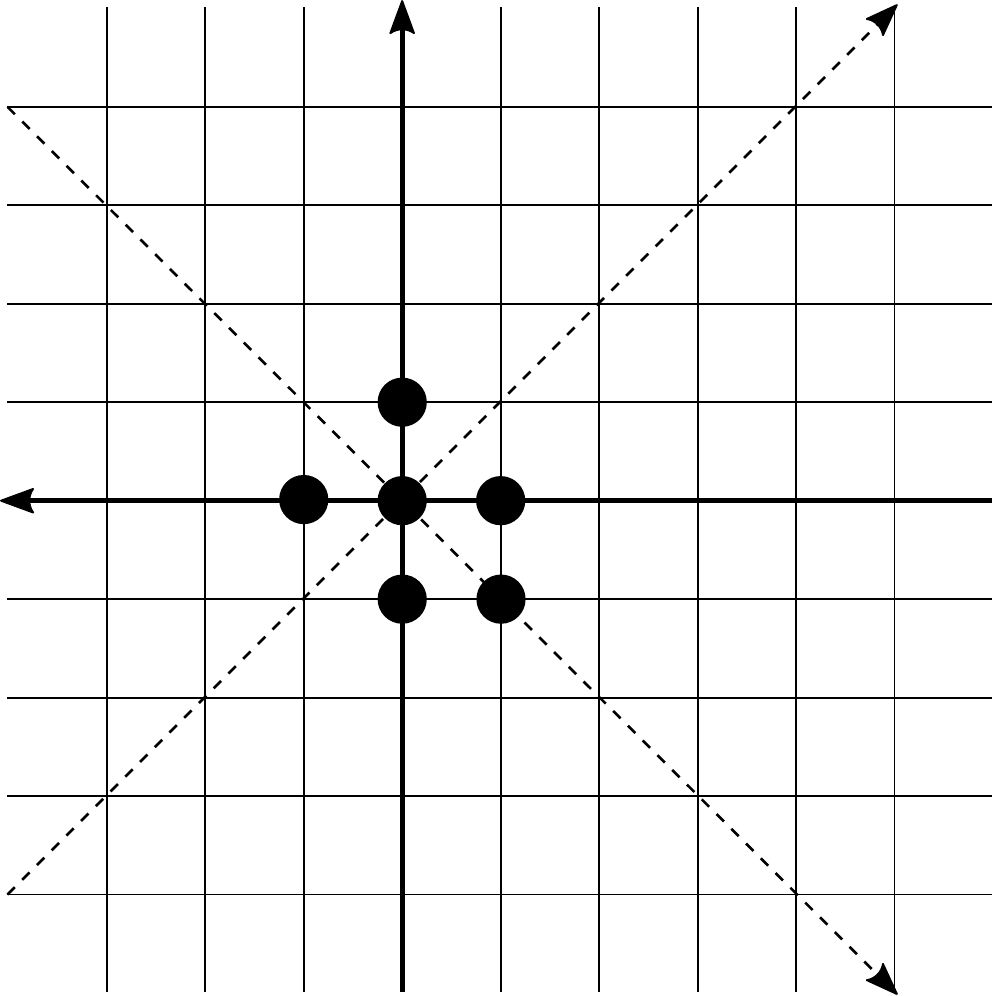}}\
        \subfigure[represents the graph of $\Tilde{R}^{4l+1} u$]{\includegraphics[width=0.3\textwidth]{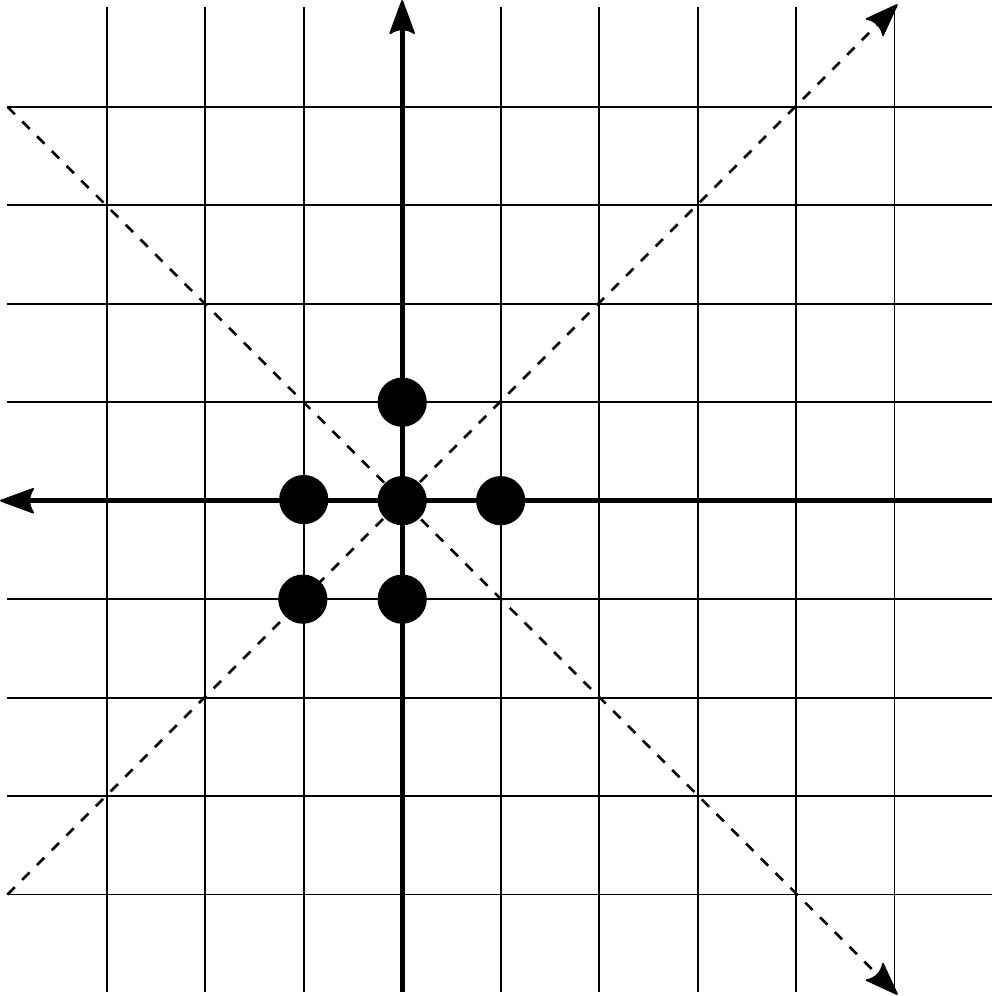}}\
        \subfigure[represents the graph of $\Tilde{R}^{4l+2} u=\Tilde{R}^{4l+3}u$]{\includegraphics[width=0.3\textwidth]{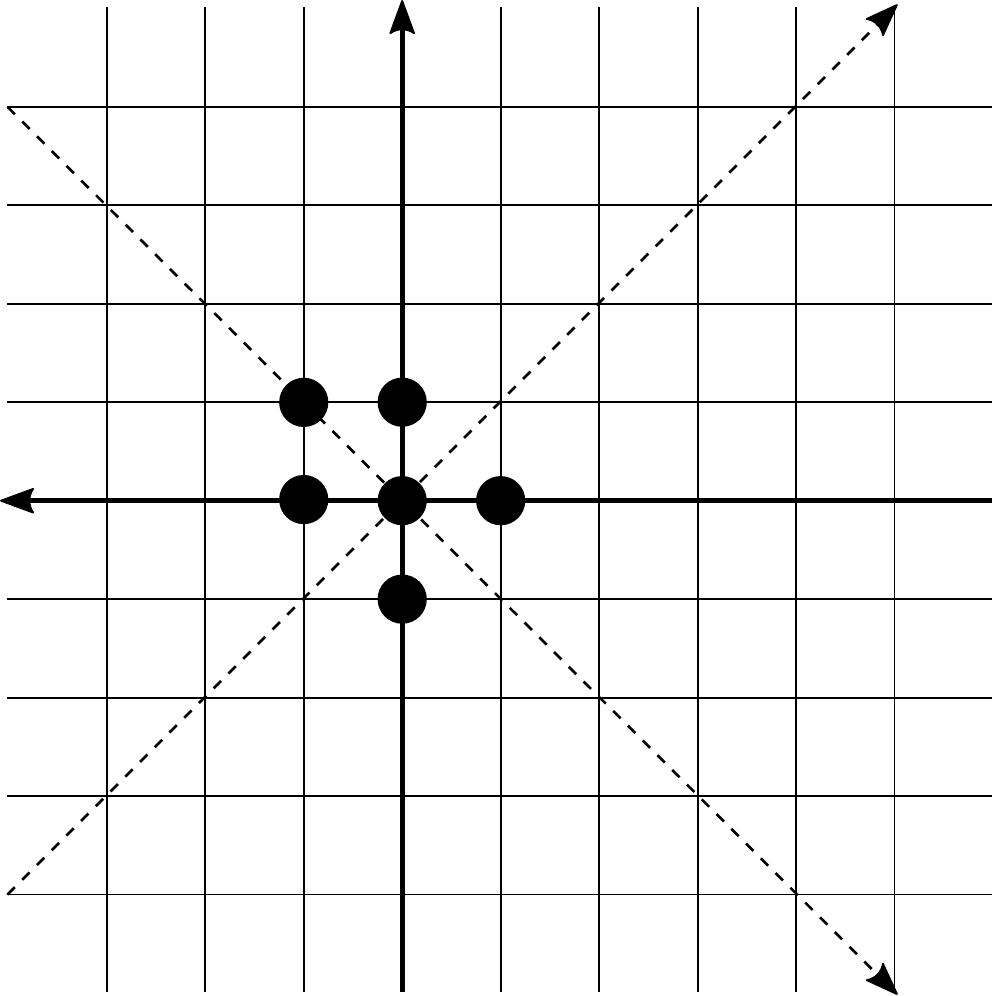}}\
    \end{figure}
\end{example}

\subsubsection{\bf{Well-definedness for finitely supported functions}}

We first prove that $R_{\Z^2}$ is well-defined for finitely supported functions on $\Z^2$. In fact, we have the following result:
\begin{lemma}
    \label{lemma:2_dimensional_discrete_Schwarz_rearrangment_is_well_defined_for_finitely_supported_functions}
    \label{lem-well_defined_Cc}
    Given $u \in C_c(\Z^2)$ be nonnegative, there exists a positive integer $K$ depending on $u$ such that
    $$R_e \circ R^K u = R^K u$$
    for all $e \in \Vec{E}$.
\end{lemma}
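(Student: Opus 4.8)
The plan is to run the iteration against a pair of monotone functionals on $C_c(\Z^2)$: first to confine the orbit to a finite set, and then to force it to reach a common fixed point of all four one-step maps. Throughout, write $\Phi_2(w)=\sum_{x\in\Z^2}|x|^2\, w(x)$ and $\Phi_1(w)=\sum_{x\in\Z^2}(-2x^1-x^2)\,w(x)$, both finite for $w\in C_c(\Z^2)$. The basic input is the one-dimensional rearrangement inequality on each line: since $|x|^2$ restricted to any line $V_e^\alpha$ is non-decreasing in the order $\prec$ used by $R_{\Z}$ (it is a symmetric, strictly convex function of the projection coordinate $\langle e,x\rangle$, minimized at the point of $V_e^\alpha$ closest to the origin), applying $R_e$ can only move larger values to smaller weights, so $\Phi_2(R_e w)\le \Phi_2(w)$ for every $e\in\vec E$.

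First I would establish finiteness of the orbit. Since each $R_e$ produces an equimeasurable function, all iterates $R^k u$ share the value multiset of $u$; let $m>0$ be the smallest nonzero value. From $\Phi_2(R^k u)\le\Phi_2(u)=:C_0$ and $(R^k u)(x)\ge m$ on its support, every $x\in\supp(R^k u)$ satisfies $|x|^2\le C_0/m$, so all iterates are supported in the fixed finite set $B=\{x:|x|^2\le C_0/m\}$. Hence the orbit $\{R^k u\}_k$ lies in the finite set of nonnegative functions supported in $B$ with value multiset that of $u$. As $\Phi_2$ is non-increasing along the orbit and takes finitely many values, it is eventually constant: there is $K_1$ with $\Phi_2(R^{k}u)=\Phi_2(R^{K_1}u)$ for all $k\ge K_1$. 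Because passing from $R^{k}u$ to $R^{k+1}u$ is a composition of one-step maps each of which can only decrease $\Phi_2$, for $k\ge K_1$ every individual one-step map in the iteration preserves $\Phi_2$.

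Next I would analyze the regime $k\ge K_1$. The equality $\Phi_2(R_e w)=\Phi_2(w)$ forces, line by line, that $w$ is already sorted with respect to $|x|^2$; since the only ties of $|x|^2$ on a line $V_e^\alpha$ are the symmetric pairs $\{c+j\hat e,\ c-j\hat e\}$ about the line center $c$, this means $R_e$ alters $w$ only by swapping values within such pairs, placing the larger value at the $\prec$-smaller position $c+j\hat e$. The weight $\ell(x)=-2x^1-x^2$ is chosen precisely so that $\langle\nabla\ell,e\rangle<0$ for all four $e\in\vec E$: indeed $\nabla\ell=(-2,-1)$ has negative inner product with $(1,0),(0,1),(1,1),(1,-1)$. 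Hence $\ell(c+j\hat e)<\ell(c-j\hat e)$, so each genuine pair-swap moves a strictly larger value to a strictly smaller $\ell$-weight and therefore strictly decreases $\Phi_1$. Thus, for $k\ge K_1$ every one-step map satisfies $\Phi_1(R_e w)\le\Phi_1(w)$, with equality if and only if $R_e w=w$.

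Finally I would close the argument lexicographically. On the finite orbit, $\Phi_1$ is non-increasing for $k\ge K_1$ and takes finitely many values, hence is eventually constant; fix $K$ with $\Phi_1$ constant for all $k\ge K$. Then the block carrying $R^{K}u$ to $R^{K+1}u$ preserves both $\Phi_2$ and $\Phi_1$, so by the strict equality criterion each one-step map in that block fixes the function, and $w:=R^{K}u$ is left unchanged by every map applied thereafter. Since a single block already applies all four directions $e\in\vec E$, we obtain $R_e\circ R^{K}u=R^{K}u$ for every $e$, as claimed. The main obstacle is exactly the degeneracy handled in the third paragraph: $|x|^2$ is only weakly $\prec$-monotone (its ties are the symmetric pairs), so $\Phi_2$ alone cannot prevent the function from oscillating via pair-swaps; the crux is to produce one linear weight $\ell$ whose gradient is strictly negatively correlated with all four rearrangement directions simultaneously, which is possible precisely because $\{e_1,e_2,\tfrac{e_1+e_2}{2},\tfrac{e_1-e_2}{2}\}$ lies in an open half-plane.
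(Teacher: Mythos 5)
Your proof is correct, and it takes a genuinely different route from the paper's. The paper argues by induction on $|\supp u|$: the inductive hypothesis freezes the positions of the $N-1$ largest values, and then the position $x_k$ of the smallest positive value $a_N$ is tracked directly, using the nesting $\overline{V}(x_{k+1})\subset \overline{V}(x_k)$ to stabilize the orbit onto $V(x_K)$ and a finite monotone progression through the classes $V^1,\dots,V^5$ to pin it down (in the $\Z^d$ version, Lemma~\ref{lemma:well-definedness_for_finitely_supported_functions}, this last step becomes acyclicity of a directed graph via Proposition~\ref{prop-positive_combination}). You instead handle all values simultaneously with a two-stage Lyapunov argument: the quadratic moment $\Phi_2$ is non-increasing under each one-step map because on every line $V_e^\alpha$ the weight $|x|^2$ equals $t^2+\mathrm{const}$ (resp.\ $2t^2+\mathrm{const}$ on diagonal lines) in the projection coordinate $t=\langle e,x\rangle$, hence is non-decreasing along the order $\prec$ used by $R_\Z$; this confines the orbit to a finite set and forces eventual $\Phi_2$-equality, at which point each map acts only by swaps within the symmetric tie pairs $\{t,-t\}$; finally the linear weight $\ell(x)=-2x^1-x^2$ strictly decreases under every genuine swap since $\langle\nabla\ell,e\rangle<0$ for all four directions, so $\Phi_1$ terminates the dynamics. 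I checked the delicate points and they all hold: the weight ties on each line are exactly the symmetric pairs, $R_\Z$ places the larger value on the $+e$ side (where $\ell$ is strictly smaller), and a window of four consecutive steps covers all four directions. Your half-plane observation is precisely the content of Proposition~\ref{prop-positive_combination} recast as a linear potential, so the two proofs share the same geometric core; what your version buys is the elimination of the induction and case analysis, explicit monovariants, and an argument that extends verbatim to $\Z^d$ (take $\ell(x)=-\sum_i \lambda_i x^i$ with $\lambda_1>\cdots>\lambda_d>0$). What the paper's more hands-on tracking buys, in exchange, is the positional information about $V(x)$ and $\overline{V}(x)$ that is reused downstream in Proposition~\ref{prop-geom_C_c_basic1} and Lemma~\ref{thm-geom_Cc} to control the geometry of $\supp u^*$.
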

\begin{proof}
We prove the result by induction on $|\supp{u}|$. Suppose that the result holds for all functions with $|\supp| \le N-1 $. Let $u \in C_c(\Z^2)$ with $\ran(u)=[a_1 \succ a_2 \succ \cdots \succ a_N \succ 0 \succ \cdots ]$, $a_N>0$. Then $u$'s $(N-1)$-th cut-off $\hat{u}_{N-1}$ satisfies $|\supp{\hat{u}_{N-1}}| = N-1$, and there is a positive integer $K_0$ such that $R_e \circ R^{K_0} \hat{u}_{N-1} =R^{K_0} \hat{u}_{N-1}$ for $e\in \Vec{E}$. Note that the positions of $a_n$ in $R_e u$ and $R_e \hat{u}_{N-1}$ are the same for any $n<N$, we only need to prove that the position of $a_N$ is fixed after finite times of one-step rearrangements. Let $x_k=(x_k^1,x_k^2)\in \Z^2$ be the position of $a_N$ in $R^{K_0+k} u$. One easily verifies the following properties:
\begin{enumerate}
    \item $x_{k+1} \in \overline{V}(x_k)$;
    \item $\overline{V}(x_{k+1}) \subset \overline{V}(x_k)$;
    \item $\overline{V}(x_{k+1})=\overline{V}(x_k)$ if and only if $x_{k+1}\in V(x_k)$.
\end{enumerate}
Since $|\overline{V}(x_k)|$ is finite, there exists $K\in \N$ such that $\overline{V}(x_k)=\overline{V}(x_K)$, and $x_k\in V(x_K)$ for all $k \ge K$.

We only consider the case when $x_K^1 \ne 0$, $x_K^2 \ne 0$, and $|x_K^1| \ne |x_K^2|$, the proofs for other cases are similar. W.L.O.G, suppose that $x_K^1 > x_K^2 >0$. Let
\begin{align*}
    V^1=&\{(x_K^1,x_K^2)\}, \qquad \qquad &V^2&=\{(x_K^2,x_K^1),(x_K^1,-x_K^2)\},\\
    V^3=&\{(x_K^2,-x_K^1),(-x_K^2,x_K^1)\}, &V^4&=\{(-x_K^2,-x_K^1),(-x_K^1,x_K^2)\},\\
    V^5=&\{(-x_K^1,-x_K^2)\}.
\end{align*}

\begin{figure}[H]
    \centering
    \setcounter{subfigure}{0}
    \subfigure[$V(3,2)$]{\includegraphics[width=0.3\textwidth]{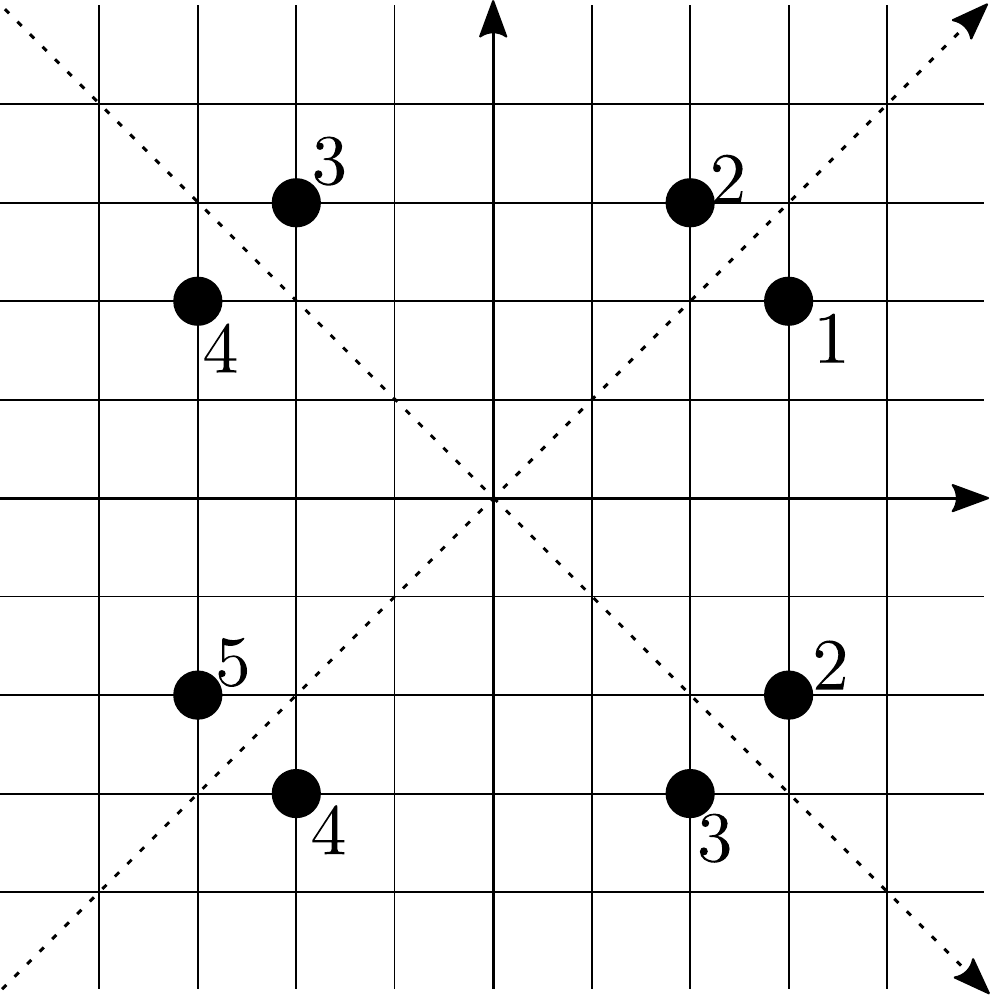}}\
    \subfigure[$V(3,0)$]{\includegraphics[width=0.3\textwidth]{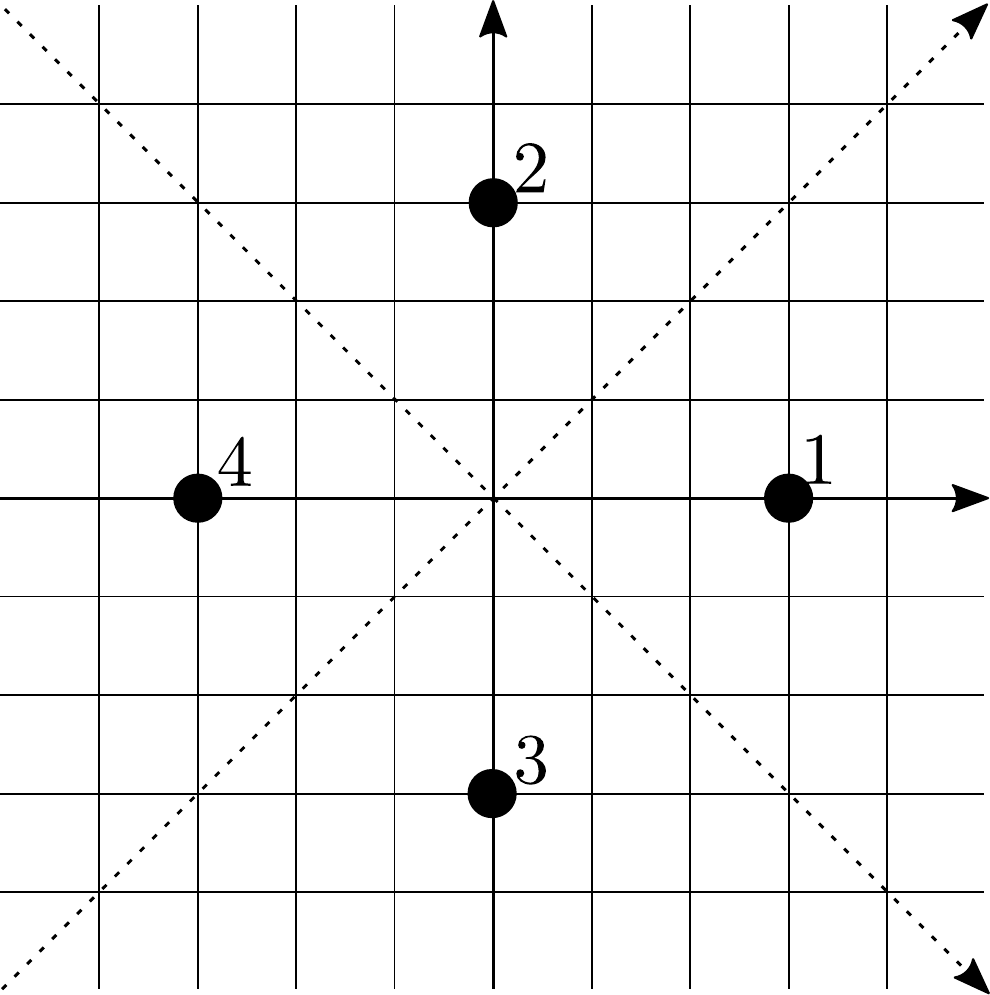}}\
    \subfigure[$V(3,3)$]{\includegraphics[width=0.3\textwidth]{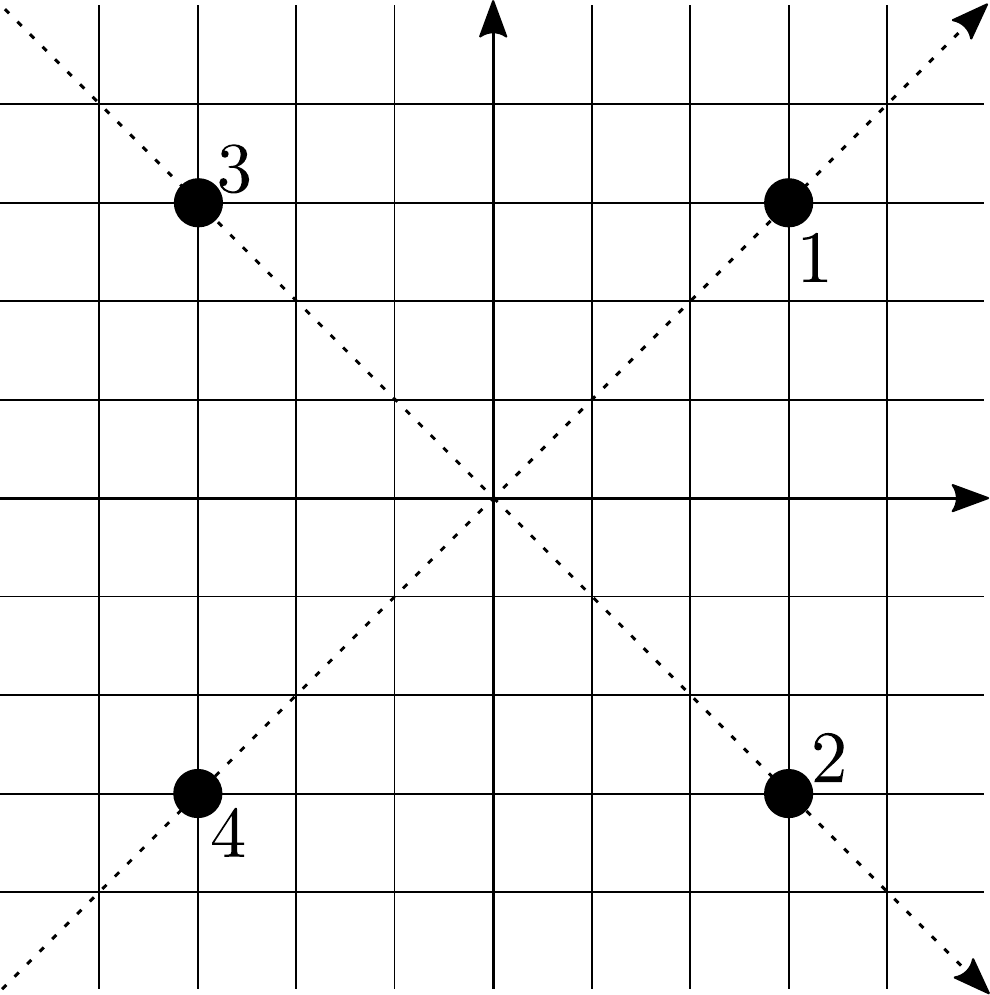}}\
    \caption{Function value $a_N$ at a vertex labeled $j$ can only moves to vertices labeled $j'$, $j'< j$.}
\end{figure}

Suppose that $x_K \in V^j$ for some $1\le j \le 5$, one can verify that:
\begin{enumerate}[(1)]
    \item For all $5\ge j'>j$, $R_e$ will not move $a_N$ to $V^{j'}$.
    \item If $V^{j-1}$ are not captured by $a_1,a_2,\cdots,a_{N-1}$, there exists an $R_e$ that moves $a_N$ to $V^{j-1}$.
\end{enumerate}
Since the positions of $a_1,\cdots, a_{N-1}$ are fixed, the position of $a_N$ is fixed after finitely number of one-step rearrangements, and this proves the lemma.
\end{proof}

\subsubsection{\bf{Geometry of the support of a rearranged finitely supported function}}

By the proof of Lemma~\ref{lem-well_defined_Cc}, one can verify the following properties:
\begin{prop}
    \label{prop-geom_C_c_basic1}
    Let $u\in C_c(\Z^2)$ be nonnegative, and $u^*=R_{\Z^2}u$.
    \begin{enumerate}[(i)]
        \item If $V(x)\in \supp{u^*}$, then $$\overline{V}(x) \in \supp{u^*}.$$
        \item If $\partial \overline{V}(x) \cap \supp{u^*}=\varnothing$, then $$(\overline{V}(x))^c \cap \supp{u^*} =\varnothing.$$
    \end{enumerate}
\end{prop}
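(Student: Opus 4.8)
The plan is to convert invariance of $u^*$ under the four one-step rearrangements into four directional monotonicity rules for $\supp u^*$, and then to argue combinatorially on the octagon $\overline V(x)$. I read statement (i) as ``$V(x)\subseteq \supp u^*$''. Set $a=||x||_\infty$ and $b=||x||_1-a$, so $a\ge b\ge 0$ and $V(x)=\{(\pm a,\pm b),(\pm b,\pm a)\}$. The first thing I would record is the explicit description
\[
\overline V(x)=\{w\in\Z^2:\ ||w||_\infty\le a,\ ||w||_1\le a+b\},
\]
a convex octagon whose eight edges are parallel to the four vectors of $\Vec{E}$. Accordingly $\partial\overline V(x)$ splits into the ``flat'' part $\{||w||_\infty=a,\ ||w||_1\le a+b\}$ and the four ``slanted'' edges $\{||w||_1=a+b,\ ||w||_\infty\le a\}$.

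By Lemma~\ref{lem-well_defined_Cc} and Definition~\ref{def}, for $u\in C_c(\Z^2)$ the function $u^*=R_{\Z^2}u$ is eventually constant under iteration, hence is a common fixed point of $R_e$ for every $e\in\Vec{E}$. A function fixed by $R_e$ restricts on each line $V_e^\alpha$ to a one-dimensional Schwarz-rearranged function, whose support is an interval centred at the point of $V_e^\alpha$ closest to the origin; this yields four interior monotonicity rules for $S:=\supp u^*$. Concretely, $(m,n)\in S$ forces $(m',n)\in S$ for every $|m'|<|m|$ and $(m,n')\in S$ for every $|n'|<|n|$, and similarly a point lying on the same slope-$\pm1$ line as a point of $S$, but strictly closer to the centre $\{x^1+x^2=0\}$, resp. $\{x^1-x^2=0\}$, again lies in $S$. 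Each rule weakly decreases both $||\cdot||_\infty$ and $||\cdot||_1$, so $S$ is closed under all four.

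For (i) I would show that the closure of $V(x)$ under the four rules is exactly $\overline V(x)$; since $S$ contains $V(x)$ and is closed under the rules, this gives $\overline V(x)\subseteq S$. The inclusion ``closure $\subseteq\overline V(x)$'' is immediate from the remark that the rules decrease both norms. For the reverse I argue by downward induction on $a+b-||w||_1$. The base case $||w||_1=a+b$ is the union of the slanted edges; such a point $(p,q)$ has $b\le|p|,|q|\le a$ and lies strictly between the two vertices of $V(x)$ bounding its edge, on the diagonal line along which the relevant diagonal rule rearranges, so that rule reaches it from those vertices. For the inductive step, given $w\in\overline V(x)$ with $||w||_1<a+b$ I pick a parent $w^+\in\overline V(x)$ with $||w^+||_1=||w||_1+1$ by increasing the magnitude of a coordinate that is still $<a$: if $||w||_\infty<a$ either coordinate works, while if $||w||_\infty=a$ then $||w||_1<a+b$ forces the other coordinate to have magnitude $<b\le a$. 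By induction $w^+\in S$, and the axis rule along that coordinate then yields $w\in S$.

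For (ii) I would argue by contradiction: assume $\partial\overline V(x)\cap S=\varnothing$ while $S\not\subseteq\overline V(x)$, and pick $z\in S\setminus\overline V(x)$ of minimal $||z||_1$, which exists since $S$ is finite. If $||z||_1>a+b$, reduce by one the coordinate of larger magnitude; the axis rule keeps the new point $z'$ in $S$, minimality forces $z'\in\overline V(x)$, and $||z'||_1\ge a+b$ then pins $||z'||_1=a+b$, so $z'\in\partial\overline V(x)\cap S$, a contradiction. If instead $||z||_1\le a+b$ but $||z||_\infty>a$, then the offending coordinate equals $a+1$ and the other has magnitude $<b$; the same single step lands $z'$ on the flat part $\{||\cdot||_\infty=a\}$ of $\partial\overline V(x)$, again contradicting emptiness. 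I expect the main obstacle to lie in part (i): organizing the octagon so that the diagonal rules genuinely reach every slanted-edge point and that every interior point admits a parent staying inside $\overline V(x)$, and checking the degenerate configurations $b=0$ and $a=b$ where some edges collapse to single vertices. Part (ii) reduces to the clean one-step descent above.
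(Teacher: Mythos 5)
Your proof is correct, and it takes a genuinely different route from the paper, which in fact offers no detailed proof of this proposition: it merely asserts that the properties ``can be verified'' from the proof of Lemma~\ref{lem-well_defined_Cc}, whose mechanism is dynamic --- tracking the position $x_k$ of the smallest positive value under iterated one-step rearrangements and using the nesting $\overline{V}(x_{k+1})\subseteq \overline{V}(x_k)$. You instead argue statically: since $u^*=R^K u$ for some finite $K$, the function $u^*$ is a common fixed point of the four one-step rearrangements, so each restriction $u^*|_{V_e^{\alpha}}$ equals its own one-dimensional rearrangement, and this yields your four strict downward-closure rules for $S=\supp{u^*}$ (strictly smaller coordinate magnitude along an axis line; strictly smaller $|x^1+x^2|$, resp.\ $|x^1-x^2|$, along a diagonal line). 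Combined with the identity $\overline{V}(x)=\{w\in\Z^2:\ \|w\|_\infty\le a,\ \|w\|_1\le a+b\}$, your downward induction on $a+b-\|w\|_1$ for (i) (the vertices of $V(x)$ seed the slanted edges via the diagonal rules, since a non-vertex edge point satisfies $|w^1\mp w^2|<a-b$; interior points get parents by growing a coordinate of magnitude $<a$) and your minimal-$\|\cdot\|_1$ descent for (ii) are complete; I also checked the degenerate cases $b=0$ and $a=b$, where the collapsed edges cause no trouble. What your approach buys is a self-contained, checkable argument where the paper has only a pointer to the dynamics of its well-definedness proof.

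Two small points to tighten, neither a genuine gap. First, in the second case of (ii) you assert that ``the offending coordinate equals $a+1$'' \emph{before} taking the descent step; as written this is unjustified, since $\|z\|_1\le a+b$ and $\|z\|_\infty>a$ only give $|z^1|\ge a+1$. Reorder: apply the axis rule once to get $z'\in S$ with $\|z'\|_1=\|z\|_1-1$, invoke minimality of $\|z\|_1$ to conclude $z'\in\overline{V}(x)$, hence $|z^1|-1\le a$, which \emph{then} forces $|z^1|=a+1$ and places $z'$ on the flat part $\{\|w\|_\infty=a\}$ of $\partial\overline{V}(x)$ (the other coordinate having magnitude at most $b-1$), contradicting $\partial\overline{V}(x)\cap S=\varnothing$. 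Second, the phrase ``support is an interval centred at the point of $V_e^{\alpha}$ closest to the origin'' is loose on the lines identified with $\Z+1/2$, where ties at equal coordinate magnitude are broken toward the positive side; this is harmless because you only ever invoke the strict-magnitude closure rules, which hold for both parities.
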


Then we get the following result for the support of a rearranged $C_c$ function.
\begin{lemma}\label{thm-geom_Cc}
    Let $u \in C_c(\Z^2)$ be nonnegative with $|\supp{u}|=N$ large enough, $u^*=R_{\Z^2}u$, then
    $$V^{\Diamond}_{L_1} \subset \supp{ u^*} \subset V^{\Box}_{L_2},$$
    where $L_1=\left\lfloor \frac{\sqrt{N}-5}{2} \right\rfloor$ and $L_2=\left\lceil \sqrt{\frac{N}{2}} \right\rceil +2$.
\end{lemma}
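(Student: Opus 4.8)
The lemma claims that after full rearrangement, the support of $u^*$ is squeezed between a diamond $V^{\Diamond}_{L_1}$ and a box $V^{\Box}_{L_2}$, where $L_1 \approx \sqrt{N}/2$ and $L_2 \approx \sqrt{N/2}$. Let me think about what these bounds mean and how to prove them.

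The outer bound $\supp{u^*} \subset V^{\Box}_{L_2}$ says no point of the support has $\|\cdot\|_\infty > L_2 \approx \sqrt{N/2}$. The inner bound $V^{\Diamond}_{L_1} \subset \supp{u^*}$ says the support fills up the entire diamond of radius $L_1 \approx \sqrt{N}/2 - \text{const}$.

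Let me figure out the counting. $|V^{\Box}_l| = (2l+1)^2$ and $|V^{\Diamond}_l| = 2l^2 + 2l + 1$.

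For the box bound: if $\supp{u^*} \subset V^{\Box}_{L_2}$, we'd want... actually we want to show the support *fits inside* this box. If the support fits in a box of side $2L_2+1$ then $N \le (2L_2+1)^2$, i.e., $L_2 \ge (\sqrt{N}-1)/2$. With $L_2 = \lceil \sqrt{N/2}\rceil + 2$, this is larger than $(\sqrt N -1)/2$, giving slack. So the box bound is about showing the support is "compact" — it doesn't stretch out in any direction.

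For the diamond bound: we want the full diamond of radius $L_1$ contained in the support. $|V^{\Diamond}_{L_1}| = 2L_1^2 + 2L_1 + 1$. With $L_1 = \lfloor (\sqrt N - 5)/2 \rfloor$, we have $2L_1 + 1 \approx \sqrt N - 4$ and $2L_1^2 \approx (\sqrt N - 5)^2/2 = (N - 10\sqrt N + 25)/2 < N/2$. So $|V^{\Diamond}_{L_1}| < N$ with room to spare — the diamond has fewer than $N$ points, so there's enough "mass" to fill it.

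**Strategy.**

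The key tools are Proposition~\ref{prop-geom_C_c_basic1} and the structural symmetry of $u^*$. Proposition part (i) says $u^*$ is "downward closed" in the partial order where $\overline{V}(x) \subset \overline{V}(y)$ — if the whole orbit $V(x)$ is in the support then the whole convex hull $\overline{V}(x)$ is too. This encodes a convexity/monotonicity: the support is a union of orbits, and it's "saturated downward."

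Let me think about the inner bound first.

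**Inner bound $V^{\Diamond}_{L_1} \subset \supp{u^*}$.**

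The plan is: since $u^*$ is a fixed point of all $R_e$ (for $e \in \vec E$), it is symmetric under the dihedral group (the automorphism group generated by coordinate swaps and sign flips). So $\supp{u^*}$ is a union of full orbits $V(x)$.

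I want to show $\supp{u^*}$ contains the full diamond $V^{\Diamond}_{L_1}$. By Proposition (i), it suffices to show that for every $x$ with $\|x\|_1 \le L_1$, the orbit $V(x) \subset \supp{u^*}$.

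Here's a counting/extremal argument. Suppose some $x$ with $\|x\|_1 \le L_1$ has $x \notin \supp{u^*}$. By symmetry the whole orbit $V(x)$ is outside. I want to derive that the support is then "too spread out" relative to $N$ points, contradicting minimality of the $\ell^1$-norm distribution, OR directly contradict that $u^*$ is a fixed point.

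Actually I think the cleanest route is: $u^*$ being a fixed point of the one-dimensional rearrangements $R_e$ means that along each line in directions $e_1, e_2$, the support is an interval centered (in the $\prec$ sense) at the axis. In particular, along the horizontal line $x^2 = j$, the support is a contiguous set of the form $\{x^1 : |x^1| \le r_j\}$ (up to the $\Z$ vs $\Z$ ordering, essentially a symmetric interval), and these half-widths $r_j$ are decreasing in $|j|$ by the $R_{e_1}, R_{e_2}$ fixed-point conditions combined with the diagonal conditions. Let me exploit this.

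Let me write $\rho(j) = \#\{x^1 : (x^1, j) \in \supp{u^*}\}$ be the number of support points on row $j$. The fixed-point property under $R_{e_1}$ makes each row a symmetric interval; under $R_{e_2}$ makes each column symmetric; the diagonal rearrangements $R_{(e_1\pm e_2)/2}$ enforce that $\rho$ is unimodal and that the shape is "diamond-like."

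The inner bound then follows: because $\sum_j \rho(j) = N$ and the rows are nested symmetric intervals with $\rho$ concentrated near $0$, the region must contain the diamond $V^{\Diamond}_{L_1}$. If it didn't contain some lattice point at $\ell^1$-distance $\le L_1$, then by the nesting and symmetry a whole "inner" region of size $> N$ (via the diamond count $2L_1^2 + 2L_1 + 1 < N$ comparison) would have to be missing, but there are only $N$ points to place — contradiction.

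**Outer bound $\supp{u^*} \subset V^{\Box}_{L_2}$.**

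For the outer bound I'd argue: suppose some support point $x$ has $\|x\|_\infty = m > L_2$. WLOG $x^1 = m$. By the diagonal and axis symmetries (fixed-point under all four $R_e$), the presence of a far-out point forces a whole cascade of points closer in. Specifically, along the column $x^1 = m$, using the diagonal rearrangement fixed-point property, having a support point at $\ell^\infty$-distance $m$ forces the rows $0, 1, \dots$ up to height roughly $m$ to each have support extending out to column $\approx m$ — because the diagonal $R_{(e_1+e_2)/2}$ pushes mass toward the main diagonal, so a point at $(m, 0)$ being in the support forces (by the diagonal ordering) at least $m$ points along each relevant diagonal to be present. This forces $\supp{u^*}$ to contain a full triangular region with roughly $m^2$ points. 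If $m > L_2 \approx \sqrt{N/2}$, then $m^2 > N/2$, and by the 8-fold symmetry we'd get more than $N$ points total — contradiction. The "$+2$" slack in $L_2$ absorbs boundary rounding.

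**Key steps in order.**

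\begin{enumerate}
\item Establish that $u^*$ is invariant under the full symmetry group (coordinate permutations and sign changes), so $\supp{u^*}$ is a union of orbits $V(x)$; hence it suffices to control one fundamental domain, say $\{0 \le x^2 \le x^1\}$.

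\item Encode the fixed-point conditions under $R_{e_1}, R_{e_2}, R_{(e_1+e_2)/2}, R_{(e_1-e_2)/2}$ as monotonicity/nesting statements: each row is a symmetric interval; the half-widths are nonincreasing away from the axes; and along diagonals the support is "downward saturated" as in Proposition~\ref{prop-geom_C_c_basic1}(i).

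\item Inner bound: show that if $V^{\Diamond}_{L_1} \not\subset \supp{u^*}$, the nesting forces $\ge N+1$ points to lie in the support (using $|V^{\Diamond}_{L_1}| \le N$ strictly via $L_1 = \lfloor(\sqrt N - 5)/2\rfloor$), contradiction.

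\item Outer bound: show a support point at $\ell^\infty$-distance $m$ forces a triangular block of $\sim m^2$ support points in each of the 8 symmetric sectors; if $m > L_2 = \lceil\sqrt{N/2}\rceil + 2$ this exceeds $N$, contradiction.
\end{enumerate}

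**Main obstacle.**

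The hardest part is the outer bound. Converting "there is a far point $(m,0)$" into "a full $\sim m^2$ triangle is occupied" requires carefully using the diagonal fixed-point conditions to propagate the far point inward through the diagonals, and tracking the constants so that the count comes out as $m^2/\text{(something)}$ matching $N/2$. The diagonal rearrangements act on lines of slope $\pm 1$ where the correct "centering" (the $\Z$ vs $\Z + 1/2$ distinction from the definition of $\vec E$) must be handled, and I expect the bookkeeping of which fundamental domain a point lands in — and hence which $V^j$-type sets from the proof of Lemma~\ref{lem-well_defined_Cc} are occupied — to be the genuinely delicate step. The inner bound is comparatively routine once the nesting in Step~2 is in hand.
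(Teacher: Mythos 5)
Your outer bound is workable but overbuilt, and your inner bound has a genuine gap. Take the outer bound first: the cascade through triangles and eight sectors is unnecessary, because the fixed-point property under $R_{e_2}$ alone already gives that $(i,j)\in\supp u^*$ implies $(i,0)\in\supp u^*$ (the parameter $0$ is first in the order $\prec$), and similarly for rows; hence $\supp u^*\subseteq[-X_2,X_1]\times[-Y_2,Y_1]$, where $X_1,X_2,Y_1,Y_2$ denote the maximal extents of the support along the four half-axes. This is exactly how the paper proceeds: it proves the chain $0\le X_2\le Y_2\le Y_1\le X_1\le X_2+1$ (the one-dimensional centering on the axes gives $X_2\le X_1\le X_2+1$, the diagonal invariances interlace the $X$'s with the $Y$'s), from which Proposition~\ref{prop-geom_C_c_basic1} yields the sandwich $V^{\Diamond}_{X_1-1}\subset V^{\Diamond}_{X_2}\subset\supp u^*\subset V^{\Box}_{X_1+1}\subset V^{\Box}_{X_2+2}$, and then a \emph{single} two-sided count $2(X_1-1)^2+2(X_1-1)+1\le N\le(2X_2+5)^2$ pins down both radii simultaneously. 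The crucial point is that inner and outer radii are tied to the same quantity up to an additive constant; your proposal treats the two bounds as independent contradiction arguments and never establishes this linkage.

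The inner bound is where your sketch actually fails. The stated contradiction is not a valid inference: a missing diamond point cannot ``force $\ge N+1$ points to lie in the support,'' and ``a whole inner region of size $>N$ would have to be missing'' contradicts nothing, since the complement of an $N$-point support is infinite anyway. The natural repair --- if $(a,b)$ with $a+b\le L_1$ is missing, downward closure confines the support (inside the outer box) to an L-shaped region of size roughly $4L_1L_2$ --- fails \emph{numerically}: with $L_1\approx\sqrt N/2$ and $L_2\approx\sqrt{N/2}$ this region holds about $\sqrt2\,N>N$ points, so there is no contradiction. The constants in the lemma sit precisely in the regime where unimodality-plus-nesting and area counting are insufficient; one needs the near-equality of the four axis extents (the chain above), which rules out cross- or L-shaped fixed points via the \emph{centering} of the diagonal rearrangements, not merely their monotonicity. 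Ironically, the mechanism you develop for the outer bound is what would rescue the inner one: if $(E,0)\in\supp u^*$, row centering puts $(i,0)$ in the support for all $|i|\le E-1$, and the two diagonal classes through each such axis point (centered at parameter $0$, with the $\Z$ versus $\Z+1/2$ parity handled) fill the segments to $(0,\pm i)$, giving $V^{\Diamond}_{E-1}\subset\supp u^*$; applied with $E=\max(X_1,Y_1)$ this recovers the sandwich and the counting closes the lemma. But as written, your step~3 does not contain this argument, and without it the inner bound is unproved.
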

\begin{proof}
    Let
    \begin{align*}
        X_1&=\max \{x:\ (x,0) \in \supp{u^*}\},\quad  &X_2=-\min \{x:\ (x,0) \in \supp{u^*}\},\\
        Y_1&=\max \{y:\ (0,y) \in \supp{u^*}\},  &Y_2=-\min \{y:\ (0,y) \in \supp{u^*}\}.
    \end{align*}
    Then
    \begin{equation}
        0 \le X_2 \le Y_2 \le Y_1 \le X_1 \le X_2+1.
    \end{equation}
    By the above inequality and Proposition~\ref{prop-geom_C_c_basic1}, we have
    $$V^{\Diamond}_{X_1-1} \subset V^{\Diamond}_{X_2} \subset \supp{u^*} \subset V^{\Box}_{X_1+1} \subset V^{\Box}_{X_2+2}.$$

    By (\ref{equ-num_box_dia}), we have
    $$2(X_1-1)^2+2(X_1-1)+1 \le N \le (2X_2+5)^2.$$
    Then $X_1 \le \left\lceil \sqrt{\frac{N}{2}} \right\rceil +1$ and $X_2 \ge \left\lfloor \frac{\sqrt{N}-5}{2} \right\rfloor$, which proves the lemma.
\end{proof}

\begin{corollary}
    \label{cor-geom_Cc}
    Let $u\in C_c(\Z^2)$, $N \in \N$ large enough, $u^*=R_{\Z^2}u$, then
    $$\inf_{(x,y)\in V^{\Diamond}_{L_1}}u^* \ge \sup_{(x,y) \notin V^{\Box}_{L_2}} u^*.$$
    where $L_1=\left\lfloor \frac{\sqrt{N}-5}{2} \right\rfloor$, $L_2=\left\lceil \sqrt{\frac{N}{2}} \right\rceil +2$.
\end{corollary}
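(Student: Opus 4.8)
The plan is to read the corollary directly off the two-sided support localization established in Lemma~\ref{thm-geom_Cc}. Since $u\in C_c(\Z^2)$ is nonnegative and $u^*$ is equimeasurable with $u$, the rearranged function $u^*$ is itself nonnegative and finitely supported; the only extra ingredient needed is the precise location of its support relative to the nested regions $V^{\Diamond}_{L_1}$ and $V^{\Box}_{L_2}$.

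First I would invoke Lemma~\ref{thm-geom_Cc} to obtain, for $N=|\supp u|$ large enough, the sandwich
$$V^{\Diamond}_{L_1} \subset \supp u^* \subset V^{\Box}_{L_2},$$
with $L_1=\left\lfloor \frac{\sqrt{N}-5}{2} \right\rfloor$ and $L_2=\left\lceil \sqrt{\frac{N}{2}} \right\rceil +2$. The right-hand inclusion says that every vertex outside the box carries zero mass: if $(x,y)\notin V^{\Box}_{L_2}$ then $(x,y)\notin \supp u^*$, hence $u^*(x,y)=0$. Since $(V^{\Box}_{L_2})^c$ is a nonempty (indeed infinite) set on which $u^*$ vanishes identically, this gives $\sup_{(x,y)\notin V^{\Box}_{L_2}} u^* = 0$.

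Next I would use the left-hand inclusion together with nonnegativity. Because $u^*\ge 0$ on all of $\Z^2$, the infimum of $u^*$ over the finite set $V^{\Diamond}_{L_1}$ is nonnegative (in fact strictly positive, since $V^{\Diamond}_{L_1}\subset \supp u^*$ forces $u^*>0$ there). Combining the two observations yields
$$\inf_{(x,y)\in V^{\Diamond}_{L_1}} u^* \ge 0 = \sup_{(x,y)\notin V^{\Box}_{L_2}} u^*,$$
which is exactly the asserted inequality.

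There is no genuine obstacle at this level: all of the substantive work is already contained in Lemma~\ref{thm-geom_Cc}, whose proof pins the support between the diamond and the box by iterating the one-step rearrangements and counting lattice points via \eqref{equ-num_box_dia}. The corollary is simply the clean \emph{a posteriori} restatement of a ``layer-cake'' comparison: the values taken on the inner diamond dominate those seen off the outer box, precisely because $u^*$ is nonnegative and supported inside the box. The only point worth keeping in mind is that the statement is vacuous unless $N$ is large enough for $L_1$ to be a nonnegative integer and for the inclusions of Lemma~\ref{thm-geom_Cc} to be available, which is why the hypothesis that $N$ is large is retained.
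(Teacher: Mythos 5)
There is a genuine gap, and it comes from a misreading of the role of $N$. You have silently identified $N$ with $|\supp u|$, but in Corollary~\ref{cor-geom_Cc} the function $u\in C_c(\Z^2)$ and the integer $N$ are \emph{independent} data: $N$ is a threshold parameter, and the substantive case is $N<|\supp u|$ (think of $u$ with enormous support and $N$ moderate). In that case Lemma~\ref{thm-geom_Cc}, applied to $u$ itself, localizes $\supp u^*$ between a diamond and a box whose radii are computed from $|\supp u|$ — \emph{not} from $N$ — so the inclusion $\supp u^*\subset V^{\Box}_{L_2}$ with $L_2=\lceil\sqrt{N/2}\rceil+2$ simply fails, and $\sup_{(x,y)\notin V^{\Box}_{L_2}}u^*$ is in general strictly positive. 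Your chain $\inf_{V^{\Diamond}_{L_1}}u^*\ge 0=\sup_{\notin V^{\Box}_{L_2}}u^*$ then collapses: the right-hand side is not $0$, and nonnegativity alone says nothing. That the nontrivial reading is the intended one is confirmed by how the corollary is used downstream: in Lemma~\ref{lem-def_fair}, Theorem~\ref{thm:geometry_of_Schwarz_symmetric_functions} and Theorem~\ref{thm:compact_embedding_for_rearranged_functions}, the inequality is invoked with $N$ a cut-off level for functions of arbitrarily large (even infinite) support, and the resulting bound $|u(x)|^p\le 1/|V^{\Diamond}_{L_1}|$ off the box would be vacuous if the supremum there were always zero.

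The paper's actual proof is a contradiction argument through cut-offs, which is the ingredient your proposal is missing. Assuming some $(x_0,y_0)\notin V^{\Box}_{L_2}$ carries a value $a_M>\inf_{V^{\Diamond}_{L_1}}u^*$, one passes to $v=\widehat{u^*}_M$, the $M$-th cut-off of $u^*$. This $v$ is itself Schwarz symmetric with $|\supp v|=M$, so Lemma~\ref{thm-geom_Cc} applies to $v$ with its own support size $M$ in place of $N$. Since some point of $V^{\Diamond}_{L_1}$ has $u^*$-value below $a_M$ and is therefore cut to zero, $V^{\Diamond}_{L_1}\not\subset\supp v$, forcing $M<N$; since $(x_0,y_0)\in\supp v$ lies outside $V^{\Box}_{L_2}$, $\supp v\not\subset V^{\Box}_{L_2}$, forcing $M>N$ — a contradiction. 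The key idea, absent from your argument, is that Lemma~\ref{thm-geom_Cc} must be applied not to $u$ but to a cut-off of $u^*$ whose support size is the unknown level $M$ of the offending value, playing the two inclusions of the lemma against each other. Your argument is correct only in the degenerate boundary case $N\ge|\supp u|$ and cannot be repaired without this cut-off device.
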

\begin{proof}
    Let $\ran(u^*)=[a_1 \succ a_2 \succ \cdots]$, we prove the corollary by contradiction. Suppose that there exists $(x_0,y_0) \notin V^{\Box}_{L_2}$ satisfying
    $$u^*(x_0,y_0)=a_M > \inf\limits_{(x,y)\in V^{\Diamond}_{L_1}}u^* \ge 0.$$
    Let $v=\widehat{u^*}_M$ be the $M$-th cut-off of $u^*$, then
    \begin{enumerate}[(i)]
        \item $v=R_{\Z^2}v$.
        \item $|\supp{v}|=M$.
        \item $V^{\Diamond}_{L_1}\not\subset \supp{v}$. By Lemma~\ref{thm-geom_Cc}, $M < N$.
        \item $\supp{v} \not\subset V^{\Box}_{L_2}$. By Lemma~\ref{thm-geom_Cc}, $M > N$.
    \end{enumerate}
    From (iii) and (iv), we get a contradiction, and this proves the result.
\end{proof}

\subsubsection{\bf{Well-definedness for admissible functions}}

$~$

Now we are ready to prove that $R_{\Z^2}$ is well-defined for admissible functions on $\Z^2$.
\begin{lemma}\label{lem-def_fair}
Given $u\in C_0^+(\Z^2)$ be admissible, and $x_0 \in \Z^2$, there exists a positive integer $M$ such that
$$R^M u(x_0)=R_e\circ R^M u(x_0)$$
holds for all $e\in \Vec{E}$.
\end{lemma}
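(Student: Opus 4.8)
The plan is to deduce the admissible case from the finitely supported case (Lemma~\ref{lem-well_defined_Cc}) by approximating $u$ with its cut-offs $\hat{u}_N$ and using the geometric control of Lemma~\ref{thm-geom_Cc}. The engine of the argument is that one-step rearrangements commute with cut-offs. First I would establish, for any admissible $v$ with $\ran(v)=[a_1\succ a_2\succ\cdots]$ and any level $N$ at which there is a strict drop $a_N>a_{N+1}$, the identity
$$\widehat{(R_e v)}_N=R_e(\hat{v}_N),\qquad e\in\Vec{E}.$$
Because each $R_e$ is equimeasurable, the global threshold $a_{N+1}$ is the same for $v$ and $R_e v$, so this reduces to the one-dimensional claim that $R_{\Z}$ commutes with thresholding at a value lying strictly between two consecutive function values: both ``sort then threshold'' and ``threshold then sort'' place the surviving values, in decreasing order, at exactly the same central sites of each line $V_e^{\alpha}$. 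Composing the six factors of the cycle $R$ and iterating then yields $\widehat{(R^k u)}_N=R^k\hat{u}_N$ for every $k$.

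Next I would fix $x_0$ and choose $N$. Since $u$ vanishes at infinity, either $u$ has finite support, in which case Lemma~\ref{lem-well_defined_Cc} applies directly with a uniform $K$, or its positive values accumulate only at $0$; in the latter case $a_n>0$ for all $n$ and there are arbitrarily large $N$ with $a_N>a_{N+1}>0$ and $|\supp \hat{u}_N|=N$. I would pick such an $N$, large enough that $L_1=\lfloor(\sqrt{N}-5)/2\rfloor\ge\|x_0\|_1$, so that $x_0\in V^{\Diamond}_{L_1}$. By Lemma~\ref{lem-well_defined_Cc} the $C_c$ function $\hat{u}_N$ stabilizes after $M:=K_N$ cycles, so $\hat{u}_N^*:=R^M\hat{u}_N=R_{\Z^2}\hat{u}_N$ is a fixed point of every $R_e$; and by Lemma~\ref{thm-geom_Cc} we have $V^{\Diamond}_{L_1}\subset\supp \hat{u}_N^*$, hence $x_0\in\supp \hat{u}_N^*$.

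Finally I would combine these. From the iterated commuting identity, $\widehat{(R^M u)}_N=R^M\hat{u}_N=\hat{u}_N^*$, and since $x_0\in\supp \hat{u}_N^*$ this forces $R^M u(x_0)=\hat{u}_N^*(x_0)>0$: the value of the full rearrangement at $x_0$ is captured by the stabilized cut-off. Applying the commuting identity once more to $v=R^M u$ gives, for each $e\in\Vec{E}$,
$$\widehat{(R_e R^M u)}_N=R_e\bigl(\widehat{(R^M u)}_N\bigr)=R_e\hat{u}_N^*=\hat{u}_N^*,$$
so evaluating at $x_0\in\supp \hat{u}_N^*$ yields $R_e R^M u(x_0)=\hat{u}_N^*(x_0)=R^M u(x_0)$, which is the assertion.

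The main obstacle I anticipate is the commuting identity, specifically the bookkeeping of ties in the multiset $\ran(v)$: the cut-off is defined through the total order $\succ$, so I must choose $N$ at a genuine real drop $a_N>a_{N+1}$ to make the global threshold unambiguous and to guarantee that $R_e$, being equimeasurable, cannot shuffle a surviving value past a discarded one. Making precise that on each line the top-$N$ values occupy exactly the central sites, independently of the smaller tail values lying further out, is where the explicit structure of $R_{\Z}$ must be used with care.
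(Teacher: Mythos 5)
Your proposal is correct and follows essentially the same route as the paper's proof: approximate $u$ by its $N$-th cut-off with $N$ chosen so that $x_0\in V^{\Diamond}_{L_1}$, stabilize $\hat{u}_N$ via Lemma~\ref{lem-well_defined_Cc}, and use Lemma~\ref{thm-geom_Cc} to ensure $x_0$ lies in the stabilized support. The only difference is presentational: you spell out the commuting identity $\widehat{(R_e v)}_N=R_e(\hat{v}_N)$ and the choice of $N$ at a strict drop $a_N>a_{N+1}$ (handling ties), which the paper compresses into the one-line assertion $(R_e u)^{-1}(a_n)=(R_e \hat{u}_N)^{-1}(a_n)$ for $n\le N$.
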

\begin{proof}
Let $\ran(u)=[a_1 \succ a_2 \succ \cdots]$, $L \in \N$ such that $x_0 \in V^{\Diamond}_L$, and $N\in \N$ such that $\left\lfloor \frac{\sqrt{N}-5}{2} \right\rfloor \ge L$. Let $\hat{u}_N$ be the $N$-th cut-off of $u$. Then for all $n\le N$,
$$(R_e u)^{-1}(a_n)=(R_e \hat{u}_N)^{-1}(a_n),$$
i.e. the position of $a_n$ in $R_e u$ is the same with that in $R_e \hat{u}_N$. By Lemma~\ref{lem-well_defined_Cc}, there is a positive integer $M$ such that $R^M \hat{u}_N=R_{\Z^2} \hat{u}_N$. Moreover, $x_0 \in \supp R_{\Z^2} \hat{u}_N$ by Lemma~\ref{thm-geom_Cc}. This proves the lemma.
\end{proof}

\subsection{Schwarz rearrangement on $\Z^d$}
In a similar way to Schwarz rearrangement on $\Z^2$, we define the discrete Schwarz rearrangement on $\Z^d$ for $d\ge 3$. Let $e_i$ be the $i$-th standard unit vector of $\R^d$, and let
$$\Vec{E}=\{e_i,\frac{e_i-e_j}{2},\frac{e_i+e_j}{2}: \ 1\le i< j \le d\}.$$
\begin{prop}
    \label{prop-positive_combination}
    Let $c: \Vec{E} \to \R_+$ be a nonnegative function on $\Vec{E}$, then
    $$\sum_{e\in \Vec{E}} c(e)e=0 \ \Leftrightarrow \  c\equiv 0.$$
\end{prop}
\begin{proof}
    Write $I(e)=i$ if $e=e_i$, $\frac{e_i+e_j}{2}$, or $\frac{e_i-e_j}{2}$, and let $I=\min\{I(e): c(e)\ne 0\}$. If $c\notequiv\:0$, then $I\le d$, and
    $$\sum_{e\in \Vec{E}} c(e)e=\sum_{i=I}^d c'_i e_i$$
    for some constants $c'_i$. Moreover, $c'_I >0$, and the result follows.
\end{proof}

Given $e\in \Vec{E}$, for any $x,y \in \Z^d$, we define $x\overset{e}{\sim} y$ if $(x-y)$ is parallel to $e$ in $\R^d$. Then $\overset{e}{\sim}$ is an equivalence relation, and there is an unique corresponding partition $\Z^d=\bigsqcup_{\alpha \in I_e} V_e^{\alpha}$. The map
\begin{align*}
    V_e^{\alpha} &\to \R,\\
    x &\mapsto <e,x>,
\end{align*}
is a bijection to $\Z$ or $\Z+1/2$. Then $R_{\Z}$ on $V_e^{\alpha}$ is well-defined. Define one-step rearrangement $R_e$ via
\begin{equation*}
    (R_e u)|_{V_e^{\alpha}}=R_{\Z}(u|_{V_e^{\alpha}}), \ u\in C_0^+(\Z^d).
\end{equation*}

\begin{definition}
    \label{def:definition_of_discrete_Schwarz_rearrangement}
    Let $R^k$ be the iteration of $k$ one-step rearrangements, the Schwarz rearrangement of an admissible function $u\in C_0^+(\Z^d)$ is defined via
    \begin{equation}
        R_{\Z^d} u(x):= \lim_{k\to \infty} R^k u(x), \ x\in \Z^d.
    \end{equation}
\end{definition}
Without causing ambiguity, we write $u^*:=R_{\Z^d} u$ for convenient.

\begin{lemma}
    \label{lemma:well-definedness_for_finitely_supported_functions}
    Let $u\in C_c(\Z^d)$ be nonnegative, then there exists $M\in \N$ such that
    $$R_e(R^M u)=R^M u$$
    holds for all $e\in \Vec{E}$.
\end{lemma}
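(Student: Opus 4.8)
The plan is to run a single global monotonicity argument rather than tracking individual function values as in the two-dimensional case. The idea is to find one ``potential'' $\psi\colon\Z^d\to\R$ that is strictly decreased by every one-step rearrangement $R_e$ \emph{unless} the function is already $R_e$-invariant, and then exploit that only finitely many configurations can occur. Concretely, I would first invoke Proposition~\ref{prop-positive_combination}: it is equivalent to the existence of a weight $w\in\R^d$ (for instance $w=(d,d-1,\dots,1)$) with $\langle w,e\rangle>0$ for every $e\in\Vec{E}$, i.e. $\Vec{E}$ lies in an open half-space. I would then set $\psi(x)=\|x\|_2^2-\varepsilon\langle w,x\rangle$ for a small $\varepsilon>0$ to be chosen, and $\Phi(f)=\sum_{x\in\Z^d}f(x)\psi(x)$.

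The first step is to note that all iterates stay in a fixed finite set: since $R_{\Z}$ packs the at most $N=|\supp u|$ nonzero values on each line around its center, running through the coordinate directions $e_1,\dots,e_d$ once (each of which occurs in a cycle, and none of which increases $|x^i|$ for $i$ already treated) forces $\supp(R^k u)\subseteq\{\|x\|_\infty\le N\}$, so only finitely many functions $R^k u$ arise. The second, and central, step is the per-line behaviour of $\psi$. Restricted to any line $\ell$ parallel to a fixed $e\in\Vec{E}$, the map $x\mapsto\psi(x)$ is a strictly convex quadratic whose minimizer sits at the point of $\ell$ with $\langle e,x\rangle$ closest to $0$; the quadratic part orders the lattice points of $\ell$ by their $\prec$-distance from this center, while the linear term $-\varepsilon\langle w,x\rangle$, using $\langle w,e\rangle>0$, breaks the remaining $\pm$ ties exactly as the order $0\prec1\prec-1\prec2\prec\cdots$ does. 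Choosing $\varepsilon$ small enough (uniformly, since there are finitely many directions and the support is confined to $\{\|x\|_\infty\le N\}$) makes $\psi|_\ell$ \emph{injective} and its increasing order coincide with the $\prec$-order used by $R_{\Z}$. Consequently $R_e$ is precisely the operation that, line by line, pairs larger values with smaller values of $\psi$; the one-dimensional rearrangement inequality gives $\Phi(R_e f)\le\Phi(f)$, and injectivity of $\psi|_\ell$ upgrades this to equality \emph{if and only if} $f=R_e f$.

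Granting these two steps, the conclusion is immediate. The sequence $\Phi(R^k u)$ is non-increasing (each cycle is a composition of maps $R_e$) and takes only finitely many values, hence is eventually constant, say from $k=M$ on. Writing one cycle as $R_{e^{(r)}}\circ\cdots\circ R_{e^{(1)}}$ with $g_0=R^M u$ and $g_i=R_{e^{(i)}}g_{i-1}$, the equality $\Phi(g_r)=\Phi(g_0)$ together with $\Phi(g_i)\le\Phi(g_{i-1})$ forces $\Phi(g_i)=\Phi(g_{i-1})$ for every $i$, whence $g_i=g_{i-1}$ by the strictness just established. Since every $e\in\Vec{E}$ appears among the $e^{(i)}$, this yields $R_e(R^M u)=R^M u$ for all $e\in\Vec{E}$, as claimed. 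I expect the main obstacle to be the second step: checking that one fixed perturbed potential orders \emph{every} line in \emph{every} direction exactly according to $\prec$ (getting the $\pm$ tie-breaks right in both the $\Z$ and $\Z+\tfrac12$ cases simultaneously), and that a single $\varepsilon$ suffices for all directions. This is precisely where Proposition~\ref{prop-positive_combination} is indispensable, since it is what guarantees a common orientation $w$ compatible with the asymmetric order $\prec$ along all of $\Vec{E}$ at once. As an alternative one could instead mimic the induction on $|\supp u|$ from Lemma~\ref{lem-well_defined_Cc}, stabilizing the positions of $a_1,\dots,a_{N-1}$, reducing the motion of $a_N$ to a single orbit $V(x_K)$ via the nested-hull properties, and then using $w$ to order the ``shells'' of that orbit; the potential argument above simply packages this bookkeeping into one monotone quantity.
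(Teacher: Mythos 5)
Your argument is correct, but it takes a genuinely different route from the paper's. The paper proves the lemma by induction on $|\supp u|$: it freezes the positions of the $N-1$ largest values, shows that the position $X_k$ of the smallest positive value eventually enters a single orbit $V(X_M)$, and then rules out perpetual motion by noting that successive displacements are positive multiples of vectors in $\Vec{E}$, so a directed cycle would produce a nontrivial nonnegative combination $\sum_e c(e)e=0$, contradicting Proposition~\ref{prop-positive_combination}. You invoke the same proposition but through its dual form --- a functional $w$ with $\langle w,e\rangle>0$ on all of $\Vec{E}$ --- and convert it into a single Lyapunov functional $\Phi(f)=\sum_x f(x)\psi(x)$ that strictly decreases under every one-step rearrangement that acts nontrivially. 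This buys you what the paper's proof does not give: no induction on support size, no positional bookkeeping of individual values, stabilization of the whole function at once, and a reusable equality statement ($\Phi(R_e f)=\Phi(f)$ iff $R_e f=f$) coming from the classical pairing rearrangement inequality. Moreover, the obstacle you flag as the main risk in fact dissolves: on a line in direction $e$ parametrized by $t=\langle e,x\rangle\in\Z$ or $\Z+\tfrac12$, one computes
\begin{equation*}
\psi(x)=c_\ell+\frac{1}{\|e\|_2^2}\Bigl(t-\frac{\varepsilon\langle w,e\rangle}{2}\Bigr)^2-\frac{\varepsilon^2\langle w,e\rangle^2}{4\|e\|_2^2},
\end{equation*}
with $c_\ell$ a line constant, so any single $\varepsilon$ with $0<\varepsilon\langle w,e\rangle<1$ for all (finitely many) $e\in\Vec{E}$ places the vertex in $(0,\tfrac12)$; this makes $\psi$ injective on every line and its increasing order coincide with $\prec$ simultaneously in the $\Z$ and $\Z+\tfrac12$ cases, uniformly and with no dependence on the support box at all.

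One detail in your first step needs repair, though it does not threaten the proof. The parenthetical ``none of which increases $|x^i|$ for $i$ already treated'' is false for the diagonal steps: $R_{(e_i\pm e_j)/2}$ does move the coordinates $x^i,x^j$, and can push $\|x\|_\infty$ beyond $N$ even after $R_{e_i}$ and $R_{e_j}$ have acted. What is true, and suffices, is that each $R_e$ enforces $|\langle e,x\rangle|\le N$ on the support while conserving the transverse functionals of its lines, and each coordinate step resets $|x^i|\le N$ once per cycle; chasing these bounds through one cycle confines $\supp R^k u$ to $\{\|x\|_\infty\le C_d N\}$ for a dimensional constant $C_d$ and all $k$ past the first cycle. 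With the box enlarged to $C_dN$ (which changes nothing else, since your $\varepsilon$ is box-independent), the finiteness of configurations holds, $\Phi(R^k u)$ is nonincreasing with finitely many values, and your strictness argument closes the proof exactly as you describe.
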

\begin{proof}
    In an analog way to the proof of Lemma~\ref{lem-well_defined_Cc}, we prove by induction on $|\supp{u}|$. Given $x=(x^1,x^2,\cdots,x^d)\in \Z^d$, we write
    $$V(x)=\{(\pm x^{\mu_1},\pm x^{\mu_2},\cdots,\pm x^{\mu_d}):\ \text{$\mu$ is a permutation of $(1,2,\cdots,d)$}\}.$$
    Let $a\in \ran(u)$ be the smallest positive function value, $X_k$ be the position of $a$ in $R^k u$, then there is a $M\in \N$ such that $X_k \in V(X_M)$ for all $k \ge M$.

    Consider a directed graph $G=(V(X_M),E)$, where there is a directed edge from $x'$ to $x''$ if and only if there exists an $e\in \Vec{E}$ such that $<e,(x''-x')>=|e|\cdot|(x''-x')|$, i.e. the directions of $e$ and $(x''-x')$ are the same. Then $X_M \to X_{M+1} \to \cdots$ is a directed walk on the directed graph $G$, we only need to prove $G$ contains no directed cycles.

    Suppose that $Y_1\to Y_2 \to \cdots \to Y_K \to Y_{K+1}=Y_1$ is a directed cycle of $G$. Let $\Vec{v}_k=Y_{k+1}-Y_k$, then:
    \begin{enumerate}[(i)]
        \item $\sum_k \Vec{v}_k =0$;
        \item For each $\Vec{v}_k$, there exists an $e^k\in \Vec{E}$ and $C_k>0$ such that $\Vec{v}_k=C_k e^k$.
    \end{enumerate}
    By (ii) and Proposition~\ref{prop-positive_combination}, $\sum_k \Vec{v}_k \ne 0$, which is contradictory to (i). So $G$ contains no directed cycles, and this proves the lemma.
\end{proof}
Following the arguments of Lemma~\ref{lem-def_fair}, one can prove that $R_{\Z^d}$ is well-defined for $d \ge 3$, and we omit the proof here.

\subsection{Schwarz symmetric functions}
\begin{definition}[Schwarz symmetric function]
    An admissible function $u\in C_0^+(\Z^d)$ is called Schwarz symmetric if $u=R_{\Z^d} u$.
\end{definition}
Let
$$\mathcal{S}(\Z^d):=\{u \in C_0^+(\Z^d): \ R_{\Z^d} u=u\}$$
be the set of all Schwarz symmetric functions on $\Z^d$, and let
$$\mathcal{S}^p(\Z^d) := \mathcal{S}(\Z^d) \cap l^p(\Z^d).$$
Using the same arguments as Lemma~\ref{thm-geom_Cc}, we have the following result.
\begin{theorem}[Geometry of Schwarz symmetric functions]
    \label{thm:geometry_of_Schwarz_symmetric_functions}
    Given $N>0$, there exist two constants $L_1 < L_2$ depending on $N$ and $d$, such that
    \begin{equation}
        V_{L_1}^{\Diamond}\subset \supp{\hat{u}_N} \subset V_{L_2}^{\Box}
    \end{equation}
    holds for all $u\in \mathcal{S}(\Z^d)$, where $\hat{u}_N$ is the $N$-th cut-off of $u$. Moreover, if $|\supp{u}|=\infty$, we can choose $L_1 \to \infty$ as $N\to \infty$.
\end{theorem}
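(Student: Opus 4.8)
The plan is to reduce the claim to the finitely supported case already handled in Lemma~\ref{thm-geom_Cc} by showing that the $N$-th cut-off of a Schwarz symmetric function is again Schwarz symmetric. The first step is the commutation identity
$$R_e\,\hat u_N=\widehat{R_e u}_N,\qquad e\in\Vec{E},$$
valid for every admissible $u$. This holds because $R_{\Z}$ acts line by line and sends the $k$-th largest value on a line to its $k$-th $\prec$-position irrespective of the smaller values on that line; since the cut-off keeps on each line exactly a $\prec$-prefix of its sorted values, the positions into which $a_1,\dots,a_N$ are sent by one one-step rearrangement are unchanged by discarding the tail. (This sharpens the remark already used in the proof of Lemma~\ref{lem-def_fair}.) Iterating yields $R^k\hat u_N=\widehat{R^k u}_N$ for all $k$.

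Next I would pass to the limit. If $u\in\mathcal{S}(\Z^d)$ then $R^k u\to u$ pointwise, and by (the $\Z^d$ analog of) Lemma~\ref{lem-def_fair} the values and positions of $a_1,\dots,a_N$ stabilize, so $\widehat{R^k u}_N\to\hat u_N$; meanwhile $R^k\hat u_N\to R_{\Z^d}\hat u_N$ because $\hat u_N\in C_c(\Z^d)$ (Lemma~\ref{lemma:well-definedness_for_finitely_supported_functions}). Hence $R_{\Z^d}\hat u_N=\hat u_N$, i.e. $\hat u_N$ is a finitely supported Schwarz symmetric function with $|\supp\hat u_N|\le N$. In particular the maximal value sits at the origin, so $V^{\Diamond}_{0}=\{0\}\subset\supp\hat u_N$ and the left inclusion is available trivially with $L_1=0$.

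The remaining geometry is a $d$-dimensional transcription of Lemma~\ref{thm-geom_Cc}. The $\Z^d$ versions of Proposition~\ref{prop-geom_C_c_basic1} (solidity of the support: if the orbit $V(x)$ lies in $\supp\hat u_N$ then so does $\overline V(x)$, and if $\partial\overline V(x)$ misses the support then so does its exterior) hold by the same argument, and the extents of $\supp\hat u_N$ along the $2d$ coordinate directions agree up to an additive constant because the one-step rearrangements along $e_i$ and $\tfrac{e_i\pm e_j}{2}$ equalize the profile in the corresponding directions. Combined with $|\supp\hat u_N|\le N$ and the cardinalities $|V^{\Box}_L|=(2L+1)^d$, $|V^{\Diamond}_L|\sim\tfrac{2^d}{d!}L^d$, solidity forces $\supp\hat u_N\subset V^{\Box}_{L_2}$ for the least integer $L_2$ with $(2L_2+1)^d\gtrsim N$, a bound depending only on $N$ and $d$.

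Finally, for the ``moreover'' clause I would note that when $|\supp u|=\infty$ the values $a_n\downarrow0$ force $|\supp\hat u_N|\to\infty$ as $N\to\infty$; solidity then gives $V^{\Diamond}_{L_1}\subset\supp\hat u_N$ with $|V^{\Diamond}_{L_1}|\le|\supp\hat u_N|$, and since $|V^{\Diamond}_L|\sim\tfrac{2^d}{d!}L^d$ this produces an inner radius $L_1=L_1(N)\to\infty$. I expect the one genuinely delicate point to be the reduction step: because a single $R_e$ rearranges infinite lines, one must verify carefully that the pointwise stabilization of $R^k u$ toward $u$ transfers to stabilization of the finitely many top-$N$ positions entering $\widehat{R^k u}_N$. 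Once this is in place, the sandwich is the same counting argument as in Lemma~\ref{thm-geom_Cc}, now carried out in $\Z^d$.
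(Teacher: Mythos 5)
Your proposal is correct and takes essentially the same approach as the paper, whose entire proof is the remark that the theorem follows ``using the same arguments as Lemma~\ref{thm-geom_Cc}'' — i.e.\ precisely your reduction (the $N$-th cut-off of a Schwarz symmetric function is again Schwarz symmetric, via the commutation $R_e\,\hat u_N=\widehat{R_e u}_N$, a fact the paper already uses in Lemma~\ref{lem-def_fair} and in proving that $\widehat{u^*}_N\in\mathcal{S}(\Z^d)$), followed by the extent-comparison, solidity, and counting sandwich transcribed to $\Z^d$. One simplification: for $u\in\mathcal{S}(\Z^d)$ one has $R_e u=u$ exactly for every $e\in\Vec{E}$, so the limit/stabilization step you flagged as the delicate point is unnecessary — the commutation identity yields $R_e\hat u_N=\widehat{R_e u}_N=\hat u_N$ directly.
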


We have the following properties:
\begin{prop}
    The following properties hold.
    \begin{enumerate}[(i)]
        \item $u=R_{\mathbb{Z}^d} u$ $\Leftrightarrow$ $R_e u = u $ for all $e\in \Vec{E}$ $\Leftrightarrow$ $R_{\Z}(u|_{V_e^{\alpha}})=u|_{V_e^{\alpha}}$ for all $V_e^{\alpha}$.
        \item Any admissible function $u$ is equimeasurable with $R_e u$ for $e\in \Vec{E}$, $R^k u$ for any integer $k$, and $R_{\mathbb{Z}^d} u.$
        \item $\mathcal{S}(\Z^d)=R_{\Z^d}(C_0^+(\Z^d))$, $\mathcal{S}^p(\Z^d)=R_{\Z^d}(l^{p,+}(\Z^d))$, where $l^{p,+}(\Z^d)$ is the set of nonnegative $l^p$ functions on $\Z^d$.
        \item Suppose that $u_j \in \mathcal{S}(\Z^d)$ and $u_j \to u$ pointwisely, then $u \in \mathcal{S}(\Z^d)$.
        \item Suppose that $u\in \mathcal {S}(\Z^d)$ with $|\supp{u}|=\infty$, then $u$ is positive.
    \end{enumerate}
\end{prop}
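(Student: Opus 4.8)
The plan is to reduce all five assertions to a single structural fact: for every admissible $u$, the limit $w:=R_{\Z^d}u$ is a common fixed point of all one-step rearrangements, i.e. $R_e w=w$ for every $e\in\Vec{E}$. Granting this, the second equivalence in (i) is immediate from the definition $(R_e u)|_{V_e^\alpha}=R_{\Z}(u|_{V_e^\alpha})$; the forward direction of the first equivalence is immediate, since if $R_e u=u$ for all $e$ then every one-step rearrangement fixes $u$, so $R^k u=u$ and $R_{\Z^d}u=\lim_k R^k u=u$; and the backward direction is exactly the structural fact, because the hypothesis $u=R_{\Z^d}u$ says $u$ is such a limit $w$. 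The $R_e$- and $R^k$-parts of (ii) are also immediate, since each $R_e$ preserves the multiset of values on every line $V_e^\alpha$ and hence all global level-set cardinalities, and $R^k$ is a finite composition of such maps.

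For the structural fact I would argue in three steps. First, the finitely supported case: if $u\in C_c(\Z^d)$, then by Lemma~\ref{lemma:well-definedness_for_finitely_supported_functions} there is $M$ with $R_e R^M u=R^M u$ for all $e$, so $R^k u=R^M u$ for $k\ge M$ and $R_{\Z^d}u=R^M u$ is fixed by every $R_e$. Second, I reduce the general admissible case to cut-offs: since the trajectory of each value $a_n$ under the iteration $R^k$ depends only on the values $\succ a_n$ (the mechanism exploited in the proof of Lemma~\ref{lem-def_fair}), the functions $R^k u$ and $R^k\hat u_N$ place $a_1,\dots,a_N$ at identical positions for every $k$; letting $k\to\infty$ gives $R_{\Z^d}\hat u_N=\hat w_N$, which is finitely supported and hence, by the first step, fixed by all $R_e$. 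Third, I pass $N\to\infty$: on a fixed line $V_e^\alpha$ one has $\hat w_N|_{V_e^\alpha}\to w|_{V_e^\alpha}$ with $w|_{V_e^\alpha}\in C_0^+$, and the explicit formula for $R_{\Z}$ shows that the value of $R_{\Z}(\,\cdot\,|_{V_e^\alpha})$ at a fixed vertex stabilizes under cut-offs (it is the $m$-th largest value on the line, and for fixed rank $m$ the top $m$ values are retained once $N$ is large); therefore $R_{\Z}(w|_{V_e^\alpha})=\lim_N R_{\Z}(\hat w_N|_{V_e^\alpha})=\lim_N \hat w_N=w$ on $V_e^\alpha$, i.e. $R_e w=w$. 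The limit/continuity interchange in this third step is the main obstacle, and the place where admissibility (vanishing at infinity) of $w$ is genuinely used. Once the structural fact is available, idempotence $R_{\Z^d}w=w$ follows ($w$ is fixed by every $R_e$, hence by every $R^k$, hence by the limit), the limit part of (ii) follows because for each $t>0$ the finitely many values exceeding $t$ occupy a stabilized configuration in a bounded region so that cardinalities are preserved in the limit, and (iii) follows since $R_{\Z^d}$ is idempotent, preserves the value-multiset (hence the $l^p$ norm), and maps into $C_0^+$.

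For (iv) I would use the pointwise characterization coming from (i): $u\in\mathcal S(\Z^d)$ iff $u\in C_0^+(\Z^d)$ and, for every $e$ and every line $V_e^\alpha$, the restriction $u|_{V_e^\alpha}$ is $\prec$-decreasing, where $\prec$ is the order $0\prec 1\prec -1\prec\cdots$ transported by $x\mapsto\langle e,x\rangle$. Being $\prec$-decreasing is a family of two-vertex inequalities $u(p)\ge u(q)$ over consecutive pairs, and each such inequality is preserved under pointwise convergence; hence if $u_j\to u$ pointwise with each $u_j\in\mathcal S(\Z^d)$, the limit $u$ satisfies all of them, so $u$ is $\prec$-decreasing on every line and $R_e u=u$ for all $e$. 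I would flag that admissibility of the limit is the one point requiring care (the example $\mathbbm 1_{V^{\Diamond}_j}\to\mathbbm 1_{\Z^d}$ shows it can fail without further control), so this is either part of the hypothesis or supplied by a uniform tail bound; under it, $u\in\mathcal S(\Z^d)$.

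Finally, (v) is a direct consequence of Theorem~\ref{thm:geometry_of_Schwarz_symmetric_functions}. If $u\in\mathcal S(\Z^d)$ has $|\supp u|=\infty$, the ``moreover'' clause provides cut-offs with $V^{\Diamond}_{L_1}\subset\supp\hat u_N$ and $L_1\to\infty$ as $N\to\infty$. Since $\supp\hat u_N\subset\supp u$, every vertex lies in $V^{\Diamond}_{L_1(N)}\subset\supp u$ for $N$ large, whence $\supp u=\Z^d$ and $u>0$ everywhere.
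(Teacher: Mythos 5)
Your proposal is correct and takes essentially the same route as the paper's proof: the ``structural fact'' that the limit is a common fixed point of every one-step rearrangement is exactly the paper's (terse) combination of Lemma~\ref{lemma:well-definedness_for_finitely_supported_functions} (resp.\ Lemma~\ref{lem-well_defined_Cc}), the cut-off position-stabilization argument of Lemma~\ref{lem-def_fair} (giving $\widehat{u^*}_N\in\mathcal S(\Z^d)$), the passage of the line-wise $\prec$-monotonicity inequalities to pointwise limits for (iv), and Theorem~\ref{thm:geometry_of_Schwarz_symmetric_functions} for (v). Your caveat on (iv) is also well taken: a pointwise limit of Schwarz symmetric functions need not vanish at infinity (e.g.\ $\mathbbm{1}_{V^{\Diamond}_j}\to\mathbbm{1}_{\Z^d}$), so admissibility of the limit $u$ must be assumed (or otherwise supplied), a point the paper's proof uses silently.
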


\begin{proof}
    (i): It is obtained directly from the definition of one-step rearrangements and $R_{\Z^d}$.\\
    (ii): $u\overset{e.m.}{\sim} R_e u \overset{e.m.}{\sim} R^k u$ follows from the definitions. We only need to prove that $u\overset{e.m.}{\sim} R_{\Z^d} u$. Let $t>0$, W.O.L.G., we assume that $\ran(u)=[a_1 \succ a_2 \succ \cdots]=\ran(R^k u)$ with $a_N \ge t$ and $a_{N+1} \le t-\epsilon$ for some $\epsilon>0$. Let $\hat{u}_N$ be the $N$-th cut-off of $u$, then $R_{\Z^d} (\hat{u}_N) = \widehat{(R^k u)}_N$ for all $k$ large enough. For any $x\notin \supp{R_{\Z^d} (\hat{u}_N)}$, $R^k u(x) \le t-\epsilon$ for all $k$ large enough, which shows that $R_{\Z^d}(x) \le t-\epsilon$. This implies that $|\{x\in \Z^d: \R_{\Z^d}u(x) \ge t \}|=N$, i.e. $u\overset{e.m.}{\sim} R_{\Z^d} u$.\\
    (iii): We only need to prove that $R_{\Z^d}u \in \mathcal{S}(\Z^d)$ for any admissible function $u$. This follows from (i) and the fact that $\widehat{u^*}_N \in \mathcal{S}(\Z^d)$ for all $N\in\N$. \\
    (iv): We need to prove that $R_{\Z}(u|_{V_e^{\alpha}})=u|_{V_e^{\alpha}}$ for all $V_e^{\alpha}$. Write $u'=u|_{V_e^{\alpha}}$, $u'_j=u_j|_{V_e^{\alpha}}$, which are functions defined on $\Z$ or $\Z+1/2$. Then for all $x,y \in \Z$ or $\Z+1/2$ with $|x| \le |y|$, we have $u'_j(x) \ge u'_j(y)$ and $u'_j(|x|) \ge u'_j(-|x|)$. This implies $u'(x) \ge u'(y)$ and $u'(|x|) \ge u'(-|x|)$, i.e. $R_{\Z}(u|_{V_e^{\alpha}})=u|_{V_e^{\alpha}}$.\\
    (v): We prove it by contradiction. Suppose that there exists $x\in \Z^d$ such that $u(x)=0$. By Theorem~\ref{thm:geometry_of_Schwarz_symmetric_functions}, we can choose $N$ large enough such that $x\in V_{L_1}^{\Diamond}\subset \supp{\hat{u}_N}$ since $|\supp{u}|=\infty$, then $u(x)>0$. The result follows.
\end{proof}

Let $1\le p < q < +\infty$, then $\mathcal{S}^p(\Z^d) \subset \mathcal{S}^q(\Z^d)$ since $l^p(\Z^d) \subset l^q(\Z^d)$. Moreover, we can prove a very useful compact embedding result:
\begin{theorem}
    \label{thm:compact_embedding_for_rearranged_functions}
    Let $\{u_i\}$ be a sequence in $\mathcal{S}^p(\Z^d)$ with $||u_i||_p \le 1$, $p\ge 1$. Then for all $q>p$, there exists a sub-sequence of $\{u_i\}$ convergent in $l^q(\Z^d)$.
\end{theorem}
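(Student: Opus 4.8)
The plan is to reduce everything to a single \emph{uniform pointwise decay estimate} of Strauss type, after which the conclusion follows from the standard $\varepsilon/3$ argument combining a diagonal pointwise-convergence extraction with uniform control of the tails. Concretely, I would first prove that there is a constant $C=C(d,p)$ such that every $u\in\mathcal{S}^p(\Z^d)$ with $\|u\|_p\le 1$ satisfies
$$u(x)\le C\,\|x\|_\infty^{-d/p}\qquad\text{for all }x\neq 0.$$
This is the only place where Schwarz symmetry is genuinely used, and I expect it to be the main obstacle.

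To establish the decay estimate, write $\ran(u)=[a_1\succ a_2\succ\cdots]$ and recall the elementary bound $\|u\|_p^p\ge\sum_{n=1}^N a_n^p\ge N a_N^p$, which gives $a_N\le N^{-1/p}\|u\|_p\le N^{-1/p}$. Now fix $x$ and suppose $u(x)=a_N$, so that $x\in\supp\hat u_N$. By Theorem~\ref{thm:geometry_of_Schwarz_symmetric_functions} the support of the $N$-th cut-off is contained in $V^{\Box}_{L_2}$ with $L_2=L_2(N,d)$ \emph{independent of the particular function} $u$; since $|V^{\Box}_L|\sim L^d$ forces $L_2(N,d)\le C'N^{1/d}$, membership $x\in V^{\Box}_{L_2}$ yields $\|x\|_\infty\le C'N^{1/d}$, i.e. $N\gtrsim\|x\|_\infty^{d}$. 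Combining with $a_N\le N^{-1/p}$ gives $u(x)=a_N\lesssim\|x\|_\infty^{-d/p}$, which is the claim. The crucial point, and the subtlety to get right, is that the geometric sandwich constants depend only on $N$ and $d$, so the decay bound is uniform over the whole family $\{u_i\}$.

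With the decay estimate in hand, I would proceed as follows. Since $u_i(x)\le\|u_i\|_p\le 1$, the sequence is pointwise bounded, so a diagonal (Bolzano–Weierstrass) argument over the countably many vertices produces a subsequence, still denoted $\{u_i\}$, converging pointwise to some $u$; by the closedness of $\mathcal{S}(\Z^d)$ under pointwise limits established earlier, and by Fatou together with $\|u_i\|_q\le\|u_i\|_p\le 1$, the limit $u$ lies in $\mathcal{S}^q(\Z^d)$. For the tails, the decay estimate gives, for every $R>0$ and every $i$,
$$\sum_{\|x\|_\infty>R}|u_i(x)|^q\le\sum_{\|x\|_\infty>R}|u_i(x)|^p\bigl(C\|x\|_\infty^{-d/p}\bigr)^{q-p}\le C^{q-p}R^{-d(q-p)/p}\sum_{x}|u_i(x)|^p\le C^{q-p}R^{-d(q-p)/p}.$$
Because $q>p$, the right-hand side tends to $0$ as $R\to\infty$, uniformly in $i$; by Fatou the same bound holds for the limit $u$.

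Finally I would run the $\varepsilon/3$ argument: given $\varepsilon>0$, choose $R$ so that the uniform tail bound above is smaller than $\varepsilon^q$ for all $u_i$ and for $u$; on the finite set $\{\|x\|_\infty\le R\}$ the pointwise convergence $u_i\to u$ of finitely many coordinates forces $\sum_{\|x\|_\infty\le R}|u_i(x)-u(x)|^q\to 0$. Splitting $\|u_i-u\|_q^q$ into the finite part and the two tails then yields $\|u_i-u\|_q\to 0$, establishing convergence in $l^q(\Z^d)$. The decay lemma is the heart of the matter; once it is available, the remaining steps are entirely routine.
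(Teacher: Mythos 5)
Your proposal is correct, and at its core it is the paper's own argument in more quantitative clothing: both proofs rest on the uniformity in Theorem~\ref{thm:geometry_of_Schwarz_symmetric_functions} (constants depending only on $N$ and $d$), both split the tail via $|u_i|^q=|u_i|^p\,|u_i|^{q-p}$ with a uniform sup-bound on $|u_i|$ outside a box, and both finish by combining pointwise convergence on a finite set with the uniform tail. The difference is packaging: the paper never extracts a decay rate — it uses the inf-sup consequence $\sup_{y\notin V^{\Box}_{L_2}}u_i(y)^p\le 1/|V^{\Diamond}_{L_1}|$ directly, so it only needs the qualitative fact that $L_1(N,d)\to\infty$ as $N\to\infty$, whereas your Strauss-type bound $u(x)\le C\|x\|_\infty^{-d/p}$ needs the stronger quantitative input $L_2(N,d)\lesssim N^{1/d}$. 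That input is true, but your one-line justification for it is not quite right: the upper containment $\supp\hat u_N\subset V^{\Box}_{L_2}$ together with $|V^{\Box}_L|\sim L^d$ does \emph{not} force $L_2\lesssim N^{1/d}$ (a huge box also contains the support). What forces it is the \emph{lower} containment $V^{\Diamond}_{L_1}\subset\supp\hat u_N$, which gives $|V^{\Diamond}_{L_1}|\le N$, hence $L_1\lesssim N^{1/d}$, combined with the fact from the proof of Lemma~\ref{thm-geom_Cc} that $L_2$ exceeds $L_1$ only by a bounded amount (in $d=2$, $L_2=\left\lceil\sqrt{N/2}\right\rceil+2$, and the same sandwich argument works for general $d$). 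With that citation repaired, your decay lemma holds uniformly over the family and the rest of your $\varepsilon/3$ argument is routine and sound; as a side benefit, your version records an explicit pointwise decay for unit balls of $\mathcal{S}^p(\Z^d)$ that the paper's proof leaves implicit.
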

\begin{proof}
    W.L.O.G., we assume that $u_i$ is convergent pointwisely. By Theorem~\ref{thm:geometry_of_Schwarz_symmetric_functions}, for any $u \in \mathcal{S}^p(\Z^d)$ and $N\in \N$, we have
    $$\inf_{x\in V_{L_1}^{\Diamond}}u(x) \ge \sup_{y \notin V_{L_2}^{\Box}}u(y),$$
    where $L_1 \le L_2$ are constants depending only on $N$ and $d$. Then for all $x \notin V_{L_2}^{\Box}$, we have
    $$|u_i(x)|^p \le \frac{1}{|V_{L_1}^{\Diamond}|}.$$
    Then
    \begin{align*}
        \sum_{x \notin V_{L_2}^{\Box}} |u_i(x)|^q = & \sum_{x \notin V_{L_2}^{\Box}} |u_i(x)|^p  |u_i(x)|^{q-p}\\
        \le & \left(\frac{1}{|V_{L_1}^{\Diamond}|}\right)^{\frac{q-p}{p}} \sum_{x \notin V_{L_2}^{\Box}} |u_i(x)|^p\\
        \le & \left(\frac{1}{|V_{L_1}^{\Diamond}|}\right)^{\frac{q-p}{p}} \sum_{x \in \Z^d} |u_i(x)|^p\\
        \le & \left(\frac{1}{|V_{L_1}^{\Diamond}|}\right)^{\frac{q-p}{p}}.
    \end{align*}
    For any $\varepsilon > 0$, choose $N$ large enough such that $\left(\frac{1}{|V_{L_1}^{\Diamond}|}\right)^{\frac{q-p}{p}} < \varepsilon$, and $K$ large enough such that for all $i,j > K$ the following holds:
    $$\sum_{x \in V_{L_2}^{\Box}}|u_i(x)-u_j(x)|^q \le \varepsilon.$$
    Then
    \begin{align*}
        ||u_i-u_j||_q^q  = & \sum_{x \in V_{L_2}^{\Box}}|u_i(x)-u_j(x)|^q + \sum_{x \notin V_{L_2}^{\Box}}|u_i(x)-u_j(x)|^q\\
        \le & \ \varepsilon + \sum_{x \notin V_{L_2}^{\Box}} \left(|u_i(x)|^q +|u_j(x)|^q\right)\\
        \le & \ 3\varepsilon.
    \end{align*}
    This proves the result.
\end{proof}

Note that the above theorem does not hold for $q=p$. The following sequence serves as a counterexample.
\begin{equation*}
    u_i(x)=
    \begin{cases}
    \frac{1}{|V^{\Diamond}_i|^{1/p}}, \qquad & x \in V^{\Diamond}_i;\\
    0, & \text{others.}
    \end{cases}
\end{equation*}

\section{Discrete rearrangement inequalities}
Using the ideas developed in the previous sections, we show some classic rearrangement inequalities in the discrete setting in this section. Without loss of generality, we will assume that the integrands are continuous. The results for non-continuous functions can be obtained by using slicing techniques developed in \cite{burchard2006rearrangement}.

Note that an admissible function $u$ is equimeasurable with $R_e u$ for $e\in \Vec{E}$, $R^k u$ for any integer $k$, and $R_{\mathbb{Z}^d} u.$
\begin{prop}[Discrete Cavalieri's Principle]
    \label{prop:discrete_Cavalieri's_principle}
    Let $f\colon\R_+\to\R_+,$ $u\in C_0^+(\Z^d)$ be admissible, $u^*=R_{\Z^d} u$. If $f(0)=0,$ then:
    \begin{equation}\label{Cavalieri's}
        \sum_{x}f\left(u(x)\right)=\sum_{x}f\left(u^*(x)\right).
    \end{equation}
\end{prop}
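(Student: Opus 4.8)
The plan is to reduce the identity to the fact, already recorded above, that an admissible function is equimeasurable with its rearrangement: $u \overset{e.m.}{\sim} R_{\Z^d}u$ (property (ii) of the preceding proposition). Granting this, both sides of \eqref{Cavalieri's} depend only on the common multiset of positive values of $u$ and $u^*$, and the hypothesis $f(0)=0$ annihilates the contribution of the zero values on each side. Note also that since $f \ge 0$, all sums involved are sums of nonnegative terms over an unordered index set, hence well defined in $[0,+\infty]$ and freely reorderable.

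First I would make the reduction precise. Since $u \in C_0^+(\Z^d)$, for every $t>0$ the super-level set $\{x : u(x) > t\}$ is finite, and likewise for $u^*$. Writing $\mu_u(t) := |\{x : u(x) > t\}|$ for the distribution function, equimeasurability is exactly $\mu_u \equiv \mu_{u^*}$ on $(0,\infty)$. From $\mu_u$ one recovers the multiplicity of each positive value $a$ via
\[
 |\{x : u(x) = a\}| = \lim_{t \uparrow a} \mu_u(t) - \mu_u(a),
\]
because $\{u > t\} \uparrow \{u \ge a\}$ as $t \uparrow a$. Hence $\mu_u = \mu_{u^*}$ forces $u$ and $u^*$ to attain the same positive values with the same finite multiplicities; equivalently $\ran(u) = \ran(u^*)$ as multisets.

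Next I would evaluate both sides against this common multiset. Splitting $\Z^d$ into $\{u>0\}$ and $\{u=0\}$ and using $f(0)=0$,
\[
 \sum_{x} f(u(x)) = \sum_{x : u(x) > 0} f(u(x)) = \sum_{a \in \ran(u)} f(a),
\]
where the last sum runs over the positive values of $u$ counted with multiplicity; the identical computation applies to $u^*$. Since $\ran(u) = \ran(u^*)$, the two right-hand sides coincide (with equality holding in $[0,+\infty]$ whether or not the common value is finite), which is the assertion.

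The argument is essentially bookkeeping once equimeasurability is in hand, so the only point requiring care—rather than a genuine obstacle—is the verification that admissibility makes every positive level set finite, so that the multiplicities above are well defined and the reordering of the sums over unordered index sets is legitimate. I would emphasize that monotonicity of $f$ is \emph{not} needed: by working directly with the multiset of values instead of a layer-cake representation, the single hypothesis $f(0)=0$ suffices, and the result holds for arbitrary $f\colon\R_+\to\R_+$ with $f(0)=0$.
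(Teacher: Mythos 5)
Your proof is correct and takes essentially the same route as the paper, whose entire proof is the single sentence ``This is a direct consequence of the equimeasurability of $u$ and $u^*$'' (property (ii) of the preceding proposition); you have simply made the bookkeeping explicit by extracting the common multiset of positive values with multiplicities and using $f(0)=0$ to discard the zero set. One cosmetic slip: as $t\uparrow a$ the super-level sets $\{x: u(x)>t\}$ \emph{decrease} to $\{x: u(x)\ge a\}$ rather than increase, but since they are finite this still yields your multiplicity formula $|\{x:u(x)=a\}|=\lim_{t\uparrow a}\mu_u(t)-\mu_u(a)$, so the argument stands.
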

\begin{proof}
  This is a direct consequence of the equimeasurability of $u$ and $u^*$.
\end{proof}

\begin{prop}
    Let $u,v$ be admissible with $u(x) \le v(x)$ for all $x\in \Z^d$, then
    $$R_{\Z^d}\,u(x) \le R_{\Z^d}\,v(x)$$
    for all $x\in \Z^d$.
\end{prop}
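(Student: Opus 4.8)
The plan is to reduce the two-variable statement to the one-dimensional rearrangement $R_{\Z}$ and then propagate the pointwise inequality through the definition of $R_{\Z^d}$ as a limit of iterated one-step rearrangements. The mechanism is that monotonicity is preserved at the one-dimensional level, and this property is inherited by each $R_e$ and hence by the full iteration $R^k$.

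First I would prove the one-dimensional statement: if $f,g\in C_0^+(\Z)$ (or on $\Z+1/2$) satisfy $f(x)\le g(x)$ for every $x$, then $R_{\Z}f(x)\le R_{\Z}g(x)$ for every $x$. The key point is that each position $x$ carries a fixed rank $k=k(x)$, determined solely by $x$ through the order $0\prec 1\prec -1\prec\cdots$ and independent of the function, and that $R_{\Z}f(x)=a_{k(x)}(f)$, the $k(x)$-th largest value of $f$. Hence it suffices to show that the $k$-th largest value is monotone in the function. This follows by comparing distribution functions: from $f\le g$ we get $\{x:f(x)>t\}\subseteq\{x:g(x)>t\}$, so $|\{x:f(x)>t\}|\le|\{x:g(x)>t\}|$ for all $t>0$; since $a_k(f)=\sup\{t:|\{x:f(x)>t\}|\ge k\}$, the inclusion of the defining sets yields $a_k(f)\le a_k(g)$. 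Comparing $f$ and $g$ value-by-value at matching ranks then gives $R_{\Z}f\le R_{\Z}g$ pointwise.

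Next I would lift this to one-step rearrangements and iterate. For each $e\in\Vec{E}$ the operator $R_e$ acts as $R_{\Z}$ on each equivalence class $V_e^{\alpha}$, and $u\le v$ pointwise on $\Z^d$ forces $u|_{V_e^{\alpha}}\le v|_{V_e^{\alpha}}$ on every class; these restrictions are admissible on the line since $u,v\in C_0^+(\Z^d)$. Applying the one-dimensional result on each $V_e^{\alpha}$ gives $R_e u\le R_e v$ pointwise on $\Z^d$, and a straightforward induction yields $R^k u(x)\le R^k v(x)$ for every $k\in\N$ and every $x$. Since $R_{\Z^d}$ is well-defined and the iteration stabilizes pointwise (as established earlier in this section), passing to the limit $k\to\infty$ preserves the inequality, giving $R_{\Z^d}u(x)=\lim_k R^k u(x)\le\lim_k R^k v(x)=R_{\Z^d}v(x)$.

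I expect the only genuine content to lie in the one-dimensional monotonicity step; the lifting and the passage to the limit are routine. The subtle point I would emphasize is precisely that the rank $k(x)$ attached to a position depends only on $x$ and not on the function being rearranged, so that $R_{\Z}f$ and $R_{\Z}g$ may be compared value-by-value at the same positions; monotonicity of the distribution function then closes the argument.
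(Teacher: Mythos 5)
Your proposal is correct and follows essentially the same route as the paper: the paper's proof likewise observes that $u\le v$ gives $R_e u\le R_e v$ for each $e\in\Vec{E}$ (by the definition of the one-step rearrangement), iterates to get $R^k u\le R^k v$, and passes to the pointwise limit. The only difference is that you spell out the one-dimensional monotonicity via the rank characterization $a_k(f)=\sup\{t>0:\ |\{x: f(x)>t\}|\ge k\}$ and the position-independent rank $k(x)$, a step the paper treats as immediate from the definition of $R_{\Z}$; your elaboration is accurate and harmless.
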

\begin{proof}
    By the definition of one-step rearrangement, we have $R_e u(x) \le R_e v(x)$ for all $x\in \Z^d$ and $e\in \Vec{E}$, then $R^k u(x) \le R^k v(x)$ for all $x\in \Z^d$ and $k\in \N$. The result follows since $R^k u \to R_{\Z^d} u$ and $R^k v \to R_{\Z^d} v$ pointwisely.
\end{proof}

\begin{definition}
    \label{def:supermodular}
    A function $G\colon\R_+\times\R_+\to\R$ is supermodular if
    \begin{equation}\label{ineq4.7}
        G(s+s_0,t+t_0)+G(s,t)\geq G(s,t+t_0)+G(s+s_0,t)
    \end{equation}
    for any $s,t,s_0,t_0\in\R_+.$\\
    $G$ is said to be strictly supermodular if \eqref{ineq4.7} holds with a strict inequality when $s_0,t_0>0$.
\end{definition}

\begin{prop}
    The following functions are supermodular.
    \begin{enumerate}[(i)]
        \item $G(s,t)=f(s)g(t), \ (s,t)\in \R_+ \times \R_+$, $f$ and $g$ are both increasing or decreasing. Particularly, $G(s,t)=s t$ is strictly supermodular.
        \item $G(s,t)=-J(|s-t|), \ (s,t)\in \R_+ \times \R_+$, where $J$ is an increasing convex function. Particularly, $G(s,t)=-|s-t|^p,\ p\ge 1$, is supermodular. Moreover, if $p>1$, $G(s,t)=-|s-t|^p$ is strictly supermodular.
    \end{enumerate}
\end{prop}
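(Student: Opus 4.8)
The plan is to treat the two families separately, reducing each instance of \eqref{ineq4.7} to an elementary sign or convexity statement rather than manipulating $G$ directly.

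For (i), the key observation is that the supermodularity defect factorizes. Writing out the left minus right side of \eqref{ineq4.7} for $G(s,t)=f(s)g(t)$ and regrouping, I would verify the algebraic identity
\[
G(s+s_0,t+t_0)+G(s,t)-G(s,t+t_0)-G(s+s_0,t)=\bigl(f(s+s_0)-f(s)\bigr)\bigl(g(t+t_0)-g(t)\bigr).
\]
Since $s_0,t_0\ge 0$, each factor carries a sign dictated by the monotonicity of $f$ and $g$: if both are increasing, both factors are $\ge 0$; if both are decreasing, both are $\le 0$. In either case the product is $\ge 0$, which is exactly \eqref{ineq4.7}. For the strict claim I take $f(s)=s$, $g(t)=t$, so that the defect equals $s_0t_0$, strictly positive whenever $s_0,t_0>0$.

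For (ii), I would recast the inequality as a statement about a single convex function on $\R$. Setting $\phi(x):=J(|x|)$ and $a:=s-t\in\R$, the four arguments $|s-t|$, $|s+s_0-t-t_0|$, $|s-t-t_0|$, $|s+s_0-t|$ become $|a|$, $|a+s_0-t_0|$, $|a-t_0|$, $|a+s_0|$, so that \eqref{ineq4.7} for $G=-\phi$ is equivalent to
\[
\phi(a-t_0)+\phi(a+s_0)\ge \phi(a)+\phi(a+s_0-t_0).
\]
Two facts drive the argument. First, $\phi$ is convex on $\R$, being the composition of the increasing convex function $J$ with the convex function $x\mapsto|x|$. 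Second, writing $p:=a-t_0$ and $q:=a+s_0$, the two remaining arguments $a$ and $a+s_0-t_0$ both lie in $[p,q]$ and satisfy $a+(a+s_0-t_0)=p+q$. Thus the inner pair and the outer pair have equal sums, and the claim reduces to the elementary majorization inequality: for convex $\phi$ and $p\le x\le y\le q$ with $x+y=p+q$, one has $\phi(x)+\phi(y)\le\phi(p)+\phi(q)$, proved by writing $x,y$ as the symmetric convex combinations $(1-\lambda)p+\lambda q$ and $\lambda p+(1-\lambda)q$ and adding the two convexity estimates.

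The specialization to $J(r)=r^p$ is then immediate, since $r\mapsto r^p$ is increasing and convex on $\R_+$ for $p\ge 1$. For the strict statement I would use that $|x|^p$ is strictly convex for $p>1$, together with the observation that when $s_0,t_0>0$ the inner points lie strictly inside $(p,q)$, because $p=a-t_0<\min(a,a+s_0-t_0)$ and $\max(a,a+s_0-t_0)<a+s_0=q$; strict convexity in the two convex-combination estimates then upgrades the inequality to a strict one. The only place demanding genuine care---hence the main obstacle---is part (ii): establishing convexity of $x\mapsto J(|x|)$ uniformly across $x=0$, and, more delicately, tracking exactly when the majorization inequality is strict so that the strict supermodularity of $-|s-t|^p$ follows cleanly.
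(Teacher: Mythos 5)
Your proof is correct, and for part (ii) it takes a genuinely different route from the paper's. Part (i) is the same in substance: the paper declares it immediate from the definition, and your factorization of the supermodularity defect as $\bigl(f(s+s_0)-f(s)\bigr)\bigl(g(t+t_0)-g(t)\bigr)$, with defect $s_0t_0$ in the strict case $G(s,t)=st$, is exactly that direct verification. For part (ii) the paper normalizes $A=s-t\ge 0$ and reduces to $J(|A+s_0-t_0|)+J(|A|)\le J(|A+s_0|)+J(|A-t_0|)$, which it then proves by a three-way case split: $t_0\le A$ (convexity of $J$), $t_0\ge A+s_0$ (monotonicity of $J$), and a middle case handled by a chain of three inequalities through $J(t_0)$ and $J(0)$. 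You instead establish once that $\phi=J\circ|\cdot|$ is convex on all of $\R$ (this is precisely where the hypothesis that $J$ is increasing is consumed; convexity of $J$ alone would not suffice, as $J(r)=(r-1)^2$ shows), after which the whole inequality is a single application of the majorization lemma $\phi(x)+\phi(y)\le \phi(p)+\phi(q)$ for $p\le x\le y\le q$ with $x+y=p+q$; your identification of the endpoints $a-t_0$, $a+s_0$ and inner points $a$, $a+s_0-t_0$ with equal pair-sums is correct, and it removes both the case analysis and the need for any assumption on the sign of $a=s-t$. This uniformity also pays off in the strict case, which the paper only asserts via strict convexity of $|\cdot|^p$: your observation that $s_0,t_0>0$ places both inner points strictly inside the interval between the endpoints (so the convex-combination parameter lies in $(0,1)$ and the endpoints are distinct) turns strict convexity of $|x|^p$ for exponent greater than $1$ into a genuinely strict inequality, exactly as needed for strict supermodularity of $-|s-t|^p$. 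In short: same elementary ingredients, but your single majorization argument is cleaner and more transparent than the paper's case split, at the modest cost of first checking convexity of $J(|x|)$ across $x=0$, which you do correctly.
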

\begin{proof}
    (i) is obtained directly by the definition of supermodular function. For (ii), let $s,t,s_0,t_0 \ge 0$. W.L.O.G., we assume that $A=s-t\ge 0$. Only need to prove
    \begin{equation}
        J(|A+s_0 -t_0|)+J(|A|) \le J(|A+ s_0|)+J(|A-t_0|).
    \end{equation}
    If $t_0\le A$, it is obtained by the convexity of $J$. If $t_0 \ge A+s_0$, it is obtained by the monotonicity of $J$. Otherwise
    \begin{align*}
        J(|A+s_0 -t_0|)+J(|A|)
        &\le J(A+s_0 -t_0)+J(t_0)\\
        &\le J(A+s_0)+J(0)\\
        &\le J(A+s_0)+J(t_0-A).
    \end{align*}
    This shows $G$ is supermodular. If $p>1$, $G(s,t)=-|s-t|^p$ is strictly supermodular follows by the strict convexity of $|\cdot|^p$.
\end{proof}

Next, we start to prove the main result. We begin with the one-dimensional case as shown below, which is proved in Appendix A by the discrete polarization method introduced by the first author\cite{HH10}.
\begin{lemma}
    \label{lemma:one-dimensional_discrete_Riesz_inequality}
    \label{lemma:one-dimensional_discrete_Riesz_inequality_for_finitely_supported_functions}
    Let $H:\R_+ \to \R_+$ be a decreasing function, $G:\R_+ \times \R_+ \to \R_+$ be a supermodular function.
    \begin{enumerate}[(i)]
        \item If $u,v \in C_c(\Z)$ are two nonnegative functions, then:
        \begin{equation*}
            \sum_{x\in \Z} \sum_{y\in \Z}G(u(x),v(y))H(|x-y|) \le \sum_{x\in \Z} \sum_{y\in \Z}G(R_{\Z}u(x),R_{\Z}v(y))H(|x-y|).
        \end{equation*}
        \item If $u,v \in C_c(\Z+1/2)$ are two nonnegative functions, then:
        \begin{align*}
            &\sum_{x\in \Z+1/2} \sum_{y\in \Z+1/2}G(u(x),v(y))H(|x-y|) \\
            \le &\sum_{x\in \Z+1/2} \sum_{y\in \Z+1/2}G(R_{\Z+1/2}u(x),R_{\Z+1/2}v(y))H(|x-y|).
        \end{align*}
        \item If $u \in C_c(\Z)$, $v\in C_c(\Z+1/2)$ are two nonnegative functions, then:
        \begin{align*}
            &\sum_{x\in \Z} \sum_{y\in \Z+1/2}G(u(x),v(y))H(|x-y|) \\
            \le &\sum_{x\in \Z} \sum_{y\in \Z+1/2}G(R_{\Z}u(x),R_{\Z+1/2}v(y))H(|x-y|).
        \end{align*}
    \end{enumerate}
\end{lemma}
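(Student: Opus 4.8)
The plan is to prove all three cases at once by the method of discrete polarization (two-point rearrangement) from \cite{HH10}: first I would show that a single polarization never decreases the functional, and then exhibit a \emph{fixed} sequence of polarizations that simultaneously drives $u$ to $R_{\Z}u$ and $v$ to $R_{\Z}v$. For a reflection $\sigma$ of $\Z$ (or $\Z+1/2$) about a center $c$, i.e. $\sigma(x)=2c-x$, I fix the preferred half-line $P=\{x:\ x\prec\sigma(x)\}$ and define the polarized function $u^{\sigma}$ by placing the larger of $\{u(x),u(\sigma x)\}$ at the point of the orbit $\{x,\sigma x\}$ lying in $P$ and the smaller at the other point, with $u^{\sigma}(c)=u(c)$ at a fixed point. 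Each $u^{\sigma}$ is then equimeasurable with $u$.

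The polarization inequality
\[
\sum_{x,y}G(u(x),v(y))H(|x-y|)\le \sum_{x,y}G(u^{\sigma}(x),v^{\sigma}(y))H(|x-y|)
\]
I would prove by grouping the double sum over pairs of $\sigma$-orbits. For two orbits $\{x_1,x_2=\sigma x_1\}$ and $\{y_1,y_2=\sigma y_1\}$ with $x_1,y_1\in P$, the four terms they contribute involve only the two distances $|x_1-y_1|=|x_2-y_2|$ and $|x_1-y_2|=|x_2-y_1|$, since $\sigma$ is an isometry. Because $x_1,y_1$ lie on the same side of $c$, the triangle inequality gives $|x_1-y_1|\le|x_1-y_2|$, so by monotonicity of $H$ the same-side coefficient $A:=H(|x_1-y_1|)$ dominates the cross coefficient $B:=H(|x_1-y_2|)$. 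Writing $u_1^{+}\ge u_1^{-}$ and $v_1^{+}\ge v_1^{-}$ for the sorted pairs, supermodularity of $G$ in its increasing-differences form gives $\mathcal P:=G(u_1^{+},v_1^{+})+G(u_1^{-},v_1^{-})\ge G(u_1^{+},v_1^{-})+G(u_1^{-},v_1^{+})=:\mathcal Q$. A short case check then shows the polarized group equals $A\mathcal P+B\mathcal Q$, while the original group equals $A\mathcal P+B\mathcal Q$ or $A\mathcal Q+B\mathcal P$; the difference is therefore $0$ or $(A-B)(\mathcal P-\mathcal Q)\ge0$. Orbit pairs on which either the $u$-pair or the $v$-pair is identically zero contribute equally on both sides, so only finitely many groups actually differ and the total difference is a finite sum of nonnegative terms; fixed-point orbits are handled identically, the two relevant distances then coinciding. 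The same computation applies verbatim to case (iii), with $x$ ranging over $\Z$ and $y$ over $\Z+1/2$.

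It then remains to convert polarization into the full rearrangement. Here I would use that reflections about $c=0$ and $c=1/2$ realize the comparators of the classical odd--even transposition sort with respect to the order $0\prec1\prec-1\prec\cdots$: on $\Z$, reflection about $1/2$ compares the $\prec$-pairs in positions $(1,2),(3,4),\dots$ while reflection about $0$ compares $(2,3),(4,5),\dots$ (fixing position $1$), and on $\Z+1/2$ the roles of the two centers are interchanged. Alternating the two reflections thus performs an odd--even transposition sort on each lattice, which sorts any finitely supported nonnegative function into its $\prec$-decreasing (Schwarz) arrangement after finitely many rounds, the number of rounds being controlled by the diameter of the support. The crucial feature is that this comparator network is \emph{oblivious}: one fixed sequence of reflections sorts \emph{every} input. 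Hence a single sequence $\sigma_1,\sigma_2,\dots$ carries $u$ to $R_{\Z}u$ and, applied to $v$, carries $v$ to $R_{\Z}v$ at the same time; since the centers $0$ and $1/2$ act compatibly on $\Z$ and $\Z+1/2$, the same sequence works in all three cases. Chaining the polarization inequality along this finite sequence yields the lemma.

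The routine part is the four-point inequality, which drops out of supermodularity and the monotonicity of $H$. The hard part is the convergence step: one must verify that a single \emph{input-independent} sequence of reflections sorts $u$ and $v$ simultaneously (and, in case (iii), sorts the two lattices $\Z$ and $\Z+1/2$ at once). The obliviousness of the odd--even transposition network is exactly what makes this possible, and checking that reflections about $0$ and $1/2$ reproduce its comparators with the correct preferred side in each orbit is the technical heart of the argument.
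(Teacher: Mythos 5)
Your proposal is correct and takes essentially the same route as the paper: the paper likewise proves the half-line polarization inequality (its Lemma A.2, by exactly your four-point orbit-pairing argument using supermodularity and the monotonicity of $H$), then shows that iterating the single fixed operator $Tu=(u^{H_-})^{H_+}$, with $H_+=(0,\infty)$ and $H_-=(-\infty,1/2)$ --- your alternating reflections about $0$ and $1/2$ --- reaches $R_{\Z}u$ (resp.\ $R_{\Z+1/2}u$) after finitely many steps (its Lemma A.3), and chains the two, the simultaneity for $u$ and $v$ being automatic since rearranged functions are fixed points of $T$. The only difference is cosmetic and lies in the convergence step: you identify $T$ with an oblivious odd--even transposition sorting network, whereas the paper proves Lemma A.3 by induction on $|\supp u|$, tracking the position of the smallest positive function value; both arguments are valid and produce the same fixed sequence of polarizations.
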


\begin{lemma}[Discrete Riesz inequality for finitely supported functions]
 \label{lemma:discrete_Riesz_inequality_for_finitely_supported_functions}

 $~$

    Let $u,v \in C_c(\Z^d)$ be two nonnegative functions, $H:\R_+ \to \R_+$ be a decreasing function, then
    \begin{equation}
        \sum_{x,y \in \Z^d}G(u(x),v(y))H(d(x,y)) \le \sum_{x,y \in \Z^d}G(u^*(x),v^*(y))H(d(x,y)).
    \end{equation}
\end{lemma}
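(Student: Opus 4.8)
The plan is to show that the functional
$$\Phi(u,v):=\sum_{x,y \in \Z^d}G(u(x),v(y))H(d(x,y))$$
never decreases under a single one-step rearrangement $R_e$, and then to iterate this fact along the fixed cyclic schedule defining $R^k$ until both functions reach their Schwarz rearrangements, which by Lemma~\ref{lemma:well-definedness_for_finitely_supported_functions} occurs after finitely many steps.

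The heart of the argument is the one-step monotonicity $\Phi(u,v)\le \Phi(R_e u, R_e v)$ for each fixed $e\in\Vec{E}$. To prove it I would slice $\Z^d=\bigsqcup_\alpha V_e^\alpha$ into lines parallel to $e$ and regroup the double sum according to the pair $(\alpha,\beta)$ of lines containing $x$ and $y$. On each line the coordinate $s=\langle e,x\rangle$ identifies $V_e^\alpha$ with $\Z$ or $\Z+1/2$, and by definition $R_e$ acts there as the one-dimensional rearrangement $R_{\Z}$ in this coordinate. The key computation is that for each fixed pair $(\alpha,\beta)$ the distance $d(x,y)$ depends only on the along-line separation $s-t$ and is a non-decreasing function of $|s-t|$: for $e=e_i$ one gets $d=|s-t|+c_{\alpha\beta}$, while for $e=\frac{e_i\pm e_j}{2}$ one gets $d=2\max(|s-t|,|\alpha-\beta|/2)+c_{\alpha\beta}$, where $c_{\alpha\beta}$ is the constant transverse distance coming from the coordinates fixed on both lines. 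Hence $H(d(x,y))=\tilde H_{\alpha\beta}(|s-t|)$ for a decreasing kernel $\tilde H_{\alpha\beta}$, so the inner sum over each line-pair is exactly of the form treated in Lemma~\ref{lemma:one-dimensional_discrete_Riesz_inequality}, using case (i), (ii) or (iii) according to whether the two lines are of type $\Z$ or $\Z+1/2$. Applying the one-dimensional inequality line-pair by line-pair and summing over all $(\alpha,\beta)$ yields the one-step monotonicity.

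With this in hand the iteration is routine. Since each $R_e$ preserves $C_c(\Z^d)$ and equimeasurability, applying the same sequence of one-step rearrangements to both arguments gives $\Phi(u,v)\le\Phi(R^k u,R^k v)$ for every $k$, because no individual step decreases $\Phi$. By Lemma~\ref{lemma:well-definedness_for_finitely_supported_functions} there is a $K$ (the maximum of the two stabilization thresholds) so large that $R^K u=u^*$ and $R^K v=v^*$. Evaluating the chain of inequalities at $k=K$ then produces $\Phi(u,v)\le\Phi(u^*,v^*)$, which is the assertion.

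The main obstacle is precisely the distance decomposition inside the one-step monotonicity: one must check that the chosen direction set $\Vec{E}$ (only the coordinate axes and the half-diagonals) is exactly what forces $d(x,y)$ to be a non-decreasing function of the along-line coordinate for \emph{every} pair of parallel lines, so that $H\circ d$ remains a legitimate decreasing kernel to which the one-dimensional lemma applies. A secondary bookkeeping point is matching the parity of each line ($\Z$ versus $\Z+1/2$) to the correct case of Lemma~\ref{lemma:one-dimensional_discrete_Riesz_inequality}, and confirming that the possibly divergent tails of the sums are handled consistently (via $G(0,0)=0$ and the equimeasurability of $u$ with $R_e u$, so that contributions from indices outside the supports agree on both sides).
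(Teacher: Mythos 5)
Your proposal is correct and follows essentially the same route as the paper's proof: slicing $\Z^d$ into lines $V_e^\alpha$ parallel to each $e\in\Vec{E}$, verifying that on each pair of lines $d(x,y)$ reduces to $|s-t|+c_{\alpha\beta}$ (axis case) or $A+\max\{2|s-t|,2B\}$ (half-diagonal case) so that $H\circ d$ is a decreasing one-dimensional kernel, and then applying the one-dimensional Riesz inequality line-pair by line-pair. Your explicit iteration to the common stabilization index $K$ is exactly the (implicit) final step of the paper's argument, so there is nothing to correct.
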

\begin{proof}
    Given $e\in \Vec{E}$, let $\{V_e^{\alpha}\}_{\alpha \in I_e}$ be the partition of $\Z^d$ with respect to $\overset{e}{\sim}$. Then
    \begin{equation}
        \sum_{x,y \in \Z^d}G(u(x),v(y))H(d(x,y))=\sum_{\alpha,\beta \in I_e} \sum_{x\in V_e^{\alpha},y\in V_e^{\beta}}G(u(x),v(y))H(d(x,y)).
    \end{equation}
    We only need to prove that for any $\alpha,\beta \in I_e$, the following holds:
    \begin{equation}
        \sum_{x\in V_e^{\alpha},y\in V_e^{\beta}}\!\!G(u(x),v(y))H(d(x,y)) \le \sum_{x\in V_e^{\alpha},y\in V_e^{\beta}}\!\!G(R_e u(x),R_e v(y))H(d(x,y)).
    \end{equation}

    \underline{Case I:} $e=e_i$ be an unit vector. For $l,m\in \Z$, we write $x_l\in V_e^{\alpha}$ for the vertex in $V_e^{\alpha}$ with $<\!e,x_l\!>\:=l$, and $y_m\in V_e^{\beta}$ for the vertex in $V_e^{\beta}$ with $<\!e,y_m\!>\:=m$. Then $d(x_l,y_m)=|l-m|+d(x_0,y_0)$. Let $\Tilde{H}(|l-m|)=H(|l-m|+d(x_0,y_0))$, which is a nonincreasing function. Regard $u|_{V_e^{\alpha}}$ and $v|_{V_e^{\beta}}$ as functions on $\Z$, by Lemma~\ref{lemma:one-dimensional_discrete_Riesz_inequality}, we have
    \begin{align*}
        \sum_{x\in V_e^{\alpha},y\in V_e^{\beta}}G(u(x),v(y))H(d(x,y))
        &=\sum_{l,m\in \Z}G(u(x_l),v(y_m))\Tilde{H}(|l-m|)\\
        &\le \sum_{l,m\in \Z}G(R_e u(x_l),R_e v(y_m))\Tilde{H}(|l-m|)\\
        &=\!\!\sum_{x\in V_e^{\alpha},y\in V_e^{\beta}}\!\!G(R_e u(x),R_e v(y))H(d(x,y)).
    \end{align*}

    \underline{Case II:} $e=\frac{e_i+e_j}{2}$ or $\frac{e_i-e_j}{2}$. Let $x_l=(x_l^1,x_l^2,\cdots, x_l^d) \in V_e^{\alpha}$ with \\$<e,x_l>=l$, $l\in \Z$ or $\Z+1/2$; $y_m=(y_m^1,y_m^2,\cdots,y_m^d) \in V_e^{\beta}$ with $<e,y_m>=m$, $m\in \Z$ or $\Z+1/2$. Note that: $x_l^k$ only depends on $\alpha$ for $k\ne i,j$; $y_m^k$ only depends on $\beta$ for $k\ne i,j$; $x_l^i+x_l^j=2l$; $y_m^i+y_m^j=2m$; $x_l^i-l$ only depends on $\alpha$; $y_m^i-m$ only depends on $\beta$. The combinatorial distance
    \begin{align*}
        d(x_l,y_m)
        =&\sum_{k\ne i,j} |x_l^k-y_m^k|+|x_l^i-y_m^i|+|x_l^j-y_m^j|\\
        =&\sum_{k\ne i,j} |x_l^k-y_m^k|+|(l-m)+(x_l^i-l-y_m^i+m)|\\
        &+|(l-m)-(x_l^i-l-y_m^i+m)|\\
        =&\sum_{k\ne i,j} |x_l^k-y_m^k|+\max\{2|l-m|,2|x_l^i-l-y_m^i+m|\}\\
        =& A+\max\{2|l-m|,2B\},
    \end{align*}
    where $A:=\sum_{k\ne i,j} |x_l^k-y_m^k|$, $B:=2|x_l^i-l-y_m^i+m|$, only depend on $\alpha$ and $\beta$. Let $\Tilde{H}(|l-m|)=H(A+\max\{2|l-m|,2B\})=H(d(x_l,y_m))$, the result follows by the same argument of Case I.

    This proves the result.
\end{proof}

Now we are ready for the proof of the main result: The generalized discrete Riesz inequality. For the reader's convenience, we restate the theorem here.
\begin{theorem}[Generalized discrete Riesz inequality]
    \label{thm:discrete_Riesz_inequality}
    Let $u,v\in C_0^+(\Z^d)$ be admissible, $u^*=R_{\Z^d} u$, $v^*=R_{\Z^d} v$, $H:\R_+ \to \R_+$ be a decreasing function, and $G:\R_+ \times \R_+ \to \R_+$ be a supermodular function with $G(0,0)=0$. Then
    \begin{equation}
        \label{equ:discrete_Riesz_inequality}
        \sum_{x,y \in \Z^d}G(u(x),v(y))H(d(x,y)) \le \sum_{x,y \in \Z^d}G(u^*(x),v^*(y))H(d(x,y)).
    \end{equation}
    Moreover, if $u=u^*$ with $\ran u=[a_1>a_2>\cdots]$, $G$ is strictly supermodular, $H(n)>H(n+1)$ for some $n\in \N$, $|\sum\limits_{x,y \in \Z^d}G(u^*(x),v^*(y))H(d(x,y))|<\infty$, then the equality holds if and only if $v=v^*$.
\end{theorem}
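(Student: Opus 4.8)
The plan is to prove the inequality first by reducing to the finitely supported case, and then to handle the equality statement by a one-step analysis localized to lines. For the inequality, I would approximate $u,v$ by their $N$-th cut-offs $\hat u_N,\hat v_N\in C_c(\Z^d)$, which increase pointwise to $u,v$. The identity $R_{\Z^d}\hat u_N=\widehat{u^*}_N$ (read off from the proof that $u\overset{e.m.}{\sim}R_{\Z^d}u$) shows that rearranging a cut-off produces the cut-off of the rearrangement, so $\widehat{u^*}_N\nearrow u^*$ and $\widehat{v^*}_N\nearrow v^*$ as well. Lemma~\ref{lemma:discrete_Riesz_inequality_for_finitely_supported_functions} then gives the inequality for each $N$, and the task is to pass to the limit.

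Since $G$ is not assumed monotone, I would pass to the limit through the decomposition $G(s,t)=\tilde G(s,t)+G(s,0)+G(0,t)$ with $\tilde G(s,t):=G(s,t)-G(s,0)-G(0,t)$. The mixed part $\tilde G$ is nonnegative and supermodular, vanishes on both axes, and by the increasing-differences form of supermodularity is nondecreasing in each variable; hence $\tilde G(\hat u_N,\hat v_N)\nearrow \tilde G(u,v)$ pointwise and monotone convergence transfers the $\tilde G$-inequality to the limit. The separable remainder sums to $\big(\sum_x G(u(x),0)\big)\big(\sum_z H(d(0,z))\big)$ plus its $v$-analogue; by translation invariance of $H$ and discrete Cavalieri's principle (Proposition~\ref{prop:discrete_Cavalieri's_principle}, applied to $s\mapsto G(s,0)$, which vanishes at $0$) these quantities are rearrangement invariant, so they match on both sides and the inequality descends to the $\tilde G$ part.

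For the equality statement the implication $v=v^*\Rightarrow$ equality is immediate because $u=u^*$. For the converse I would use that $u=u^*$ is Schwarz symmetric, so each one-step operator $R_e$ fixes $u$ and the one-step inequality becomes one-sided, $\sum G(u,v)H\le \sum G(u,R_e v)H$. Combining the inequality already proved with finiteness of the right-hand sum produces the squeeze $\sum G(u,v)H\le \sum G(u,R^k v)H\le \sum G(u,v^*)H=\sum G(u,v)H$, so the functional is constant along the orbit $v,R^1 v,R^2 v,\dots$ and in particular every one-step occurring in the first cycle is an equality. Reducing to a single pair of parallel lines (the functional splits as a sum of nonnegative per-line-pair gaps, so aggregate equality forces each gap to vanish), I would then invoke a one-dimensional equality characterization at each step and propagate the equalities through the cycle to conclude $R_e v=v$ for every $e\in\Vec{E}$, whence $v=R_{\Z^d}v=v^*$.

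The main obstacle is this one-dimensional equality characterization and its promotion to the lattice. On a line I would argue by the polarization method of Appendix~A: a two-point polarization moving $v$ toward $R_\Z v$ alters the functional by terms of the form $\big(G(a,c)+G(b,d)-G(a,d)-G(b,c)\big)\big(H(p)-H(q)\big)$, which strict supermodularity of $G$ renders nonnegative and strictly positive as soon as $p\ne q$; the single strict drop $H(n)>H(n+1)$ guarantees at least one such effective gap, while the strict ordering $a_1>a_2>\cdots$ of the values of $u$ prevents ties that could otherwise conceal an inversion of $v$. The two delicate points are (i) producing, for each $v$-line and each direction $e$, a parallel $u$-line on which $u$ is nondegenerate enough to detect any misalignment of $v$ --- here I would use that a symmetric $u$ with strictly decreasing values is positive on a large symmetric region (Theorem~\ref{thm:geometry_of_Schwarz_symmetric_functions}) --- and (ii) legitimizing all rearrangements and cancellations of infinite series, for which the hypothesis $|\sum G(u^*,v^*)H|<\infty$ is precisely what rules out $\infty-\infty$.
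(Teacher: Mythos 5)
Your proposal is correct and follows essentially the same route as the paper: the same reduction $G\mapsto\widetilde G=G-G(\cdot,0)-G(0,\cdot)$ with Cavalieri's principle handling the separable parts, exhaustion by finitely supported approximants (you use cut-offs $\hat u_N$ where the paper truncates by balls $\mathbbm{1}_{B_i}u$ --- both work, via the monotonicity of $R_{\Z^d}$) combined with monotone convergence and Lemma~\ref{lemma:discrete_Riesz_inequality_for_finitely_supported_functions}, and, for the equality case, the same localization to lines $V_e^{\alpha}$ resolved by the one-dimensional polarization characterization (Lemma~\ref{lemma:one_dimensional_generlized_Risez_inequality}). Your squeeze along the orbit $v,R^1v,R^2v,\dots$ and the per-line-pair gap argument simply make explicit what the paper's terser equality argument asserts directly, and your worry (i) is resolved exactly as in the paper by taking the diagonal pair $\alpha=\beta$, where $u$ restricted to the line is symmetric with strictly decreasing values.
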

\begin{proof}
    Suppose that $\sum_{x,y \in \Z^d}G(u^*(x),v^*(y))H(d(x,y))<\infty$, otherwise the result is trivial. Let
    $$\widetilde G(u,v) = G(u,v)-G(u,0)-G(0,v).$$
    Then
    \begin{enumerate}[(i)]
        \item $\widetilde G:\R_+ \times \R_+ \to \R_+$ is a supermodular function with $\widetilde G(\cdot,0) \equiv 0$ and $\widetilde G(0,\cdot) \equiv 0$;
        \item $\widetilde G(u,v)$ is nondecreasing on $u$ and $v$;
        \item \eqref{equ:discrete_Riesz_inequality} holds if and only if
        $$\sum_{x,y \in \Z^d}\widetilde G(u(x),v(y))H(d(x,y)) \le \sum_{x,y \in \Z^d}\widetilde G(u^*(x),v^*(y))H(d(x,y)).$$
    \end{enumerate}
    (i) is obtained directly by the definition of $\widetilde G$. Given $u_0>0$, $\widetilde G(u+u_0,v)=\widetilde G(u+u_0,v)+\widetilde G(u,0) \ge \widetilde G(u,v)+\widetilde G(u+u_0,0)=\widetilde G(u,v)$, so (ii) holds.
    Note that
    \begin{align*}
        &\sum_{x,y \in \Z^d}G(u(x),0)H(d(x,y))+\sum_{x,y \in \Z^d}G(0,v(y))H(d(x,y))\\
        =&\sum_{x,y \in \Z^d}G(u^*(x),0)H(d(x,y))+\sum_{x,y \in \Z^d}G(0,v^*(y))H(d(x,y))\\
        \le & \sum_{x,y \in \Z^d}G(u^*(x),v^*(y))H(d(x,y)) < \infty.
    \end{align*}
    by Proposition~\ref{prop:discrete_Cavalieri's_principle}, then (iii) holds.

    Let $B_i=\{x\in \Z^d: ||x||_1\le i \}$ be the $i$-ball, $u_i=\mathbbm{1}_{B_i} u$, $u_i^*=R_{\Z^d}u_i$, $v_i=\mathbbm{1}_{B_i} v$, and $v_i^*=R_{\Z^d}v_i$. Then $u_{i+1}(x) \ge u_{i}(x)$, $u_{i+1}^*(x) \ge u_{i}^*(x)$ for all $x\in \Z^d$; $v_{i+1}(y) \ge v_{i}(y)$, $v_{i+1}^*(y) \ge v_{i}^*(y)$ for all $y\in \Z^d$. Note that $\widetilde G(u,v)$ is nondecreasing on $u$ and $v$, then \eqref{equ:discrete_Riesz_inequality} holds by Lemma~\ref{lemma:discrete_Riesz_inequality_for_finitely_supported_functions} and monotone convergence.

    Next we prove the equality cases. For that, we only need to  prove that $v|_{V^{\alpha}_e} = (v|_{V^{\alpha}_e})^*$ for all $V^{\alpha}_e$. It is sufficient to show  the result for $e=e_1$, the proofs for the other cases are similar. Note that if the equality holds in \eqref{equ:discrete_Riesz_inequality}, then
    \begin{equation}
        \sum_{x,y \in V_{e_1}^{\alpha}}G(u(x),v(y))H(d(x,y)) = \sum_{x,y \in V_{e_1}^{\alpha}}G(u^*(x),R_{e_1}v(y))H(d(x,y)),
    \end{equation}
    for all $V_{e_1}^{\alpha}$, then the result follows by Lemma~\ref{lemma:one_dimensional_generlized_Risez_inequality}.
\end{proof}

Following the proof of Theorem~\ref{thm:discrete_Riesz_inequality}, we have the following result.
\begin{thmbis}{thm:discrete_Riesz_inequality}\label{thm:discrete_Riesz_inequality_b}
    Let $u,v\in C_0^+(\Z^d)$ be admissible, $u^*=R_{\Z^d} u$, $v^*=R_{\Z^d} v$, $H:\R_+ \to \R_+$ be a decreasing function, and $G:\R_+ \times \R_+ \to \R$ be a supermodular function with $G(0,0)=0$. If
    $$|\sum_{x,y \in \Z^d} G(u(x),0)H(d(x,y))|<\infty \text{, and } |\sum_{x,y \in \Z^d} G(0,v(y))H(d(x,y))|<\infty,$$
    then
    \begin{equation}
        \sum_{x,y \in \Z^d}G(u(x),v(y))H(d(x,y)) \le \sum_{x,y \in \Z^d}G(u^*(x),v^*(y))H(d(x,y)).
    \end{equation}
    Moreover, if $u=u^*$ with $\ran u=[a_1>a_2>\cdots]$, $G$ is strictly supermodular, $H(n)>H(n+1)$ for some $n\in \N$, $|\sum_{x,y \in \Z^d}G(u(x),v(y))H(d(x,y))|<\infty$, then the equality holds if and only if $v=v^*$.
\end{thmbis}

For the continuous situation, the assumption that $G$ is supermodular is necessary as shown in \cite{hajaiej2011necessity} by constructing a counter-example. In the discrete analogy, we have the following result by using a different idea.

\begin{prop}[Necessity of the supermodularity assumption in the discrete Riesz inequality]\label{prop5.8}
    Let $H:\R_+ \to \R$ be nonincreasing and not identically equal to zero, and satisfies
    \begin{equation}\label{equ:assumption_of_H}
        \lim_{r\to \infty} r^{d-1}H(r)=0.
    \end{equation}
    Suppose that $G:\R_+ \times R_+ \to \R$ is a Borel measurable function satisfying
    \begin{equation}\label{equ:assumption_of_G}
        G(s,0)=G(0,t)=0, \quad \forall\ s,t\ge 0.
    \end{equation}
    If for all nonnegative functions $u,v\in C_c(\Z^d)$ the following holds:
    \begin{equation}\label{equ:assumption_of_Riesz_inequality_hold}
        \sum_{x,y\in \Z^d} G(u(x),v(y))H(d(x,y)) \le \sum_{x,y\in \Z^d} G(u^*(x),v^*(y))H(d(x,y)),
    \end{equation}
    then $G$ is supermodular.
\end{prop}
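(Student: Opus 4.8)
The plan is to prove the contrapositive: assuming $G$ is \emph{not} supermodular, I will construct explicit nonnegative functions $u,v \in C_c(\Z^d)$ for which the rearrangement inequality \eqref{equ:assumption_of_Riesz_inequality_hold} is violated. By negating Definition~\ref{def:supermodular}, there exist $s,t,s_0,t_0 \ge 0$ (with $s_0,t_0 > 0$ necessarily, since the inequality is trivially an equality otherwise) such that
\begin{equation*}
    G(s+s_0,t+t_0)+G(s,t) < G(s,t+t_0)+G(s+s_0,t).
\end{equation*}
The strategy is to design $u$ and $v$ so that, on the left-hand side of \eqref{equ:assumption_of_Riesz_inequality_hold}, two well-separated pairs of vertices contribute the values $G(s+s_0,t+t_0)$ and $G(s,t)$ weighted by a \emph{large} value of $H$ (i.e. at small distance), while after rearrangement these same function values get recombined into the ``crossed'' pairing $G(s,t+t_e0)$ and $G(s+s_0,t)$. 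The assumption $G(s,0)=G(0,t)=0$ in \eqref{equ:assumption_of_G} is what lets me pad $u$ and $v$ with zero values freely without introducing spurious contributions.

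Concretely, I would work along a single coordinate axis (reducing effectively to a one-dimensional picture inside $\Z^d$) and take $u$ to attain the two levels $s$ and $s+s_0$ at two chosen vertices (and zero elsewhere), and similarly $v$ to attain $t$ and $t+t_0$ at two chosen vertices. One then arranges the supports of $u$ and $v$ so that in the \emph{original} configuration the dominant (large-$H$, i.e. short-distance) cross terms realize the pairing $\{(s+s_0,t+t_0),(s,t)\}$, whereas the Schwarz-rearranged $u^*,v^*$ force the function values into the single canonical decreasing order dictated by $R_{\Z^d}$, which places the two largest values of $u$ and $v$ at nearby central vertices and thereby realizes the \emph{other} pairing $\{(s,t+t_0),(s+s_0,t)\}$ at short distance. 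The key quantitative input is \eqref{equ:assumption_of_H}, the decay $r^{d-1}H(r)\to 0$: I would push the ``extra'' vertices far out so that all the many background cross terms involving large distances contribute an arbitrarily small total (the number of lattice points at distance $r$ grows like $r^{d-1}$, and $r^{d-1}H(r)\to 0$ guarantees the tail sums are negligible), so that the sign of the difference of the two sides of \eqref{equ:assumption_of_Riesz_inequality_hold} is governed entirely by the isolated short-distance interaction term, where we have engineered a strict reversal.

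I expect the main obstacle to be \emph{controlling the rearrangement precisely}: unlike the one-dimensional case where $R_\Z$ has a clean explicit formula, $R_{\Z^d}$ is defined only as a limit of iterated one-step rearrangements, so I cannot write down $u^*$ and $v^*$ by hand for an arbitrary configuration. The remedy is to choose $u$ and $v$ to be of a simple enough form (e.g. each taking only two nonzero values, suitably placed and well separated) that I can determine their rearrangements either directly or via the already-established geometric properties of Schwarz symmetric functions (Theorem~\ref{thm:geometry_of_Schwarz_symmetric_functions} and Proposition~\ref{prop-geom_C_c_basic1}). In particular, since the rearranged functions are equimeasurable with the originals and have their largest values concentrated near the origin in the prescribed $\prec$-order, I can compute the decisive short-distance term exactly while bounding the remainder using the $H$-decay hypothesis. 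A secondary technical point is ensuring that after scaling/separating the supports the ``background'' terms truly vanish in the limit; this is where \eqref{equ:assumption_of_H} enters and must be invoked carefully, estimating $\sum_{r} r^{d-1} H(\cdot)$ over the relevant range of distances.
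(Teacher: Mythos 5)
There is a genuine gap, and it is in the heart of your construction rather than in the technical points you flag. First, your description of the rearranged configuration is backwards: $R_{\Z^d}$ places the \emph{largest} value of each function at the origin and the second value at (essentially) $e_1$, so $u^*,v^*$ realize the \emph{aligned} pairing $\{(s+s_0,t+t_0),(s,t)\}$ at distance $0$ and the crossed pairing $\{(s+s_0,t),(s,t+t_0)\}$ at distance $1$ --- not the crossed pairing at short distance, as you claim. Consequently, to exploit the failure of supermodularity you must make the \emph{original} configuration realize the crossed pairing at short distance. But even the optimal such placement fails. Take $u(0)=s+s_0$, $u(Me_1)=s$, $v(0)=t$, $v(Me_1)=t+t_0$, and write $S_c=G(s+s_0,t)+G(s,t+t_0)$, $S_a=G(s+s_0,t+t_0)+G(s,t)$; then, using \eqref{equ:assumption_of_G} to silence the zero background,
\begin{equation*}
    \mathrm{LHS}-\mathrm{RHS}=\bigl[H(0)-H(1)\bigr]S_c-\bigl[H(0)-H(M)\bigr]S_a .
\end{equation*}
The second term is the gain the rearranged side enjoys from bringing the aligned pairs together; it is completely independent of the supermodularity deficit $S_c-S_a>0$ and generically dominates. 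For instance, if $H(0)=H(1)$ and $S_a>0$ (perfectly compatible with $G$ failing to be supermodular), the difference is $\le 0$ for \emph{every} $M$, so no violation is produced. Note also that the decay \eqref{equ:assumption_of_H} works \emph{against} you in this scheme: separating the supports annihilates terms of the original (left-hand) configuration, while the rearranged side does not depend on the separation parameter at all.

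The missing idea --- and the paper's actual mechanism --- is to plant the bumps on a large common \emph{plateau} rather than on the zero background: $u_L=s\,\mathbbm{1}_{V^{\Diamond}_L}+s_0\,\mathbbm{1}_{\{0\}}$ and $v_L=t\,\mathbbm{1}_{V^{\Diamond}_L}+t_0\,\mathbbm{1}_{\{z\}}$, where $z=(l,0,\dots,0)$ and $l$ is chosen with $H(0)>H(l)$. One checks $u_L=u_L^*$ and $v_L^*=t\,\mathbbm{1}_{V^{\Diamond}_L}+t_0\,\mathbbm{1}_{\{0\}}$, i.e.\ rearrangement moves only the $v$-bump from $z$ to $0$. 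Now the plateau--plateau and plateau--bump contributions cancel between the two sides of \eqref{equ:assumption_of_Riesz_inequality_hold} up to the single boundary term
\begin{equation*}
    \bigl[G(s,t+t_0)-G(s,t)\bigr]\Bigl[\sum_{V^{\Diamond}_L\times\{0\}}H(d(x,y))-\sum_{V^{\Diamond}_L\times\{z\}}H(d(x,y))\Bigr],
\end{equation*}
which is bounded by $\sum_{V^{\Diamond}_{L+l}\setminus V^{\Diamond}_L}H(d(0,x))\to 0$ as $L\to\infty$ --- this, not the taming of far-away bump--bump terms, is where \eqref{equ:assumption_of_H} enters (there are $l$ shells of $\sim r^{d-1}$ points each). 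What survives is exactly $[H(0)-H(l)]\bigl[G(s+s_0,t+t_0)+G(s,t)-G(s+s_0,t)-G(s,t+t_0)\bigr]\ge 0$, giving supermodularity directly for all $s,t,s_0,t_0\ge 0$ (your contrapositive framing is logically equivalent and harmless). In short: the plateau cancels the $S_a$-type gain that sinks the two-bump approach, isolating the supermodular combination; without it the argument cannot be completed.
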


\begin{proof}
    By the assumption of $H$, there exists a positive integer $l$ such that $H(0)>H(l)$. Fix $z=(l,0,0,\cdots)\in \Z^d$, let $s,s_0,t,t_0\in \R_+$, $u_L=s \mathbbm{1}_{V^{\Diamond}_L}+s_0 \mathbbm{1}_{\{0\}}$, $v_L=t \mathbbm{1}_{V^{\Diamond}_L}+t_0 \mathbbm{1}_{\{z\}}$, where $L>l$. Then $u=u^*$ and $v^*=t \mathbbm{1}_{V^{\Diamond}_L}+t_0 \mathbbm{1}_{\{0\}}$. By \eqref{equ:assumption_of_G} and \eqref{equ:assumption_of_Riesz_inequality_hold}, we have
    \begin{equation}
        \begin{aligned}
            0 \le & \sum_{x,y\in \Z^d} G(u_L^*(x),v_L^*(y))H(d(x,y))-\sum_{x,y\in \Z^d} G(u(x),v(y))H(d(x,y))\\
            = & G(s,t)\!\!\sum_{(V^{\Diamond}_L\setminus\{0\})\times(V^{\Diamond}_L\setminus\{0\})}\!\!H(d(x,y)) + G(s+s_0,t)\!\!\sum_{\{0\}\times(V^{\Diamond}_L\setminus\{0\})}\!\!H(d(x,y))\\
            &+ G(s,t+t_0)\!\!\sum_{(V^{\Diamond}_L\setminus\{0\})\times \{0\}}\!\!H(d(x,y)) + G(s+s_0,t+t_0)\!\!\sum_{\{0\}\times \{0\}}\!\!H(d(x,y))\\
            &-G(s,t)\!\!\sum_{(V^{\Diamond}_L\setminus\{0\})\times(V^{\Diamond}_L\setminus\{z\})}\!\!H(d(x,y)) - G(s+s_0,t)\!\!\sum_{\{0\}\times(V^{\Diamond}_L\setminus\{z\})}\!\!H(d(x,y))\\
            &-G(s,t+t_0)\!\!\sum_{(V^{\Diamond}_L\setminus\{0\})\times \{z\}}\!\!H(d(x,y)) - G(s+s_0,t+t_0)\!\!\sum_{\{0\}\times \{z\}}\!\!H(d(x,y))\\
            =& [H(0)-H(l)][G(s+s_0,t+t_0)+G(s,t)-G(s+s_0,t)-G(s,t+t_0)]\\
            &+[G(s,t+t_0)-G(s,t)]\left[\sum_{V^{\Diamond}_L\times \{0\}}H(d(x,y))-\sum_{V^{\Diamond}_L\times \{z\}}H(d(x,y))\right]
        \end{aligned}
    \end{equation}
    holds for all $L>l$. Note that by \eqref{equ:assumption_of_H} we have
    \begin{align*}
         \sum_{V^{\Diamond}_L\times \{0\}}H(d(x,y))-\sum_{V^{\Diamond}_L\times \{z\}}H(d(x,y))
        \le  \sum_{V^{\Diamond}_{L+l}\setminus V^{\Diamond}_L} H(d(0,x)) \to 0
    \end{align*}
    as $L \to \infty$, then we obtain
    $$G(s+s_0,t+t_0)+G(s,t)-G(s+s_0,t)-G(s,t+t_0) \ge 0,$$
    i.e. $G$ is supermodular.
\end{proof}

\begin{remark}
    If there exists an $r_0 \ge 0$ such that $H(r)=0$ for all $r\ge r_0$, then \eqref{equ:assumption_of_G} can be replaced by $G(0,0)=0.$
\end{remark}

Recall that a Borel measurable function $G:\R_+^m \to \R$ is supermodular if
$$G(y+s e_i+t e_j)+G(y) \ge G(y+s e_i)+G(y+t e_j),\ \forall s,t\ge 0,\ y\in \R_+^m, \ i\ne j.$$
By a similar argument as Theorem~\ref{thm:discrete_Riesz_inequality}, we obtain the following result. One may refer to \cite{burchard2006rearrangement} for the continuous case.
\begin{theorem}[Discrete extended Riesz inequality]
    Let $u_1,u_2,\cdots,u_m$ be admissible functions on $\Z^d$, $G:\R_+^m\to \R_+$ be supermodular with $G(\mathbf{0})=0$, each $H_{ij}$ is nonincreasing and nonnegative, then
    \begin{equation}
        \begin{aligned}
            & \sum_{x_1,\cdots,x_m\in \Z^d} G(u_1(x_1),\cdots,u_m(x_m))\prod_{1\le i<j \le m}H_{ij}(d(x_i,x_j))\\
            \le & \sum_{x_1,\cdots,x_m\in \Z^d} G(u_1^*(x_1),\cdots,u_m^*(x_m))\prod_{1\le i<j \le m}H_{ij}(d(x_i,x_j)).
        \end{aligned}
    \end{equation}
\end{theorem}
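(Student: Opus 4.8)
The plan is to mirror the proof of Theorem~\ref{thm:discrete_Riesz_inequality}: first establish the inequality for finitely supported $u_1,\dots,u_m$ by showing that \emph{each} one-step rearrangement $R_e$ does not decrease the functional, then iterate and pass to the limit $R_{\Z^d}=\lim_k R^k$ of Definition~\ref{def:definition_of_discrete_Schwarz_rearrangement}, and finally extend from $C_c(\Z^d)$ to admissible functions. Throughout I may assume the right-hand side is finite, since otherwise there is nothing to prove. The only genuinely new analytic input beyond what is already in the paper is a one-dimensional, $m$-function version of Lemma~\ref{lemma:one-dimensional_discrete_Riesz_inequality}; everything else is bookkeeping built on the machinery of Section~4 and Section~5.

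For the one-step step, fix $e\in\Vec{E}$ and the partition $\Z^d=\bigsqcup_\alpha V_e^\alpha$. Splitting the $m$-fold sum over $m$-tuples of classes $(\alpha_1,\dots,\alpha_m)$ and parametrizing each $x_k\in V_e^{\alpha_k}$ by its coordinate $l_k=\langle e,x_k\rangle\in\Z$ or $\Z+\tfrac12$, the computation of Cases I and II in the proof of Lemma~\ref{lemma:discrete_Riesz_inequality_for_finitely_supported_functions} applies verbatim to each unordered pair and gives
\[
 d(x_i,x_j)=A_{ij}+\phi_{ij}(|l_i-l_j|),
\]
where $A_{ij}$ depends only on $(\alpha_i,\alpha_j)$ and $\phi_{ij}$ is increasing; hence $H_{ij}(d(x_i,x_j))=\widetilde H_{ij}(|l_i-l_j|)$ for a nonincreasing $\widetilde H_{ij}$. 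On each tuple of classes the problem thus becomes a one-dimensional extended Riesz inequality for $m$ functions with pairwise kernels $\widetilde H_{ij}$, which I would prove by the discrete polarization method of Appendix~A: a single reflection acts on each unordered pair $\{i,j\}$ independently, and the resulting change of the functional has a definite sign because the pairwise supermodularity of $G$ (Definition~\ref{def:supermodular}) combines with the monotonicity of $\widetilde H_{ij}$ exactly as in the two-function case, so iterating polarizations yields $R_\Z$. Summing over all class-tuples gives the one-step inequality for $R_e$. Since monotonicity is preserved under composition, the inequality holds for $R^k$ for every $k$, and letting $k\to\infty$ together with Lemma~\ref{lemma:well-definedness_for_finitely_supported_functions} (which makes $R^k$ stabilize on $C_c$ data) settles the finitely supported case.

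To pass to admissible $u_1,\dots,u_m$, I would approximate each $u_k$ by $u_{k,i}=\mathbbm 1_{B_i}u_k$ with $B_i=\{x:\|x\|_1\le i\}$, so that $u_{k,i}\nearrow u_k$ and, by the monotonicity of $R_{\Z^d}$, also $u_{k,i}^*\nearrow u_k^*$ pointwise. When $G$ is nondecreasing in each variable, monotone convergence on both sides finishes the proof immediately. For a general supermodular $G\ge 0$ with $G(\mathbf 0)=0$, I would replace $G$ by its mixed difference $\widetilde G(t)=\sum_{S\subseteq[m]}(-1)^{m-|S|}G(t^S)$, which vanishes whenever some coordinate is $0$; the correction terms produced by inclusion–exclusion involve strictly fewer than $m$ of the functions, and for the single-variable terms translation invariance of $\Z^d$ turns the sums over the omitted coordinates into constants, so that Proposition~\ref{prop:discrete_Cavalieri's_principle} makes them invariant under rearrangement, exactly as for the two correction terms in Theorem~\ref{thm:discrete_Riesz_inequality}.

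The main obstacle is precisely this last passage to admissible functions for non-monotone $G$. The subtlety is that for $m\ge 3$ pairwise supermodularity no longer forces $\widetilde G$ to be nondecreasing, and the inclusion–exclusion correction terms indexed by subsets $S$ with $2\le|S|<m$ carry kernels that are \emph{not} products of pairwise factors, so they do not reduce to constants by a single translation. Reconciling these terms with an induction on $m$ — i.e.\ showing each is either rearrangement-invariant or controlled by a lower-order instance of the theorem — is where the real care is needed. The bookkeeping of the polarization over the $\binom m2$ pairs in Step~2 is the other technically heavy point, but it is conceptually routine once the two-function case is in hand.
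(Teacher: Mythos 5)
Your skeleton matches the paper's intended route — the paper's own "proof" of this theorem is literally one sentence ("by a similar argument as Theorem~\ref{thm:discrete_Riesz_inequality}", with a pointer to the continuous case) — but as a proof your proposal has two genuine gaps, only one of which you flag. The one you understate is the one-dimensional $m$-function polarization step. Your claim that "a single reflection acts on each unordered pair independently, \ldots exactly as in the two-function case" is not correct as stated: under a reflection $\sigma_H$, a tuple $(x_1,\dots,x_m)$ together with its mirror points generates an orbit of $2^m$ configurations, and the kernel weight of a configuration $\epsilon\in\{0,1\}^m$ is $\prod_{i<j}K_{ij}(\epsilon_i\oplus\epsilon_j)$, so flipping a single coordinate $\epsilon_k$ toggles all $m-1$ factors involving $k$ simultaneously; the comparison does not localize to pairs. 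The two-function proof (Lemma~\ref{lemma:A.2}) rests on a four-term identity plus one application of two-point supermodularity. For $m\ge 3$ what is needed is a $2^m$-term correlation inequality on the hypercube: polarization preserves the multiset of $G$-arguments over the orbit, and one must show that the polarized pairing of $G$-values with kernel weights dominates every other pairing. One can verify this by hand for special integrands such as $G(t)=t_it_j$ or $G(t)=t_1t_2t_3$ (using elementary kernel inequalities like $P_{12}(P_{13}P_{23}+Q_{13}Q_{23})\ge Q_{12}(Q_{13}P_{23}+P_{13}Q_{23})$ when $P_{ij}\ge Q_{ij}\ge 0$), but it does not follow formally from pairwise supermodularity, and neither your proposal nor the paper supplies an argument. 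This, not the bookkeeping, is the heart of the extension to $m\ge 3$.

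The second gap is the one you correctly identify: the passage from $C_c(\Z^d)$ to $C_0^+(\Z^d)$ for non-monotone $G$. Your concern is justified and can be made concrete. The inclusion--exclusion function $\widetilde G(t)=\sum_{S\subseteq[m]}(-1)^{m-|S|}G(t^S)$ vanishes when any coordinate is $0$, but for $m\ge 3$ it need not be coordinatewise nondecreasing: in the smooth case $\partial_1\widetilde G(t)=\int_0^{t_2}\!\int_0^{t_3}\partial_1\partial_2\partial_3 G$, and pairwise supermodularity constrains only second mixed differences, not third ones. For instance, $G(t)=t_1t_2+t_1t_3+t_2t_3-\delta\,\phi(t_1)\phi(t_2)\phi(t_3)$ with $\phi$ bounded, increasing, $\phi(0)=0$, and $\delta$ small is nonnegative, vanishes at $\mathbf{0}$, and has all pairwise mixed differences nonnegative, yet $\partial_1\partial_2\partial_3 G<0$, so $\widetilde G$ decreases somewhere and the monotone-convergence argument of Theorem~\ref{thm:discrete_Riesz_inequality} breaks. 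Moreover, as you note, the correction terms indexed by $2\le|S|<m$ carry effective kernels obtained by summing $\prod_{i<j}H_{ij}$ over the omitted coordinates; these are neither constants (so Proposition~\ref{prop:discrete_Cavalieri's_principle} does not dispose of them) nor of product form (so induction on $m$ does not apply as stated), and the sums may diverge. In short: your reconstruction of the paper's intended "similar argument" is faithful, and your diagnosis of where it needs repair is accurate, but the proposal closes neither gap, and the polarization step is asserted with a justification that is false as written.
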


By setting $H(t)=\delta_0(t)$ in Theorem~\ref{thm:discrete_Riesz_inequality}, we get the discrete generalized Hardy-Littlewood inequality.
\begin{corollary}
    Let $u,v \in C_0^+(\Z^d)$ be admissible, and $G:\R_+ \times \R_+ \to \R_+$ be a a supermodular function with $G(0,0)=0$. Then
    \begin{equation}
        \sum_{x \in \Z^d}G(u(x),v(x)) \le \sum_{x \in \Z^d}G(u^*(x),v^*(x)).
    \end{equation}
    Moreover, if $u=u^*$ with $\ran(u)=[a_1>a_2>\cdots]$, $G$ is strictly supermodular, $|\sum_{x \in \Z^d}G(u^*(x),v^*(x))|<\infty$, then the equality holds if and only if $v=v^*$.
\end{corollary}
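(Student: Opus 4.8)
The plan is to specialize Theorem~\ref{thm:discrete_Riesz_inequality} to the degenerate kernel $H=\delta_0$, that is, the function on $\R_+$ with $H(0)=1$ and $H(t)=0$ for every $t>0$. First I would verify that this $H$ satisfies the hypotheses of the theorem: it is nonnegative and nonincreasing on $\R_+$ (hence ``decreasing'' in the sense used throughout the paper), so Theorem~\ref{thm:discrete_Riesz_inequality} applies with this choice of $H$.

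The key observation is that, since $d$ is the combinatorial distance on $\Z^d$, one has $d(x,y)=0$ if and only if $x=y$; consequently $H(d(x,y))=1$ when $x=y$ and $H(d(x,y))=0$ otherwise. The double sum therefore collapses onto the diagonal:
$$\sum_{x,y\in\Z^d} G(u(x),v(y))H(d(x,y)) = \sum_{x\in\Z^d} G(u(x),v(x)),$$
and likewise $\sum_{x,y\in\Z^d}G(u^*(x),v^*(y))H(d(x,y))=\sum_{x\in\Z^d} G(u^*(x),v^*(x))$. Substituting these two identities into the inequality furnished by Theorem~\ref{thm:discrete_Riesz_inequality} yields the claimed Hardy--Littlewood inequality at once.

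For the equality statement I would simply match the hypotheses of the equality clause of Theorem~\ref{thm:discrete_Riesz_inequality}. Under the assumptions that $u=u^*$ with $\ran(u)=[a_1>a_2>\cdots]$ and that $G$ is strictly supermodular, the condition ``$H(n)>H(n+1)$ for some $n\in\N$'' is satisfied with $n=0$, since $H(0)=1>0=H(1)$; moreover the finiteness hypothesis $|\sum_{x,y\in\Z^d}G(u^*(x),v^*(y))H(d(x,y))|<\infty$ reduces, under the same collapse onto the diagonal, to exactly $|\sum_{x\in\Z^d} G(u^*(x),v^*(x))|<\infty$, which is what the corollary assumes. Hence the equality clause of Theorem~\ref{thm:discrete_Riesz_inequality} applies verbatim and gives that equality holds if and only if $v=v^*$.

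There is essentially no analytical obstacle here---the result is a clean specialization of the main theorem. The only point deserving a moment's care is confirming that the degenerate kernel $\delta_0$ is legitimately covered by the hypotheses of Theorem~\ref{thm:discrete_Riesz_inequality}, namely the nonnegativity and monotonicity of $H$ and the strict-drop condition $H(n)>H(n+1)$ required for the equality case; all of these hold trivially for $\delta_0$, so no further work is needed.
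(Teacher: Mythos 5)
Your proposal is correct and follows exactly the paper's own route: the paper obtains this corollary by setting $H(t)=\delta_0(t)$ in Theorem~\ref{thm:discrete_Riesz_inequality}, which collapses the double sum onto the diagonal, just as you do. Your additional verifications (nonincreasing kernel, the strict drop $H(0)>H(1)$ for the equality clause, and the reduction of the finiteness hypothesis) are exactly the implicit checks behind the paper's one-line derivation.
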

\begin{corollary}[Discrete Hardy-Littlewood inequality]
    Let $u,v \in C_0^+(\Z^d)$ be admissible, then
    \begin{equation}\label{equ:discrete_Hardy-Littlewood_inequality}
        \sum_{x \in \Z^d}u(x)v(x) \le \sum_{x \in \Z^d}u^*(x)v^*(x).
    \end{equation}
    Moreover, if $u\in \mathcal{S}(\Z^d)$ with $\ran u=[a_1>a_2>\cdots]$, $\sum_{x \in \Z^d}u^*(x)v^*(x)<+\infty$, then the equality holds if and only if $v=v^*$.
\end{corollary}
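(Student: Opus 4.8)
The plan is to derive this statement as the special case $G(s,t)=st$ of the generalized Hardy--Littlewood inequality proved in the corollary immediately above (which in turn follows from Theorem~\ref{thm:discrete_Riesz_inequality} upon taking $H=\delta_0$). First I would verify that $G(s,t)=st$ satisfies all the hypotheses of that corollary: it maps $\R_+\times\R_+$ into $\R_+$, it satisfies $G(0,0)=0$, and by the earlier proposition on supermodular functions the product $st$ is supermodular---indeed strictly supermodular. Substituting $G(s,t)=st$ into the inequality of the generalized corollary then gives
\[
    \sum_{x\in\Z^d}u(x)v(x)\le\sum_{x\in\Z^d}u^*(x)v^*(x),
\]
which is precisely \eqref{equ:discrete_Hardy-Littlewood_inequality}.

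For the equality assertion I would begin by noting that the hypothesis $u\in\mathcal{S}(\Z^d)$ is, by the very definition of a Schwarz symmetric function, the same as $u=R_{\Z^d}u=u^*$. The remaining assumptions in the equality part of the generalized corollary are then all met: $u=u^*$ with $\ran u=[a_1>a_2>\cdots]$, the function $G(s,t)=st$ is strictly supermodular, and the finiteness hypothesis $\sum_{x\in\Z^d}u^*(x)v^*(x)<+\infty$ coincides with $\bigl|\sum_{x\in\Z^d}G(u^*(x),v^*(x))\bigr|<\infty$ because the summands are nonnegative. The equality characterization of the generalized corollary then yields that equality holds if and only if $v=v^*$.

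As this is a direct specialization, I do not anticipate a genuine obstacle. The only two points deserving an explicit word are the strict supermodularity of the product $st$ and the identification $u\in\mathcal{S}(\Z^d)\Leftrightarrow u=u^*$, both of which are already available in the excerpt.
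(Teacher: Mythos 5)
Your proposal is correct and follows the paper's own route exactly: the paper also obtains this corollary by specializing the generalized Hardy--Littlewood corollary (itself Theorem~\ref{thm:discrete_Riesz_inequality} with $H=\delta_0$) to the strictly supermodular choice $G(s,t)=st$, with the equality case inherited verbatim. Your two explicit verifications---strict supermodularity of $st$ and the identification $u\in\mathcal{S}(\Z^d)\Leftrightarrow u=u^*$---are exactly the points the paper relies on implicitly.
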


\begin{corollary}
    The rearrangement operator is a contraction in $l^p(\Z^d)$ in the following sense:
    \begin{equation}
        \sum_{x \in \Z^d}|u^*(x)-v^*(x)|^p \le \sum_{x \in \Z^d}|u(x)-v(x)|^p
    \end{equation}
    holds for all nonnegative functions $u,v \in l^p(\Z^d)$.
    Moreover, if $u=u^*$ with $\ran u=[a_1>a_2>\cdots]$, $p>1$, then the equality holds if and only if $v=v^*$.
\end{corollary}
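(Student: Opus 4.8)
The plan is to recognize the $l^p$ distance as a special case of the generalized Riesz functional and to invoke Theorem~\ref{thm:discrete_Riesz_inequality_b}. The key choice is $G(s,t)=-|s-t|^p$, which is supermodular for $p\ge 1$ and strictly supermodular for $p>1$ by the earlier proposition on supermodular functions, and which satisfies $G(0,0)=0$. Taking $H=\delta_0$, i.e.\ $H(0)=1$ and $H(n)=0$ for $n\ge 1$ (a nonincreasing function), collapses the double sum to the diagonal $x=y$:
$$\sum_{x,y\in\Z^d}G(u(x),v(y))H(d(x,y))=\sum_{x\in\Z^d}G(u(x),v(x))=-\sum_{x\in\Z^d}|u(x)-v(x)|^p.$$
Thus the contraction inequality is exactly the assertion that this functional does not decrease under rearrangement.

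Before applying the theorem I would verify the integrability hypotheses. Since $u,v\in l^p(\Z^d)$ are nonnegative, one has $\sum_{x,y}G(u(x),0)H(d(x,y))=-\sum_x u(x)^p=-||u||_p^p$, which is finite, and likewise for $v$; moreover $u-v\in l^p(\Z^d)$ because $|u-v|^p\le 2^{p-1}(u^p+v^p)$ for $p\ge 1$, so both sides of the Riesz inequality are finite. Theorem~\ref{thm:discrete_Riesz_inequality_b} then gives
$$-\sum_{x\in\Z^d}|u(x)-v(x)|^p\le -\sum_{x\in\Z^d}|u^*(x)-v^*(x)|^p,$$
and multiplying by $-1$ yields the stated contraction property.

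For the equality case, suppose $p>1$, so that $G(s,t)=-|s-t|^p$ is strictly supermodular. Under the hypothesis $u=u^*$ with $\ran u=[a_1>a_2>\cdots]$, the remaining conditions of the equality statement in Theorem~\ref{thm:discrete_Riesz_inequality_b} hold: the chosen $H$ satisfies $H(0)>H(1)$ (so $H(n)>H(n+1)$ with $n=0$), and $|\sum_{x\in\Z^d}|u(x)-v(x)|^p|<\infty$ by the integrability already checked. Hence equality holds if and only if $v=v^*$. There is no genuine obstacle in this argument; the only point requiring care is to invoke the real-valued version Theorem~\ref{thm:discrete_Riesz_inequality_b} rather than the nonnegative form, since the natural integrand $-|s-t|^p$ takes nonpositive values.
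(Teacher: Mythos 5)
Your proof is correct and follows exactly the paper's own argument: applying the real-valued Riesz inequality (Theorem~\ref{thm:discrete_Riesz_inequality_b}) with $G(s,t)=-|s-t|^p$ and $H=\delta_0$. You also verify the finiteness hypotheses and the condition $H(0)>H(1)$ explicitly, which the paper leaves implicit, but the route is the same.
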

\begin{proof}

    Apply Theorem~\ref{thm:discrete_Riesz_inequality_b} by setting $G(u,v)= -|u-v|^p$ and $H(t)=\delta_0(t)$, then the result follows.
\end{proof}
In the following example, we will show that the conditions of the equality cases in Theorem~\ref{thm:discrete_Riesz_inequality} are necessary.

\begin{example}\label{example5.14}
    We consider admissible functions $u,v$ on $\Z$.\\
   \begin{enumerate}[(i)]
       \item The condition ``$\ran(u)=[a_1>a_2>\cdots]$'' is necessary. Consider $u=\mathbbm{1}_{\{0,\pm 1,\pm 2\}}$, then
       $$\sum_{x \in \Z}u(x)v(x) = \sum_{x \in \Z}u^*(x)v^*(x)$$
       holds for all nonnegative $v$ with $\supp{v} \subset \{0,\pm 1,\pm 2\}$.
       \item The condition ``$G$ is strictly supermodular'' is necessary. Let $u\in \mathcal{S}(\Z)$ with $\ran(u)=[a_1=5>a_2=4>a_3=1>\cdots]$, let
       \begin{align*}
           v(x)=
           \begin{cases}
               2, \quad & x=0;\\
               3, & x=1;\\
               u(x), &\text{others}.
           \end{cases}
       \end{align*}
       Then $\sum_{x \in \Z}|u^*(x)-v^*(x)| = \sum_{x \in \Z}|u(x)-v(x)|$, and $v^* \notequiv v$.
       \item The condition ``$H(n)>H(n+1)$ for some $n\in \N$'' is necessary. For the trivial $H\equiv 0$, we immediately get the result. If $H\equiv 1$, $u \in \mathcal{S}^1(\Z)$, then
       $$\sum_{x \in \Z, \ y\in \Z}u(x)v(y) = \sum_{x \in \Z,\ y\in \Z}u^*(x)v^*(y)=||u||_1 ||v||_1$$
       for all $v\in l^1(\Z)$.
       \item The condition ``$|\sum\limits_{x,y \in \Z^d}G(u^*(x),v^*(y))H(d(x,y))|<\infty$'' is necessary. Let
       \begin{align*}
           u(x)=
           \begin{cases}
               \frac{1}{2|x|+1}, \quad & x\le 0;\\
               \frac{1}{2|x|}, & x>0.
           \end{cases}
       \end{align*}
       Let $H(n)=\mathbbm{1}_{\{0\}}(n)+1$. Then
       $$\sum_{x \in \Z, \ y\in \Z}u(x)v(y)H(d(x,y)) \ge ||v||_{\infty}||u||_1=+\infty$$
       for all admissible $v\notequiv\:0$.
   \end{enumerate}
\end{example}

\begin{theorem}[Discrete P\'olya-Szeg\"o inequality]
    \label{theorem:discrete_Polya-Szego_inequality}
    Let $u\in l^p(\Z^d)$ be nonnegative, $p\ge 1$, $u^*=R_{\Z^d}u$, then
    \begin{equation}
        \label{equ:discrete_Polya-Szego_inequality}
        ||\nabla u^*||_p \le ||\nabla u||_p.
    \end{equation}
    Moreover, if $p>1$, $\ran(u)=[a_1 > a_2 > \cdots]$, then the equality holds if and only if
    $$u\cong u^*.$$
\end{theorem}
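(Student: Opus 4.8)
The plan is to read the P\'olya--Szeg\"o inequality off the already-established generalized Riesz inequality by a single substitution, and then to obtain the sharp equality case through the one-step rearrangements. For the inequality I would apply Theorem~\ref{thm:discrete_Riesz_inequality_b} (the signed version, since the kernel below is nonpositive) with $v=u$, $G(s,t)=-|s-t|^p$, and $H(n)=\mathbbm{1}_{\{n\le 1\}}$. Here $G$ is supermodular by the Proposition following Definition~\ref{def:supermodular} (strictly supermodular when $p>1$) with $G(0,0)=0$, and $H$ is nonincreasing. The two finiteness hypotheses of Theorem~\ref{thm:discrete_Riesz_inequality_b} hold because each $x$ has exactly $2d+1$ points $y$ with $d(x,y)\le 1$, so $\sum_{x,y}G(u(x),0)H(d(x,y))=-(2d+1)||u||_p^p$ is finite for $u\in l^p(\Z^d)$. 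The bookkeeping identity
$$\sum_{x,y\in\Z^d}|w(x)-w(y)|^p\,\mathbbm{1}_{\{d(x,y)\le 1\}}=\sum_{d(x,y)=1}|w(x)-w(y)|^p=2||\nabla w||_p^p$$
(the diagonal terms vanish and each edge is counted in both orders) turns the two sides of Theorem~\ref{thm:discrete_Riesz_inequality_b} into $-2||\nabla u||_p^p$ and $-2||\nabla u^*||_p^p$, which is exactly \eqref{equ:discrete_Polya-Szego_inequality}.

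For the equality case, the ``if'' direction is immediate: a graph automorphism $\sigma$ permutes the edge set of $\Z^d$, so $u\cong u^*$ forces $||\nabla u||_p=||\nabla u^*||_p$. For ``only if'' I would first note that the slice-by-slice argument in the proof of Lemma~\ref{lemma:discrete_Riesz_inequality_for_finitely_supported_functions} in fact establishes the single-step bound $||\nabla R_e w||_p\le||\nabla w||_p$ for every $e\in\Vec E$. Consequently $||\nabla R^k u||_p$ is nonincreasing in $k$; since $R^k u\to u^*$ pointwise and $||\nabla\cdot||_p$ is lower semicontinuous under pointwise convergence (Fatou applied to the edge sum), the chain $||\nabla u||_p\ge||\nabla R^k u||_p\ge||\nabla u^*||_p$ collapses to equalities once equality in \eqref{equ:discrete_Polya-Szego_inequality} is assumed. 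Hence every elementary step occurring in the iteration preserves the $p$-energy.

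It then remains to analyze a single energy-preserving step $R_e$. Decomposing the energy into slice-pair functionals as in the proof of Lemma~\ref{lemma:discrete_Riesz_inequality_for_finitely_supported_functions} and inserting the distance formula found there, against the kernel $H=\mathbbm{1}_{\{n\le 1\}}$ only finitely many of these functionals are nonzero, and equality in the step forces equality in each. Every such functional is a one-dimensional Riesz functional, so the pertinent one-dimensional equality case applies --- the within-line terms (present for the coordinate directions) through the one-dimensional P\'olya--Szeg\"o equality case, and the between-line terms through the one-dimensional Hardy--Littlewood/contraction equality case. Since $p>1$ and $\ran u=[a_1>a_2>\cdots]$ forces distinct values along every line, these constraints rigidify $w$ in direction $e$: each line parallel to $e$ must carry the symmetric-decreasing arrangement up to a shift or reflection, and the shifts of interacting lines are locked together. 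The sub-result I would isolate from this is that $R_e w\cong w$ through a single graph automorphism of $\Z^d$.

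Granting this sub-result, the theorem follows by composition: each elementary step of the iteration defining $R^k u$ is then an automorphism-equivalence, and since the iteration stabilizes on every fixed finite region (Lemma~\ref{lem-def_fair}), passing to the limit $u^*=\lim_k R^k u$ gives $u\cong u^*$. I expect the genuine obstacle to be exactly this sub-result: promoting a family of per-line symmetries --- a priori an independent shift or reflection on each line and in each direction of $\Vec E$, most delicately the diagonal directions $\tfrac{e_i\pm e_j}{2}$ that create the rounded geometry rather than a box --- into one globally consistent automorphism. The natural tool is an induction outward through the level sets $\{u\ge a_n\}$, where the distinctness of the $a_n$ fixes the location of each successive value once the inner levels are placed, much as in the well-definedness and geometry arguments of Lemma~\ref{lem-well_defined_Cc} and Lemma~\ref{thm-geom_Cc}.
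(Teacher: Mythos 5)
Your first half coincides with the paper's: the same substitution of $v=u$, $G(s,t)=-|s-t|^p$, $H=\mathbbm{1}_{\{0,1\}}$ into Theorem~\ref{thm:discrete_Riesz_inequality_b}, and your finiteness check and edge-counting identity are correct (the paper omits them). Your reduction of the equality case to single energy-preserving steps (per-step monotonicity from the slice decomposition, Fatou for lower semicontinuity, collapse of the chain) is also sound and is a more careful version of the paper's one-line observation that equality forces $||\nabla R_e u||_p=||\nabla u||_p$ for all $e\in\Vec{E}$. But the step you explicitly leave unproven --- that equality in one step forces $R_e w\cong w$ via a \emph{single} automorphism, with the per-line shifts and reflections ``locked together'' --- is exactly the substantive content of the paper's proof, and your sketch of how to obtain it does not work as stated. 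The paper supplies it through Corollary~\ref{cor:appendix_A.5} and, crucially, Lemma~\ref{lem:appendix_A.6}, proved in the appendix by polarization.

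Concretely, two of your proposed tools fail. First, the between-line coupling you invoke, the contraction/Hardy--Littlewood equality case (Corollary~\ref{cor:appendix_A.5}), requires one of the two functions to already lie in $\mathcal{S}(\Z)$ with strictly decreasing range; mid-argument, neither of two adjacent lines of $w$ is known to be Schwarz symmetric, so the corollary cannot be applied to a pair of neighboring line restrictions. Moreover, for the diagonal directions $\frac{e_i\pm e_j}{2}$ adjacent lines sit on $\Z$ and $\Z+1/2$ respectively, which your two-function formulation does not address. The paper's Lemma~\ref{lem:appendix_A.6} avoids both problems by running the equality analysis of the polarization inequality \eqref{a.1} directly on the interleaved function on $\frac{\Z}{2}$ (with $u_1=u|_{\Z}$, $u_2=u|_{\Z+1/2}$): strict supermodularity of $-|s-t|^p$ for $p>1$ yields the sign-rigidity $v(x+1)>v(-x-1)\Leftrightarrow v(x)>v(-x)$ from $H_+$ and the analogous statement from $H_-$, whence $v=v^*$ or $v=v^*\circ\sigma_{H_+}$. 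Second, and consequently, the ``independent shift on each line'' ambiguity you anticipate never arises: polarization compares each point with its mirror about \emph{fixed} hyperplanes, not arbitrary translates, so after normalizing $u(0,0)=a_1$ the only surviving freedom per direction is one global reflection ($R_{e_1}u=u$ or $u\circ\sigma_{(0,1)}$, $R_{\frac{e_1+e_2}{2}}u=u$ or $u\circ\sigma_{(1,-1)}$, etc.), and $u\cong u^*$ follows at once --- no outward induction through level sets is needed. So your proposal is a correct skeleton with the load-bearing rigidity lemma missing, and the replacement argument you gesture at would need to be rebuilt along the paper's polarization lines to close.
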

\begin{proof}
    The inequality~\eqref{equ:discrete_Polya-Szego_inequality} follows by applying Theorem~\ref{thm:discrete_Riesz_inequality_b} to $u=v$, $G(u(x),u(y))=-|u(x)-u(y)|^p$, and
    \begin{align*}
        H(d(x,y))=
        \begin{cases}
            1, \quad & d(x,y) =0 \text{ or } 1,\\
            0, & \text{others}.
        \end{cases}
    \end{align*}
    Next we prove the second part of the result. We will address $\Z^2$ for the simplicity of notation, the proof for higher dimensions is similar. W.O.L.G., we always assume that $u(0,0)=a_1$, i.e. the largest function value attains at $(0,0)$. Note that if $||\nabla u^*||_p = ||\nabla u||_p$, then $||\nabla R_e u||_p = ||\nabla u||_p$ for all $e\in \Vec{E}$.\\
    \underline{Step 1}: We prove that $R_{e_1}u \cong u$.\\
    Consider $e=e_1$. Let
    $$\mathcal{E}_p^{e,i}(u)=\sum_{(l,i)\in V_{e}^i} |u(l,i)-u(l+1,i)|^p,$$
    and
    $$\mathcal{E}_p^{e',i}(u)=\sum_{(l,i)\in V_{e}^i} |u(l,i)-u(l,i+1)|^p.$$
    Then
    $$\mathcal{E}_p(u)=\sum_{i\in \Z}(\mathcal{E}_p^{e,i}(u)+\mathcal{E}_p^{e',i}(u)).$$
    By Corollary ~\ref{cor:appendix_A.5} and Lemma~\ref{lem:appendix_A.6}, we have $\mathcal{E}_p^{e,i}(R_{e}u) = \mathcal{E}_p^{e,i}(u)$, $\mathcal{E}_p^{e',i}(R_{e} u) = \mathcal{E}_p^{e',i}(u)$, and $ R_{e_1} u =u$ or $u\circ \sigma_{(0,1)}$, where $\sigma_{(0,1)}$ is the reflection with respect to line $(0,1)$.
    \begin{figure}[H]
        \centering
        \setcounter{subfigure}{0}
        \subfigure[$\mathcal{E}_p^{e_1,2}$ takes over the energy on the bold edges]{
        \includegraphics[width=0.4\textwidth]{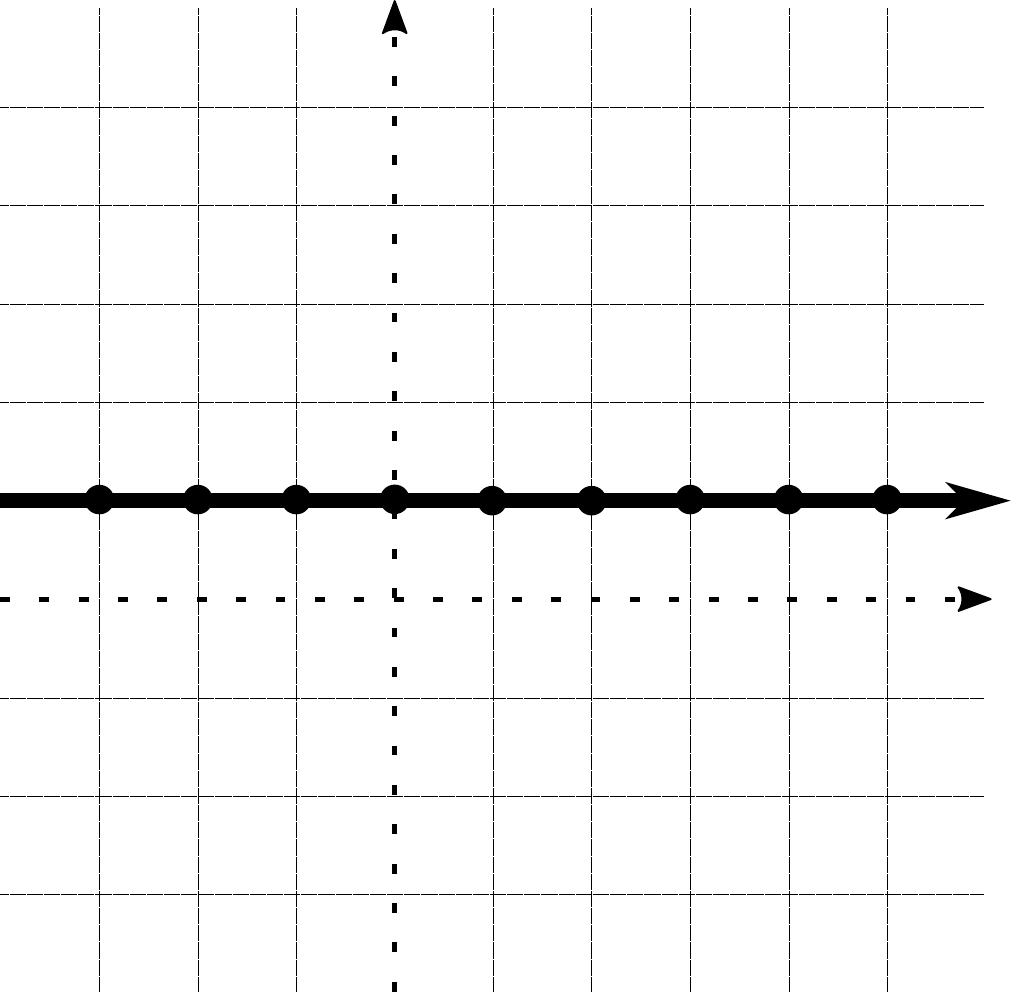}} \quad
        \subfigure[$\mathcal{E}_p^{e_1',2}$ takes over the energy on the bold edges]{
        \includegraphics[width=0.4\textwidth]{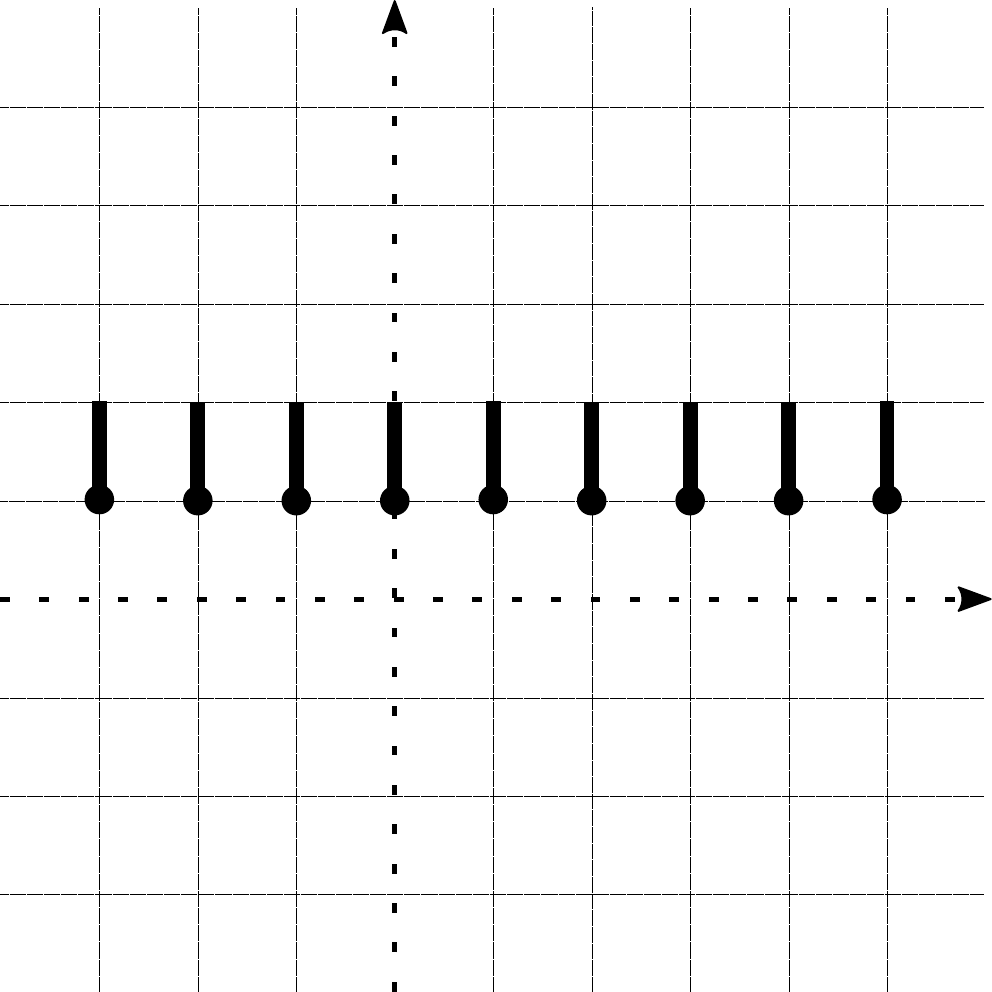}} \quad
    \end{figure}
    \underline{Step 2}: We prove that $R_{\frac{e_1+e_2}{2}}u \cong u$.\\
    Consider $e=\frac{e_1+e_2}{2}$. Let
    $$\mathcal{E}_p^{e,i}(u)=\sum_{(l,m)\in V_{e}^i} (|u(l,m)-u(l-1,m)|^p+|u(l,m)-u(l,m+1)|^p).$$
    Then
    $$\mathcal{E}_p(u)=\sum_{i\in \Z}\mathcal{E}_p^{e,i}(u).$$
    By Lemma~\ref{lem:appendix_A.6}, we have $\mathcal{E}_p^{e,i}(R_e u) = \mathcal{E}_p^{e,i}(u)$, and $R_{\frac{e_1+e_2}{2}} u =u$ or $u\circ \sigma_{(1,-1)}$, where $\sigma_{(1,-1)}$ is the reflection with respect to the line $(1,-1)$.
    \begin{figure}[H]
        \centering
        \includegraphics[width=0.4\textwidth]{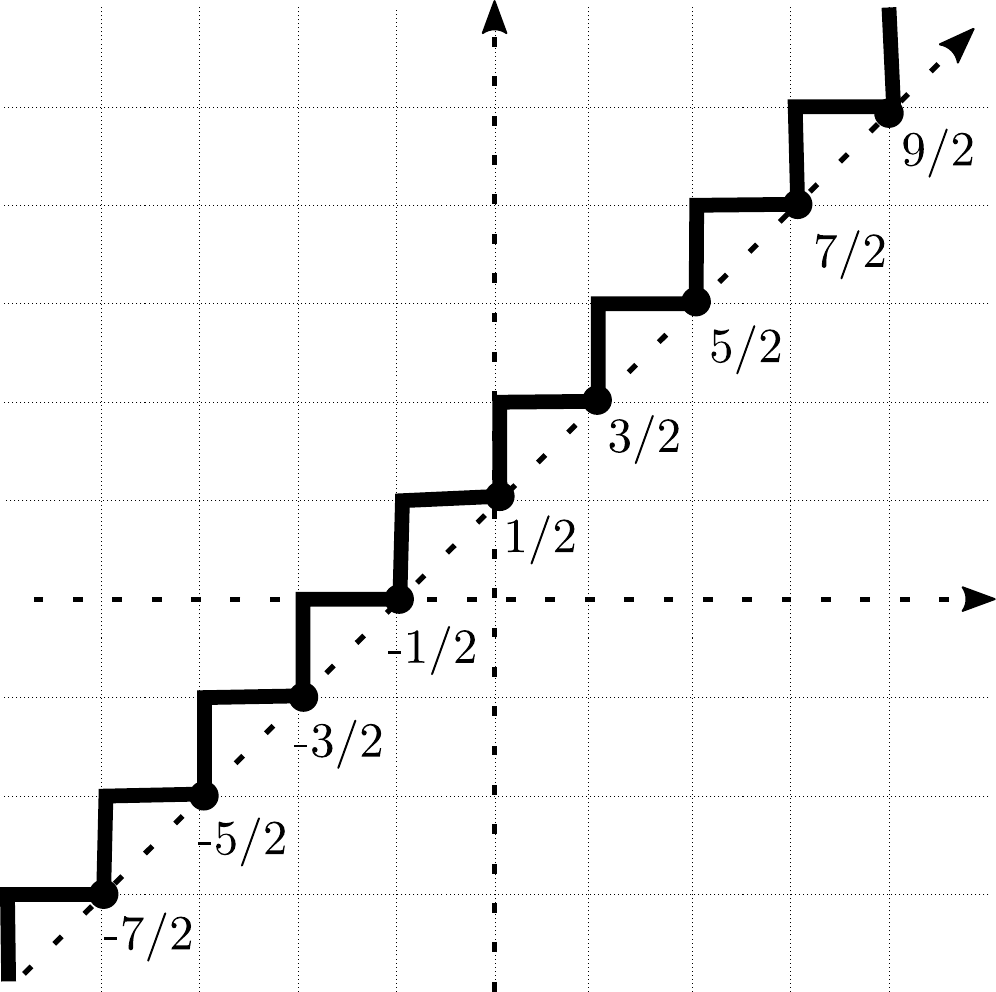}
        \caption{$\mathcal{E}_p^{\frac{e_1+e_2}{2},-1}$ takes over the energy on the bold edges}
    \end{figure}
    With a similar argument, we have $ R_{e_1} u =u$ or $u\circ \sigma_{(1,0)}$, $R_{\frac{e_1-e_2}{2}} u =u$ or $u\circ \sigma_{(1,1)}$. This proves the result.
\end{proof}

One may wonder whether the condition ``$a_{n}>a_{n+1}$ for all $n\in \N^+$'' in the cases of equality of Theorem~\ref{theorem:discrete_Polya-Szego_inequality} can be removed. Unfortunately it does not seem to be the case, and we have the following counter-example.
\begin{example}
    Given $u\in \mathcal{S}(\Z)$ with $\ran(u)=[a_1 > a_2 >a_3 >a_4=a_5>a_6>a_7 \succ \cdots]$, we switch the positions of function values $a_2$ and $a_3$ to get a new function $u'$ as shown in the following figures, then $||\nabla u||_p=||\nabla u'||_p$ and $u \not\cong u'$.
    \begin{figure}[H]
        \centering
        \subfigure[$u$]{\includegraphics[width=0.8\textwidth]{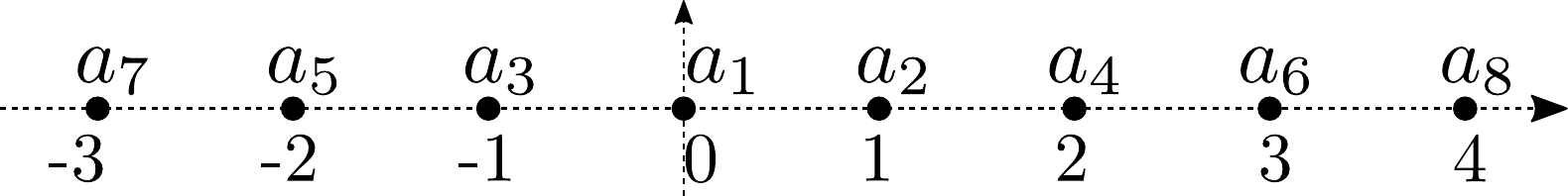}} \quad
        \subfigure[$u'$]{\includegraphics[width=0.8\textwidth]{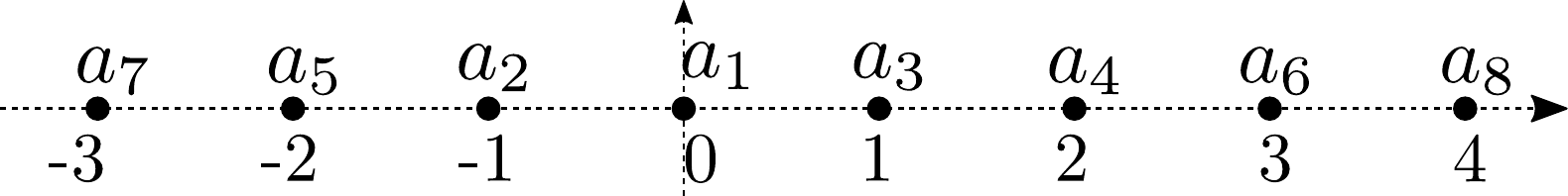}}
    \end{figure}
\end{example}

The following example shows that we can not even get $\supp u \cong \supp u^*$ in the cases of equality of Theorem~\ref{theorem:discrete_Polya-Szego_inequality} if the condition ``$a_{n}>a_{n+1}$ for all $n\in \N^+$'' is removed.
\begin{example}
   Consider $\Omega \subset \Z^2$ as shown in (a) of the following figure, let $u=\mathbbm{1}_{\Omega}$. Let $\Omega' \subset \Z^2$ as shown in (b), then $R_{\Z^2}u=\mathbbm{1}_{\Omega'}$. We have $||\nabla u||_p=||\nabla R_{\Z^2}u||_p$. However, $\Omega \not \cong \Omega'$.
    \begin{figure}[H]
        \setcounter{subfigure}{0}
        \centering
        \subfigure[$\Omega$ and $u=\mathbbm{1}_{\Omega}$]{\includegraphics[width=0.3\textwidth]{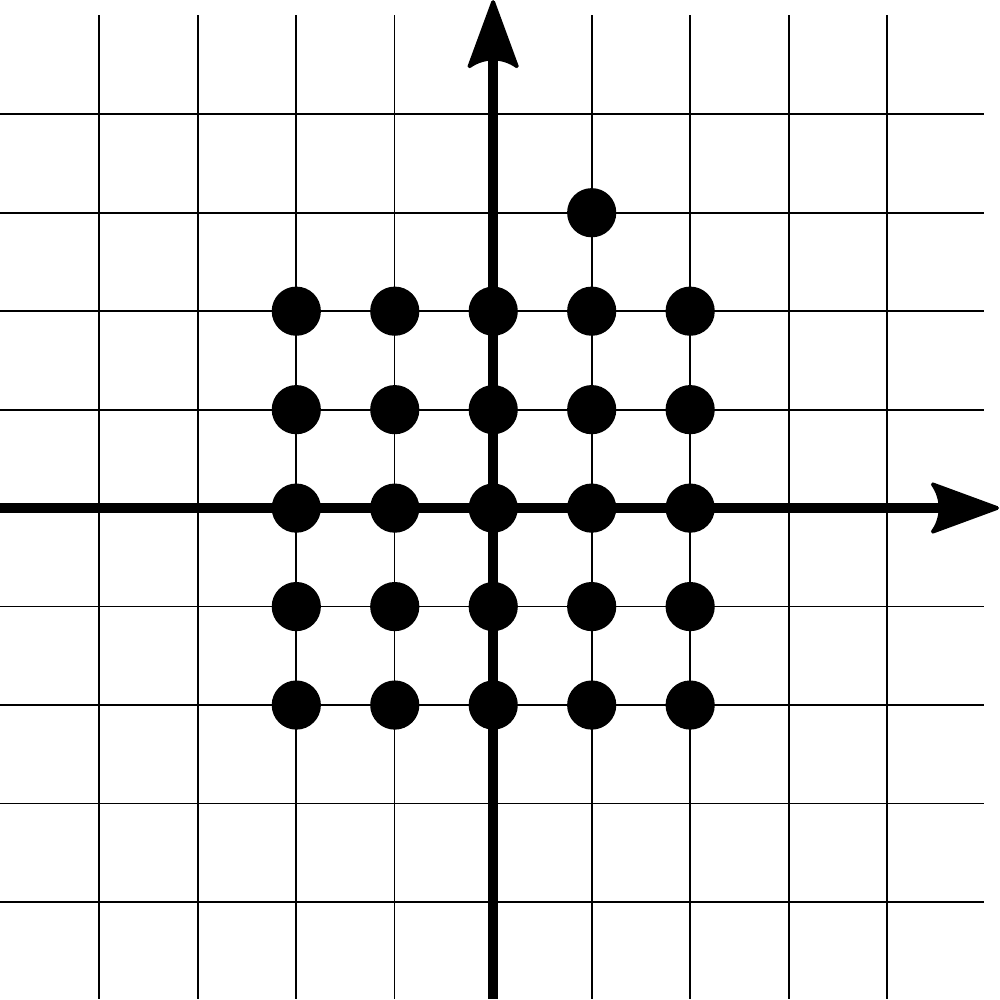}} \quad
        \subfigure[$\Omega'$ and $R_{\Z^2}u=\mathbbm{1}_{\Omega'}$]{\includegraphics[width=0.3\textwidth]{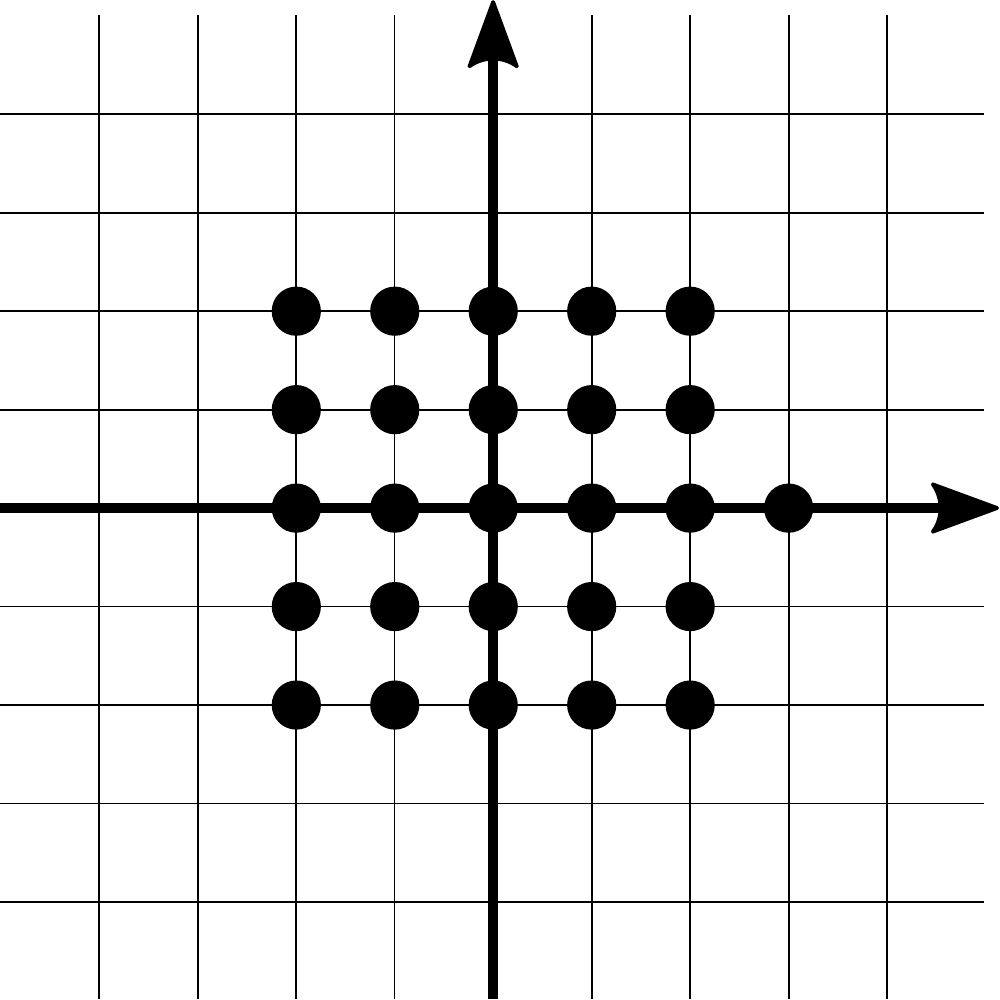}}
    \end{figure}
\end{example}

\begin{lemma}
    \label{lemma:existence_of_best_symmetric_function}
    Let $u\in l^p(\Z^d)$ be nonnegative, $p\ge 1$, then there exists $v \overset{e.m.}{\sim} u$ such that $v\in \mathcal{S}(\Z^d)$ and
    \begin{equation}
        ||\nabla v||_p = \inf\{||\nabla u'||_p:\ u'\overset{e.m.}{\sim} u\}.
    \end{equation}
\end{lemma}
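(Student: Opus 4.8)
The plan is to realize the minimizer as a pointwise limit of a cleverly chosen minimizing sequence, exploiting the combinatorial rigidity of Schwarz symmetric functions provided by Theorem~\ref{thm:geometry_of_Schwarz_symmetric_functions}. Write $m=\inf\{||\nabla u'||_p: u'\overset{e.m.}{\sim}u\}$; this is finite because $u$ is itself a competitor and $l^p(\Z^d)=W^{1,p}(\Z^d)$. The first step is to reduce to a \emph{symmetric} minimizing sequence. By the discrete P\'olya--Szeg\"o inequality (Theorem~\ref{theorem:discrete_Polya-Szego_inequality}), for every $w\overset{e.m.}{\sim}u$ the function $R_{\Z^d}w$ lies in $\mathcal S(\Z^d)$, is still equimeasurable with $u$, and satisfies $||\nabla R_{\Z^d}w||_p\le||\nabla w||_p$. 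Hence $m=\inf\{||\nabla v||_p: v\in\mathcal S(\Z^d),\ v\overset{e.m.}{\sim}u\}$, and I may fix a sequence $v_i\in\mathcal S(\Z^d)$ with $v_i\overset{e.m.}{\sim}u$ and $||\nabla v_i||_p\to m$.

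The second step is the extraction. The crucial point is that Schwarz symmetric competitors are combinatorially constrained: for each positive level $t$ the super-level set $\{v_i>t\}$ has the fixed cardinality $|\{u>t\}|$ and, by Theorem~\ref{thm:geometry_of_Schwarz_symmetric_functions}, is contained in a fixed finite box $V^{\Box}_{L_2}$ depending only on that cardinality and on $d$. Consequently, at each level only finitely many super-level sets can occur. Listing the distinct positive values $b_1>b_2>\cdots$ of $u$ and applying a diagonal argument over $j$, I obtain a subsequence (not relabelled) along which each set $\{v_i\ge b_j\}$ is eventually a fixed set $A_j$; the $A_j$ are nested and increasing, so setting $v(x)=b_j$ for $x\in A_j\setminus A_{j-1}$ and $v(x)=0$ otherwise gives a well-defined function with $v_i\to v$ pointwise.

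The third step verifies the three required properties of $v$. Equimeasurability is immediate: for each $j$ we have $|\{v\ge b_j\}|=|A_j|=|\{u\ge b_j\}|$, and no other positive values occur, so $v\overset{e.m.}{\sim}u$; in particular no mass escapes to infinity. Symmetry follows because $v$ is a pointwise limit of members of $\mathcal S(\Z^d)$, which is closed under pointwise limits. For minimality, Fatou's lemma applied edgewise gives $||\nabla v||_p^p=\sum_{xy\in E}|v(x)-v(y)|^p\le\liminf_i||\nabla v_i||_p^p=m^p$, while $v\overset{e.m.}{\sim}u$ forces $||\nabla v||_p\ge m$; hence $||\nabla v||_p=m$, as desired.

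The main obstacle is precisely the extraction step: in $l^p$ a minimizing sequence need not be precompact, and indeed the compact embedding of Theorem~\ref{thm:compact_embedding_for_rearranged_functions} is known to fail at the endpoint $q=p$, so one must rule out loss of mass to infinity by hand. What rescues the argument is that, having first reduced to $\mathcal S(\Z^d)$ via P\'olya--Szeg\"o, every super-level set of every competitor is forced by Theorem~\ref{thm:geometry_of_Schwarz_symmetric_functions} to live inside an a priori bounded region; this ``compactness by geometry'' reduces the limiting process to a choice among finitely many configurations at each level and is exactly what guarantees the pointwise limit remains equimeasurable with $u$. Once pointwise convergence without loss of mass is secured, the lower semicontinuity of the $p$-Sobolev energy is routine.
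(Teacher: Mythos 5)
Your proof is correct, and while it shares the paper's opening move, the key compactness step is genuinely different. Like the paper, you first use the discrete P\'olya--Szeg\"o inequality (Theorem~\ref{theorem:discrete_Polya-Szego_inequality}) to replace an arbitrary minimizing sequence by one in $\mathcal{S}(\Z^d)$, and both arguments ultimately rest on the confinement provided by Theorem~\ref{thm:geometry_of_Schwarz_symmetric_functions}. But from there the paper runs an analytic argument: it extracts a subsequence converging \emph{in $l^p$} ``by a similar argument with Theorem~\ref{thm:compact_embedding_for_rearranged_functions}'' --- the endpoint $q=p$ is rescued precisely because all competitors are equimeasurable with the fixed $u$, so the tail $\sum_{x\notin V^{\Box}_{L_2}}|v_i(x)|^p$ is bounded by a fixed tail of $\sum_n a_n^p$, uniformly in $i$ --- and then concludes via the equivalence of the $l^p$ and $W^{1,p}$ norms on $\Z^d$, which upgrades $l^p$ convergence to convergence of the Sobolev energies. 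You instead argue combinatorially: since each super-level set $\{v_i\ge b_j\}$ has fixed cardinality and lives in a fixed finite box, a pigeonhole-plus-diagonal extraction freezes every level set along a subsequence, so the limit $v$ is defined level by level, is equimeasurable with $u$ by construction, lies in $\mathcal{S}(\Z^d)$ by pointwise closedness (Proposition 4.15(iv) in the paper), and Fatou combined with the trivial competitor bound $\|\nabla v\|_p\ge m$ closes the argument. Each route buys something: yours is more elementary and self-contained at this step (no norm equivalence, no endpoint adaptation of the compact embedding, and the equimeasurability of the limit --- which the paper leaves implicit --- is explicit and automatic), while the paper's yields the stronger conclusion that the symmetric minimizing sequence actually converges in $l^p$, with energy convergence rather than mere lower semicontinuity. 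One small point worth spelling out in your write-up: eventual freezing of all level sets gives $v_i(x)=v(x)$ for all large $i$ at each fixed $x$ (using that equimeasurable functions share the same value multiset and that $b_j\downarrow 0$ when the support is infinite), which is what justifies both the pointwise convergence and the identification $\{v\ge b_j\}=A_j$.
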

\begin{proof}
    Let $\{v_i\}$ be a minimizing sequence, i.e.:
    \begin{enumerate}[(i)]
        \item $v_i \overset{e.m.}{\sim} u$ for all $i$;
        \item $||\nabla v_i||_p \ge ||\nabla v_{i+1}||_p$ and $\lim_{i \to \infty}||\nabla v_i||_p = \inf\{||\nabla u'||_p:\ u'\overset{e.m.}{\sim} u\}$.
    \end{enumerate}
    By Theorem~\ref{theorem:discrete_Polya-Szego_inequality}, we may assume that $v_i \in \mathcal{S}(\Z^d)$. By a similar argument with Theorem~\ref{thm:compact_embedding_for_rearranged_functions}, we have (by passing to a subsequence)
    \begin{equation}
        v_i \overset{l^p}{\longrightarrow} v.
    \end{equation}
    Then $||\nabla v||_p=\inf\{||\nabla u'||_p:\ u'\overset{e.m.}{\sim} u\}$ follows by the equivalence of $l^p$ norm and $W^{1,p}$ norm on $\Z^d$. This proves the result.
\end{proof}

By a similar argument as Theorem~\ref{thm:discrete_Riesz_inequality}, we have the following result.
\begin{theorem}
    Let $u\in C_0^+(\Z^d)$ be admissible, $F\colon \R_+\times\R_+\to \R_+$ be such that:
    \begin{enumerate}[(i)]
        \item $-F$ is supermodular with $F(0,0)=0$;
        \item $\sum_{x\in \Z^d}F(|x|,0)<\infty$.
    \end{enumerate}
    Then
    \begin{equation}\label{ineq4.9}
        \sum_{x}F(|x|,u(x))\leq \sum_{x}F(|x|,u^*(x)).
     \end{equation}
\end{theorem}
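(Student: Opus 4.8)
The plan is to follow the architecture of the proof of Theorem~\ref{thm:discrete_Riesz_inequality}: first reduce to a ``marginal-free'' integrand, then treat finitely supported functions by slicing the lattice along the directions in $\Vec{E}$ and invoking a one-dimensional exchange inequality, and finally pass to general admissible $u$ by monotone convergence. First I would set $\widetilde F(r,s)=F(r,s)-F(r,0)$. Since subtracting a function of the first variable alone leaves the mixed second difference unchanged, $-\widetilde F$ is again supermodular and $\widetilde F(\cdot,0)\equiv 0$. Because $\sum_{x}F(|x|,0)<\infty$ by hypothesis (ii) and this quantity is independent of $u$, inequality~\eqref{ineq4.9} is equivalent to
\[
\sum_{x}\widetilde F(|x|,u(x))\le \sum_{x}\widetilde F(|x|,u^*(x)).
\]
A short but crucial observation is that the hypotheses force $F$ (hence $\widetilde F$) to be nondecreasing in its second variable: submodularity says $F(r,s)-F(r,0)$ is nonincreasing in $r$, so if $F(r_*,s_*)<F(r_*,0)$ for some $r_*,s_*$, then $F(r,s_*)\le F(r,0)+[F(r_*,s_*)-F(r_*,0)]$ for all $r\ge r_*$; letting $r\to\infty$ along $r=|(n,0,\dots,0)|$ and using $F(r,0)\to 0$ (a consequence of (ii)) contradicts $F\ge 0$. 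Thus $\widetilde F\ge 0$ and is nondecreasing in $s$, which legitimizes the additive splitting above in $[0,\infty]$ and will drive the final limit.

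The heart of the argument is the finitely supported case, treated exactly as in Lemma~\ref{lemma:discrete_Riesz_inequality_for_finitely_supported_functions}. For $u\in C_c(\Z^d)$ and $e\in\Vec{E}$, I would decompose $\Z^d=\bigsqcup_\alpha V_e^\alpha$ and bound $\sum_x\widetilde F(|x|,u)$ slice by slice. Writing $x_l\in V_e^\alpha$ for the vertex with $\langle e,x_l\rangle=l$, a direct computation shows that along each slice the Euclidean norm satisfies $|x_l|^2=c_1 l^2+c_2$ with $c_1>0$ and $c_2\ge 0$ depending only on $\alpha$ (for $e=e_i$ one gets $|x_l|^2=l^2+\text{const}$, and for $e=\frac{e_i\pm e_j}{2}$ one gets $|x_l|^2=2l^2+\text{const}$). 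Hence $\psi(l):=|x_l|$ is symmetric in $l$ and nondecreasing in $|l|$, which is precisely the monotonicity of a one-dimensional radial weight. Since $R_e$ acts on $V_e^\alpha$ as $R_{\Z}$ and the position weight on the slice depends only on $l$, it suffices to prove the one-dimensional inequality
\[
\sum_l \widetilde F(\psi(l),w(l))\le \sum_l \widetilde F(\psi(l),w^*(l)),\qquad w=u|_{V_e^\alpha},
\]
and sum over $\alpha$; as all but finitely many terms vanish, the summation causes no trouble.

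This one-dimensional statement is the weighted analogue of~\eqref{equ:generlized_inverse_Riesz_inequality} from \cite{HH10} and follows from a finite exchange argument. If $w$ places a smaller value $a$ at a position $l_1$ with $\psi(l_1)\le\psi(l_2)$ and a larger value $b\ge a$ at $l_2$, swapping them changes the sum by
\[
\big[\widetilde F(\psi(l_1),b)+\widetilde F(\psi(l_2),a)\big]-\big[\widetilde F(\psi(l_1),a)+\widetilde F(\psi(l_2),b)\big],
\]
which is nonnegative precisely because $\widetilde F(r,b)-\widetilde F(r,a)=F(r,b)-F(r,a)$ is nonincreasing in $r$ --- the decreasing-differences form of the submodularity of $F$. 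Since $w^*$ is reached from $w$ by finitely many such inversion-removing swaps (the symmetry $\psi(l)=\psi(-l)$ makes the tie-breaking in the order $0\prec 1\prec -1\prec\cdots$ irrelevant), the inequality follows. Iterating over the periodic sequence of one-step rearrangements and using Lemma~\ref{lemma:well-definedness_for_finitely_supported_functions}, which guarantees $R^k u=u^*$ for $k$ large, yields the claim for all $u\in C_c(\Z^d)$.

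Finally, for general $u\in C_0^+(\Z^d)$ I would approximate by the cut-offs $\hat u_N\uparrow u$, for which $(\hat u_N)^*=\widehat{u^*}_N\uparrow u^*$ by the commutation of rearrangement with truncation. The finitely supported case gives $\sum_x\widetilde F(|x|,\hat u_N)\le\sum_x\widetilde F(|x|,\widehat{u^*}_N)$ for every $N$, and since $\widetilde F\ge 0$ is nondecreasing in $s$ while $\hat u_N\uparrow u$ and $\widehat{u^*}_N\uparrow u^*$ pointwise, monotone convergence passes both sides to the limit, proving the reduced inequality and hence~\eqref{ineq4.9}. I expect the main obstacle to be the bookkeeping around convergence and finiteness: the raw sums $\sum_x F(|x|,u(x))$ may diverge, so the whole argument must be run through the marginal-free integrand $\widetilde F$, and the monotone-convergence step is only justified after establishing --- from (i), (ii) and $F\ge 0$ --- that $F$ is automatically nondecreasing in its second argument. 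The slice geometry is routine once the formula $|x_l|^2=c_1l^2+c_2$ is recorded.
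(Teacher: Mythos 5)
Your overall architecture---subtract the marginal to get $\widetilde F(r,s)=F(r,s)-F(r,0)$, prove the finitely supported case slice-by-slice via a one-dimensional exchange argument, then exhaust by cut-offs---is exactly the route the paper intends when it states the theorem follows ``by a similar argument as Theorem~\ref{thm:discrete_Riesz_inequality}'' (the paper gives no details), and the slice geometry you record, $|x_l|^2=c_1l^2+c_2$ for $e=e_i$ and $e=\frac{e_i\pm e_j}{2}$, is correct, as is the swap argument, which needs only the decreasing-differences form of submodularity. However, one intermediate claim is false: the hypotheses do \emph{not} force $F$ to be nondecreasing in its second variable. Your displayed argument only rules out $F(r_*,s_*)<F(r_*,0)$, i.e.\ it proves $\widetilde F\ge 0$; it gives no comparison between two positive values $s<s'$, since submodularity constrains increments across different values of $r$, never within a fixed $r$. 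And no such monotonicity holds in general: take $F(r,s)=g(s)$ with $g\ge 0$, $g(0)=0$, $g$ non-monotone (say $g(1)=1$, $g(2)=1/2$). The mixed second difference of $F$ vanishes identically, so $-F$ is supermodular, $F(0,0)=0$, and $\sum_x F(|x|,0)=0<\infty$, yet $F(r,2)<F(r,1)$ for every $r$. (The theorem still holds for this $F$, with equality, by Proposition~\ref{prop:discrete_Cavalieri's_principle}; correspondingly your limiting argument produces no contradiction here, because $F(n,s_*)$ tends to $g(s_*)\ge 0$ rather than to a negative number.)

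Fortunately, the only place you invoke this false monotonicity is the final monotone-convergence step, and it is not needed there. The cut-off is two-valued pointwise: $\hat u_N(x)\in\{0,u(x)\}$ and $\widetilde F(\cdot,0)\equiv 0$, so $\sum_x\widetilde F\left(|x|,\hat u_N(x)\right)=\sum_{x\in\supp \hat u_N}\widetilde F\left(|x|,u(x)\right)$, which is a partial sum of a series of nonnegative terms over the increasing exhaustion $\supp \hat u_N\uparrow\supp u$; hence it increases to $\sum_x\widetilde F(|x|,u(x))$ in $[0,\infty]$ using only $\widetilde F\ge 0$, which you did correctly establish from (i), (ii) and $F\ge 0$. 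The same applies on the starred side via $(\hat u_N)^*=\widehat{u^*}_N\uparrow u^*$. With this repair---replace ``$\widetilde F$ is nondecreasing in $s$'' by ``$\widetilde F\ge 0$ and the cut-offs take only the values $0$ and $u(x)$''---your proof is complete and coincides with the argument the paper sketches.
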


\section{Applications: Constrained minimization problems on lattice graphs}

\subsection{Construction of waves for discrete non-linear Schr\"odinger equations}

In this section, we will study the following minimization problem: For $u\in H^1(\mathbb{Z}^d)=W^{1,2}(\mathbb{Z}^d):$
\begin{align}\label{ineq5.1}
  &E(u)=\frac{1}{2}||\nabla u||_2^2-\sum_{x\in \Z^d}F(|x|,u(x)).
\end{align}
\begin{equation}\label{ineq5.2}
  I_c=\inf\left\{E(u):\ u\in S_c\right\},\quad S_c=\left\{u\in H^1(\mathbb{Z}^d):\ \sum_{x}u^2(x) = c^2 \right\}.
\end{equation}

\begin{theorem}\label{thm5.1}
  Let $F\colon\mathbb{N}\times\R_+\to \R_+$ be a function satisfying the following assumptions:
  \begin{itemize}
    \item [($F_0$)] $F(|x|,t)\leq F(|x|,|t|)$, $\forall\ x\in \Z^d,$ $t\in\R$;
    \item [($F_1$)] $F(|x|,\cdot)$ is continuous for all $x\in \Z^d$;
    \item [($F_2$)] There exist $K>0,$ $l>0$, such that
    $$0\leq F(|x|,t)\leq K \left(t^2+t^{l+2}\right),\ \forall\ x\in \Z^d,\ \text{and}\ t\geq0;$$
    \item [($F_3$)] $\forall\ \epsilon>0,$ $\exists\ X_0 >0,$ $t_0\in\R,$ such that
    $$F(|x|,t)\leq\epsilon t^2,\quad \forall\ |x| > X_0\ \text{and}\ |t|<t_0;$$
    \item [($F_4$)] $-F$ is supermodular with $F(0,0)=0$;
    \item [($F_5$)] $F(|x|,\theta t)\geq \theta^2 F(|x|, t),$ $\forall\ x\in \Z^d,$ $\theta \ge 1,$ $t\in\R$;
    \item [($F_6$)] There exist $\delta>0,$ $t_1>0,$ $X_1>0,$ $0<\sigma<\frac{2}{d}$, such that $$F(|x|,t)>\delta t^{2+2\sigma},\  \forall |x|> X_1\ \mathrm{and}\ |t|<t_1.$$
  \end{itemize}
  Then \eqref{ineq5.2} admits a Schwarz symmetric solution, i.e. $u=R_{\mathbb{Z}^d}u$ such that $E(u)=I_c$, $u\in S_c$.
\end{theorem}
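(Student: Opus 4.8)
The plan is to run the direct method on a \emph{Schwarz symmetric} minimizing sequence, using the compact embedding of Theorem~\ref{thm:compact_embedding_for_rearranged_functions} in place of the usual concentration--compactness machinery. First I would symmetrize: given any minimizing sequence $v_n\in S_c$ for \eqref{ineq5.2}, the replacement $u_n:=R_{\Z^d}|v_n|$ never increases $E$. Indeed, $\||v_n|\|_2=\|v_n\|_2=c$ and $(F_0)$ keep the constraint while lowering the potential term, and the edge-wise inequality $\bigl||a|-|b|\bigr|\le|a-b|$ gives $\|\nabla|v_n|\|_2\le\|\nabla v_n\|_2$; then Cavalieri's principle (Proposition~\ref{prop:discrete_Cavalieri's_principle}) preserves $\|u_n\|_2=c$, the P\'olya--Szeg\"o inequality (Theorem~\ref{theorem:discrete_Polya-Szego_inequality}) lowers $\|\nabla u_n\|_2$, and the rearrangement inequality \eqref{ineq4.9}---whose hypotheses hold because $(F_4)$ makes $-F$ supermodular with $F(0,0)=0$ and $F(|x|,0)=0$ is summable---raises $\sum_x F(|x|,u_n)$. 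Hence $u_n=u_n^*\in\mathcal S^2(\Z^d)$ is again minimizing. The growth bound $(F_2)$ together with $l^2(\Z^d)\subset l^{l+2}(\Z^d)$ bounds $\sum_x F(|x|,u_n)$ by $K(c^2+c^{l+2})$, which simultaneously shows $I_c>-\infty$ and, via $E(u_n)\to I_c$, bounds $\|\nabla u_n\|_2$; thus $\{u_n\}$ is bounded in $l^2$. By Theorem~\ref{thm:compact_embedding_for_rearranged_functions} I pass to a subsequence with $u_n\to u$ in $l^q(\Z^d)$ for every $q>2$ and pointwise, and $u\in\mathcal S(\Z^d)$ since $\mathcal S$ is closed under pointwise limits.

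Next I would prove $I_c<0$, which is the role of $(F_6)$ and the constraint $\sigma<2/d$. Choosing a slowly varying profile $u_R(x)=A_R\,\phi(x/R)$ with $\phi$ a fixed nonnegative bump and $A_R$ normalized so that $u_R\in S_c$ (so that $u_R\sim R^{-d/2}$ on its support), the discrete Dirichlet energy scales like $\|\nabla u_R\|_2^2\sim R^{-2}$, whereas $(F_6)$ forces $\sum_x F(|x|,u_R)\gtrsim R^{-d\sigma}$ once $R$ is large enough that the values of $u_R$ lie below $t_1$. Since $d\sigma<2$, the potential term dominates as $R\to\infty$, giving $E(u_R)<0$ for large $R$ and hence $I_c\in(-\infty,0)$.

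The heart of the proof is to show that the potential term converges \emph{in full}, $\sum_x F(|x|,u_n)\to\sum_x F(|x|,u)$, even though $l^2$ mass could a priori escape to infinity. On any fixed box this is immediate from pointwise convergence and the continuity $(F_1)$. Outside a large box I would invoke the uniform decay of symmetric functions already extracted inside the proof of Theorem~\ref{thm:compact_embedding_for_rearranged_functions}: a symmetric $u_n$ with $\|u_n\|_2\le c$ satisfies $u_n(x)^2\le c^2/|V^{\Diamond}_{L_1}|$ for $x\notin V^{\Box}_{L_2}$, where $L_1,L_2\to\infty$ as the box grows, so $u_n$ is uniformly (in $n$) small far from the origin. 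Then $(F_3)$ controls the tail of $\sum_x F(|x|,u_n)$ by $\varepsilon c^2$, while the tail of $\sum_x F(|x|,u)$ is small because $\sum_x F(|x|,u)\le K(c^2+c^{l+2})<\infty$; sending the box to infinity yields the convergence. I expect this to be the main obstacle, since it is exactly here that the radial decay forced by the symmetric geometry and the decay hypothesis $(F_3)$ must cooperate to stop the nonlinear energy from leaking to infinity; without symmetry the spreading counterexample after Theorem~\ref{thm:compact_embedding_for_rearranged_functions} shows the potential term need not converge.

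Finally I would combine lower semicontinuity with a scaling inequality. Weak lower semicontinuity of $\|\nabla\,\cdot\,\|_2^2$ (from $u_n\to u$ pointwise with bounded gradients) and the convergence just established give $E(u)\le\liminf_n E(u_n)=I_c$; in particular $u\not\equiv0$, for otherwise $E(u)=0\le I_c<0$ is impossible. On the other hand $(F_5)$ yields $E(\theta w)\le\theta^2 E(w)$ for $\theta\ge1$, whence $I_{\theta c'}\le\theta^2 I_{c'}$ for all $c'>0$, and together with $I_c<0$ this forces the strict monotonicity $I_b>I_c$ for every $0\le b<c$. If $\|u\|_2=b<c$ we would obtain $I_c\ge E(u)\ge I_b>I_c$, a contradiction; hence $\|u\|_2=c$, so $u\in S_c$ and $E(u)=I_c$. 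Since $u\in\mathcal S(\Z^d)$ we have $u=R_{\Z^d}u$, giving the desired Schwarz symmetric solution.
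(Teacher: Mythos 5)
Your proposal is correct and follows essentially the same strategy as the paper's proof: symmetrize the minimizing sequence via the rearrangement inequalities, get well-posedness from $(F_2)$, use the uniform decay of Schwarz symmetric functions together with $(F_1)$ and $(F_3)$ to pass the potential term to the limit, obtain $I_c<0$ from $(F_6)$ by the same tent-profile scaling as in the paper's remark, and rule out mass loss via the $(F_5)$ scaling. The only cosmetic difference is the last step, where you deduce $\|u\|_2=c$ from strict monotonicity $I_b>I_c$ for $b<c$, while the paper scales the limit directly by $t=c/\|u\|_2$ and concludes $t\le 1$ from $I_c\le E(tu)\le t^2E(u)\le t^2I_c$ --- the same $(F_5)$-based trick.
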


A typical example is $F(|x|,t)=\frac{1}{2+2\sigma}|t|^{2+2\sigma}$ with $\sigma < \frac{2}{d}$, and the minimization problem is
\begin{equation}
    I_c=\inf\left\{||\nabla u||_2^2 - \frac{1}{\sigma+1}||u||_{2\sigma+2}^{2\sigma+2}:\ ||u||_2^2 = c^2\right\},
\end{equation}
whose minimizers solve the following equation on $\Z^d$:
\begin{equation}
    \label{equ:DNLS}
    -\Delta u + \omega u - |u|^{2\sigma}u=0,
\end{equation}
where $\omega>0$ is the Lagrange multiplier that depends on $c$. Weinstein \cite{weinstein1999excitation} proved the existence of solution of \eqref{equ:DNLS} by the concentration-compactness arguments. We give a different proof by the rearrangement inequalities established in the previous sections.
\begin{remark}
    Under $(F_6)$, $I_c<0$ for all $c>0$; see \cite[Lemma 3]{stefanov2021ground}.\\
    For any $K\in \N$, let 
    \begin{align*}
        u_K(x)=
        \begin{cases}
            c_K \cdot \frac{K-||x||_{1}}{K^{\frac{d}{2}+1}}, \quad & ||x||_1 \le K,\\
            0, & \text{otherwise},
        \end{cases}
    \end{align*}
    where $c_K$ is a constant such that $||u_K||_2=c$. In \cite{stefanov2021ground}, the followings are proved:
    \begin{enumerate}[(i)]
        \item $||\nabla u_K||_2^2 \le c'K^{-2}$ for some constant $c'$;
        \item $||u_K||_{2\sigma +2}^{2\sigma +2} \ge c''K^{-d\sigma}$ for some constant $c''$.
    \end{enumerate}
    The result follows by choosing $K$ large enough.
\end{remark}

\begin{proof}[Proof of Theorem \ref{thm5.1}]
$~$

\underline{Step 1}: We will first show that $ I_c>-\infty$ and that all minimizing sequences are bounded in $H^1$( The variational problem is well-posed).\\
Since $\sum_{x}u^2(x) = c^2$, we have $||u||_{\infty}\le c$. By $(F_2)$, we can write
\begin{align*}
    \sum_{x\in\Z^d}F\left(|x|,u(x)\right)
    &\le K \sum_{x}u^2+ K \sum_{x}|u|^l|u|^2\\
    &\le K \sum_{x}u^2+ K ||u||_{\infty}^l\sum_{x}|u|^2\\
    &\le K(1+c^l)\sum_{x}u^2=K(1+c^l)c^2.
\end{align*}
Therefore,
\begin{align*}
E(u)&\geq \frac{1}{2}||\nabla u||_{2}^{2}- K(1+c^l)c^2.
\end{align*}
Thus, $ I_c>-\infty$. All minimizing sequences are bounded in $H^1$ by the equivalence of $l^2$ norm and $H^1$ norm on $\Z^d$.\\
\underline{Step 2}: Existence of Schwarz symmetric sequence.\\
By the rearrangement inequalities proved in section 5, we certainly have $E\left( R_{\Z^d}|u|\right)\leq E\left( |u|\right)\leq E\left( u\right)$. Therefore without loss of generality, we can assume that $I_c$ always admits a Schwarz symmetric minimizing sequence, i.e, $u_n^c$ such that $u_n^c=R_{\Z^d}u_n^c$ and $u_n^c\in l^2(\Z^d)$.\\
\underline{Step 3}: Compactness  of Schwarz symmetric minimizing sequences.\\
Let $\{u_n^c\}$ be a
Schwarz symmetric minimizing sequence, then $\forall t >0$, we can find $X_2>0$ such that
\begin{equation}\label{ineq5.4}
  \qquad |u_n^c(x)|\leq t,\quad \forall\ |x|\geq  X_2.
\end{equation}
By the lower semi-continuity of $||\cdot||_2$, we certainly have 
$$||\nabla u^c||_2\leq \liminf_n||\nabla u_n^c||_2,$$ 
where $u$ is the weak limit of $u_n$ in $H^1$( up to subsequence).\\
For all fixed $X_3>0$, $u_n^c $ strongly converge to $u$ in $l^{l+2}\left( |x| \le X_3\right)$. This implies that,
$$\lim_{n\to\infty}\sum_{|x|\le X_3}F\left(|x|,u^c_n\right)=\sum_{|x|\le X_3}F\left(|x|,u^c\right).
$$
\eqref{ineq5.4} together with $(F_3)$ imply that:
$$\sum_{|x|>X_{\epsilon}}F\left(|x|,u^c_n\right)\text{ and } \sum_{|x|>X_{\epsilon}}F\left(|x|,u^c\right)<\varepsilon,\ \forall \varepsilon>0,\ \text{for some } X_{\epsilon}\text{ big enough.}$$
In conclusion, $\displaystyle{\lim_{n\to\infty}\sum_{x}F\left(|x|,u^c_n\right)=\sum_{x}F\left(|x|,u^c\right)}
$.\\
\underline{Step 4}: $I_c$ is achieved.\\
By the weak lower semi-continuity of the norm $l^2$, we have $\displaystyle{\sum_{x\in \Z^d}u^2\left(x\right)\leq c^2}.$\\
$u\neq 0$, since $I_c<0$ and $ F(.,0)=0$. Set $t=\frac{c}{||u||_2}$, then $t\geq 1$. On the other hand,
$$I_c\leq E(tu)\leq t^2 E(u)\leq t^2I_c\Rightarrow t\leq 1,$$
by the strict negativity of $I_c$.
\end{proof}

 Let us go back to equation~\eqref{equ:DNLS}. A different way to consider this type of equation is tackling the following constrained minimization problem:
 \begin{equation}\label{equ:nonormalized_wave}
     I_{\omega}=\inf\{||\nabla u||_2^2+\omega||u||_2^2 : ||u||_{2\sigma+2}^{2\sigma+2}=1\},
 \end{equation}
whose minimizers solve the following equation:
\begin{equation}
    -\Delta u + \omega u - c_{\omega} |u|^{2\sigma}u=0,
\end{equation}
where $\omega >0$ is a fixed constant, $c_{\omega}>0$ depends on $\omega$.

The existence of the minimizer is well known; see \cite{stefanov2021ground,hua2022class}, we give an alternative proof here.

\begin{theorem}
    The minimization problem \eqref{equ:nonormalized_wave} admits a Schwarz symmetric solution.
\end{theorem}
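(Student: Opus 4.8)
The plan is to run the direct method of the calculus of variations, using the discrete rearrangement to recover the compactness that the Sobolev-type embedding does not by itself provide. First I would record that the problem is nondegenerate: on $\Z^d$ one has $l^2(\Z^d)\subset l^{2\sigma+2}(\Z^d)$ with $||u||_{2\sigma+2}\le ||u||_2$, so the constraint $||u||_{2\sigma+2}=1$ forces $||u||_2\ge 1$, whence $||\nabla u||_2^2+\omega||u||_2^2\ge \omega>0$ and $I_\omega\ge \omega>0$. The same computation shows that any minimizing sequence $\{u_n\}$ is bounded in $H^1(\Z^d)=W^{1,2}(\Z^d)$, since the functional simultaneously controls $||\nabla u_n||_2$ and $||u_n||_2$.

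Next I would symmetrize. Replacing $u_n$ by $|u_n|$ leaves $||u_n||_2$ and $||u_n||_{2\sigma+2}$ unchanged and does not increase $||\nabla u_n||_2$, so we may assume $u_n\ge 0$. Applying $R_{\Z^d}$ and invoking the discrete P\'olya--Szeg\"o inequality (Theorem~\ref{theorem:discrete_Polya-Szego_inequality}) gives $||\nabla R_{\Z^d}u_n||_2\le ||\nabla u_n||_2$, while the discrete Cavalieri principle (Proposition~\ref{prop:discrete_Cavalieri's_principle}) preserves $||u_n||_2$ and, crucially, the constraint $||R_{\Z^d}u_n||_{2\sigma+2}=||u_n||_{2\sigma+2}=1$. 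Hence $\{R_{\Z^d}u_n\}$ is again a minimizing sequence, now lying in $\mathcal{S}^2(\Z^d)$; I rename it $\{u_n\}$.

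The heart of the argument is compactness, and this is where the rearrangement pays off. Since $\{u_n\}$ is bounded in $l^2$ and Schwarz symmetric, after rescaling so as to apply the compact embedding for rearranged functions (Theorem~\ref{thm:compact_embedding_for_rearranged_functions}) with $p=2$ and $q=2\sigma+2>2$, a subsequence converges strongly in $l^{2\sigma+2}(\Z^d)$ to some $u$; strong $l^{2\sigma+2}$ convergence gives pointwise convergence, and boundedness in $l^2$ yields (along a further subsequence) weak convergence $u_n\rightharpoonup u$ in $l^2$. Strong $l^{2\sigma+2}$ convergence transfers the constraint to the limit, $||u||_{2\sigma+2}=1$, so in particular $u\not\equiv 0$ and $u$ is admissible. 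I expect this step to be the \emph{main obstacle}: without the rearrangement the minimizing sequence could spread its mass toward infinity and the limit would only satisfy $||u||_{2\sigma+2}<1$ (loss of compactness), and Theorem~\ref{thm:compact_embedding_for_rearranged_functions} rules this out precisely because the symmetric profiles are radially controlled.

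Finally I would close by lower semicontinuity. Since $\nabla$ is a bounded operator $l^2(\Z^d)\to l^2(E)$, weak convergence $u_n\rightharpoonup u$ gives $||u||_2\le\liminf ||u_n||_2$ and $||\nabla u||_2\le\liminf ||\nabla u_n||_2$, so
$$||\nabla u||_2^2+\omega||u||_2^2 \le \liminf\big(||\nabla u_n||_2^2+\omega||u_n||_2^2\big)=I_\omega.$$
As $u$ satisfies the constraint, the reverse inequality holds by definition of $I_\omega$, so $u$ is a minimizer. Because each $u_n\in\mathcal{S}(\Z^d)$ and $u_n\to u$ pointwise, the closedness of $\mathcal{S}(\Z^d)$ under pointwise limits (established in the properties proposition of Section~4) yields $u=R_{\Z^d}u$, i.e. the minimizer is Schwarz symmetric. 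The whole scheme parallels the proof of Theorem~\ref{thm5.1}, but is simpler here since the quantity $||u||_{2\sigma+2}$ defining the constraint is itself what the compact embedding controls.
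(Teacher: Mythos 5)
Your proposal is correct and follows essentially the same route as the paper: replace the minimizing sequence by its Schwarz rearrangement (P\'olya--Szeg\"o plus equimeasurability keep it minimizing and preserve the constraint), use the compactness of Schwarz symmetric sequences --- the paper invokes Theorem~\ref{thm:geometry_of_Schwarz_symmetric_functions} directly, you invoke Theorem~\ref{thm:compact_embedding_for_rearranged_functions}, which rests on that same geometry --- to get strong $l^{2\sigma+2}$ convergence and pass the constraint to the limit, then conclude by weak lower semicontinuity in $l^2$. Your write-up is merely more detailed, usefully making explicit the normalization needed to apply the compact embedding, the identification of the weak $l^2$ and strong $l^{2\sigma+2}$ limits via pointwise convergence, and the pointwise closedness of $\mathcal{S}(\Z^d)$ guaranteeing that the limit is itself Schwarz symmetric.
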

\begin{proof}
    Let $\{u_k\}$ be a minimizing sequence. By passing to a subsequence, we may assume that:
    \begin{enumerate}[(i)]
        \item $u_k\in \mathcal{S}(\Z^d)$;
        \item $\{u_k\}$ converge weakly in $l^2(\Z^d)$.
    \end{enumerate}
    By (i) and Theorem~\ref{thm:geometry_of_Schwarz_symmetric_functions},
    \begin{equation}
        u_k \overset{l^{2\sigma+2}}{\longrightarrow} u
    \end{equation}
    for some $u\in l^{2\sigma+2}(\Z^d)$, which implies $||u||_{2\sigma+2}=1$.
    By (ii), $||\nabla u||_2^2+\omega||u||_2^2=I_{\omega}$.
    This proves the result.
\end{proof}

\subsection{Existence of extremal functions for Sobolev inequality on $\Z^d$}
Let $D^{1,p}(\Z^d)$ be the completion of $C_c(\Z^d)$ in the norm $||u||_{D^{1,p}}=||\nabla u||_p$.
{\bf Discrete Sobolev Inequality} (see \cite{hua2015time}):
there exists a constant $C>0$ such that
\begin{equation}\label{equ:discrete_Soblev_inequality}
    ||u||_{p^*} \le C ||u||_{D^{1,p}}, \ \forall u \in D^{1,p}(\Z^d),
\end{equation}
where $d \ge 3$, $1 \le p < d$, $p^*=\frac{d p}{d-p}$.

Consider the following minimization problem:
\begin{equation}
    \label{equ:minimiaztion_problem_for_Soblev_inequality}
    I=\inf \{||\nabla u||_p:\ u\in S\}, \ S=\{u\in D^{1,p}(\Z^d):\ ||u||_q=1\},
\end{equation}
where $d\ge 3$ and $q > p^*=\frac{dp}{d-p}$. The minimizers solve the following equation on $\Z^d$:
\begin{equation}
    \Delta_p u + u^{q-1}=0,
\end{equation}
where
\begin{equation}
    \Delta_p u(x):=\sum_{y\sim x}|u(y)-u(x)|^{p-2}(u(y)-u(x))
\end{equation}
is called the $p$-Laplacian on graphs. Hua-Li~\cite{hua2021existence} proved the existence of minimizer of \eqref{equ:minimiaztion_problem_for_Soblev_inequality} by a discrete concentration-compactness principle, we provide an alternative proof here.
\begin{theorem}
    The minimization problem~\eqref{equ:minimiaztion_problem_for_Soblev_inequality} admits a Schwarz symmetric solution.
\end{theorem}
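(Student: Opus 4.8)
The plan is to follow the same compactness scheme used in the previous two theorems, adapting it to the $p$-Laplacian minimization problem. First I would take a minimizing sequence $\{u_k\}$ for \eqref{equ:minimiaztion_problem_for_Soblev_inequality}, so that $\|u_k\|_q=1$ and $\|\nabla u_k\|_p \to I$. By the discrete P\'olya-Szeg\"o inequality (Theorem~\ref{theorem:discrete_Polya-Szego_inequality}) together with the Cavalieri principle (Proposition~\ref{prop:discrete_Cavalieri's_principle}), replacing each $u_k$ by $R_{\Z^d}|u_k|$ preserves the constraint $\|u_k\|_q=1$ while not increasing $\|\nabla u_k\|_p$. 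Hence without loss of generality I may assume $u_k \in \mathcal{S}(\Z^d)$ for all $k$, i.e.\ the minimizing sequence is Schwarz symmetric. Since $\|\nabla u_k\|_p \to I$ is bounded, the sequence is bounded in $D^{1,p}(\Z^d)$, and by the discrete Sobolev inequality \eqref{equ:discrete_Soblev_inequality} it is bounded in $l^{p^*}(\Z^d)$ as well; passing to a subsequence I may assume $u_k$ converges pointwise (equivalently weakly in $l^{p^*}$) to some limit $u$.

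The crucial step is to upgrade this to strong convergence in $l^q$. This is exactly where the choice $q>p^*$ and the Schwarz symmetry of the $u_k$ pay off: the compact embedding result, Theorem~\ref{thm:compact_embedding_for_rearranged_functions}, states that a sequence bounded in $\mathcal{S}^{p^*}(\Z^d)$ admits a subsequence convergent in $l^q$ for every exponent strictly larger than $p^*$. I would apply this with base exponent $p^*$ and target exponent $q>p^*$, obtaining $u_k \xrightarrow{l^q} u$ after passing to a further subsequence. Strong $l^q$ convergence immediately gives $\|u\|_q = \lim_k \|u_k\|_q = 1$, so the limit $u$ is admissible for the constraint and in particular $u\neq 0$.

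It remains to check that $u$ achieves the infimum. The limit $u$ lies in $\mathcal{S}(\Z^d)$ by pointwise closedness of the Schwarz-symmetric class (property (iv) of the Proposition following Theorem~\ref{thm:geometry_of_Schwarz_symmetric_functions}), so the solution will indeed be Schwarz symmetric as claimed. For the energy, I would use weak lower semicontinuity of the Sobolev seminorm $\|\nabla\,\cdot\,\|_p$ under pointwise (weak $l^p$) convergence of the gradients, which yields $\|\nabla u\|_p \le \liminf_k \|\nabla u_k\|_p = I$. Combined with the fact that $u$ satisfies the constraint, this forces $\|\nabla u\|_p = I$, so $u$ is a minimizer. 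The main obstacle I anticipate is verifying that Theorem~\ref{thm:compact_embedding_for_rearranged_functions} applies cleanly here: its hypothesis is an $l^{p}$-type bound on Schwarz symmetric functions, so I must confirm that the uniform $D^{1,p}$ bound on $\{u_k\}$ together with the Sobolev inequality produces the required uniform $l^{p^*}$ bound, and that $q>p^*$ is precisely the gap needed to invoke the compact embedding (the endpoint $q=p^*$ being genuinely non-compact, as the counterexample following that theorem shows).
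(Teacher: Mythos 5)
Your proposal is correct and follows essentially the same route as the paper's proof: symmetrize the minimizing sequence using P\'olya--Szeg\"o together with Cavalieri's principle, obtain a uniform $l^{p^*}$ bound from the discrete Sobolev inequality \eqref{equ:discrete_Soblev_inequality}, apply the compact embedding Theorem~\ref{thm:compact_embedding_for_rearranged_functions} with base exponent $p^*$ and target $q>p^*$ to get strong $l^q$ convergence of a subsequence, and conclude $\|u\|_q=1$ and $\|\nabla u\|_p=I$ by lower semicontinuity. You merely make explicit some details the paper leaves implicit (the rescaling needed to fit the normalization $\|u_i\|_{p^*}\le 1$ in the embedding theorem, closedness of $\mathcal{S}(\Z^d)$ under pointwise limits, and the Fatou argument for the gradient norm), which is harmless.
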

\begin{proof}
    Let $\{u_k\}$ be a minimizing sequence of \eqref{equ:minimiaztion_problem_for_Soblev_inequality}. By passing to a subsequence, we may assume that:
    \begin{enumerate}[(i)]
        \item $u_k \in \mathcal{S}(\Z^d)$;
        \item $u_k$ converge weakly in $l^p(\Z^d)$;
        \item $||\nabla u_k||_p < E$ for some constant $E$.
    \end{enumerate}
    By the discrete Sobolev inequality \eqref{equ:discrete_Soblev_inequality} and (iii), we have
    $$||u_k||_{p^*} < CE.$$
    By passing to a subsequence and Theorem~\ref{thm:compact_embedding_for_rearranged_functions}, we have
    $$u_k \overset{l^q(\Z^d)}{\longrightarrow} u,$$
    for some $u \in l^q(\Z^d)$. Then $||u||_q=1$ and $||u||_{D^{1,p}}=I$. This proves the result.
\end{proof}

\begin{remark}
  As in the continuous case, rearrangement inequalities play a crucial rule in the establishment of the optimizers of many important functional inequalities. This is the subject of another paper under preparation for the discrete setting.
\end{remark}

\section*{Acknowledgments}
H. Hajaiej would like to thank A. Puss for many helpful discussions in 2009-2010 and Jean Van Schaftingen for insightful and illuminating conversations about the topic at the University of Giessen, and for his scientific generosity. B. Hua is supported by NSFC, grants no.11831004, and by Shanghai Science and Technology Program [Project No. 22JC1400100].

\appendix
\section{Polarizations method on $\Z$}
For the convenience of the reader, we list some basic methods and results on the discrete polarizations, one can find these in \cite{HH10}. Given a semi-finite  open interval $H=(a/2,\infty)$ or $(-\infty, a/2)$, where $a\in \Z$, the reflection with respect to $\partial H$ is denoted by $\sigma_H$.
\begin{definition}[Polarization]
    Let $V=\Z$ or $\Z+1/2$, the polarization of $u:V \to \R$ is defined via:
    \begin{align*}
        u^H(x)=
        \begin{cases}
            \max(u(x),u(\sigma_H x)) \quad & x\in V\cap H;\\
            \min(u(x),u(\sigma_H x)) \quad & x\in V \setminus H.
        \end{cases}
    \end{align*}
\end{definition}
The following lemma plays a key role in the proof of one-dimensional discrete Riesz inequality.
\begin{lemma}
    \label{lemma:A.2}
    Let $W:\R_+ \to \R$ be nonincreasing, and $G:\R_+ \times \R_+ \to \R$ be supermodular.
    \begin{enumerate}[(i)]
        \item If $u,v$ be admissible functions on $\Z$, then
        \begin{equation}
            \sum_{x,y \in \Z} G(u(x),v(y))W(|x-y|) \le \sum_{x,y \in \Z} G(u^{H}(x),v^{H}(y))W(|x-y|).
        \end{equation}
        \item If $u,v$ are admissible functions on $\Z+1/2$, then
        \begin{equation}
            \sum_{x,y \in \Z+1/2} G(u(x),v(y))W(|x-y|) \le \sum_{x,y \in \Z+1/2} G(u^{H}(x),v^{H}(y))W(|x-y|).
        \end{equation}
        \item If $u:\Z \to \R_+$ and $v: \Z+1/2 \to \R_+$ are admissible functions, then
        \begin{equation}
            \sum_{x\in \Z,\ y\in {\Z+1/2}}\!\!\! G(u(x),v(y))W(|x-y|) \le \!\!\!\sum_{x\in \Z,\ y \in \Z+1/2}\!\!\!G(u^{H}(x),v^{H}(y))W(|x-y|).
        \end{equation}
    \end{enumerate}
\end{lemma}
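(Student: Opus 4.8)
The plan is to prove all three parts by a single \emph{local} argument that rearranges four points at a time, reducing the global inequality to one elementary estimate coupling the supermodularity of $G$ with the monotonicity of $W$. First I would reduce to finitely supported $u,v$, so that every sum is finite and may be freely reordered (the general admissible case then follows by exhausting $V$ with finite sets, once one knows the two sides converge). Writing $x'=\sigma_H x$ and $y'=\sigma_H y$, I would partition $V\times V$ into the orbits of the involution $(x,y)\mapsto(x',y')$. A generic orbit is the set of four ordered pairs $(x,y),(x,y'),(x',y),(x',y')$, and I normalize the labels so that $x_+\in H$, $x_-=\sigma_H x_+\notin H$, and likewise $y_\pm$; orbits meeting $\partial H$ (fixed points of $\sigma_H$, which occur only for the appropriate parity of $a$ relative to the lattice $\Z$ or $\Z+1/2$) degenerate to two or one point and are disposed of separately as equalities.

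The geometric heart of the matter is a distance comparison: for $x_+,y_+\in H$ one has
$$|x_+-y_+|=|x_--y_-|\le |x_+-y_-|=|x_--y_+|,$$
since two points on the same side of $\partial H$ are never farther apart than a reflected (opposite-side) pair. Concretely, for $H=(a/2,\infty)$ with $x_+\ge y_+>a/2$ one gets $|x_+-y_+|=x_+-y_+\le x_++y_+-a=|x_+-y_-|$, and the symmetric computations cover the mixed-lattice case of part (iii). Setting $W_1=W(|x_+-y_+|)$ and $W_2=W(|x_+-y_-|)$, the hypothesis that $W$ is nonincreasing yields $W_1\ge W_2$. Abbreviating $s=u^H(x_+)=\max(u(x_+),u(x_-))$, $\bar s=u^H(x_-)=\min(u(x_+),u(x_-))$ and $t,\bar t$ analogously, the orbit's contribution to the right-hand side is $G(s,t)W_1+G(\bar s,\bar t)W_1+G(s,\bar t)W_2+G(\bar s,t)W_2$.

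The remaining step is the four-point inequality asserting that this polarized contribution dominates the original one. I would run the case analysis on the signs of $u(x_+)-u(x_-)$ and $v(y_+)-v(y_-)$. When both original orderings already agree with the polarized assignment, or both disagree with it, the two contributions coincide term by term and there is nothing to prove. In the two remaining ``mixed'' cases the difference, after collecting the $W_1$- and $W_2$-terms, reduces to
$$\bigl[G(s,t)+G(\bar s,\bar t)-G(s,\bar t)-G(\bar s,t)\bigr]\,(W_1-W_2),$$
where $s\ge\bar s$ and $t\ge\bar t$. The bracket is nonnegative by supermodularity (Definition~\ref{def:supermodular}) and $W_1-W_2\ge0$ by the previous paragraph, so the whole expression is $\ge0$. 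Summing the per-orbit inequalities gives the claim; parts (ii) and (iii) are verbatim, the only difference being that the ambient lattice is $\Z+1/2$ or $\Z\times(\Z+1/2)$, which affects only the parity of the admissible fixed points of $\sigma_H$, not the estimates.

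I expect the main obstacle to be bookkeeping rather than conceptual: one must check that in \emph{every} configuration the two ``same-side'' pairs are exactly the ones carrying the heavier weight $W_1$, and verify that the degenerate orbits on $\partial H$ contribute equalities. Some care is also needed to justify the regrouping of the series when $G$ is not sign-definite; restricting first to finite supports, where absolute convergence is automatic, removes this difficulty, and the general admissible case is recovered by the exhaustion noted at the outset.
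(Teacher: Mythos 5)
Your proposal is correct and takes essentially the same route as the paper's proof: both reduce the claim to a four-point inequality on the reflection orbit $\{(x,y),(x',y'),(x',y),(x,y')\}$, observe that same-side pairs are no farther apart than reflected pairs, and conclude by pairing the supermodularity defect of $G$ with the weight gap $W_1-W_2\ge 0$, exactly as in the paper's key estimate \eqref{a.1}. Your additional bookkeeping (finite-support reduction with exhaustion, and the degenerate orbits at fixed points of $\sigma_H$) only makes explicit what the paper leaves implicit.
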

\begin{proof}
    We only prove (i). Only need to prove
    \begin{align*}
        &G(u(x),v(y))W(|x-y|)+G(u(x'),v(y'))W(|x'-y'|)\\
            &+G(u(x'),v(y))W(|x'-y|)+G(u(x),v(y'))W(|x-y'|)\\
        \le &G(u^H(x),v^H(y))W(|x-y|)+G(u^H(x'),v^H(y'))W(|x'-y'|)\\
        &+G(u^H(x'),v^H(y))W(|x'-y|)+G(u^H(x),v^H(y'))W(|x-y'|).
    \end{align*}
    holds for all $x,y \in \Z$, where $x'=\sigma_H(x)$ and $y'=\sigma_H(y)$.

    Assume that $x<y$ W.L.O.G..

    \underline{Case I}: $\partial H \notin (x,y)$. Note that $|x-y|=|x'-y'|\le|x'-y|=|x-y'|$. Then
    \begin{align*}
            &G(u(x),v(y))W(|x-y|)\!+\!G(u(x'),v(y'))W(|x'-y'|)\\
            &+G(u(x'),v(y))W(|x'-y|)\!+\!G(u(x),v(y'))W(|x-y'|)\\
            =& W(|x'-y|)[G(u(x)\!,\!v(y))\!+\!G(u(x')\!,\!v(y'))\!+\!G(u(x')\!,\!v(y))\!+\!G(u(x)\!,\!v(y'))]\\
            &+[W(|x-y|)\!-\!W(|x'-y|)][G(u(x),v(y))\!+\!G(u(x'),v(y'))],
    \end{align*}
    and
    \begin{align*}
        &G(u(x),v(y))\!+\!G(u(x'),v(y'))\!+\!G(u(x'),v(y))\!+\!G(u(x),v(y'))\\
        =&G(u^H\!(x),v^H\!(y))\!+\!G(u^H\!(x'),v^H\!(y'))\!+\!G(u^H\!(x'),v^H\!(y))\!+\!G(u^H\!(x),v^H\!(y')).
    \end{align*}
    Since $G$ is supermodular and $W$ is decreasing, we get:
    \begin{equation}\label{a.1}
        \begin{aligned}
            &[W(|x-y|)-W(|x'-y|)][G(u(x),v(y))+G(u(x'),v(y'))]\\
            \le&[W(|x-y|)-W(|x'-y|)][G(u^H(x),v^H(y))+G(u^H(x'),v^H(y'))],
        \end{aligned}
    \end{equation}
    and this implies the result.

    \underline{Case II}: $\partial H \in (x,y)$. Note that $|x-y|=|x'-y'|\ge|x'-y|=|x-y'|$. The proof is similar.

    This completes the proof.
\end{proof}

We use two particular polarizations. Let $H_+=(0,+\infty)$ and $H_-=(-\infty,1/2)$, so
\begin{align*}
    u^{H_+}(x)=
    \begin{cases}
        \max(u(x),u(-x)) \quad & x \ge 0;\\
        \min(u(x),u(-x)) & x\le 0;
    \end{cases}
\end{align*}
and
\begin{align*}
    u^{H_-}(x)=
    \begin{cases}
        \max(u(x),u(1-x)) \quad & x \le 1/2;\\
        \min(u(x),u(1-x)) & x\ge 1/2.
    \end{cases}
\end{align*}

\begin{lemma}
    \label{lemma:A.3}
    Let $Tu:=(u^{H_-})^{H_+}$.
    \begin{enumerate}[(i)]
        \item If $u\in C_c(\Z)$ be nonnegative, then there exists an integer $K_1$ such that for all $k\ge K_1$
        \begin{equation}
            R_{\Z}u=T^k u.
        \end{equation}
        \item If $u\in C_c(\Z+1/2)$ be nonnegative, then there exists an integer $K_2$ such that for all $k\ge K_2$
        \begin{equation}
            R_{\Z+1/2}u=T^k u.
        \end{equation}
    \end{enumerate}
\end{lemma}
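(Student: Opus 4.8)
The plan is to show that iterating the double polarization $T=(\,\cdot\,^{H_-})^{H_+}$ stabilizes after finitely many steps, and that its stable value is forced to be the Schwarz rearrangement. I will only treat (i); part (ii) on $\Z+1/2$ follows by the identical argument, the reflections $\sigma_{H_+},\sigma_{H_-}$ now pairing the half-integers $1/2\leftrightarrow -1/2$, $-1/2\leftrightarrow 3/2,\dots$, and the weight introduced below chosen strictly decreasing along the corresponding order $1/2\prec -1/2\prec 3/2\prec\cdots$. First I would record a confinement estimate for the support: if $\supp u\subset[-M,M]$, then $\supp(u^{H_+})\subset[-M,M]$ and $\supp(u^{H_-})\subset[-M,M]$ as well. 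The point is that a polarization assigns to each point a $\max$ or a $\min$ of the two values $u(x),u(\sigma_H x)$; on the ``$\min$'' side a nonzero value can appear only where \emph{both} $u(x)$ and $u(\sigma_H x)$ are nonzero, so reflected mass never escapes the window. Hence $\supp(T^k u)\subset[-M,M]$ for all $k$, and since each polarization is equimeasurable with its argument (the values form the fixed finite multiset $\ran u$), the orbit $\{T^k u\}_{k\ge0}$ lies in the \emph{finite} set of arrangements of $\ran u$ on $[-M,M]\cap\Z$.

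Next I would introduce a strictly Schwarz-decreasing test weight and a monotone functional. Fix $w\in C_0^+(\Z)$ strictly positive whose values strictly decrease along the order $0\prec1\prec-1\prec2\prec\cdots$, e.g.\ $w$ equal to $2^{-k}$ at the $k$-th vertex of that order; then $w=R_\Z w$, so $w^{H_+}=w^{H_-}=w$. Put $\Phi(f)=\sum_{x}f(x)\,w(x)$. Applying Lemma~\ref{lemma:A.2}(i) with $G(s,t)=st$ and $W=\mathbbm 1_{\{0\}}$ (nonincreasing on $\R_+$) and using $w^{H}=w$ gives $\Phi(f)\le\Phi(f^{H})$ for $H\in\{H_+,H_-\}$, so $\Phi$ is nondecreasing along the $T$-orbit. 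The key additional fact is \emph{strict} monotonicity: for $x\in H$ the paired weights satisfy $w(x)>w(\sigma_H x)$ strictly (this is exactly what ``$w$ strictly Schwarz-decreasing'' encodes for both $H_+$ and $H_-$), so at any pair where the polarization actually swaps values the two corresponding terms of $\Phi$ increase strictly. Thus $f^{H}\ne f\Rightarrow\Phi(f^{H})>\Phi(f)$; chaining the two polarizations gives $Tf\ne f\Rightarrow\Phi(Tf)>\Phi(f)$, while conversely $\Phi(Tf)=\Phi(f)$ forces both intermediate inequalities to be equalities, hence first $(f^{H_-})^{H_+}=f^{H_-}$ and then $f^{H_-}=f$, i.e.\ $Tf=f$ with $f=f^{H_+}=f^{H_-}$.

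Combining the two steps finishes the proof. Since $\Phi(T^ku)$ is nondecreasing and takes only finitely many values, it is eventually constant, say for all $k\ge K_1$; by the strict-monotonicity dichotomy this means $T(T^{K_1}u)=T^{K_1}u$, so $v:=T^{K_1}u$ is a fixed point with $v=v^{H_+}=v^{H_-}$. Reading off these invariances gives $v(x)\ge v(-x)$ for $x\ge0$ (from $v^{H_+}=v$) and $v(0)\ge v(1),\,v(-1)\ge v(2),\,v(-2)\ge v(3),\dots$ (from $v^{H_-}=v$); interlacing them yields the full chain $v(0)\ge v(1)\ge v(-1)\ge v(2)\ge v(-2)\ge\cdots$, which is precisely the defining monotonicity of $R_\Z$. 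As $v\overset{e.m.}{\sim}u$, uniqueness of the $\prec$-decreasing equimeasurable function gives $v=R_\Z u$, whence $T^k u=R_\Z u$ for all $k\ge K_1$.

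The main obstacle is obtaining finiteness (not merely convergence) of the stabilization. Everything hinges on the support-confinement observation, which turns the a priori infinite orbit into a walk on a finite configuration set, together with the strict monotonicity of $\Phi$ and the implication $\Phi(Tf)=\Phi(f)\Rightarrow Tf=f$, which upgrades ``$\Phi$ eventually constant'' to ``$T^k u$ eventually constant''. Identifying the resulting fixed point with $R_\Z u$ is then the elementary interlacing argument above; the half-integer case (ii) requires only bookkeeping of the shifted reflection $\sigma_{H_-}(x)=1-x$, under which $1/2$ is its own partner and the remaining vertices pair off exactly as needed.
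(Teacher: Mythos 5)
Your proof is correct, but it takes a genuinely different route from the paper's. The paper proves Lemma~\ref{lemma:A.3} by induction on $|\supp u|$: the inductive hypothesis makes the $(N-1)$-th cut-off $\hat u_{N-1}$ stabilize under $T$ after finitely many steps, and one then tracks the position $X_k$ of the smallest positive value $a_N$, observing $|X_{k+1}|\le |X_k|$ so that $|X_k|$ freezes, with the sign of $X_k$ settled according to whether $|X_{K_1}|$ is already occupied by a larger value; this gives $T^{K_1+1}u=R_\Z u$. You instead run a Lyapunov-type argument: pair against a fixed strictly $\prec$-decreasing weight $w$ with $w^{H_\pm}=w$, note that $\Phi(f)=\sum_x f(x)w(x)$ is nondecreasing under each polarization (your invocation of Lemma~\ref{lemma:A.2} with $G(s,t)=st$ and $W=\mathbbm{1}_{\{0\}}$ is legitimate, and the pairwise computation $(f(x')-f(x))(w(x)-w(x'))>0$ correctly upgrades this to strictness exactly when the polarization moves $f$), that the orbit lies in a finite configuration set by support confinement plus equimeasurability, and that a stabilized $\Phi$-value forces $T^{K_1}u$ to be fixed by \emph{both} $H_+$ and $H_-$; the two families of inequalities $v(x)\ge v(-x)$ and $v(-x)\ge v(x+1)$ then interlace into the chain $v(0)\ge v(1)\ge v(-1)\ge v(2)\ge\cdots$, which together with uniqueness of the $\prec$-decreasing equimeasurable function identifies $v=R_\Z u$. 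This is the classical ``polarization against a strictly symmetric target'' device from the continuous theory; it is self-contained and arguably cleaner, avoiding the inductive bookkeeping entirely. What the paper's induction buys in exchange is the finer dynamical fact that positions of larger values freeze first (cut-offs stabilize level by level), which is precisely the mechanism reused in the two-dimensional Lemma~\ref{lem-well_defined_Cc}, where no analogous monotone functional is available. One cosmetic remark: your confinement justification explicitly treats only the ``min'' side; for the ``max'' side one should add that both $\sigma_{H_+}$ and $\sigma_{H_-}$ send points of the max side lying outside $[-M,M]$ to points also outside $[-M,M]$, so both paired values vanish --- immediate for these two specific polarizations, but worth a sentence.
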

\begin{proof}
    Only prove (i). We prove by induction on $|\supp{u}|$. Suppose that (i) holds for all $|\supp{u}| \le N-1$. Consider function $u$ with $\ran(u)=[a_1 \succ \cdots \succ a_{N-1} \succ a_{N} \succ 0 \succ \cdots]$, where $a_N>0$. Let $\Tilde{u}=\hat{u}_{N-1}$ be the $(N-1)$-th cut-off of $u$. Then:
    \begin{enumerate}[(i)]
        \item Positions of function value $a_n$ are the same for $T^k u$ and $T^k \Tilde{u}$, for all $n \le N-1$ and $k\in \N$;
        \item There exists $K_0$ such that $T^k \Tilde{u}=R_{\Z} \Tilde{u}$ for all $k\ge K_0$.
    \end{enumerate}
    This implies $T^k u(x) =R_{\Z} \Tilde{u}(x)$ for all $x\in \supp{R_{\Z} \Tilde{u}}$ and $k\ge K_0$. Let $X_k$ be the position of $x_N$ in $T^k u$, $k \ge K_0$. Then:
    \begin{enumerate}[(i)]
        \item $|X_{k+1}| \le |X_{k}|$, which implies there exists $K_1$ such that $|X_{k}|=|X_{K_1}|$ for all $k\ge K_1$;
        \item $X_{K_1} \le 0$ if and only if $|X_{K_1}|$ is captured by some $a_n$, $n\le N-1$.
    \end{enumerate}
    This implies $T^{K_1+1}u=R_{\Z}u$, which proves the result.
\end{proof}

\begin{proof}[Proof of Lemma~\ref{lemma:one-dimensional_discrete_Riesz_inequality_for_finitely_supported_functions}]The result follows by Lemma~\ref{lemma:A.2} and Lemma~\ref{lemma:A.3}.
\end{proof}

\begin{lemma}[One-dimensional discrete Riesz inequality]
    \label{lemma:one_dimensional_generlized_Risez_inequality}
    Let $W:\R_+ \to \R_+$ be nonincreasing, and $G:\R_+ \times \R_+ \to \R_+$ be supermodular with $G(0,0)=0$, $u,v$ be admissible functions on $\Z$, then
    \begin{equation}
        \label{inequlity:main_inequlity_of_lemma:one_dimensional_generlized_Risez_inequality}
        \sum_{x,y \in \Z} G(u(x),v(y))W(|x-y|) \le \sum_{x,y \in \Z} G(u^*(x),v^*(y))W(|x-y|).
    \end{equation}
    Moreover, if $G$ is strict supermodular, $u\in \mathcal{S}(\Z)$ with $\ran(u)=[a_1>a_2>\cdots]$, $\sum_{x,y \in \Z} G(u^*(x),v^*(y))W(|x-y|)<\infty$, $W$ satisfies $W(n)>W(n+1)$ for some $n\in \N$, then the equality holds if and only if
    $$v=v^*.$$
\end{lemma}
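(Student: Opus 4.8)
The plan is to establish the inequality first and then the rigidity in the equality case. For the inequality \eqref{inequlity:main_inequlity_of_lemma:one_dimensional_generlized_Risez_inequality} I would reduce to the finitely supported situation already handled in Lemma~\ref{lemma:one-dimensional_discrete_Riesz_inequality_for_finitely_supported_functions}. Setting $u_i=\mathbbm{1}_{B_i}u$ and $v_i=\mathbbm{1}_{B_i}v$ for the truncations to the balls $B_i=\{|x|\le i\}$, one has $u_i\uparrow u$, $v_i\uparrow v$ and, by the monotonicity of the rearrangement, $u_i^*\uparrow u^*$, $v_i^*\uparrow v^*$. Exactly as in the proof of Theorem~\ref{thm:discrete_Riesz_inequality} I would pass to $\widetilde G(s,t)=G(s,t)-G(s,0)-G(0,t)$, which is nonnegative (by supermodularity at the corner $(0,0)$), supermodular, nondecreasing in each variable, and vanishing on both axes. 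Applying Lemma~\ref{lemma:one-dimensional_discrete_Riesz_inequality_for_finitely_supported_functions} to $\widetilde G$ with $u_i,v_i$ and letting $i\to\infty$ by monotone convergence (using the assumed continuity of $G$) yields the $\widetilde G$-inequality for $u,v$. The two axis pieces factor as $\big(\sum_x G(u(x),0)\big)\big(\sum_z W(|z|)\big)$ and its $v$-analogue, which are unchanged under rearrangement by Cavalieri's principle (Proposition~\ref{prop:discrete_Cavalieri's_principle}); since all three pieces are nonnegative, adding them back, even when some equal $+\infty$, gives \eqref{inequlity:main_inequlity_of_lemma:one_dimensional_generlized_Risez_inequality} with no subtraction required.

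For the equality case I would use that $u=u^*$ is symmetric decreasing and is therefore a fixed point of both polarizations, $u^{H_+}=u^{H_-}=u$. Consequently each polarization of the pair $(u,v)$ acts on $v$ alone, and by Lemma~\ref{lemma:A.2} the functional $w\mapsto\sum_{x,y}G(u(x),w(y))W(|x-y|)$ is nondecreasing along the chain $v\to v^{H_-}\to (v^{H_-})^{H_+}=Tv\to\cdots$. Since $T^kv\to v^*$ by Lemma~\ref{lemma:A.3} (extended to admissible $v$ by the truncation argument above) and the endpoint $\sum_{x,y}G(u^*(x),v^*(y))W(|x-y|)$ is finite, every intermediate sum is finite and equality in \eqref{inequlity:main_inequlity_of_lemma:one_dimensional_generlized_Risez_inequality} forces equality at \emph{each} polarization step. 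It then suffices to prove the single-step rigidity: equality $\sum_{x,y}G(u(x),v^H(y))W(|x-y|)=\sum_{x,y}G(u(x),v(y))W(|x-y|)$ implies $v=v^H$ for $H\in\{H_+,H_-\}$. This gives $v=v^{H_-}=v^{H_+}$, hence $Tv=v$ and $v=v^*$.

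The heart of the matter is this single-step rigidity. Since $u$ is fixed, swapping $v$ at one polarized pair $\{z,z'\}$ with $z'=\sigma_H z$ changes the sum by exactly
\begin{equation*}
\Delta_{z,z'}=\sum_{x}\big[G(u(x),v(z'))-G(u(x),v(z))\big]\big[W(|x-z|)-W(|x-z'|)\big],
\end{equation*}
and the full polarization increment equals $\sum_{\{z,z'\}}\Delta_{z,z'}$ over swapped pairs. Pairing each $x$ with $\sigma_H x$ and using $|\sigma_H x-z|=|x-z'|$, the contribution of $\{x,\sigma_H x\}$ (with $x$ on the $H$-side) is
\[
\big[W(|x-z|)-W(|x-z'|)\big]\Big(\big[G(u(x),v(z'))-G(u(x),v(z))\big]-\big[G(u(\sigma_H x),v(z'))-G(u(\sigma_H x),v(z))\big]\Big),
\]
which is $\ge 0$: on the $H$-side $u(x)\ge u(\sigma_H x)$ and $|x-z|\le|x-z'|$, so supermodularity and monotonicity of $W$ give both factors the same sign. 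To force \emph{strict} positivity when a genuine swap occurs (so $v(z')>v(z)$), I would note that $u=u^*$ with $\ran u=[a_1>a_2>\cdots]$ is injective, whence $u(x)>u(\sigma_H x)$ strictly off the axis and the $G$-factor is strictly positive by strict supermodularity; and I would exhibit a distance at which $W$ strictly decreases by choosing (for $H_+$, $z>0$) the index $x=z+n$, so that $|x-z|=n$ and $|x-z'|=2z+n\ge n+1$, giving $W(|x-z|)=W(n)>W(n+1)\ge W(|x-z'|)$ from the hypothesis $W(n)>W(n+1)$. Hence $\Delta_{z,z'}>0$, contradicting step-equality, so no swap occurs and $v=v^H$.

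I expect the main obstacle to be precisely this last step: extracting a strictly positive term in $\Delta_{z,z'}$ from the single assumption $W(n)>W(n+1)$, which forces a careful choice of the index $x$ and a separate treatment of the analogous $H_-$ geometry (where $\sigma_{H_-}x=1-x$) and of pairs $\{z,z'\}$ lying near the axis. The finiteness hypothesis $\sum_{x,y}G(u^*(x),v^*(y))W(|x-y|)<\infty$ is what legitimizes the termwise rearrangements and the passage $T^kv\to v^*$ inside the sum; without it the equality characterization genuinely fails, as Example~\ref{example5.14}(iv) demonstrates.
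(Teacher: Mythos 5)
Your proposal is correct and follows essentially the same route as the paper: the inequality by truncation to finitely supported functions plus the $\widetilde G$-reduction and Cavalieri's principle (as in Theorem~\ref{thm:discrete_Riesz_inequality}), and the equality case by forcing equality at a single polarization step for each of $H_+$ and $H_-$ and extracting strictness from the choice $x=z+n$ with $W(n)>W(n+1)$, strict supermodularity, and the strict monotonicity of $\ran(u)$. Your swap-increment decomposition $\Delta_{z,z'}$ paired over $\{x,\sigma_H x\}$ is exactly the paper's termwise equality in \eqref{a.1} (with $u^H=u$) in different packaging, and the interlacing $v=v^{H_+}=v^{H_-}$ yields $v=v^*$ just as in the paper's proof.
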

\begin{proof}
    We only prove the equality case. Firstly, we prove $v(y_0) \ge v(-y_0)$ for all $y_0\in \Z_+$. Note that the following holds if ``$=$'' holds in \eqref{inequlity:main_inequlity_of_lemma:one_dimensional_generlized_Risez_inequality}.
    $$\sum_{x,y \in \Z} G(u(x),v(y))W(|x-y|) = \sum_{x,y \in \Z} G(u^{H_+}(x),v^{H_+}(y))W(|x-y|).$$
    Then for all positive integers $x\ge y>0$, the following holds by \eqref{a.1}:
    \begin{equation}
        \begin{aligned}
            &[W(x-y)-W(x+y)][G(u(x),v(y))+G(u(-x),v(-y))\\
            &-G\left(u(x),\max\{v(y),v(-y)\}\right)-G\left(u(-x),\min\{v(y),v(-y)\}\right)]=0.
        \end{aligned}
    \end{equation}
    Choose $y=y_0$, $x=y_0+n$, then $v(y_0) \ge v(-y_0)$ by the assumptions.\\
    With a similar argument, one can prove $v(1-y_0) \ge v(y_0)$ for any positive integer $y_0$, then the result follows.
\end{proof}

\begin{corollary}\label{cor:appendix_A.5}
    Let $u,v\in C_0^+(\Z)$ be admissible, then
    \begin{equation}
        \sum_{x\in \Z}|u^*(x)-v^*(x)|^p \le \sum_{x\in \Z}|u(x)-v(x)|^p.
    \end{equation}
    Moreover, If $p>1$, $u\in \mathcal S(\Z)$ with $\ran(u)=[a_1 > a_2 > \cdots]$, then the equality holds if and only if $v=v^*$.
\end{corollary}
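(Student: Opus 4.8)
The plan is to read off the corollary as the diagonal case of the one-dimensional discrete Riesz inequality, Lemma~\ref{lemma:one_dimensional_generlized_Risez_inequality}. I would choose the weight $W=\delta_0=\mathbbm{1}_{\{0\}}$, which is nonincreasing and drops strictly from $W(0)=1$ to $W(1)=0$; with this choice every double sum collapses to its diagonal, \[ \sum_{x,y\in\Z}G(f(x),g(y))W(|x-y|)=\sum_{x\in\Z}G(f(x),g(x)). \] The natural integrand is $G(s,t)=-|s-t|^p$, which the paper has already shown to be supermodular for $p\ge 1$ and strictly supermodular for $p>1$. The only mismatch is that Lemma~\ref{lemma:one_dimensional_generlized_Risez_inequality} requires a nonnegative integrand while $-|s-t|^p\le 0$, so I would first repair the sign.

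To do so, pass to the modified integrand \[ \widetilde G(s,t):=G(s,t)-G(s,0)-G(0,t)=s^p+t^p-|s-t|^p,\qquad s,t\ge 0, \] exactly as in the proof of Theorem~\ref{thm:discrete_Riesz_inequality}. This $\widetilde G$ is nonnegative because $|s-t|^p\le\max(s,t)^p\le s^p+t^p$ for $p\ge 1$; it vanishes on both axes; and since the subtracted terms are separable they contribute nothing to any mixed second difference, so $\widetilde G$ inherits supermodularity from $G$, strictly so when $p>1$. Applying Lemma~\ref{lemma:one_dimensional_generlized_Risez_inequality} to $\widetilde G$ and $W=\delta_0$ yields \[ \sum_{x}\bigl(u(x)^p+v(x)^p-|u(x)-v(x)|^p\bigr)\le\sum_{x}\bigl(u^*(x)^p+v^*(x)^p-|u^*(x)-v^*(x)|^p\bigr). \] By equimeasurability (Proposition~\ref{prop:discrete_Cavalieri's_principle}) we have $\sum_x u(x)^p=\sum_x u^*(x)^p$ and $\sum_x v(x)^p=\sum_x v^*(x)^p$, and cancelling these separable contributions leaves exactly $\sum_x|u^*(x)-v^*(x)|^p\le\sum_x|u(x)-v(x)|^p$.

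For the equality statement ($p>1$, $u=u^*$ with $\ran(u)=[a_1>a_2>\cdots]$) I would invoke the equality clause of Lemma~\ref{lemma:one_dimensional_generlized_Risez_inequality}: $\widetilde G$ is strictly supermodular, $u$ is already Schwarz symmetric with strictly decreasing values, and $W=\delta_0$ is strictly decreasing at the origin, so equality forces $v=v^*$; the converse is immediate. Here the lemma's finiteness hypothesis becomes $\sum_x\widetilde G(u^*(x),v^*(x))<\infty$, i.e.\ $u,v\in l^p(\Z)$.

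I expect the main technical obstacle to be the bookkeeping of the separable sums $\sum_x u(x)^p$ and $\sum_x v(x)^p$, which can be infinite for merely admissible $u,v$, making the cancellation above formal. The inequality itself is vacuous when $\sum_x|u(x)-v(x)|^p=\infty$, and in the finite case I would either run the argument on the finitely supported cut-offs $\hat u_N,\hat v_N$ (where all sums converge) and pass to the limit using pointwise convergence and Fatou's lemma, or, more robustly, bypass the subtraction entirely: apply the polarization inequality Lemma~\ref{lemma:A.2} directly with the real-valued integrand $G=-|s-t|^p$ and $W=\delta_0$ to conclude that each polarization contracts the $l^p$-distance, and then iterate via Lemma~\ref{lemma:A.3} together with Fatou. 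This second route handles infinite sums with no cancellation and produces the more natural finiteness condition $\sum_x|u^*(x)-v^*(x)|^p<\infty$ in the equality case.
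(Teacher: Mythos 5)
Your proposal is correct and follows essentially the same route the paper intends: Corollary~\ref{cor:appendix_A.5} is the diagonal case of Lemma~\ref{lemma:one_dimensional_generlized_Risez_inequality} with $G(s,t)=-|s-t|^p$ and $W=\delta_0$, exactly as the paper proves the $\Z^d$ analogue from Theorem~\ref{thm:discrete_Riesz_inequality_b}, and your sign repair $\widetilde G(s,t)=s^p+t^p-|s-t|^p$ is precisely the device used in the proof of Theorem~\ref{thm:discrete_Riesz_inequality}. Your caveats are also sound: the cancellation of the separable sums does require $u,v\in l^p(\Z)$ (the role played in the paper by the finiteness hypotheses of Theorem~\ref{thm:discrete_Riesz_inequality_b}), and your fallback via Lemma~\ref{lemma:A.2} with real-valued $G$ plus Fatou handles the remaining infinite-sum case that the stated corollary leaves implicit.
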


Let $\frac{\Z}{2}=\Z \cup (\Z+1/2)=\{0,\pm \frac{1}{2}, \pm 1, \cdots \}$. Let $u:\frac{\Z}{2} \to \R$ be admissible, $u_1=u|_{\Z}$, $u_2=u|_{\Z+1/2}$. By defining graph structure on $\Z+1/2$ via $x\sim y$ if and only if $|x-y|=1$, on $\frac{\Z}{2}$ via $x\sim y$ if and only if $|x-y|=\frac{1}{2}$, $||\nabla u_2||_p$ and $||\nabla u||_p$ are well defined:
\begin{align*}
    ||\nabla u_2||_p^p=\sum_{x\in \Z+1/2} |u_2(x+1)-u_2(x)|^p; \ ||\nabla u||_p^p=\sum_{x\in \frac{\Z}{2}} |u(x+\frac{1}{2})-u(x)|^p.
\end{align*}
Let $u_1^*=R_{\Z} u_1$, $u_2^*=R_{\Z+1/2}u_2$, and let $u^*$ defined via $u^*|_{\Z}=u_1^*$, $u^*|_{\Z+1/2}=u_2^*$.
The discrete P\'olya-Szeg\"o inequality tells us that: $||\nabla u_1^*||_p \le ||\nabla u_1||_p$. By a similar argument, we also have
$||\nabla u_2^*||_p \le ||\nabla u_2||_p$ and $||\nabla u^*||_p \le ||\nabla u||_p$.
We focus on the case that the equality holds.

\begin{lemma}\label{lem:appendix_A.6}
    Let $u:\frac{\Z}{2} \to \R$ be admissible, with $\ran(u)=[a_1 > a_2 > \cdots]$. For any $p > 1$, if $||\nabla v^*||_p = ||\nabla v||_p<+\infty$, then
    \begin{equation}
        v \cong v^*,
    \end{equation}
    where $v=u_1$, $u_2$ or $u$, as defined above.
\end{lemma}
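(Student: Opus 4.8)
The plan is to localize the equality to a single polarization and then reassemble it along the polarization iteration that produces $v^{*}$. Throughout I write $H$ for a polarization as in the appendix, $\sigma_{H}$ for the associated reflection, and I use that the one–dimensional rearrangement depends only on the multiset of values, so $w \overset{e.m.}{\sim} w'$ forces $w^{*}=(w')^{*}$.

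\textbf{Step 1 (every polarization preserves the energy).} The one–dimensional P\'olya--Szeg\"o energy is the instance $G(s,t)=-|s-t|^{p}$ of Lemma~\ref{lemma:A.2} with $W$ the nearest–neighbour weight, so $||\nabla w^{H}||_{p}\le ||\nabla w||_{p}$ for every polarization $H$. Since $v^{H}\overset{e.m.}{\sim}v$, we have $(v^{H})^{*}=v^{*}$, whence
\[
||\nabla v^{*}||_{p}=||\nabla (v^{H})^{*}||_{p}\le ||\nabla v^{H}||_{p}\le ||\nabla v||_{p}=||\nabla v^{*}||_{p}.
\]
Therefore $||\nabla v^{H}||_{p}=||\nabla v||_{p}$ for \emph{every} $H$; this uses only finiteness of the energy, not of the support.

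\textbf{Step 2 (equality for one polarization; the main obstacle).} I would reopen the proof of Lemma~\ref{lemma:A.2} with $u=v=w$ and $G(s,t)=-|s-t|^{p}$: the energy gap is a sum of the nonnegative four–point terms in \eqref{a.1}, so $||\nabla w^{H}||_{p}=||\nabla w||_{p}$ forces each of them to vanish. For two points $x,y$ on the same side of $\partial H$ one lattice step apart, the weight factor $W(|x-y|)-W(|\sigma_{H}x-y|)=1-0$ is nonzero, so the corresponding bracket must vanish; because $-|s-t|^{p}$ is \emph{strictly} supermodular exactly when $p>1$, the vanishing bracket is equivalent to the concordance $\bigl(w(x)-w(\sigma_{H}x)\bigr)\bigl(w(y)-w(\sigma_{H}y)\bigr)\ge 0$. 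Consecutive reflection–pairs share such an adjacency, so concordance chains along the whole lattice; as the values of $w$ are pairwise distinct, each difference $w(\cdot)-w(\sigma_{H}\cdot)$ is nonzero and the chained relation forces one common sign. A single global orientation says exactly $w^{H}=w$ (all pairs already ordered) or $w^{H}=w\circ\sigma_{H}$ (all pairs reversed). Making the chaining reach every pair, and handling the fixed point of $\sigma_H$, is where I expect the real work to lie.

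\textbf{Step 3 (assembling $v\cong v^{*}$).} By Lemma~\ref{lemma:A.3} one has $v^{*}=T^{k}v$ for large $k$, with $T=(\cdot^{H_-})^{H_+}$. Writing the iteration as a chain of polarizations $w_{0}=v,\,w_{1},\dots,w_{k}=v^{*}$, Step~1 gives $||\nabla w_{i}||_{p}=||\nabla v^{*}||_{p}$ for all $i$ (inductively, via $(w_{i})^{*}=v^{*}$), and Step~2 gives $w_{i+1}=w_{i}\circ\tau_{i}$ with each $\tau_{i}\in\{\mathrm{id},\sigma_{H_{i}}\}$ a graph automorphism; composing, $v^{*}=v\circ(\tau_{0}\circ\cdots\circ\tau_{k-1})$, i.e. $v\cong v^{*}$. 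The three cases $v=u_{1},u_{2},u$ are identical: for $v=u$ on $\tfrac{\Z}{2}$ the edges have length $\tfrac12$ and the centers of $H_{+},H_{-}$ sit at an integer and a half–integer, but since each $\sigma_{H}$ preserves $\Z$ and $\Z+1/2$ setwise the argument of Step~2 is unchanged. For infinite support I would run Steps~1--2 verbatim (they need only finite energy and distinct values) and then apply Lemma~\ref{lemma:A.3} to the cut–offs $\hat v_{N}$, letting $N\to\infty$; the stabilization of each value's position yields a consistent limiting automorphism, which is the only point there requiring care.
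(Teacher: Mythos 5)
Your Steps 1--2 coincide with the paper's engine: the paper likewise observes $||\nabla v||_p=||\nabla v^{H_-}||_p=||\nabla v^{H_+}||_p$ by the sandwich argument, reopens the proof of Lemma~\ref{lemma:A.2} with $G(s,t)=-|s-t|^p$ and $W=\mathbbm{1}_{[0,1]}$, and uses strict supermodularity for $p>1$ to force exactly your concordance along the links $y=x+1$. The two worries you flag in Step 2 are benign: the links $(x,x+1)$, $x\ge 1$, chain through every reflection orbit of $\sigma_{H_+}$ (resp.\ $\sigma_{H_-}$), and the fixed point of $\sigma_{H_+}$ imposes no constraint because its four-point term is invariant under polarization; under the hypothesis $\ran(u)=[a_1>a_2>\cdots]$ the positive values are pairwise distinct, so the differences along the chain do not vanish.

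The genuine gap is Step 3. Lemma~\ref{lemma:A.3} is proved only for finitely supported functions, whereas the hypothesis $\ran(u)=[a_1>a_2>\cdots]$ \emph{forces infinite support} (a strictly decreasing nonnegative sequence has all $a_n>0$), so $v^*=T^k v$ is false for every finite $k$ and your chain $w_0,\dots,w_k=v^*$ does not exist in the only case the lemma covers. Your cut-off patch does not repair this as stated: the cut-offs $\hat v_N$ need not satisfy the energy equality, so Step 2 cannot be applied along their polarization chains; and even granting per-step dichotomies for $v$ itself, you face an infinite composition of the maps $\tau_i$ (reflections about $0$ and $1/2$, whose partial compositions include unbounded translations), so the existence of a limiting automorphism requires a stabilization argument you have not supplied, on top of an unproved extension of Lemma~\ref{lemma:A.3} giving $T^k v\to v^*$ pointwise for infinite support. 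The paper avoids the iteration entirely, and so can you: since the hypothesis and conclusion are translation invariant, normalize $v(0)=b_1$ (the paper's ``W.L.O.G.''). Then the $H_-$ chain is anchored at the pair $\{0,1\}$, giving $v(-k)>v(k+1)$ for all $k\ge 0$, while the $H_+$ chain gives one of the two uniform orientations of the pairs $\{x,-x\}$; combined, these yield the full interleaved order $v(0)>v(1)>v(-1)>v(2)>\cdots$ or its mirror, i.e.\ $v=v^*$ or $v=v^*\circ\sigma_{H_+}$, with no limiting process. Note the anchoring is essential: without it, the two dichotomies for $H_\pm$ alone do not determine the order (e.g.\ the arrangement $b_3,b_1,b_2$ at $0,1,2$ with $b_4$ at $-1$ satisfies both $v^{H_+}=v$ and $v^{H_-}=v\circ\sigma_{H_-}$ yet is not of the rearranged form; it is ruled out only by a polarization centered at $1$, equivalently by the translation normalization).
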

\begin{proof}
    Only prove the case of $v=u_1$ with $\ran(v)=[b_1>b_2>\cdots]$. W.L.O.G., assume that $v(0)=b_1$. Note that $||\nabla v||_p=||\nabla v^{H_-}||_p=||\nabla v^{H_+}||_p$. 
    By the proof of Lemma~\ref{lemma:A.2}, we know the equality in \eqref{a.1} holds for $G=-|v(x)-v(y)|^p$, $H=H_+$, $W=\mathbbm{1}_{[0,1]}$, $x>0$, and $y=x+1$, i.e. 
    \begin{equation}
        \begin{aligned}
            &|v(x)-v(x+1)|^p+|v(-x)-v(-x-1)|^p\\
            =&|\max\{v(x),v(-x)\}-\max\{v(x+1),v(-x-1)\}|^p\\
            &+|\min\{v(x),v(-x)\}-\min\{v(x+1),v(-x-1)\}|^p.
        \end{aligned}
    \end{equation}
    By the strict supermodular of $G(s,t)=-|s-t|^p$ for $p>1$, we have $v(x+1) > v(-x-1)$ if and only if $v(x)>v(-x)$ for all $x\in \N$.\\
    By a similar argument for $H=H_-$, we have $v(|x|)>v(|x|+1)$ for all $x\in \Z$. Then $v=v^*$, or $v=v^* \circ \sigma_{H_+}$, where $v^* \circ \sigma_{H_+}$ is the reflection of $v^*$ with respect to $0$.
\end{proof}

\normalem
\bibliographystyle{alpha}
\bibliography{reference}
\end{document}